\documentclass{article}

\topmargin= .5cm
\textheight= 20cm
\textwidth= 32cc
\baselineskip=16pt

\evensidemargin= .9cm
\oddsidemargin= .9cm

\usepackage[utf8]{inputenc} 
\usepackage[T1]{fontenc}    
\usepackage{hyperref}       
\usepackage{url}            
\usepackage{booktabs}       
\usepackage{amsfonts}       
\usepackage{nicefrac}       
\usepackage{microtype}      
\usepackage{enumerate}

\usepackage{microtype}
\usepackage{graphicx}
\usepackage{subcaption}
\usepackage{booktabs} 
\usepackage[numbers, compress]{natbib}

\graphicspath{{./figures/}}
\usepackage{hyperref}


\newcommand\numberthis{\addtocounter{equation}{1}\tag{\theequation}}
%

\usepackage{amsmath,amssymb,amsfonts}
\usepackage{mathtools}
\usepackage{amsthm}
\usepackage{algorithm}
\usepackage{algorithmic}

\usepackage[capitalize,noabbrev]{cleveref}

\usepackage{makecell}

\usepackage{float}
\usepackage{placeins}
\usepackage{multirow}
\theoremstyle{plain}
\newtheorem{theorem}{Theorem}[section]
\newtheorem{proposition}[theorem]{Proposition}
\newtheorem{lemma}[theorem]{Lemma}
\newtheorem{corollary}[theorem]{Corollary}
\theoremstyle{definition}
\newtheorem{definition}[theorem]{Definition}
\newtheorem{example}[theorem]{Example}
\newtheorem{assumption}{Assumption}
\theoremstyle{remark}
\newtheorem{remark}[theorem]{Remark}

\usepackage[textsize=tiny]{todonotes}
\usepackage[toc,page,header]{appendix}



\usepackage{url}            
\usepackage{booktabs}       
\usepackage{amsfonts}       
\usepackage{nicefrac}       
\usepackage{microtype}      

\usepackage{caption} 
\usepackage{wrapfig}

\usepackage{graphicx}

\usepackage{pifont}

\usepackage[most]{tcolorbox}

\usepackage{minitoc}

\newcommand{\R}{\mathbb{R}}

\newcommand{\xmark}{\ding{55}}
\newcommand{\cmark}{\ding{51}}%

\newcommand{\emphblockoption}{drop shadow,
    colframe=black!60,
    colback=black!10,
    coltitle=white!, 
    left=.2pt,
    right=.2pt,
    boxrule=0pt,
    arc=.5pt}
\tcbset{emphblock/.style={code={\pgfkeysalsofrom{\emphblockoption}}}}

\title{Efficient First-Order Optimization on the Pareto Set
for Multi-Objective Learning under Preference Guidance}
\date{}
{\author{Lisha Chen\footnotemark[1] \and 
Quan Xiao\footnotemark[1] \and
Ellen Hidemi Fukuda\footnotemark[2] 
\and Xinyi Chen\footnotemark[3] 
\and Kun Yuan\footnotemark[3] 
\and Tianyi Chen\footnotemark[1] \\
\hspace{-5cm} 
{\small \footnotemark[1] Rensselaer Polytechnic Institute
\footnotemark[2] Kyoto University
\footnotemark[3] Peking University}\\
}
}

%


\allowdisplaybreaks

\begin{document}

\maketitle

\begin{abstract}
Multi-objective learning under user-specified preference is common in real-world problems such as multi-lingual speech recognition under fairness. 
In this work, we frame such a problem as a semivectorial bilevel optimization problem, whose goal is to optimize a pre-defined preference function, subject to the constraint that the model parameters are weakly Pareto optimal. To solve this problem,  we convert the multi-objective constraints to a single-objective constraint through a merit function with an easy-to-evaluate gradient, and then, we use a penalty-based reformulation of the bilevel optimization problem. We theoretically establish the properties of the merit function, and the relations of solutions for the penalty reformulation and the constrained formulation. Then we propose algorithms to solve the reformulated single-level problem, and establish its convergence guarantees. We test the method on various synthetic and real-world problems. The results demonstrate the effectiveness of the proposed method in finding preference-guided optimal solutions to the multi-objective problem.

\end{abstract}

\section{Introduction}
Many machine learning tasks naturally involve multiple objectives, which may encompass diverse performance metrics such as accuracy, fairness, and privacy, or even the same metrics evaluated across different  datasets~\cite{sener2018multi}. A common approach to tackling such multi-objective problems is to learn a shared model that performs well across all objectives simultaneously. 
Compared to training separate models for each objective, this approach
offers significant benefits -- most notably, it reduces model size and inference time, making it more efficient and scalable.
Multi-objective optimization facilitates this by enabling the learning of models that minimize vector-valued objectives~\cite{miettinen_nonlinear_1998,ehrgott_multicriteria_2005}. In practical scenarios, it is often desirable to obtain solutions that provide controlled trade-offs or reflect specific preferences among competing objectives, 
rather than treating all objectives equally.

To further illustrate, we use one example on multi-lingual speech or language processing problem in Figure~\ref{fig:example_multi_lingual}.
The goal of this problem is to minimize multiple losses from different languages, while satisfying the user-specified preferences.
Preferences can control trade-offs among multiple losses and enhance steerability, enabling the solver to return diverse solutions on the Pareto front.
Analytically, the preferences can be defined as constraints or objectives, see, e.g.,~\cite{lin2019pareto,curtis2023fair,chen2024ferero}.
To prioritize finding the optimal solutions of the multi-lingual losses over satisfying the preferences, we model the preference as a secondary scalar-valued objective.
We first optimize the vector-valued objective formed by concatenating the multi-lingual losses, and then optimize the scalar-valued preference objective.
\begin{figure}
\centering
\includegraphics[width=.7\linewidth]{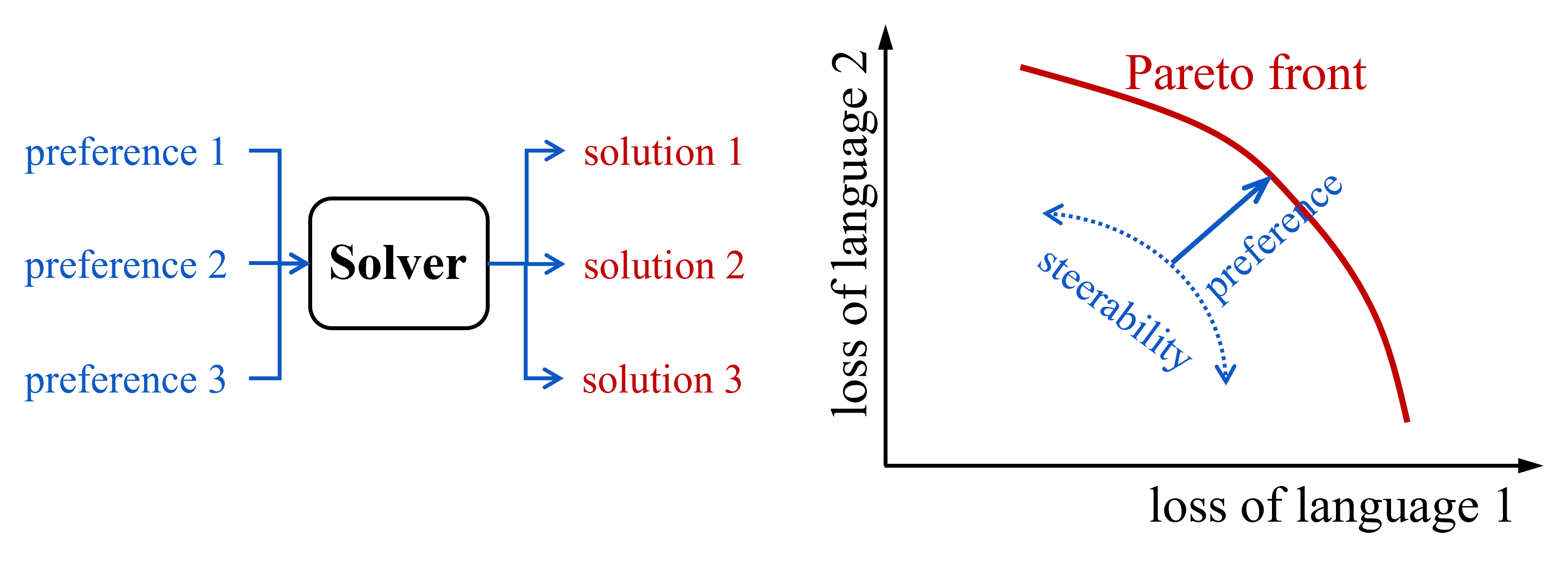} 
\caption{Example on multi-lingual speech or language processing problem under user-specified preference guidance.
The red curve represents the Pareto front, which is, informally, the set of objective values that achieve the best trade-offs among multiple objectives.}
\label{fig:example_multi_lingual}
\end{figure}
More formally, let $f_1,\dots,f_M: {\cal X}\to\mathbb{R}$ be the objective functions, and $f_0: {\cal X}\to \mathbb{R}$ be the preference function, with ${\cal X} \subseteq \R^q$ being nonempty, closed and convex.
Depending on the problem, $\cal X$ can be compact or $\R^q$.
Let $F=(f_1,\dots,f_M):{\cal X}\to\mathbb{R}^M$ be a vector-valued function. 
Then the optimization problem is 
\begin{equation}\label{eq:intro_bi_vector}
\min \nolimits_{x \in {\cal X}} f_0(x)
~~\mathrm{s.t.}~~x \in \mathop{\arg\min} \nolimits_{x \in {\cal X}}~ F(x)
\end{equation}
where for minimizing the vector-valued objective $F(x)$, we consider the widely used Pareto optimality, whose formal definition is deferred to Section~\ref{sec:preliminaries}. 
Then problem~\eqref{eq:intro_bi_vector} is also known as Optimization on the Pareto Set (OPS), or semivectorial simple bilevel optimization problem~\cite{Bolintineanu1993_Necessary_ops,Bolintineanu1993_Optimality_ops}.

The OPS problem is generally difficult to solve.
Existing studies usually make strong assumptions such as the convexity of the objectives $F$~\cite{roy2023opt_pareto_set}, or the algorithms require evaluating the second-order derivatives of the objectives, and is thus inefficient for large-scale problems~\cite{chen2022weighted_crosstask,roy2023opt_pareto_set}.
To address these challenges, 
we introduce reformulations of the OPS problem, 
establish relations of the reformulations and the original problem, 
and propose algorithms to solve the reformulated problem with convergence guarantees.

Our contributions can be summarized as follows:\\
First, we propose a smoothed merit function to convert the vector-valued optimization to a scalar-valued optimization, with easy-to-evaluate gradient, 
and prove that the smoothed merit function preserves the approximate weak Pareto optimality.
Then we use the smoothed merit function as a penalty function,
prove its error bound properties,
and establish the relation between the global/local/stationary solutions of the penalty-based reformulation and original problem.  
Based on the reformulation, we propose an efficient first-order algorithm with convergence guarantees.
Experiments are conducted on various synthetic and real datasets with possibly nonconvex objectives to demonstrate its effectiveness.

Technically, we address the following challenges:
\begin{itemize}
\item[T1] Existing merit functions for weak Pareto optimality have max-min non-smooth structures, and their subdifferential can be hard to evaluate. 
We propose a penalty function using a smoothed merit function, which admits easy-to-evaluate gradients.
\item[T2] Instead of directly assuming the merit function satisfies the error bound as in existing (simple) bilevel optimization literature, we prove the proposed merit function satisfies the desired error bound.
\item[T3] We establish the KKT condition for
a simple bilevel problem with provable constraint qualification condition under the Kurdyka-Łojasiewicz inequality. Based on this, we establish 
the relation of the stationarity solution of the penalty problem 
and the KKT solution to the simple bilevel problem.
\end{itemize}

\section{Problem setup and preliminaries} 
\label{sec:preliminaries}

For the optimization problem $\min_{x \in {\cal X}} F(x)$ in~\eqref{eq:intro_bi_vector}, 
we use the standard definitions for Pareto optimality~\cite{miettinen_nonlinear_1998}.
Given two vectors $v$ and $w$, we use $v<w$ and $v\leq w$ to denote $v_i < w_i$  and $v_i\leq w_i$ for all $i$, respectively. We use  $v\lneq w$ to denote $v\leq w$ and $v\neq w$, and define $>, \geq$, $\gneq$ analogously.
We use $\mathbf{1}_q$ to denote an all-one vector with dimension $q$, 
where $q$ is sometimes ommitted if it is clear from the context.
Then Pareto dominance and weak Pareto optimality are formally defined below.
\begin{definition}[Pareto dominance and optimality]
\label{def:generalized_dominance}
Given $v,w \in \R^M$, we say
$v$ strictly dominates $w$ if and only if  $v - w < 0$.
Correspondingly, a point $x\in {\cal X}$ is \emph{weakly Pareto optimal} if
 there is no $x' \in {\cal X}$ such that, $F(x') < F(x)$.
In addition, a point $x\in {\cal X}$ is \emph{$\epsilon$-weakly Pareto optimal} 
if there exists no $x'\in {\cal X}$ and $x' \neq x$ such that, $F(x') < F(x) - \epsilon \mathbf{1}$.
\end{definition}

Throughout the paper, we assume $f_m(x), m = 0, \ldots, M$ are proper and bounded below. And we assume they are twice continuously and directionally differentiable.
Denote the directional derivative of $f_m$ at point $x$ along direction $d$ 
as $f_m'(x; d)$, defined as
\begin{equation}
  f_m'(x; d) \coloneqq \lim_{\alpha\downarrow 0} 
  \frac{f_m(x + \alpha d) - f_m(x)}{\alpha}.
\end{equation} 
Then the Pareto stationarity is defined as follows.
\begin{definition}[Pareto stationarity~e.g.~\cite{ehrgott_multicriteria_2005}]
A point $x \in {\cal X}$ is Pareto stationary if 
$\max_{m\in [M]} f_m'(x; z - x) \geq 0$ for all $z \in {\cal X}$,
where $[M] = \{1, \ldots, M\}$.
\end{definition}

Denote $WP(F) \subseteq {\cal X}$ as the weak Pareto set of $F$, which contains all the weakly Pareto optimal solutions for $\min_{x \in {\cal X}} F(x)$. 
We consider the following problem
\begin{equation}\label{eq:pareto_constrained_opt}
  \min \nolimits_{x \in WP(F)}  ~~f_0(x) .
\end{equation}
We say that the solutions to \eqref{eq:pareto_constrained_opt} are \emph{preferred Pareto optimal}.

\section{Problem reformulation} 
\label{sec:problem_reformulation}

In this section, we first convert the original problem~\eqref{eq:pareto_constrained_opt} with multiple lower-level objectives to an equivalent problem with a single  lower-level objective using a merit function. Then we discuss its penalty-based reformulation.

\subsection{A smoothed merit function and its properties}
A merit function associated with the multi-objective optimization problem $\min_{x \in {\cal X}} F(x)$
is non-negative, and returns zero only at the weakly Pareto optimal solutions~\cite{Auslender1976,Hearn1982_gap_convex}. 
Under this requirement, assuming the lower semicontinuity of $F$,
then 
\begin{equation}
  \bar{u}(x) \coloneqq \sup_{y\in {\cal X}}\min_{m\in [M]}~~\{f_m(x)-f_m(y) \}  
\end{equation}
is a merit function in the sense of weak Pareto optimality~\citep[Theorem~3.1]{tanabe2018proximal}. 
In other words, $\bar{u}(x) = 0$ if and only if $x$ is weakly Pareto optimal.
Given this equivalence, it is desirable to convert the original lower-level multi-objective optimization problem to minimizing the scalar-valued function $\bar{u}(x)$.
However, $\bar{u}(x)$ in general can be non-differentiable due to its max-min structure, posing challenges to directly applying gradient-based approaches to minimize $\bar{u}(x)$.  
To address this challenge,
we propose the following smoothed and regularized merit function
$v_{l,\tau}(x)$ given $l \geq 0, \tau > 0$.
{\small\begin{subequations}
\begin{align}
\label{eq:hltau}
& h_{l,\tau}(x,y) \coloneqq \tau\ln \Big(\sum_{m=1}^M e^{\frac{f_m(y) - f_m(x)}{\tau} } \Big) + \frac{l}{2}\|x - y\|^2 \\
\label{eq:vl}
& v_{l,\tau}(x) \coloneqq 
- \min_{y\in {\cal X}} h_{l,\tau}(x,y) .
\end{align}
\end{subequations}}
Note that, $v_{l,\tau}$ can be seen as a smoothed and regularized function of $\bar{u}$. Specifically, when $l=0$, 
$v_{0,\tau}$ smoothes the maximization operation over $m \in [M]$ in $\bar{u}$ with the log-sum-exponential (LSE) function~\cite{Nesterov2005_smooth_min}.
And it uniformly converges to $\bar{u}$ as $\tau$ converges to zero.
Besides, adding the regularization with $l > 0$
can further lift a weakly convex objective to a strongly convex one, 
so that not only the minimization $\min_{y\in {\cal X}} h_{l,\tau}(x,y)$ in~\eqref{eq:vl} enjoys a unique solution, but also $v_{l,\tau}(x)$ is smooth and has easy-to-evaluate gradient.
To further illustrate, 
we provide a visualization of $\bar{u}$, $v_{l,\tau}$ on a simple example in Figure~\ref{fig:u_v_plot}. 
It shows that $v_{l,\tau}$ with smaller $\tau$ or $l$ approximates $\bar{u}$ better, while larger $\tau$ or $l$ makes $v_{l,\tau}$ smoother.
Too large $\tau$ or $l$ could possibly change the shape of 
$v_{l,\tau}$ significantly compared to $\bar{u}$.
\begin{figure}[ht]
\centering
\begin{subfigure}[b]{0.3\textwidth}
\centering
\includegraphics[width=.98\linewidth]{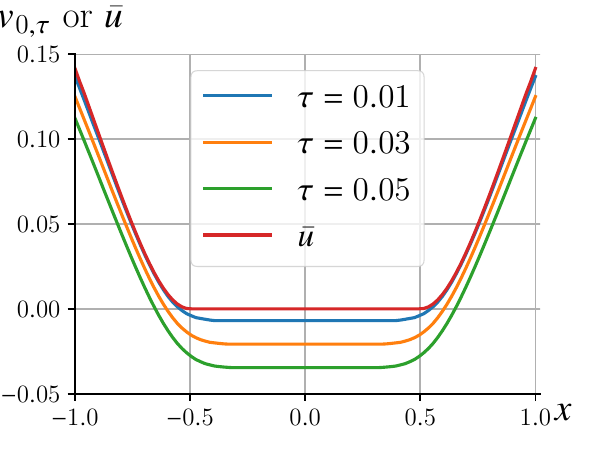}
\caption{$v_{0,\tau}$ and $\bar{u}$}
\label{sfig:u_v0}
\end{subfigure}  
\begin{subfigure}[b]{0.3\textwidth}
\centering
\includegraphics[width=.98\linewidth]{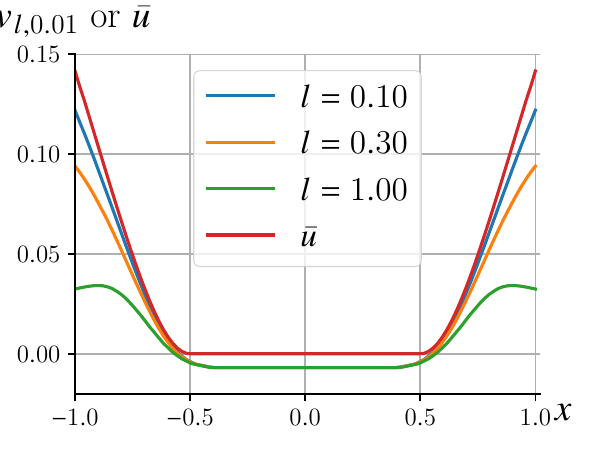}
\caption{$v_{l,0.01}$ and $\bar{u}$}
\label{sfig:u_vl}
\end{subfigure}  
\caption{Illustration of $v_{l,\tau}$ and $\bar{u}$ 
with different values of $\tau$ and $l$ with 
$F(x) = \Big( \sqrt[6]{(x + \frac{1}{2})^2 + \frac{1}{8}}, 
\sqrt[6]{(x - \frac{1}{2})^2 + \frac{1}{8} } \Big)^\top$.}
\label{fig:u_v_plot}
\end{figure}

Then we discuss the properties of the smoothed merit function and outline the procedure for computing its gradient. \\
\textbf{Properties of the smoothed merit function.} 
We establish how the value of the smoothed merit function changes 
when $x$ reaches the weak Pareto optimality condition.
In the prior work~\cite{tanabe2022new}, such properties have been
established for the merit functions $\bar{u}$ and $u_l$ (c.f. Appendix~\ref{sub_app:aux_property_v_po}).
We will show that the smoothed version $v_{l,\tau}$ 
can still preserve these properties approximately depending on the hyperparameters $l$ and $\tau$.
For our analysis, we make the following assumptions that are common in multi-objective optimization~\cite{fliege2019complexity,tanabe2018proximal,liu2021stochastic}.
Note that we do not require all the following assumptions to hold for all our results, which will be specified correspondingly.

\begin{assumption}
\label{assmp:local_lip_f}
For all $m\in\{0,\ldots, M\}$, $f_m(x)$ is locally Lipschitz on any bounded set
in ${\cal X}$.
\end{assumption}

\begin{definition}[Weak convexity]
A locally Lipschitz function $f: {\cal X}\to \R$ is 
$\mu$-weakly convex if 
$f(x) - \frac{\mu}{2}\|x\|^2$ is convex
for $x\in {\cal X}$.  
\end{definition}

\begin{assumption}
\label{assmp:weak_c_f}
For all $m \in [M]$, $ f_m(x)$ is 
locally Lipschitz and $\mu$-weakly convex on $\cal X$.
\end{assumption}

Before proceeding to the theoretical results, 
we introduce the following definition, which relaxes the commonly used 
convexity assumption of the objective functions. 
\begin{definition}[Point strong quasar-convex functions~{\citep[Definition~2.1]{hardt2018gd_linear_dy_quasar}}]
\label{def:quasar_convex}
A function $f: {\cal X} \to \R$ is $(c_q, \mu)$-point 
strong quasar-convex with $c_q \in (0, 1], \mu \geq 0$ 
at $x^* \in {\cal X}$ if for all  $x \in {\cal X}$,
{\small\begin{equation}
f(x^*) \geq f(x) + \frac{1}{c_q} \nabla f(x)^{\top}(x^*-x)
+\frac{\mu}{2}\|x^*-x\|^2 .
\end{equation}}
\vspace{-6mm}
\end{definition}
The point (strong) quasar-convexity in Definition~\ref{def:quasar_convex}
is a relaxation of the (strong) convexity.
When $c_q = 1$, the quasar-convexity  implies 
point star-convexity~\cite{lee2016optimizing_star}. And if the point star-convexity holds at all $x \in {\cal X}$, then it implies convexity.
There are many examples of nonconvex but point quasar-convex functions, 
see the discussions in, e.g.,~\cite{hinder2020near_star_quasar}.
In machine learning, typical examples that satisfy the quasar convexity 
include the linear dynamical systems identification~\cite{hardt2018gd_linear_dy_quasar} and generalized linear models with leaky ReLU or logistic activation functions~\cite{wang2023continuized_acc_quasar}.

Based on the above assumptions and definition,
we introduce the properties of the smoothed merit function 
$v_{l,\tau}$ in Proposition~\ref{prop:vl_property}. 
\begin{tcolorbox}[emphblock]
\vspace{-2mm}
\begin{proposition}[Properties of $v_{l,\tau}$]
\label{prop:vl_property}
Suppose Assumption~\ref{assmp:weak_c_f} holds.
The merit function $v_{l,\tau}(x)$ defined in~\eqref{eq:vl} satisfies the following properties: \\ 
1. $\bar{u}(x) - \tau \ln M \leq v_{0,\tau} (x) \leq \bar{u}(x)$.
Furthermore, $\min_{x\in {\cal X}} v_{l,\tau}(x) = - \tau \ln M$. \\
2. If $x$ is weakly Pareto optimal, then $v_{l,\tau}(x)\leq 0$. 
Conversely, if 
~a) $l = 0$, $v_{l,\tau}(x)\leq 0$,
then $x$ is $\epsilon$-weakly Pareto optimal with $\epsilon = \tau \ln M $;
~b) $l > 0$, $v_{l,\tau}(x)\leq - \tau \ln M$, and for all $m\in [M]$, 
$f_m$ are $(1, 0)$-point quasar-convex at $x$,
then $x$ is weakly Pareto optimal. 
\end{proposition}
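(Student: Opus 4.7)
The argument hinges on the elementary log-sum-exp sandwich
\begin{equation}
  \max_{m\in[M]} a_m \;\le\; \tau\ln\Big(\sum_{m=1}^M e^{a_m/\tau}\Big) \;\le\; \max_{m\in[M]} a_m + \tau\ln M .
\end{equation}

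\textbf{Part~1.} Setting $a_m=f_m(y)-f_m(x)$ and negating gives, for every $y\in{\cal X}$,
\begin{equation}
  \min_{m}\{f_m(x)-f_m(y)\}-\tau\ln M \;\le\; -h_{0,\tau}(x,y) \;\le\; \min_{m}\{f_m(x)-f_m(y)\},
\end{equation}
and taking the supremum in $y$ recovers $\bar u(x)-\tau\ln M\le v_{0,\tau}(x)\le \bar u(x)$. The global bound $v_{l,\tau}(x)\ge -\tau\ln M$ for every $x$ is immediate by plugging $y=x$ into the inner minimization and using $h_{l,\tau}(x,x)=\tau\ln M$. The reverse inequality $\min_x v_{l,\tau}(x)\le -\tau\ln M$ is proved by exhibiting an $x^*$ at which $y=x^*$ globally minimizes $h_{l,\tau}(x^*,\cdot)$; a natural candidate is a weakly Pareto optimal ``balanced'' point satisfying the uniform-weight first-order condition $\sum_m\nabla f_m(x^*)=0$ (so that $\nabla_y h_{l,\tau}(x^*,x^*)=0$), with local convexity from the regularizer upgrading this critical point to a global minimizer.

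\textbf{Part~2, forward direction.} If $x$ is weakly Pareto optimal, then for every $y\in{\cal X}$ some index $m$ satisfies $f_m(y)\ge f_m(x)$, so $\sum_m e^{(f_m(y)-f_m(x))/\tau}\ge 1$, the LSE term is non-negative, and $-h_{l,\tau}(x,y)\le -\tfrac{l}{2}\|x-y\|^2\le 0$; taking the supremum yields $v_{l,\tau}(x)\le 0$.

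\textbf{Part~2(a).} Contrapositive: if $x$ fails to be $(\tau\ln M)$-weakly Pareto optimal, some $y^*\neq x$ has $f_m(y^*)<f_m(x)-\tau\ln M$ for all $m$, giving $e^{(f_m(y^*)-f_m(x))/\tau}<1/M$, hence $\sum_m e^{\cdot}<1$ and $-h_{0,\tau}(x,y^*)>0$, forcing $v_{0,\tau}(x)>0$ in contradiction with $v_{0,\tau}(x)\le 0$.

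\textbf{Part~2(b).} Combining $v_{l,\tau}(x)\le -\tau\ln M$ with Part~1 forces equality, so $y=x$ is a \emph{global} minimizer of $h_{l,\tau}(x,\cdot)$ on ${\cal X}$. Suppose for contradiction $x$ is not weakly Pareto optimal; pick $y^*\in{\cal X}$ with $f_m(y^*)<f_m(x)$ for all $m$ and set $\epsilon:=\min_m(f_m(x)-f_m(y^*))>0$. A Gronwall-type integration along $y(t):=(1-t)x+ty^*$ (applied to $g(t):=f_m(y(t))-f_m(x)$, for which $(1,0)$-point quasar-convexity yields $t\,g'(t)\ge g(t)$) shows that the gradient-form quasar-convexity of Definition~\ref{def:quasar_convex} is equivalent to ordinary star-convexity at the anchor, giving $f_m(y(t))\le (1-t)f_m(x)+tf_m(y^*)$ and therefore $f_m(y(t))-f_m(x)\le -t\epsilon$. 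Substituting into $h_{l,\tau}$ produces
\begin{equation}
  h_{l,\tau}(x,y(t)) \;\le\; \tau\ln M - t\epsilon + \tfrac{l\,t^2}{2}\|y^*-x\|^2,
\end{equation}
which drops strictly below $\tau\ln M=h_{l,\tau}(x,x)$ for all sufficiently small $t>0$, contradicting global optimality of $y=x$. Hence $x$ must be weakly Pareto optimal.

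\textbf{Main obstacle.} The delicate step is Part~2(b): converting the gradient-form quasar-convexity of Definition~\ref{def:quasar_convex} into the chordal upper bound along $[x,y^*]$, and then balancing the quadratic penalty $\tfrac{l}{2}\|x-y(t)\|^2$ against the $O(t)$ drop in the LSE term to manufacture a strict improvement over the diagonal. The equality $\min_x v_{l,\tau}(x)=-\tau\ln M$ in Part~1 also needs care: the lower bound is automatic, but attainment additionally relies on the existence of a ``balanced'' weakly Pareto stationary point at which the regularizer certifies $y=x$ as the global minimizer of the inner problem.
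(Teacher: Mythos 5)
Your proof is correct, and while it follows the same overall skeleton as the paper's (LSE sandwich, attainment at a ``balanced'' point, star-convexity along the segment $[x,y^*]$, and letting $t\downarrow 0$ to kill the quadratic penalty), it is executed through a genuinely more self-contained route. The paper channels everything through the auxiliary merit functions $\bar u$ and $u_l$ and the imported facts of Lemma~\ref{lemma:u_l0} (that $u_l\geq 0$ and vanishes exactly at weak Pareto optima): the forward direction of Part~2 is read off from $u_l(x)=0$, Part~2(a) is deduced from $\bar u(x)\leq \tau\ln M$, and Part~2(b) starts from $u_l(x)=0$ and its max-min definition. You instead argue directly on $h_{l,\tau}$ and the exponential sums -- the forward direction via $\sum_m e^{(f_m(y)-f_m(x))/\tau}\geq 1$, Part~2(a) by the clean contrapositive $\sum_m e^{(f_m(y^*)-f_m(x))/\tau}<1$, and Part~2(b) by exhibiting a strict decrease $h_{l,\tau}(x,y(t))\leq \tau\ln M-t\epsilon+O(t^2)$ below the diagonal value. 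This buys you independence from the cited properties of $u_l$, and you even supply the monotonicity-of-$g(t)/t$ argument that upgrades the gradient form of $(1,0)$-point quasar-convexity to the chordal bound, a step the paper asserts without proof. What the paper's route buys is that the forward implication and the nonnegativity $v_{l,\tau}(x)\geq -\tau\ln M$ come for free from $u_l\geq 0$.

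Two small caveats. First, the attainment $\min_x v_{l,\tau}(x)=-\tau\ln M$ is only sketched in your write-up; the paper's Lemma~\ref{lemma:exist_min_v} makes it precise by taking $x^*\in\arg\min_{x\in{\cal X}}\frac1M\sum_m f_m(x)$, writing the stationarity condition as $0\in\frac1M\sum_m\nabla f_m(x^*)+\partial P(x^*)$ with $P$ the indicator of ${\cal X}$ (your unconstrained condition $\sum_m\nabla f_m(x^*)=0$ ignores the constraint set), and invoking the \emph{global} convexity of $h_{l,\tau}(x^*,\cdot)$ in $y$ -- which holds under Assumption~\ref{assmp:weak_c_f} when $l+\mu\geq 0$, not merely ``local convexity from the regularizer'' -- to certify that the critical point is a global minimizer. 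Second, the candidate point is not a priori ``weakly Pareto optimal''; it is simply a minimizer of the uniform scalarization, whose existence follows from the standing assumptions (boundedness below plus compactness or coercivity). Neither issue changes the argument, but both deserve the extra lines if this were written out in full.
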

\vspace{-3mm}
\end{tcolorbox}
The proof of Proposition~\ref{prop:vl_property} is deferred to Appendix~\ref{sub_app:proof_v_weak_po}.
In Appendix~\ref{sub_app:example_quasar},
we provide some examples of nonconvex $F$ that satisfies 
condition 2-b) in Proposition~\ref{prop:vl_property}.

Next we discuss how to compute the gradient of 
the smoothed merit function $v_{l,\tau}$, so that we can use 
gradient-based methods to directly minimize $v_{l,\tau}$.\\
\textbf{Gradient of the smoothed merit function.}
Under Assumption~\ref{assmp:weak_c_f}, all the objectives
$f_m$ are $\mu$-weakly convex, i.e., $f_m(x) - \frac{\mu}{2}\|x\|^2$ is convex on ${\cal X}$. The LSE function preserves weak convexity (c.f. Lemma~\ref{lemma:log_sum_exp_preserve_weak_convexity}),
thus $h_{l,\tau}(x,y)$ is strongly convex w.r.t. $y$
if $l + \mu > 0$. Then $y^*_{l,\tau}(x) \coloneqq 
\arg\min_{y\in {\cal X}} h_{l,\tau}(x,y)$ is a singleton,
and is continuous w.r.t. $x$,
so the Danskin-type theorem can be applied here
to compute the gradient of $v_{l,\tau}$, given by
\begin{subequations}\label{eq:grad_v_main}
\begin{align}
& \nabla v_{l,\tau}(x) = 
\sum_{m=1}^M \pi_m(x) \nabla f_m(x) - l(x - y^*_{l,\tau}(x)), \\
& \text{with}~~
\pi_m(x) \coloneqq \frac{e^{\frac{1}{\tau}(f_m(y^*_{l,\tau}(x)) - f_m(x))}} {\sum_{m=1}^M e^{\frac{1}{\tau}(f_m(y^*_{l,\tau}(x)) - f_m(x))} }.
\end{align}  
\end{subequations}
\textbf{Reformulation of problem~\eqref{eq:pareto_constrained_opt}.}
We then consider the following optimization problem by approximating the Pareto set constraint $x \in WP(F)$ through a merit function constraint using $v_{l,\tau}$.
{\begin{equation}
\min_{x\in {\cal X}} ~ f_0(x), 
~\mathrm{s.t.}~ x \in {\cal X}_{v_{l,\tau}}^* 
\!\!\coloneqq \!\!
\{x\in {\cal X}\mid v_{l,\tau}(x)
+\tau\ln M \leq 0\} .
\label{eq:bilevel_opt_vltau}
\tag{CP}
\end{equation}}
We name the above program a constrained program (CP) reformulation.
By Proposition~\ref{prop:vl_property}, 
$ S(F) \subseteq WP(F)$, and these two sets become equal 
as $l,\tau\downarrow 0$.
To solve~\eqref{eq:bilevel_opt_vltau},
we further consider a penalty-based program~\eqref{eq:approximate_penalty}
with penalty parameter $\gamma, \theta > 0$ below
\begin{align}\label{eq:approximate_penalty}
\tag{PP$_\gamma$}
&\min_{x\in {\cal X}} ~ \varphi_\gamma(x) 
\coloneqq f_0(x) + \gamma p(x) 
~~~\text{with}~~{p(x) \coloneqq (v_{l,\tau}(x) + \tau \ln M)^{\theta}} 
\nonumber .
\end{align}
For~\eqref{eq:approximate_penalty}, when $\gamma \to \infty$, any limit point 
of the sequence of solutions to the approximation problem~\eqref{eq:approximate_penalty} is a solution to the problem~\eqref{eq:bilevel_opt_vltau}, 
as proved in Appendix~\ref{sub_app:proof_relation_asymptotic}.

\subsection{Relation of different formulations}
\label{sub:relation_formulation}

To establish the relations of the solutions 
to~\eqref{eq:approximate_penalty} and~\eqref{eq:bilevel_opt_vltau},
without loss of generality, we assume there exists 
at least one $x^* \in \arg\min_{x\in {\cal X}} v_{l,\tau}(x)$
that $x^*$ is bounded, and that the function value $f_m$ 
and gradient $\nabla f_m$ at $x^*$ for $m=0,\ldots,M$
are also bounded.
We also introduce the following subanalyticity
assumptions on the objectives below.
\begin{definition}[Subanalyticity~\cite{Bierstone1988_Semianalytic}]
\label{def:subanalytic}
1) A subset $S \subset \R^q$ is called \emph{semianalytic} if each point of $\R^q$ admits a neighborhood $V$ for which $S \cap V$ assumes the following form
\begin{align}
  \bigcup_{i=1}^I \bigcap_{j=1}^J\left\{x \in V: f_{i j}(x)=0, g_{i j}(x)>0\right\},  
\end{align}
where $f_{i j}, g_{i j}: V \rightarrow \R$ are real analytic functions for $1 \leq i \leq I, 1 \leq j \leq J$.\\
2) A subset $S \subset \R^q$ is called \emph{subanalytic} if each point of $\R^q$ admits a neighborhood $V$ such that
\begin{align}
  S \cap V = \{x \in \R^q \mid (x, y) \in B \}  
\end{align}
where $B$ is a bounded semianalytic subset of $\R^q \times \R^m$.\\
3) A function $f: \R^q \rightarrow \R \cup\{+\infty\}$ is called \emph{subanalytic} if its graph is a subanalytic subset of $\R^q \times \R$.  
\end{definition}

\begin{definition}[Global subanalyticity~{\citep[p.~506]{VanDenDriesMiller1996_ominimal}}]
\label{def:global_suba}
Let $x = [x_1, \ldots, x_q]^\top \in \R^q$. Define the function 
\begin{align}\label{eq:Phi_q}
\Phi_q(x) \coloneqq \left(\frac{x_1}{1+x_1^2}, \ldots, \frac{x_q}{1+x_q^2}\right).
\end{align}
1) A subset $S$ of $\R^q$ is called \emph{globally subanalytic} if its image under $\Phi_q$ is a subanalytic subset of $\R^q$.\\
2) A function $f: \R^q \rightarrow \R \cup\{+\infty\}$ is called \emph{globally subanalytic} if its graph is a globally subanalytic subset of $\R^q \times \R$.
\end{definition}
\begin{assumption}[Subanalyticity of $f_m(x)$]
\label{assmp:subanalyticity_f}
For all $m \in [M]$, $f_m(x)$ is subanalytic on $\cal X$.
\end{assumption} 
The properties of subanalytic functions
are provided in Appendix~\ref{sec_app:subanalytic_property}.
Subanalyticity can be 
generally satisfied by many widely-used objective functions~\cite{VanDenDriesMiller1996_ominimal,Bolte2007_subanalytic}. 
For example, the $\ell_p$-norm with $p\geq 1$, 
and the LSE and polynomial functions defined on a bounded set, 
all satisfy the subanalyticity.
More discussions and examples are provided in Appendix~\ref{sub_app:examples_global_suba} and Table~\ref{tab:HEB_example}.
Intuitively speaking, global subanalytic functions can be described 
by finite combinations of locally analytic functions.
They exhibit  a ``tame'' geometry, 
thus stability under basic operations,
and desirable properties for optimization.
One of them is the H\"{o}lderian error bound 
defined below. 
\begin{definition}[$(\varrho,\eta)$-H\"{o}lderian error bound]
\label{def:varrho_eta_dist_bound}
For a function $v: {\cal X} \to \R$, 
let ${\cal X}^*_v \coloneqq \arg\min_{x\in {\cal X}}~ v(x)$.
Then $v$ satisfies the $(\varrho,\eta)$-H\"{o}lderian error bound (HEB) if 
\begin{equation}
\varrho \big(v(x) - \min_{x\in {\cal X}} v(x) \big) 
\geq \big(\mathrm{dist}(x, {\cal X}^*_v) \big)^\eta 
\end{equation}
where $\mathrm{dist}(x, S)$ is the Euclidian distance from a point $x$ to a set $S$, and $\varrho, \eta > 0$.
\end{definition}
  
The HEB in Definition~\ref{def:varrho_eta_dist_bound} 
generalizes the widely used Quadratic Growth (QG) condition 
with $\eta = 2$ in optimization~\cite{karimi2016linear_pl,drusvyatskiy2018_eb_qg}, and the weak 
sharp minima condition with $\eta = 1$~\cite{Burke1993_weaksharpmin,Samadi2024_rapm}. This condition ensures the point is close to the solution set if the function value gap at the point is small.
In our problem, it is desirable that the function $v_{l,\tau}$
also satisfies such a condition, so that it satisfies HEB near its solution set.
This can be proved based on the properties of subanalytic functions,
as described in Lemma~\ref{lemma:global_suba_X_v} below.
\begin{lemma}[Subanalyticity of ${\cal X}^*_{v_{l,\tau}}$ and $v_{l,\tau}(x)$]
\label{lemma:global_suba_X_v}
Under Assumption~\ref{assmp:subanalyticity_f},  
given a subanalytic and compact set ${\cal X}_C \subseteq {\cal X}$,
suppose $f_m(x)$ is continuous and bounded on ${\cal X}_C$ for all $m \in [M]$. 
Then both ${\cal X}^*_{v_{l,\tau}}\cap {\cal X}_C$ and $v_{l,\tau}(x)$ on 
${\cal X}_C$ are globally subanalytic.
Consequently, $v_{l,\tau}(x)$, $p(x)$ satisfy the $(\varrho,\eta)$ and 
$(\varrho_p,\eta_p)$-HEB in Definition~\ref{def:varrho_eta_dist_bound}
on ${\cal X}_C \subseteq {\cal X}$, respectively, with some $\varrho, \eta > 0$,
$\eta_p = \theta \eta$, and $\varrho_p = \varrho^{\theta}$.
\end{lemma}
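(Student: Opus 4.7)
The plan is to first upgrade $v_{l,\tau}$ and ${\cal X}^*_{v_{l,\tau}} \cap {\cal X}_C$ to the globally subanalytic class, and then invoke a {\L}ojasiewicz-type inequality for globally subanalytic functions to derive the HEB. The argument tracks how the basic building blocks of $v_{l,\tau}$ --- differences of subanalytic functions, exponentials on bounded domains, finite sums, logarithms on bounded positive domains, a polynomial regularizer, and partial infimization over a compact set --- respect the class of globally subanalytic functions, using the stability properties summarized in Appendix~\ref{sec_app:subanalytic_property}.

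Concretely, I first verify that $h_{l,\tau}(x,y)$ is globally subanalytic on ${\cal X}_C \times {\cal X}_C$. Assumption~\ref{assmp:subanalyticity_f} together with continuity and boundedness of $f_m$ on the compact subanalytic set ${\cal X}_C$ makes each restriction $f_m|_{{\cal X}_C}$ globally subanalytic, so the differences $(f_m(y)-f_m(x))/\tau$ are globally subanalytic with range in a bounded interval. Since $\exp$ on a bounded interval, finite sums, and $\ln$ on a bounded positive interval are all globally subanalytic, the composition $\tau\ln\sum_{m=1}^M e^{(f_m(y)-f_m(x))/\tau}$ inherits this property, and adding the polynomial $\tfrac{l}{2}\|x-y\|^2$ preserves it. Next, the partial-infimum/projection theorem for globally subanalytic maps applied to the bounded fiber ${\cal X}_C$ yields that $v_{l,\tau}(x) = -\min_{y\in{\cal X}_C} h_{l,\tau}(x,y)$ is globally subanalytic on ${\cal X}_C$. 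Its sublevel set at the constant value $-\tau\ln M$ then gives global subanalyticity of ${\cal X}^*_{v_{l,\tau}} \cap {\cal X}_C = \{x\in{\cal X}_C : v_{l,\tau}(x) \le -\tau\ln M\}$.

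With these in hand, the {\L}ojasiewicz inequality for continuous globally subanalytic functions applied to $v_{l,\tau}$ on the compact set ${\cal X}_C$ produces exponents $\varrho,\eta>0$ satisfying $\varrho(v_{l,\tau}(x)-\min v_{l,\tau}) \ge \mathrm{dist}(x,{\cal X}^*_{v_{l,\tau}})^\eta$ for all $x\in{\cal X}_C$. Since $\min_{x\in{\cal X}} v_{l,\tau} = -\tau\ln M$ by Proposition~\ref{prop:vl_property} and $\theta>0$, we have $p(x)=(v_{l,\tau}(x)-\min v_{l,\tau})^\theta$ with $\min p = 0$ and ${\cal X}^*_p={\cal X}^*_{v_{l,\tau}}$; raising the HEB for $v_{l,\tau}$ to the $\theta$-th power yields $\varrho^\theta(p(x)-\min p) \ge \mathrm{dist}(x,{\cal X}^*_p)^{\theta\eta}$, i.e.\ $\varrho_p=\varrho^\theta$ and $\eta_p=\theta\eta$.

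The main obstacle is ensuring that every operation stays inside the \emph{globally} subanalytic class rather than just the subanalytic one, because ordinary subanalyticity is not preserved under arbitrary projection, and the partial infimum is precisely how $v_{l,\tau}$ is obtained from $h_{l,\tau}$. Compactness of ${\cal X}_C$ and boundedness of the $f_m$ on it are the enablers: they confine the arguments of $\exp$ and $\ln$ to bounded intervals where these transcendental functions are globally subanalytic, and they make the projection onto the $x$-coordinate a proper (hence globally-subanalyticity-preserving) map.
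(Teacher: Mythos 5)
Your proof is correct and follows essentially the same route as the paper's: establish global subanalyticity of $h_{l,\tau}$ on the compact set by combining subanalyticity with boundedness, pass to $v_{l,\tau}$ via the partial-minimum/projection argument, invoke the {\L}ojasiewicz factorization lemma for the HEB, and obtain the constants for $p$ by raising the inequality to the power $\theta$. The only cosmetic difference is that you obtain global subanalyticity of ${\cal X}^*_{v_{l,\tau}}\cap{\cal X}_C$ as the sublevel set $\{x\in{\cal X}_C : v_{l,\tau}(x)\leq -\tau\ln M\}$ using Proposition~\ref{prop:vl_property}, whereas the paper derives it directly for the argmin set from its projection-based auxiliary lemma (Lemma~\ref{lemma:min_preserve_suba}-2); both are valid.
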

Definition~\ref{def:varrho_eta_dist_bound} and  Lemma~\ref{lemma:global_suba_X_v} are crucial for establishing 
the relations of the global/local/stationary solutions of the penalty reformulation~\eqref{eq:approximate_penalty} and the constrained formulation~\eqref{eq:bilevel_opt_vltau},
as described in
Theorem~\ref{thm:global_solution_relation_eps}. 
Below we first define the global and local solutions
to~\eqref{eq:bilevel_opt_vltau},
then we discuss their relation with 
global and local solutions to~\eqref{eq:approximate_penalty}.
\begin{definition}[Global and local solutions]
We say $x$ is an $(\epsilon, \delta)$-global solution to~\eqref{eq:bilevel_opt_vltau} if  it satisfies
\begin{subequations}
\begin{align}
& f_0(x) - \min_{x\in {\cal X}_{\delta}} f_0(x) \leq \epsilon, \\
& x\in {\cal X}_{\delta} \coloneqq 
\{x \in {\cal X} \mid v_{l,\tau}(x) + \tau\ln M \leq \delta \} .
\end{align}  
\end{subequations}
Let $\mathcal{N}(x,r)$ denote the neighborhood of $x$
with radius $r$. We say $x$ is an $(\epsilon, \delta)$-local solution
to~\eqref{eq:bilevel_opt_vltau} on $\mathcal{N}(x,r)$ if  it satisfies that
\begin{align}
& f_0(x) - \min_{x\in {\cal X}_{\delta}\cap \mathcal{N}(x,r)} f_0(x) \leq \epsilon, 
~~\text{and}~~ x\in {\cal X}_{\delta} .
\end{align}  
\end{definition}

We then discuss the relations of solutions of the penalty formulation
and the constrained formulation.
The works most related to ours on the relations include~\cite{Ye1995_exact_penalty_blo,Luo1996_exact_penalty_mpec} and recent works~\cite{shen2023penalty,chen2024penalty_simple_blo}.
A major difference is that they directly assume the lower-level scalar objective satisfies HEB, QG, convexity or PL inequality, while in our work, we prove the property holds on a bounded set  for $v_{l,\tau}(x)$ and $p(x)$ when the objective $F$ is subanalytic, which is nontrivial.
The two works~\cite{Ye1995_exact_penalty_blo,Luo1996_exact_penalty_mpec}
focus on the cases with exact penalty.
Other differences include that the results in~\cite{shen2023penalty}
only consider LL objective satisfies HEB with $\eta = 2$, while we consider more general $\eta$. Also, we do not require the global convexity or Lipschitz assumption 
as in~\cite{chen2024penalty_simple_blo}. Furthermore, we provide the relation between the stationary solution of the penalty formulation and the KKT solution of the constrained formulation under the general KL inequality, which is not discussed in either of the two works.
By definition, the requirements of HEB and KL are weaker than QG and PL used in~\cite{shen2023penalty}, respectively.
The relations of strong convexity (SC), proximal error bound (EB), proximal PL, QG have been studied in existing literature. 
We summarize these relations using the equation below.
\begin{align*}
  f~\text{is SC}   %
\stackrel{(a)}{\implies} \text{proximal EB} 
\stackrel{(b)}{\iff} \text{proximal PL} 
\stackrel{(c)}{\implies} &\text{QG} \\
\Downarrow \qquad\qquad &~\Downarrow \\
\text{proximal EB} \iff \text{proximal KL} 
\stackrel{}{\implies} &\text{HEB} 
\numberthis \label{eq:kl_heb_relation}
\end{align*}
where $(a)$ has been proved in e.g.,~\citep[Appendix~F-2]{karimi2016linear_pl}; 
$(b)$ has been proved in e.g.,~\citep[Appendix~G]{karimi2016linear_pl};
$(c)$ has been proved in e.g.,~\citep[Theorem~3.1]{liao2024_eb_pl} with additional conditions that $\phi$ is closed and weakly convex,
or in~\citep[Theorem~2]{karimi2016linear_pl} with additional conditions that $g$ is a constant.

\begin{tcolorbox}[emphblock]
\vspace{-2mm}
\begin{theorem}[Relation of $\epsilon$-global solutions]
\label{thm:global_solution_relation_eps}
Suppose Assumptions~\ref{assmp:local_lip_f} and~\ref{assmp:subanalyticity_f} hold. Then a bounded $\epsilon$-global solution of~\eqref{eq:approximate_penalty} is an $(\epsilon, \delta)$-global solution for \eqref{eq:bilevel_opt_vltau} with proper choices of $\gamma, \epsilon$ depending on $\delta$ for all $\delta > 0$.
Conversely, an $(\epsilon_b, \epsilon)$-global solution for~\eqref{eq:bilevel_opt_vltau} is a $\delta$-global solution of~\eqref{eq:approximate_penalty}
with proper choices of $\epsilon_b, \epsilon, \gamma$ depending on $\delta$ for all $\delta > 0$.
\end{theorem}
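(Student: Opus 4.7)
The plan is to combine the Hölderian error bound (HEB) for $v_{l,\tau}$ and $p$ established in Lemma~\ref{lemma:global_suba_X_v} with the local Lipschitz continuity of $f_0$ from Assumption~\ref{assmp:local_lip_f}. The crucial linking observation is that for any $x^*\in{\cal X}^*_{v_{l,\tau}}=\arg\min v_{l,\tau}$, Proposition~\ref{prop:vl_property} gives $v_{l,\tau}(x^*)+\tau\ln M=0$, so $p(x^*)=0$ and $\varphi_\gamma(x^*)=f_0(x^*)$; consequently $\min_{\cal X}\varphi_\gamma\leq\min_{{\cal X}^*_{v_{l,\tau}}}f_0$. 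Because the standing assumption supplies a bounded $x^*$ with bounded function values, I fix a subanalytic compact set ${\cal X}_C\subseteq{\cal X}$ containing $x^*$ and the bounded approximate solutions $\bar{x}$ (or $\hat{x}$), and apply HEB together with a Lipschitz constant $L_0$ of $f_0$ on ${\cal X}_C$.

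For the forward direction, let $\bar{x}$ be a bounded $\epsilon$-global solution to~\eqref{eq:approximate_penalty}. Then $\gamma p(\bar{x})=\varphi_\gamma(\bar{x})-f_0(\bar{x})\leq\min\varphi_\gamma+\epsilon-f_0(\bar{x})\leq f_0(x^*)-f_0(\bar{x})+\epsilon\leq L_0\|\bar{x}-x^*\|+\epsilon\leq C_0+\epsilon$, where $C_0$ depends on the diameter of ${\cal X}_C$. Since $v_{l,\tau}(\bar{x})+\tau\ln M=p(\bar{x})^{1/\theta}$, choosing $\gamma\geq(C_0+\epsilon)/\delta^{\theta}$ forces $\bar{x}\in{\cal X}_\delta$. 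To control the optimality gap, the same chain gives $f_0(\bar{x})\leq\min_{{\cal X}^*_{v_{l,\tau}}}f_0+\epsilon$. For any $x_\delta\in\arg\min_{{\cal X}_\delta}f_0$, HEB yields $\mathrm{dist}(x_\delta,{\cal X}^*_{v_{l,\tau}})\leq\varrho^{1/\eta}\delta^{1/\eta}$; projecting and invoking $L_0$ gives $\min_{{\cal X}^*_{v_{l,\tau}}}f_0-\min_{{\cal X}_\delta}f_0\leq L_0\varrho^{1/\eta}\delta^{1/\eta}$. Absorbing this $O(\delta^{1/\eta})$ residual into the PP-side tolerance (i.e.\ shrinking $\epsilon$ jointly with $\gamma,\delta$) delivers the $(\epsilon,\delta)$-global solution property for~\eqref{eq:bilevel_opt_vltau}.

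For the converse direction, let $\hat{x}$ be an $(\epsilon_b,\epsilon)$-global solution to~\eqref{eq:bilevel_opt_vltau}. An upper bound reads $\varphi_\gamma(\hat{x})\leq\min_{{\cal X}_\epsilon}f_0+\epsilon_b+\gamma\epsilon^{\theta}\leq\min_{{\cal X}^*_{v_{l,\tau}}}f_0+\epsilon_b+\gamma\epsilon^{\theta}$. For a lower bound on $\min_{\cal X}\varphi_\gamma$, HEB plus Lipschitzness yields, for every $x\in{\cal X}_C$, $f_0(x)\geq\min_{{\cal X}^*_{v_{l,\tau}}}f_0-L_0\varrho^{1/\eta}p(x)^{1/(\theta\eta)}$, so $\varphi_\gamma(x)\geq\min_{{\cal X}^*_{v_{l,\tau}}}f_0-\Delta(\gamma)$ with $\Delta(\gamma):=\max_{t\geq 0}\{L_0\varrho^{1/\eta}t^{1/(\theta\eta)}-\gamma t\}$. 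In the regime $\theta\eta\geq 1$, an elementary calculus computation shows $\Delta(\gamma)\to 0$ as $\gamma\to\infty$ (indeed $\Delta(\gamma)=0$ once $\theta\eta=1$ and $\gamma\geq L_0\varrho^{1/\eta}$). Combining the two bounds gives $\varphi_\gamma(\hat{x})-\min\varphi_\gamma\leq\epsilon_b+\gamma\epsilon^{\theta}+\Delta(\gamma)$, which is driven below $\delta$ by taking $\gamma$ large, $\epsilon$ small with $\gamma\epsilon^{\theta}\to 0$, and $\epsilon_b\to 0$.

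The main obstacle is the rigorous passage from the \emph{local} HEB (Lemma~\ref{lemma:global_suba_X_v}, valid only on a subanalytic compact ${\cal X}_C$) to the \emph{global} inequalities on $\varphi_\gamma$ needed over the possibly unbounded ${\cal X}$; this is engineered through the standing boundedness hypothesis on the approximate solutions and on at least one $x^*\in{\cal X}^*_{v_{l,\tau}}$, by enlarging ${\cal X}_C$ to absorb them. A secondary technical step is the computation of $\Delta(\gamma)$, which is case-dependent on the sign of $\theta\eta-1$ and which dictates the growth rate of $\gamma$ relative to $\epsilon,\epsilon_b,\delta$.
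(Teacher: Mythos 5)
Your converse direction is essentially the paper's argument: the upper bound $\varphi_\gamma(\hat x)\leq\min_{{\cal X}_\epsilon}f_0+\epsilon_b+\gamma\epsilon^{\theta}$, the HEB-plus-Lipschitz lower bound $\varphi_\gamma(x)\geq\min_{{\cal X}^*_{v_{l,\tau}}}f_0-\Delta(\gamma)$ (this is precisely Lemma~\ref{lemma:heb_cont} with $\eta_p=\theta\eta$, $\varrho_p=\varrho^{\theta}$), and the three-way budget $\epsilon_b+\gamma\epsilon^{\theta}+\Delta(\gamma)\leq\delta$ all match Appendix~\ref{sub_app:global_relation}. The only thing to tighten there is that the lower bound must be evaluated at a global minimizer of $\varphi_\gamma$, so ${\cal X}_C$ has to be enlarged to contain a bounded point of ${\cal X}^*_{\varphi_\gamma}$, not only $\hat x$ and $x^*$; the paper states this explicitly.

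The forward direction has a genuine gap in the optimality-gap step. You establish $f_0(\bar x)\leq\min_{{\cal X}^*_{v_{l,\tau}}}f_0+\epsilon$ and then pass to $\min_{{\cal X}_\delta}f_0$ by applying the HEB at a point $x_\delta\in\arg\min_{{\cal X}_\delta}f_0$. But Lemma~\ref{lemma:global_suba_X_v} only gives the HEB on a compact subanalytic set ${\cal X}_C$, and nothing guarantees that $\arg\min_{{\cal X}_\delta}f_0$ is nonempty, bounded, or contained in ${\cal X}_C$ when ${\cal X}=\R^q$: the sublevel set ${\cal X}_\delta$ can reach arbitrarily far from ${\cal X}^*_{v_{l,\tau}}$, where neither the HEB nor the Lipschitz constant $L_0$ is available, and in that case your chain gives no control on $f_0(\bar x)-\inf_{{\cal X}_\delta}f_0$ at all. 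Even granting boundedness, the detour degrades the tolerance by an additive $O(\delta^{1/\eta})$. The paper's proof avoids both problems with one move: it sets the feasibility level equal to the achieved violation, $\epsilon_\gamma\coloneqq p(x_\gamma)$, and uses the $\epsilon$-global optimality of $x_\gamma$ for $\varphi_\gamma$ over all of ${\cal X}$ directly against any $x\in{\cal X}_{\epsilon_\gamma}$, so that $f_0(x_\gamma)+\gamma p(x_\gamma)\leq f_0(x)+\gamma p(x)+\epsilon\leq f_0(x)+\gamma\epsilon_\gamma+\epsilon$ and the penalty terms cancel exactly, yielding a gap of exactly $\epsilon$ with no HEB needed at the comparison point. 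Your feasibility bound $\gamma p(\bar x)\leq L_0\,\mathrm{diam}({\cal X}_C)+\epsilon$ is correct but cruder: the paper instead invokes Lemma~\ref{lemma:heb_cont} at a second parameter $\gamma'=\gamma/2$ to get $p(x_\gamma)\leq(\epsilon_{\gamma'}+\epsilon)/(\gamma-\gamma')$, which leads to the milder requirement $\gamma=\Theta(\delta^{-(\eta_p-1)/\eta_p})$ rather than your $\gamma=\Theta(\delta^{-\theta})$.
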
  
\vspace{-3mm}
\end{tcolorbox}
The proof of Theorem~\ref{thm:global_solution_relation_eps} 
is deferred to Appendix~\ref{sub_app:global_relation}.
It extends the result from~\cite{shen2023penalty} using the HEB
proved in Lemma~\ref{lemma:global_suba_X_v}
instead of directly assuming the QG condition. 
Furthermore, it does not rely on 
convexity assumptions of $v_{l,\tau}(x)$. 
A more detailed comparison is given in Appendix~\ref{sub_app:blo_work}.

\begin{table*}
\centering
\fontsize{6.5}{7}\selectfont 
\caption{A recipe to choose hyperparameters $\theta, \gamma$ 
to obtain solutions to~\eqref{eq:approximate_penalty} 
and thus the solutions to~\eqref{eq:bilevel_opt_vltau}.
Denote $\eta_p,\alpha_p$ as the
HEB, KL exponents of $p(x)$, and $\eta,\alpha_v$ as the
HEB, KL exponents of $v_{l,\tau}(x)$ on a subanalytic and compact set, respectively.}
\label{tab:recipe_simple_blo}
\begin{tabular}{c|ccccccc}
\toprule
\eqref{eq:approximate_penalty} 
&$\cal X$ & $v_{l,\tau}(x)$ property &$\theta$ & $p(x)$ property  & $\gamma$ 
&\eqref{eq:bilevel_opt_vltau}\\
\midrule
$\epsilon$-global (Theorem~\ref{thm:global_solution_relation_eps})
&\multirow{2}{*}{compact or $\R^q$} &\multirow{2}{*}{$\eta > 0$} 
&\multirow{2}{*}{$\theta = \frac{\eta_p}{\eta}$} &$\eta_p = 1$  
&\multirow{2}{*}{$\epsilon^{\frac{1}{\eta_p} - 1}$} 
&$(\epsilon, \epsilon)$-global \\
& & & &$\eta_p > 1$   
& &$(\epsilon, \epsilon)$-global \\
\hline
local (Theorem~\ref{thm:relation_local_solution})
&\multirow{2}{*}{compact or $\R^q$} &$\eta > 0$ &\multirow{2}{*}{$\theta = \frac{\eta_p}{\eta}$} 
&$\eta_p = 1$ 
&\multirow{2}{*}{$\epsilon^{\frac{1}{\eta_p}-1}$} 
&$(\epsilon, \epsilon)$-local \\
& & & &$\alpha_p > 1 $  
& &$(\epsilon, \epsilon)$-local\\
\hline
$\epsilon$-stat. (Theorem~\ref{thm:relation_stationary_solution})
&{$\R^q$} &$\alpha_v \geq 2$  & $\theta = 1 $ 
&$\alpha_p \geq 2$   
&$\epsilon^{-1}$ 
&$(\epsilon, \epsilon)$-KKT to~\eqref{eq:blo_nablap}\\
\bottomrule
\end{tabular}
\end{table*}

For generally nonconvex $f_0$ and $v_{l,\tau}$, it is difficult
to guarantee convergence to the global optimal solutions of~\eqref{eq:approximate_penalty}.
In these scenarios, we also establish the 
relations of the local solutions of the penalty reformulation~\eqref{eq:approximate_penalty} and the constrained formulation~\eqref{eq:bilevel_opt_vltau} in Theorem~\ref{thm:relation_local_solution}. 
\begin{tcolorbox}[emphblock]
\vspace{-2mm}
\begin{theorem}[Relation of local solutions]
\label{thm:relation_local_solution}
Suppose Assumptions~\ref{assmp:local_lip_f} and~\ref{assmp:subanalyticity_f} hold, and $f_0$ is $\ell_f$-Lipschitz on any bounded set. 
Let $x_\gamma $  be a local solution of~\eqref{eq:approximate_penalty} on $\mathcal{N}( x_\gamma,  r)$. 
If there exists $\bar{x} \in \mathcal{N} (x_\gamma, r ) \cap {\cal X}$ such that $p ( \bar{x} ) \leq \epsilon$ for some $\epsilon \geq 0$.
Then with proper choices of $\gamma, \epsilon$ depending on $\delta$ for all $\delta > 0$, $ x_\gamma $ is an $(\epsilon, \delta)$-local solution of~\eqref{eq:bilevel_opt_vltau}.
\end{theorem}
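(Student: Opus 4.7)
The plan is to imitate the global-case argument of Theorem~\ref{thm:global_solution_relation_eps} but restrict every comparison to the neighborhood $\mathcal{N}(x_\gamma, r)$, using the assumed auxiliary point $\bar{x}$ to play the role that a global minimizer of $p$ played in the global case. The three ingredients are (i) local optimality of $x_\gamma$ for $\varphi_\gamma$ against competitors in $\mathcal{N}(x_\gamma, r)\cap\mathcal{X}$, (ii) the local Lipschitz constant $\ell_f$ of $f_0$ on the bounded set containing $\mathcal{N}(x_\gamma, r)$ and the nearby minimizers of $p$ (valid under Assumption~\ref{assmp:local_lip_f}), and (iii) the $(\varrho_p,\eta_p)$-HEB on $p$ from Lemma~\ref{lemma:global_suba_X_v}, which converts function-value smallness of $p$ into Euclidean closeness to $\mathcal{X}^*_p$. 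Recall $p(x)=(v_{l,\tau}(x)+\tau\ln M)^\theta$ and Proposition~\ref{prop:vl_property} gives $v_{l,\tau}(x)+\tau\ln M\ge 0$, so $p$ and $v_{l,\tau}+\tau\ln M$ vanish on the same set.

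For the feasibility direction, I would test local optimality at $x_\gamma$ against the competitor $\bar{x}$: $f_0(x_\gamma)+\gamma p(x_\gamma)\le f_0(\bar{x})+\gamma p(\bar{x})\le f_0(\bar{x})+\gamma\epsilon$. Rearranging and bounding $f_0(\bar{x})-f_0(x_\gamma)\le \ell_f\|\bar{x}-x_\gamma\|\le \ell_f r$ yields $p(x_\gamma)\le \epsilon+\ell_f r/\gamma$. Taking the $1/\theta$-th root and using $v_{l,\tau}(x_\gamma)+\tau\ln M=p(x_\gamma)^{1/\theta}$, I would choose $\gamma$ large and $\epsilon$ small (both as explicit functions of $\delta$) so that $(\epsilon+\ell_f r/\gamma)^{1/\theta}\le\delta$, giving $x_\gamma\in \mathcal{X}_\delta$.

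For the value-gap direction, fix any $x\in\mathcal{X}_\delta\cap\mathcal{N}(x_\gamma,r)$; then $p(x)\le\delta^\theta$, and Lemma~\ref{lemma:global_suba_X_v} furnishes $\tilde{x}\in\mathcal{X}^*_p$ with $\|x-\tilde{x}\|\le (\varrho_p\,p(x))^{1/\eta_p}\le \varrho_p^{1/\eta_p}\delta^{\theta/\eta_p}$. Since $p(\tilde{x})=0$, $\tilde x$ is strictly feasible for~\eqref{eq:bilevel_opt_vltau}. By the triangle inequality, for $\delta$ small enough (depending on $r$), $\tilde{x}\in\mathcal{N}(x_\gamma,r)\cap\mathcal{X}$, so local optimality applies: $f_0(x_\gamma)+\gamma p(x_\gamma)\le f_0(\tilde{x})+\gamma p(\tilde{x})=f_0(\tilde{x})$, whence $f_0(x_\gamma)\le f_0(\tilde{x})\le f_0(x)+\ell_f\|x-\tilde{x}\|\le f_0(x)+\ell_f\varrho_p^{1/\eta_p}\delta^{\theta/\eta_p}$. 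Choosing $\theta=\eta_p/\eta$ as in Table~\ref{tab:recipe_simple_blo} turns the right-hand penalty into $O(\delta^{1/\eta})$, and the prescribed tolerance $\epsilon$ is then obtained by selecting $\delta$ small enough; the feasibility-side choices of $\gamma$ and $\epsilon$ from the previous paragraph are compatible with this choice.

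The main obstacle is precisely the step of guaranteeing $\tilde{x}\in\mathcal{N}(x_\gamma,r)$, because without that containment the local optimality inequality cannot be invoked. This is where Lemma~\ref{lemma:global_suba_X_v} is indispensable: the HEB exponent $\eta_p$ controls how fast the distance from $x$ to $\mathcal{X}^*_p$ shrinks in $\delta$, and combined with the penalty exponent $\theta$ it dictates both the admissible range of $\delta$ and the final tolerance, matching the recipe in Table~\ref{tab:recipe_simple_blo}. A minor, but necessary, technical point is that the local Lipschitz constant $\ell_f$ must be chosen relative to a bounded set containing $\mathcal{N}(x_\gamma,r)$ together with the retraction points $\tilde{x}$, which is available because $\mathcal{N}(x_\gamma,r)$ is bounded and the displacement $\|x-\tilde{x}\|$ is $O(\delta^{\theta/\eta_p})$.
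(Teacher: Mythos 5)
Your overall architecture is sound and the feasibility half goes through, but there is a genuine gap in the value-gap half, and your feasibility estimate is quantitatively weaker than the paper's.

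The gap: for a competitor $x\in{\cal X}_\delta\cap\mathcal{N}(x_\gamma,r)$ you retract to $\tilde x\in{\cal X}^*_p$ with $\|x-\tilde x\|\le\varrho_p^{1/\eta_p}\delta^{\theta/\eta_p}$ and then assert that ``for $\delta$ small enough'' one has $\tilde x\in\mathcal{N}(x_\gamma,r)$, so that the local optimality of $x_\gamma$ can be tested at $\tilde x$. This is false as a uniform statement: $x$ may sit arbitrarily close to the boundary of $\mathcal{N}(x_\gamma,r)$, in which case $\|\tilde x-x_\gamma\|\le\|x-x_\gamma\|+\|x-\tilde x\|$ can exceed $r$ no matter how small $\delta$ is, since $\delta$ must be fixed before $x$ is chosen. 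Without $\tilde x\in\mathcal{N}(x_\gamma,r)\cap{\cal X}$ the inequality $f_0(x_\gamma)+\gamma p(x_\gamma)\le f_0(\tilde x)$ is unavailable and the chain collapses. The step is repairable --- restrict competitors to $\mathcal{N}(x_\gamma,\,r-\varrho_p^{1/\eta_p}\delta^{\theta/\eta_p})$ and conclude local optimality on that strictly smaller neighborhood --- but you must say so. The paper avoids the retraction entirely: it sets the feasibility tolerance to $\epsilon_\gamma\coloneqq p(x_\gamma)$, so that for every $x\in\mathcal{N}(x_\gamma,r)$ with $p(x)\le\epsilon_\gamma$ the local optimality of $x_\gamma$ for $\varphi_\gamma$ gives $f_0(x_\gamma)-f_0(x)\le\gamma\big(p(x)-\epsilon_\gamma\big)\le 0$ directly: zero value gap and no containment issue.

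On feasibility, your comparison with $\bar x$ uses only $\|\bar x-x_\gamma\|\le r$, giving $p(x_\gamma)\le\epsilon+\ell_f r/\gamma$ and forcing $\gamma=\Omega(\delta^{-\theta})$. The paper instead compares $x_\gamma$ with its projection $x_\epsilon$ onto ${\cal X}_\epsilon=\{x:p(x)\le\epsilon\}$ (which lies in $\mathcal{N}(x_\gamma,r)$ precisely because $\bar x$ does) and bounds $\|x_\epsilon-x_\gamma\|\le\mathrm{dist}(x_\gamma,{\cal X}^*_{v_{l,\tau}}\cap{\cal X}_C)\le\varrho_p^{1/\eta_p}\big(p(x_\gamma)\big)^{1/\eta_p}$ via the HEB applied at $x_\gamma$ itself. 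This yields the self-referential inequality $\gamma p(x_\gamma)-\ell_f\varrho_p^{1/\eta_p}\big(p(x_\gamma)\big)^{1/\eta_p}-\gamma\epsilon\le 0$, whose resolution gives $p(x_\gamma)\le(\ell_f/\gamma)^{\eta_p/(\eta_p-1)}\varrho_p^{1/(\eta_p-1)}+2\epsilon$ with $\gamma\gtrsim\epsilon^{1/\eta_p-1}$ when $\eta_p>1$, and a constant $\gamma$ when $\eta_p=1$ (exact penalty). Your choice still satisfies the literal conclusion ``proper choices of $\gamma,\epsilon$ depending on $\delta$'', but it does not recover the penalty-parameter scaling advertised in Table~\ref{tab:recipe_simple_blo}, and it degrades gracelessly as $r$ grows.
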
  
\vspace{-3mm}
\end{tcolorbox}
Theorem~\ref{thm:relation_local_solution} states that
if $v_{l,\tau}$ satisfies HEB,
under additional conditions, the local solution of~\eqref{eq:approximate_penalty}
is a local solution of~\eqref{eq:bilevel_opt_vltau}.
In e.g.,~\cite{shen2023penalty,chen2024penalty_simple_blo}, 
these conditions can be satisfied under certain assumptions on
the lower-level objective, such as the PL inequality or convexity.
However, in our problems~\eqref{eq:approximate_penalty}
and~\eqref{eq:bilevel_opt_vltau}, we cannot directly assume such conditions hold for $v_{l,\tau}$. Therefore, next we will show that the conditions hold under additional conditions
specified in Proposition~\ref{prop:local_relation_condition}.
We first introduce the Kurdyka-Łojasiewicz inequality below.

\begin{definition}[Kurdyka-Łojasiewicz inequality]
A proper and lower semicontinuous function $f: \R^q \to (-\infty,+\infty]$  satisfies the $(c,\alpha)$-Kurdyka-Łojasiewicz (KL) inequality at $\bar{x} $ if there exist $\nu \in(0,+\infty]$, $c>0, \alpha > 1$, a neighborhood $\mathcal{N}(\bar{x})$, such that for all $x \in \mathcal{N}(\bar{x}) $ and $f(\bar{x}) < f(x) < f(\bar{x}) + \nu $, the following inequality holds
\begin{align}
c \big(\mathrm{dist}(0, \partial f(x)) \big)^{\alpha} \geq \|f(x) - f(\bar{x})\| .
\end{align}
Moreover, if $f$ satisfies the $(c,\alpha)$-KL inequality for every pair of points $(x, \bar{x})$ on a set ${\cal X}_C$ with $f(\bar{x}) = \min_{x\in {\cal X}_C} f(x)$, then we say $f$ is $(c,\alpha)$-KL on ${\cal X}_C$.
\end{definition}

\begin{proposition}
\label{prop:local_relation_condition}
Let $x\in {\cal X}$ be a bounded $\epsilon$-stationary point of $\min_{x\in {\cal X}}v_{l,\tau}(x)$.
If there exists $x^*\in {\cal X}^*_{v_{l,\tau}}$ and $x\in \mathcal{N}(x^*)$ with KL inequality at $x^*$, then $v_{l,\tau}$ satisfies the condition in Theorem~\ref{thm:relation_local_solution}.
The above condition holds if 
~a) for all $m\in [M]$, $f_m$ satisfies the $(1,0)$-point quasar-convexity at $x$, and $(1,\mu)$-point strong quasar-convexity at $y^*_{l,\tau}(x) = \arg\min_{y\in {\cal X}} h_{l,\tau}(x, y)$;
or~b) $v_{l,\tau}(x) + \tau\ln M \leq \nu$ in Lemma~\ref{lemma:kl_subanalytic}.
\vspace{-2mm}
\end{proposition}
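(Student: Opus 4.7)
The plan is to prove the main implication first and then verify conditions (a) and (b) as two sufficient scenarios in which the hypothesis holds. For the main implication, I would combine the $\epsilon$-stationarity of $x$ with the KL inequality at the minimizer $x^*$. By definition of a bounded $\epsilon$-stationary point of $\min_{x \in \mathcal{X}} v_{l,\tau}(x)$, we have $\mathrm{dist}(0, \partial v_{l,\tau}(x)) \leq \epsilon$. Since $x^* \in \mathcal{X}^*_{v_{l,\tau}}$, Proposition~\ref{prop:vl_property}.1 yields $v_{l,\tau}(x^*) = -\tau \ln M$. Applying the $(c,\alpha)$-KL inequality at $x^*$ (valid for $x \in \mathcal{N}(x^*)$), I would obtain $v_{l,\tau}(x) + \tau \ln M = v_{l,\tau}(x) - v_{l,\tau}(x^*) \leq c\,\epsilon^\alpha$, so that $p(x) = (v_{l,\tau}(x) + \tau \ln M)^\theta \leq (c\,\epsilon^\alpha)^\theta$ can be made arbitrarily small. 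Taking $\bar{x} := x$ then supplies a point in $\mathcal{N}(x_\gamma, r) \cap \mathcal{X}$ with $p(\bar{x}) \leq \epsilon$ as required by Theorem~\ref{thm:relation_local_solution}.

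For condition (a), I would show that the stated quasar-convexity hypotheses imply the KL inequality at $x^*$. Using the gradient formula~\eqref{eq:grad_v_main}, namely $\nabla v_{l,\tau}(x) = \sum_{m=1}^M \pi_m(x) \nabla f_m(x) - l(x - y^*_{l,\tau}(x))$, together with $(1,0)$-point quasar-convexity of each $f_m$ at $x$, I would bound the convex combination $\sum_m \pi_m(x)(f_m(x^*) - f_m(x))$ below by an inner product involving $\nabla v_{l,\tau}(x)$ and $x^* - x$. In parallel, $(1,\mu)$-point strong quasar-convexity of each $f_m$ at $y^*_{l,\tau}(x)$ combined with the log-sum-exp structure of $h_{l,\tau}$ gives a strong-convexity-type lower bound on the inner function that controls $\|x - y^*_{l,\tau}(x)\|$. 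Patching these two estimates and using Cauchy–Schwarz should produce a quadratic ($\alpha = 2$) KL inequality bounding $v_{l,\tau}(x) - v_{l,\tau}(x^*)$ in terms of $\|\nabla v_{l,\tau}(x)\|^2$. For condition (b), since Assumption~\ref{assmp:subanalyticity_f} and Lemma~\ref{lemma:global_suba_X_v} render $v_{l,\tau}$ globally subanalytic on a bounded set, Lemma~\ref{lemma:kl_subanalytic} directly supplies the KL inequality on the sublevel set $\{x : v_{l,\tau}(x) + \tau \ln M \leq \nu\}$, and we are back in the setting of the main implication.

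The main obstacle is condition (a). Quasar-convexity is strictly weaker than convexity and does not compose cleanly through the log-sum-exp smoothing and the inner minimization defining $v_{l,\tau}$; in particular, the $\pi_m$-weighted combination $\sum_m \pi_m(x) f_m$ is not automatically quasar-convex, so the two separately postulated quasar-convexity conditions (one at $x$, one at $y^*_{l,\tau}(x)$) must be carefully coupled through the optimality condition for $y^*_{l,\tau}(x)$ in order to extract a KL estimate at $x^*$. Ensuring that the resulting constants $c$ and $\alpha$ are valid uniformly in a neighborhood of $x^*$, rather than only pointwise, is the subtlety I would devote the most care to.
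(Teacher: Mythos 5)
Your handling of the main implication and of condition~b) is correct and matches the paper's logic: from $\epsilon$-stationarity and the $(c,\alpha)$-KL inequality at $x^*$ one gets $v_{l,\tau}(x)+\tau\ln M\leq c\,\epsilon^{\alpha}$ (using $\min_{x\in{\cal X}}v_{l,\tau}(x)=-\tau\ln M$ from Proposition~\ref{prop:vl_property}), hence $p(x)$ is small and $\bar{x}=x$ certifies the hypothesis of Theorem~\ref{thm:relation_local_solution}; and for b) the subanalyticity results (Lemma~\ref{lemma:global_suba_X_v}, Lemma~\ref{lemma:kl_subanalytic}) supply the KL inequality on the sublevel set $\{x : v_{l,\tau}(x)+\tau\ln M\leq\nu\}$. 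The paper also prepends one step you omit but which is needed to connect the proposition to the theorem: a local solution $x_\gamma$ of~\eqref{eq:approximate_penalty} is first shown to be an $\epsilon$-stationary point of $\min_{x\in{\cal X}}v_{l,\tau}(x)$ via Lemma~\ref{lemma:penalty_v_stationary}.

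For condition~a), which you correctly flag as the main obstacle, your proposed mechanism is off-target and would be hard to close. You plan to establish a KL inequality at $x^*$ by bounding $\sum_m\pi_m(x)(f_m(x^*)-f_m(x))$ through inner products with $x^*-x$; but the quasar-convexity hypotheses are only posited at the two points $x$ and $y^*_{l,\tau}(x)$, so no inequality along the segment to the global minimizer $x^*$ is available, and the paper's proof in fact never references $x^*$ in part~a). The route that works is entirely local to the pair $(x,y^*_{l,\tau}(x))$: (i) plug $z=y^*_{l,\tau}(x)$ into the $\epsilon$-stationarity inequality for the directional derivative $v_{l,\tau}'(x;z-x)$ (Lemma~\ref{lemma:direction_derivative_v}); (ii) use the optimality condition of $h_{l,\tau}(x,\cdot)$ at $y^*_{l,\tau}(x)$ together with the $(1,0)$-point quasar-convexity to get $l\|x-y^*_{l,\tau}(x)\|^2\leq\sum_m\pi_m\big(f_m(x)-f_m(y^*_{l,\tau}(x))\big)$; (iii) use the $(1,\mu)$-point strong quasar-convexity to upper-bound $\sum_m\pi_m f_m'(x;y^*_{l,\tau}(x)-x)$, which combined with (i)--(ii) yields $\mu\|x-y^*_{l,\tau}(x)\|\leq\epsilon$; and (iv) invoke Lemma~\ref{lemma:value_gap_y_star}, which sandwiches $v_{l,\tau}(x)+\tau\ln M$ between constant multiples of $\|x-y^*_{l,\tau}(x)\|^2$ using only the strong convexity of $h_{l,\tau}(x,\cdot)$ in $y$, to conclude $v_{l,\tau}(x)+\tau\ln M=O(\epsilon^2)$ and hence $p(\bar x)=O(\epsilon^{2\theta})$. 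The quantity $\|x-y^*_{l,\tau}(x)\|$, not $\|x-x^*\|$, is the correct intermediary; without step (iv) (or an equivalent two-sided quadratic bound) your sketch does not produce the needed value-gap estimate.
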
      
%
The condition in Theorem~\ref{thm:relation_local_solution}
 requires a local solution to 
$\min_{x\in {\cal X}} v_{l,\tau}(x)$ is also $\epsilon$-globally optimal to this problem.
Such a condition holds if $F$ satisfies point convexity or subanalyticity near the stationary points of $v_{l,\tau}(x)$, as shown in Proposition~\ref{prop:local_relation_condition}.
The proof is deferred to Appendix~\ref{sub_app:local_relation}. 
Next, we define the KKT condition, its necessity,
and establish the relation of stationary solutions.

\begin{definition}[{$(\epsilon,\delta)$-KKT condition}, e.g.~{\cite{liu2022bome,xiao2023alternating}}]
\label{def:eps_kkt_constrained_nabla_p}
Let ${\cal X} = \R^q$.
For the function $p$ such that $\nabla p(x) = 0$ and $\nabla^2 p(x) = 0$ exist, 
and $\nabla p(x) = 0$ implies $ p(x) = 0$,
a gradient-based reformulation of the problem~\eqref{eq:bilevel_opt_vltau} is
\begin{equation}\label{eq:blo_nablap}
\min \nolimits_{x\in\R^q} f_0(x), 
~\mathrm{s.t.}~\nabla p(x) = 0,  
\end{equation}
The $(\epsilon,\delta)$-KKT condition 
of the problem~\eqref{eq:blo_nablap} is
\begin{equation}\label{eq:blo_kkt_hessianp}
\|\nabla f_0(x) + \nabla^2 p(x) w \| = O (\epsilon),
~\|  \nabla p(x)  \| = O(\delta)
\end{equation}
where $w\in \R^q$ is bounded.
\end{definition}

\begin{tcolorbox}[emphblock]
\vspace{-2mm}
\begin{theorem}[Relation of $\epsilon$-stationary solutions]
\label{thm:relation_stationary_solution}
Let ${\cal X} = \R^q$, and $\theta=1$. 
Let $x_{\gamma}$ be a bounded $\epsilon$-stationary solution
to~\eqref{eq:approximate_penalty}. 
Then there exists a  compact subanalytic set ${\cal X}_C \subset \R^q$
with ${\cal X}_C \cap {\cal X}_{v_{l,\tau}}^* \neq \emptyset$
and $x_\gamma \in {\cal X}_C$. 
Suppose Assumption~\ref{assmp:local_lip_f} holds, 
and that on ${\cal X}_C$, $\nabla v_{l,\tau}(x) $ 
exists and is $\ell_{v, 2}$-smooth, 
and $v_{l,\tau}(x)$ is $(c_v,\alpha_v)$-KL with $\alpha_v \geq 2$.
Then with proper choice of parameter $\gamma$, $\epsilon$
depending on $\delta$,
$x_{\gamma}$ is an $(\epsilon, \delta)$-KKT point to 
the problem~\eqref{eq:blo_nablap}.
\end{theorem}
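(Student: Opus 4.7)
The plan is to decompose the $(\epsilon,\delta)$-KKT condition for~\eqref{eq:blo_nablap} into (i) the feasibility-type bound $\|\nabla p(x_\gamma)\| = O(\delta)$, and (ii) the Lagrangian stationarity $\|\nabla f_0(x_\gamma) + \nabla^2 p(x_\gamma) w\| = O(\epsilon)$ for some bounded multiplier $w \in \R^q$. Following the scaling in Table~\ref{tab:recipe_simple_blo}, I would take $\gamma = 1/\epsilon$ and match $\delta$ to $\epsilon$. Since $\theta = 1$, $\nabla p = \nabla v_{l,\tau}$ and $\nabla^2 p = \nabla^2 v_{l,\tau}$, so all subsequent estimates are on $v_{l,\tau}$.

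Part (i) is immediate: from $\epsilon$-stationarity of $x_\gamma$ for $\varphi_\gamma$, $\|\nabla f_0(x_\gamma) + \gamma \nabla v_{l,\tau}(x_\gamma)\| \leq \epsilon$. Since $f_0$ is locally Lipschitz and $x_\gamma$ is bounded in ${\cal X}_C$, $\|\nabla f_0(x_\gamma)\| \leq L_0$ uniformly, so the triangle inequality yields $\|\nabla v_{l,\tau}(x_\gamma)\| \leq (L_0 + \epsilon)/\gamma = O(\epsilon)$ under $\gamma = 1/\epsilon$.

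For (ii), I would construct the multiplier from the direction to a nearest minimizer. Let $\bar{x}$ be a closest point to $x_\gamma$ in ${\cal X}^*_{v_{l,\tau}} \cap {\cal X}_C$, which exists by compactness and nonemptiness of this set; since ${\cal X} = \R^q$ and $\bar{x}$ is a global minimizer of $v_{l,\tau}$, $\nabla v_{l,\tau}(\bar{x}) = 0$. The $(c_v,\alpha_v)$-KL inequality for $v_{l,\tau}$, combined with the standard Łojasiewicz gradient-flow length argument (integrating $(v_{l,\tau}-v^*)^{1-1/\alpha_v}$ along the flow), yields a distance bound $\|x_\gamma - \bar{x}\| \leq C_1 \|\nabla v_{l,\tau}(x_\gamma)\|^{\alpha_v - 1}$. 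I then set $w \coloneqq \gamma(x_\gamma - \bar{x})$; combining with part (i) gives $\|w\| \leq C_1(L_0+\epsilon)^{\alpha_v-1} \gamma^{2-\alpha_v}$, which is uniformly bounded because $\alpha_v \geq 2$. With $\nabla v_{l,\tau}(\bar{x}) = 0$ and the Lipschitz-Hessian interpretation of $\ell_{v,2}$-smoothness, a second-order Taylor expansion gives $\|\nabla v_{l,\tau}(x_\gamma) - \nabla^2 v_{l,\tau}(x_\gamma)(x_\gamma - \bar{x})\| \leq \tfrac{\ell_{v,2}}{2}\|x_\gamma - \bar{x}\|^2$. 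Adding and subtracting $\gamma \nabla v_{l,\tau}(x_\gamma)$ in the KKT residual and using $\epsilon$-stationarity, the residual is bounded by $\epsilon + \tfrac{\gamma \ell_{v,2}}{2}\|x_\gamma - \bar{x}\|^2 \leq \epsilon + O(\gamma^{3 - 2\alpha_v}) = O(\epsilon)$, since $3 - 2\alpha_v \leq -1$ under $\alpha_v \geq 2$.

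The main obstacle I anticipate is using KL (rather than a directly assumed QG/PL condition as in prior simple-bilevel works) to produce the Łojasiewicz-type distance bound; the gradient-flow length argument requires the descent path from $x_\gamma$ to $\bar{x}$ to stay in the region where the KL inequality holds, which forces ${\cal X}_C$ to be chosen as a sufficiently small subanalytic neighborhood of ${\cal X}^*_{v_{l,\tau}}$ that still contains $x_\gamma$. A secondary delicate point is the exponent bookkeeping: keeping $\|w\|$ bounded and the Taylor remainder of order $\epsilon$ simultaneously relies essentially on the assumed threshold $\alpha_v \geq 2$, so any weaker KL exponent would break the scaling $\gamma = 1/\epsilon$.
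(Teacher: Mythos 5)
Your proposal is correct and follows essentially the same route as the paper's proof: stationarity of $\varphi_\gamma$ gives $\|\nabla p(x_\gamma)\| = O(1/\gamma)$, the KL inequality with $\alpha_v \geq 2$ yields the distance bound $\mathrm{dist}(x_\gamma,{\cal X}^*_{v_{l,\tau}}) = O(\|\nabla p(x_\gamma)\|^{\alpha_v-1})$ (the paper obtains this by chaining its Lemmas on KL\,$\Rightarrow$\,HEB and KL$+$HEB\,$\Rightarrow$\,EB, which is the same gradient-flow length argument you invoke), and the multiplier $w=\gamma(x_\gamma-\bar x)$ together with a Hessian-Lipschitz Taylor expansion closes the bound. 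The only cosmetic difference is that you expand $\nabla v_{l,\tau}$ at $x_\gamma$ while the paper expands at $x^*$ and then corrects $\nabla^2 p(x^*)$ to $\nabla^2 p(x_\gamma)$ with an extra $\ell_{p,2}\gamma\|x_\gamma-x^*\|^2$ term; both give the same $O(\epsilon + \gamma^{3-2\alpha_v})$ residual.
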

\vspace{-3mm}      
\end{tcolorbox}
The $\ell_{v, 2}$-smoothness of $\nabla v_{l,\tau}$ can be justified 
under additional assumptions of the Hessian of $f_m$ for $m\in[M]$.
See a detailed discussion in Lemma~\ref{lemma:smooth_nabla_vltau}.
Theorem~\ref{thm:relation_stationary_solution} shows that
for a simple bilevel problem where the lower-level objective $v_{l,\tau}(x)$ 
satisfies the KL inequality with exponent $\alpha_v\geq 2$, 
a stationary solution to the penalty reformulation
approximates the KKT solution to the constrained 
formulation in~\eqref{eq:blo_nablap}. 
This indicates that though the KKT solution 
to a bilevel problem often requires the second-order information
of the lower-level objective
as shown in~\eqref{eq:blo_kkt_hessianp} and discussed in e.g.,~\cite{roy2023opt_pareto_set,liu2022bome,xiao2023alternating},
one can still use first-order methods to approximate such solutions.
The proof of Theorem~\ref{thm:relation_stationary_solution}
is provided in Appendix~\ref{sub_app:stationary_relation}.
We first prove that a constraint qualification condition holds in the above settings,
ensuring the KKT conditions are necessary for global optimality.
In addition,  we discuss cases where the necessary KKT condition only requires 
first-order information depending on the HEB exponent of $p(x)$.
We also discuss when $p(x)$ satisfies the above conditions.

\subsection{Comparison with existing methods}
To address the preference-guided MOL problem, 
one commonly used formulation is linear scalarization (LS)
where the preference is modeled by different weights of the objectives.
Another commonly used formulation is constrained vector optimization~\cite{lin2019pareto,chen2024ferero,zhang2024pmgda}, given by
\begin{equation}
\min \nolimits_{x\in {\cal X}} F(x),
~~\mathrm{s.t.}~~G(x)\leq 0, H(x) = 0 
\label{eq:constrained_vo}
\end{equation}
where $G, H$ are vector-valued functions defined by user-specified perferences.
Though the above formulation has successful applications in, e.g., multi-task learning, it is only guaranteed to converge to a KKT point of~\eqref{eq:constrained_vo}, which may be far away from the optimal solutions to $\min_{x\in {\cal X}} F(x)$.
See Example~\ref{exmp:kkt_suboptimal}, and the corresponding results in Figure~\ref{fig:exmp_intro}.
An intuitive explanation for this suboptimality is that \eqref{eq:constrained_vo} puts constraints at the lower level, emphasizing more on satisfying the constraints rather than minimizing the objectives.
\begin{figure}
\centering
\begin{subfigure}[b]{0.25\textwidth}
\centering
\includegraphics[width=.98\linewidth]{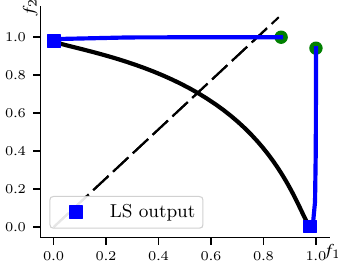}
\caption{LS}
\label{sfig:ls_toy_exmp}
\end{subfigure}  
\begin{subfigure}[b]{0.25\textwidth}
\centering
\includegraphics[width=.98\linewidth]{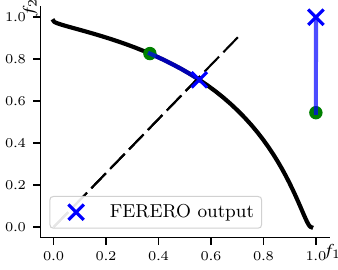}
\caption{FERERO}
\label{sfig:pmtl_toy_exmp}
\end{subfigure}
\begin{subfigure}[b]{0.25\textwidth}
\centering
\includegraphics[width=.98\linewidth]{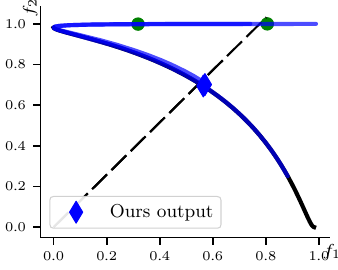}
\caption{FOOPS}
\label{sfig:foops_toy_exmp}
\end{subfigure}
\caption{Results of LS, FERERO, and FOOPS on Example~\ref{exmp:kkt_suboptimal}.
The black dashed lines is the preference defined by $H(x) =0$, or $f_0(x) = \|H(x)\|^2 = 0$.
Green dots represent initial values, blue markers represent converged values for different methods.}
\label{fig:exmp_intro}
\vspace{-2mm}
\end{figure}
\begin{example}\label{exmp:kkt_suboptimal}
Let $M=2, q = 1$, and $M_g=0, M_h=1$. Let ${\cal X} = \R$.
The objective $F$ and constraint $H$ for the 
preference-constained formulation~\eqref{eq:constrained_vo} is defined as 
{\begin{subequations}\label{eq:obj_H_exmp_fail}
\begin{align}\label{eq:obj_synthetic1}
F(x) =& \big( 1-e^{-\|x - \mathbf{1}_q\|_2^2}, 
~~1-e^{- \|x + \mathbf{1}_q \|_2^2} \big), \\
H(x) =& 5f_1(x) - 4f_2(x)  .
\label{eq:obj_H_exmp}
\end{align}  
\end{subequations}}
The corresponding preference function $f_0$
in formulation~\eqref{eq:intro_bi_vector}
to minimize the constraint violation of $H$ 
is defined as
\begin{align}
 f_0(x) = \|H(x)\|^2 .
\end{align}
In Figure~\ref{fig:exmp_intro}, it is easy to see
that there exists a solution $x^*$ in the Pareto set
whose objective $F(x^*)$ is at the intersection of the 
dashed line defined by $H(x) = 0$ and the 
solid curve representing the Pareto front.
Therefore, $x^*$ is an optimal solution to 
both~\eqref{eq:intro_bi_vector} and~\eqref{eq:constrained_vo}. 
\end{example}  
In this example, linear scalarization (LS) fails to converge to the preferred region that $H(x) = 0$, even after enumerating different weights.
Indeed, we could prove that with certain initialization,
LS cannot converge to a point that satisfies~\eqref{eq:obj_H_exmp}
with different weights, see Proposition~\ref{prop:ls_fail_exmp}.
The intuition is that, in Example~\ref{exmp:kkt_suboptimal}, 
a solution to both~\eqref{eq:intro_bi_vector} and~\eqref{eq:constrained_vo} that satisfies 
$\nabla F(x) \lambda = 0$ with $\lambda\in \Delta^M$ is a local maximum point of the objective $ \lambda^\top F(x)$.
It is also worth noting that, there are some other examples 
where LS cannot find all points on the Pareto front 
even after enumerating all possible weights~\cite{Osyczka1984Multicriterion,athan_quasi_1994,hu2024revisiting_ls_limit}.
See a detailed discussion in Appendix~\ref{sub_app:limitation_ls_kkt}.
\begin{proposition}\label{prop:ls_fail_exmp}
Under certain initializations, there exists no $\lambda\in \Delta^M$ such that 
gradient descent algorithm on LS objective with weight $\lambda$  converges to the solution of~\eqref{eq:obj_H_exmp_fail}.
\end{proposition}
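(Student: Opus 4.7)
The plan is to identify the unique preferred Pareto solution $x^{*}$ in $(0,1)$, show it can be a stationary point of the linear-scalarization objective $\lambda^{\top}F$ for at most one weight $\lambda^{*}\in\Delta^2$, and then show that at this weight $x^{*}$ is a strict local \emph{maximum} of $\lambda^{*\top}F$. That makes $x^{*}$ a repelling fixed point of the gradient descent map, which yields the claim: for a suitable initialization, no $\lambda\in\Delta^2$ admits GD convergence to $x^{*}$.

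\textbf{Step 1 (locating $x^{*}$).} On the Pareto set $[-1,1]$, $f_1$ is strictly decreasing and $f_2$ is strictly increasing, so $H(x)=5f_1(x)-4f_2(x)$ is strictly decreasing. Since $H(0)=1-e^{-1}>0$ and, at $\bar{x}:=1-1/\sqrt{2}$, direct substitution gives $H(\bar{x})<0$, the intermediate value theorem pins down a unique root $x^{*}\in(0,\bar{x})\subset(0,\,1-1/\sqrt{2})$. This is the unique point that is simultaneously weakly Pareto optimal and satisfies $H(x^{*})=0$, i.e., $f_0(x^{*})=0$.

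\textbf{Step 2 (unique stationary weight).} Computing $\nabla f_1(x)=2(x-1)e^{-(x-1)^2}$ and $\nabla f_2(x)=2(x+1)e^{-(x+1)^2}$ gives $\nabla f_1(x^{*})<0<\nabla f_2(x^{*})$. Hence there is exactly one $\lambda^{*}\in\Delta^2$ solving $\lambda_1^{*}\nabla f_1(x^{*})+\lambda_2^{*}\nabla f_2(x^{*})=0$, namely $\lambda^{*}=(\nabla f_2(x^{*}),-\nabla f_1(x^{*}))/(\nabla f_2(x^{*})-\nabla f_1(x^{*}))$, and its two components are strictly positive. For every $\lambda\ne\lambda^{*}$ in $\Delta^2$, $x^{*}$ fails to be a stationary point of $\lambda^{\top}F$, so GD on $\lambda^{\top}F$ cannot converge to $x^{*}$ from any initialization.

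\textbf{Step 3 (Hessian sign at $x^{*}$).} A direct calculation gives $\nabla^2 f_m(x)=2\bigl[1-2(x-c_m)^2\bigr]e^{-(x-c_m)^2}$ with $c_1=1,c_2=-1$. Because $x^{*}\in(0,\,1-1/\sqrt{2})$, we have $(x^{*}-1)^2>1/2$ and $(x^{*}+1)^2>1>1/2$, so both $\nabla^2 f_1(x^{*})<0$ and $\nabla^2 f_2(x^{*})<0$. Combined with $\lambda^{*}\gneq 0$, this yields $\lambda^{*\top}\nabla^2 F(x^{*})<0$, making $x^{*}$ a strict local maximum of $\lambda^{*\top}F$. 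The gradient descent map $\phi(x)=x-\eta\nabla[\lambda^{*\top}F](x)$ therefore satisfies $\phi'(x^{*})=1-\eta\lambda^{*\top}\nabla^2 F(x^{*})>1$ for any step size $\eta>0$, so $x^{*}$ is a hyperbolic repelling fixed point. For any initialization $x_0$ in a sufficiently small deleted neighborhood of $x^{*}$ (this is the ``certain initialization''), the iterates under $\lambda^{*}$ are pushed geometrically away from $x^{*}$ and thus cannot converge to it. Combined with Step~2, no $\lambda\in\Delta^2$ admits GD from $x_0$ converging to $x^{*}$.

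The main technical obstacle is controlling the Hessian sign at the implicitly defined point $x^{*}$ without a closed-form expression for it. I sidestep this by enclosing $x^{*}$ in the explicit interval $(0,\,1-1/\sqrt{2})$ (via the monotone-$H$ argument of Step~1), and observing that on this whole interval both $1-2(x-1)^2<0$ and $1-2(x+1)^2<0$, so the Hessian signs are determined without ever evaluating at the unknown root.
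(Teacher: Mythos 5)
Your proof is correct, and it reaches the conclusion by a genuinely different route from the paper's. Both arguments begin the same way: the critical weight you call $\lambda^{*}$ is exactly the $\lambda$ with $\lambda_2 = \lambda_1\frac{(1-x^*)e^{-(x^*-1)^2}}{(1+x^*)e^{-(x^*+1)^2}}$ that the paper constructs, and your Step~2 observation that no other weight can work (a GD limit must be stationary) is implicit in the paper as well. The divergence is in how non-convergence under $\lambda^{*}$ is established. The paper performs a \emph{global first-order} analysis: it factors $\nabla F(x)\lambda^{*}$ as a positive quantity times $r(x)-r(x^{*})$ with $r(x)=\frac{x-1}{x+1}e^{4x}$, uses the monotonicity structure of $r$ to locate two further stationary points $x_1<x^{*}<x_2$, and argues that GD started on either side of $x^{*}$ is attracted to $x_1$ or $x_2$. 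You instead give a \emph{local second-order} argument: you enclose the root $x^{*}$ in the explicit interval $(0,\,1-1/\sqrt{2})$, on which both $\nabla^2 f_1<0$ and $\nabla^2 f_2<0$, conclude $\lambda^{*\top}\nabla^2F(x^{*})<0$, and invoke the repelling-fixed-point property of the GD map. Your route is shorter, avoids the somewhat delicate global sign analysis of $r$, and does not need the paper's unproved assertion that GD actually converges to $x_1$ or $x_2$; it also directly substantiates the ``local maximum'' intuition the paper states in the main text. The paper's route buys more: it describes the dynamics for \emph{all} initializations in $(-1,1)$, not just a neighborhood of $x^{*}$. One small point worth a sentence in your write-up: the repelling-fixed-point argument excludes convergence for every orbit that never lands exactly on $x^{*}$; since such initializations form at most a measure-zero set and the proposition is only existential over initializations, this does not affect the claim, but the phrase ``pushed geometrically away'' should be read as ``cannot converge to $x^{*}$ unless some iterate equals $x^{*}$ exactly.''
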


Moreover, algorithms developed under the preference-constrained formulation~\eqref{eq:constrained_vo}, such as
PMTL~\cite{lin2019pareto} and
FERERO~\cite{chen2024ferero} (with the partial order cone being 
a nonnegative orthant cone $\R_{+}^M$), converge to a KKT point to~\eqref{eq:constrained_vo}, which is not necessarily Pareto optimal or Pareto stationary.
For a more detailed discussion on this example, 
see Appendix~\ref{sub_app:limitation_ls_kkt}.
Proposition~\ref{prop:kkt_not_ps} is provided 
to further support the claim 
that the KKT solution to~\eqref{eq:constrained_vo}
can be suboptimal for $\min_{x\in {\cal X}} F(x)$,
with its proof 
in Appendix~\ref{sub_app:limitation_ls_kkt}.
\begin{proposition}
\label{prop:kkt_not_ps}
The KKT solution to \eqref{eq:constrained_vo} is not necessarily Pareto stationary to $\min_{x\in {\cal X}} F(x)$.
\end{proposition}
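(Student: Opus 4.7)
My plan is to prove Proposition~\ref{prop:kkt_not_ps} by constructing an explicit counterexample, reusing the bi-objective setup of Example~\ref{exmp:kkt_suboptimal}. I want to exhibit a feasible point $x_K\in\{x:H(x)=0\}$ that satisfies the KKT conditions for \eqref{eq:constrained_vo} but for which no $\lambda\in\Delta^M$ makes $\nabla F(x_K)\lambda=0$, so that Pareto stationarity fails.

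\textbf{Locating a feasible point outside the Pareto set.} For the choice~\eqref{eq:obj_H_exmp_fail} with $q=1$, I would first argue that the Pareto set of $\min F$ is the interval $[-1,1]$: both components $f_1,f_2$ are strictly unimodal with unique minimizers at $1$ and $-1$, so any $x>1$ (resp.\ $x<-1$) is strictly dominated by $x=1$ (resp.\ $x=-1$), while on $[-1,1]$ the two gradients have opposite signs and no joint strict improvement is possible. I then study the level set $\{H=0\}$ by evaluating $H(x)=1-5e^{-(x-1)^2}+4e^{-(x+1)^2}$: since $H(1)=-4+4e^{-4}<0$ while $H(x)\to 1$ as $x\to\infty$, continuity produces a root $x_K\in(1,\infty)$, and a direct numerical check (near $x_K\approx 2.6$) confirms $\nabla H(x_K)\neq 0$ there, so the linear-independence constraint qualification is satisfied.

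\textbf{Verifying KKT and failure of Pareto stationarity.} At such $x_K>1$, both $\nabla f_1(x_K)=2(x_K-1)e^{-(x_K-1)^2}>0$ and $\nabla f_2(x_K)=2(x_K+1)e^{-(x_K+1)^2}>0$. Any convex combination $\lambda'_1\nabla f_1(x_K)+\lambda'_2\nabla f_2(x_K)$ with $\lambda'\in\Delta^2$ is therefore strictly positive, so no $\lambda'$ produces a zero sum and $x_K$ is not Pareto stationary. For the KKT side, I choose $\lambda=(1,0)^\top\in\Delta^2$ and $\mu=-\nabla f_1(x_K)/\nabla H(x_K)\in\R$; this yields $\nabla F(x_K)\lambda+\nabla H(x_K)\mu=0$ together with the feasibility $H(x_K)=0$, so $x_K$ is a KKT point of~\eqref{eq:constrained_vo}.

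\textbf{Main obstacle.} The proof is essentially a bookkeeping exercise; the only care needed is to ensure $\nabla H(x_K)\neq 0$ at the selected root so that a valid Lagrange multiplier exists. If the particular root in $(1,\infty)$ happened to coincide with a zero of $\nabla H$ (a non-generic coincidence determined by the coefficients $5$ and $4$ in the definition of $H$), a tiny perturbation of those coefficients would remove it. As a fallback, a quadratic variant --- $f_1(x)=(x-1)^2$, $f_2(x)=(x+1)^2$, $H(x)=x-2$, with $x_K=2$ --- settles the proposition in one line: $\nabla f_1(2)=2$ and $\nabla f_2(2)=6$ are both positive, ruling out Pareto stationarity, while $(\lambda_1,\lambda_2,\mu)=(1,0,-2)$ certifies KKT.
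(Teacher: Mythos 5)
Your proof is correct, and it follows the same overall strategy as the paper --- exhibit a feasible KKT point of \eqref{eq:constrained_vo} at which all objective gradients lie strictly on one side of the origin, so that no $\lambda\in\Delta^M$ can annihilate $\nabla F(x)\lambda$ --- but the instantiation differs in a way worth noting. The paper handles Example~\ref{exmp:kkt_suboptimal} only semi-rigorously (it takes the point to which PMTL empirically converges and plugs in numerically approximated gradients $\nabla f_1(x^*)\approx 0.5062$, $\nabla f_2(x^*)\approx 0.0002$), and then gives its airtight proof via a separate two-dimensional strongly convex quadratic construction, Example~\ref{exmp:sc_kkt_not_ps}, where $\nabla F(x)=\bigl(\begin{smallmatrix}4&3\\0&0\end{smallmatrix}\bigr)$ at $x=[3;0]$ rules out Pareto stationarity while $\lambda_f=[0;1]$, $\lambda_h=-0.25$ certifies KKT. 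Your version instead makes Example~\ref{exmp:kkt_suboptimal} itself rigorous: the intermediate-value argument $H(1)<0$, $H(x)\to 1$ gives an exact root $x_K\in(1,\infty)$ (your reported location $\approx 2.6$ is slightly off --- the root is near $2.27$, which is in fact essentially the point the paper finds numerically --- but this does not affect the argument), and the sign analysis of $\nabla f_1,\nabla f_2$ there is clean. Your one caveat, $\nabla H(x_K)\neq 0$, can even be settled without numerics: since $H(2)<0$ forces $x_K>2$, one has $5\nabla f_1(x_K)/(4\nabla f_2(x_K))=\tfrac{5(x_K-1)}{4(x_K+1)}e^{4x_K}>1$, so $\nabla H(x_K)>0$; and in any case, if $\nabla H$ vanished the stationarity equation could not hold at all, so the fallback is not even needed for that root. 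Your quadratic fallback also recovers the paper's point that the pathology persists for strongly convex objectives. Net effect: same proof idea, but your write-up upgrades the paper's numerical verification on Example~\ref{exmp:kkt_suboptimal} to an analytic one, while the paper's choice of a second example buys a cleaner closed-form certificate at the cost of introducing a new construction.
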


\begin{table}
\centering
\caption{Comparison with existing methods for OPS.
``NC'', ``C'' and ``SC'' represent ``nonconvex'', 
``convex'' and ``strongly convex'', respectively.
``Ncs.'' represents whether the method converges to 
a necessary condition to the OPS problem.
}
\label{tab:comparison_methods}
\begin{tabular}{c|cccc}
\toprule
\multirow{2}{*}{Method} 
& \multirow{2}{*}{\makecell{$f_m(x)$,\\$m\in [M]$}} 
& \multirow{2}{*}{$f_0(x)$} 
& \multirow{2}{*}{\makecell{First\\order}}  
& \multirow{2}{*}{\makecell{Ncs.}} \\
\\
\midrule
PB-PDO~\cite{kamani2021pareto_fairness} 
&NC &NC &\cmark   
& \xmark \\
TAWT~\cite{chen2022weighted_crosstask} 
&NC &NC &\xmark & \xmark \\
PNG~\cite{ye2022pareto_nav} 
&{NC} &{NC} 
& {\cmark} & \xmark \\
PMM~\cite{roy2023opt_pareto_set}
&SC &SC & \xmark  & \cmark \\
BSG~\cite{giovannelli2024bilevel_moo}
&Stricly C &NC & \xmark  & \cmark \\
Ours 
&NC &NC & \cmark  & \cmark \\
\bottomrule
\end{tabular}
\vspace{-5mm}
\end{table}
Besides modeling preference by weights or constraints, 
there are also some works which model the preference by 
objectives, and formulate the problem as optimization on the
Pareto set (OPS), as in~\eqref{eq:pareto_constrained_opt}.
Among these methods, Preference-Based Pareto 
Descent Optimization (PB-PDO)~\cite{kamani2021pareto_fairness} 
and Pareto Navigation Gradient descent (PNG)~\cite{ye2022pareto_nav}
use a descent-type algorithm to ensure the output of the algorithm 
satisfies the preference and decreases the objectives $F$
at each iteration.
However, it has been shown in~\cite{roy2023opt_pareto_set} that the stationary condition derived in~\cite{ye2022pareto_nav} is not a necessary optimality condition for~\eqref{eq:pareto_constrained_opt}.
Furthermore, as discussed in~\citep[Proposition~3]{roy2023opt_pareto_set}, a non-trivial stationarity condition for~\eqref{eq:pareto_constrained_opt} typically requires second-order derivatives of the objectives $f_m, m\in[M]$.
Different from these methods, 
Target-Aware Weighted Training (TAWT)~\cite{chen2022weighted_crosstask}
and Pareto Majorization-Minimization (PMM)~\cite{roy2023opt_pareto_set}
convert the lower-level vector-valued objective to a scalar-valued
objective through linear scalarization (LS), and optimize both the scalarization
weight and the model parameter.
However, as discussed in Example~\ref{exmp:kkt_suboptimal} 
and Appendix~\ref{sub_app:limitation_ls_kkt}, 
optimality for LS is not necessary for~\eqref{eq:pareto_constrained_opt} 
unless $f_m$ are convex for all $m\in [M]$.

We summarize in Table~\ref{tab:comparison_methods} the key differences 
of our work compared to existing methods for OPS.
See also a detailed review in Section~\ref{sec:related_works}
and Appendix~\ref{sec_app:related_work}.

\section{Algorithms and analysis}
\label{sec:analysis}
In this section, we introduce practical first-order
gradient-based algorithms to solve~\eqref{eq:approximate_penalty}. 
We first update $y$ to obtain an estimate for $y^*_{l,\tau}(x)$, 
and thus an estimate for $v_{l,\tau}(x)
= - h_{l,\tau}(x, y^*_{l,\tau}(x))$.
Then we update $x$ based on the estimated penalty function $\varphi_\gamma$. 

At the $t$-th outer iteration and the $k$-th inner iteration,
we iteratively update $x_t$ and $y_{t,k}$ as follows.
\begin{subequations}
\begin{align}
\label{eq:update_ytk}
y_{t,k+1} &= \mathrm{U}_y(y_{t,k},\Delta y_{t,k};\beta_{t,k}, k);\\
\label{eq:update_xt}
\!\!\!\!
x_{t+1} &= \mathrm{U}_x(x_t,\Delta x_{t};\alpha_{t},t) 
\end{align}
\end{subequations}
where $\mathrm{U}$ is some gradient based oracle, and the gradient vectors with respect to $y$ and $x$ are defined as 
\begin{align*}
\Delta y_{t,k}&=\nabla_y h_{l,\tau}(x_t, y_{t,k})\\
\Delta x_{t}&=\nabla f_0(x_t ) \!-\! \gamma_t \theta \operatorname{sign}(v_t)|v_t|^{\theta-1} \nabla_x h_{l,\tau}(x_t , y_{t+1})
\end{align*}
with $y_{t+1}=y_{t,K_t}$ and $v_t=\tau\ln M - h_{l,\tau}(x_t , y_{t+1})$. 

A meta algorithm with the above updates is summarized in Algorithm~\ref{alg:meta_v_double_loop}.
We name it \textsf{F}irst-\textsf{O}rder \textsf{O}ptimization on the \textsf{P}areto \textsf{S}et (\textbf{FOOPS}) algorithm.

\begin{algorithm}[H]
\caption{The FOOPS algorithm with oracles}
\label{alg:meta_v_double_loop} 
\begin{algorithmic}[1]
\STATE Initialize $t=0$, $x_0,y_0$, set step sizes $\{\alpha_t, \beta_t \}$, 
penalty parameter $\{\gamma_t\}$, inner-loop iterations $\{K_t\}$.
\WHILE {$\|\nabla \varphi_{\gamma_t}(x_t)\|^2 > \epsilon $ }
\STATE Set $k = 0$;
\FOR {$ k = 0, \ldots, K_t-1 $ }
\STATE Update $y_{t,k}$ by \eqref{eq:update_ytk};
\ENDFOR
\STATE Set $y_{t+1} = y_{t,K_t}$;
\STATE Update $x_t$ by \eqref{eq:update_xt};
\STATE Set $t = t+1$;
\ENDWHILE
\end{algorithmic} 
\end{algorithm}

We then discuss the choices of update oracles and the non-asymptotic convergence rate of Algorithm~\ref{alg:meta_v_double_loop} with different oracles below. Let $w$ represent the updated parameter, which can be either $x$ and $y$, $\Delta w$ denote the gradient vector, $\alpha$ the stepsize, and $t$ is the iteration number.

We give examples of oracles using projected gradient desent (PGD), 
and momentum updates (Momentum) in~\eqref{eq:update_oracle}.
The Nesterov's acceleration and Adam update are also applicable, which is detailed in Appendix~\ref{sec_app:proof_convergence}, 
one can also see in e.g.,~\citep{wang2024convergence}. 
\begin{subequations}\label{eq:update_oracle}
\begin{align}
&\text{PGD:}~\mathrm{U}(w,\Delta w;\alpha_t, t)
=\mathrm{Proj}_{\mathcal{X}}(w-\alpha_t\Delta w) \\
&\text{Momentum:}~
\mathrm{U}(w,\Delta w;\alpha_t, t)
=\mathrm{Proj}_{\mathcal{X}}(w-\alpha_t v_t), \nonumber\\
&\qquad\qquad\qquad
\text{with}~v_t=\tilde{\alpha} v_{t-1}+\Delta w 
\end{align}      
\end{subequations}
\textbf{Discussion about the convergence.} Since $h_{l,\tau}(x,y) $ is $\mu_{h_y}$-strongly convex w.r.t. $y$ as detailed in the proof of Lemma~\ref{lemma:contraction_ytk}, when we choose $\mathrm{U}_y$ as projected gradient descent, the momentum updates and Nesterov's acceleration, it gives linear convergence rate for the inner-loop of $y$. For the outer-loop w.r.t. $x$, if $\theta\geq 1$, then the objective $\varphi_\gamma(x)$ is differentiable and thus projected gradient descent, momentum, Nesterov's acceleration, Adam updates give $O(1/T)$ convergence rate in the deterministic setting and  $O(1/\sqrt{T})$ convergence rate in the stochastic setting according to \citep{wang2024convergence}. Therefore, combining outer-loop and inner-loop update oracles together, Algorithm \ref{alg:meta_v_double_loop} converges. We provide a proof for the convergence of  Algorithm~\ref{alg:meta_v_double_loop} in Appendix~\ref{sec_app:proof_convergence}, Theorem~\ref{thm:convergence_meta_alg}, when choosing both $\mathrm{U}_y$ and $\mathrm{U}_x$ as the PGD oracle and choosing $\theta \geq 1$,
and assuming the objectives $f_m(x)$ for $m=0,\ldots, M$ are smooth. 
For $\theta<1$, $p(x)$ can be nonsmooth. The convergence for Algorithm~\ref{alg:meta_v_double_loop} can be built upon nonsmooth optimization~\citep{kiwiel2004convergence,davis2018subgradient,davis2018stochastic}, but possibly under additional assumptions. 
Some discussions about algorithms for nonsmooth lower-level objective are provided in~\cite{chen2024penalty_simple_blo}. 
When $f_0(x)$ and $p(x)$ are Lipschitz, the algorithm can be applied to our problem.
We defer a more detailed study of algorithm development in nonsmooth cases to future research.

\textbf{Single-loop and stochastic variants.}
When we take the exact penalty method with $\eta_p=1$, 
$\gamma_t$ can be upper bounded by a constant.
Then $K_t$ can also be upper bounded by a constant.
In fact, under Assumption~\ref{assmp:weak_c_f}, 
when $l > \ell_{f,1}$,
a single-loop variant of Algorithm~\ref{alg:meta_v_double_loop}
that takes $K_t = 1$ also has non-asymptotic convergence guarantees,
see e.g.~\cite{chen2021closing_gap}.
Besides, stochastic variants of Algorithm~\ref{alg:meta_v_double_loop}
can also be derived which replace the deterministic gradients with their unbiased stochastic estimates.
We leave the convergence analysis of such variants for future work.

\section{Related works}
\label{sec:related_works}
We discuss recent works that are most related to ours.
An extended discussion is provided in Appendix~\ref{sub_app:blo_work}.\\
\textbf{Preference-guided multi-objective learning.}
Preferences in multi-objective optimization can be represented using weights, thresholds, or preference vectors. Scalarization methods, such as linear and Tchebycheff scalarization, convert vector objectives into scalar objectives by applying weighted norms~\cite{miettinen_nonlinear_1998}. Alternatively, $\epsilon$-constraint methods impose thresholds on objectives to convert the problem to a constrained optimization problem~\cite{curtis2023fair}. 
There are also other approaches which represent preferences with vectors in the objective space, focusing on finding optimal solutions satisfying constraints defined by these vectors~\cite{lin2019pareto,chen2024ferero,zhang2024pmgda} or minimizing distances to the vectors~\cite{mahapatra2020multi,momma2022multi}.
See also a comprehensive review in~\cite{chen2025gradient_moo_review}.
\\
\textbf{Optimization on the Pareto Set.}
In machine learning, there are {specific instantiations} of the OPS problem.
For example, EPO~\cite{mahapatra2020multi} and Preference-Based Pareto Descent Optimization (PB-PDO)~\cite{kamani2021pareto_fairness} find a Pareto model such that the objective values satisfy a ratio constraint by minimizing the non-uniformity score. 
Specifically, EPO finds update directions to improve the objective values, or to reduce the constraint violations, or both.
PB-PDO finds common descent directions for the lower-level multi-objectives and the upper-level objective.
They are not guaranteed to converge to a necessary condition of the OPS problem.
Target-Aware Weighted Training (TAWT)~\cite{chen2022weighted_crosstask} learns a multi-task model that minimizes the discrepancy of task representations to ensure they are similar.
TAWT converts the lower-level objectives to a linearly scalarized objective, and optimizes the scalarization weights in the upper level.
This approach has the limitation of introducing undesired local solutions. 
\\
\textbf{(Simple) bilevel optimization.}
Problem~\eqref{eq:bilevel_opt_vltau} is a constrained reformulation of the simple bilevel program with a generally nonconvex lower-level (LL) objective.
For nonconvex LL objectives, algorithms are proposed in e.g.,~\cite{liu2021nonconvex_blo,huang2023momentum_nc_blo,xiao2023alternating}.
However, they usually require second-order derivatives in the algorithms, which can be expensive to implement.
More recently, \emph{first-order Hessian-free} approaches were proposed to address this by the value function reformulation of BLO. For example, 
interior-point method~\cite{liu21_interior_blo},
sequential quadratic programming~\cite{liu2022bome},
and smoothed Lagrangian method~\cite{lu2023slm_blo} 
were used to solve the constrained problem.  
Later on, a \emph{penalty-based} algorithm was proposed~\cite{shen2023penalty}. 
Variants such as Moreau Envelope based algorithms~\cite{kwon2023penalty_envelope,liu2024moreau} and adaptive algorithms~\cite{chen2024penalty_simple_blo}
were proposed.
However, none of these works tackle BLO with vector-valued LL objective.
Moreover, even after converting the vector-valued LL objective to a scalar-valued one through the merit function $v_{l,\tau}$, the LL objective is generally nonconvex and non-PL even if the objectives $f_m, m\in[M]$ are all convex or PL. Therefore, it is difficult to directly apply the
existing analysis or algorithms to our problem.

\section{Experiments} 
\label{sec:experiments}

In this section, we conduct experiments to verify our theory and show the applicability of the algorithms to 
preference-guided multi-task learning. 
We use LS, PMTL~\cite{lin2019pareto}, EPO~\cite{mahapatra2020multi}, XWC-MGDA (XM)~\cite{momma2022multi}, FERERO~\cite{chen2024ferero} as baselines for comparison.
For a preference vector-guided MOL problem,
we define $f_0(x) = \|H(x)\|^2$, where $H(x)$
is the equality constraint function derived from the preference vector~\cite{chen2024ferero}.

\textbf{Metrics.}
\emph{Objective loss and accuracy.}
We report the objective losses and accuracies in classification.\\
\emph{Hypervolume.} Let $F' \in \R^M$ denote the Nadir point, i.e., the worst performance on single-task baselines, and $\mathcal{S}$ denote a set of objective function values of the obtained models.
Hypervolume measures the size of the dominated space of $\mathcal{S}$ relative to $F'$, which can be computed by $H(\mathcal{S})=\Lambda ( \{q \in \R^M \mid \exists F \in \mathcal{S}: F \leq q  \leq F' \} )$, where $\Lambda(\cdot)$ denotes the Lebesgue measure.\\
\textbf{Additional details.} The implementation and additional experiments can be found in Appendix~\ref{sec_app:experiment}. 

\begin{figure}[ht]
\centering
\begin{subfigure}[b]{0.25\textwidth}
\centering
\includegraphics[width=.98\linewidth]{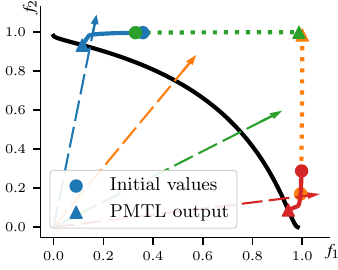}
\caption{PMTL}
\label{sfig:syn_init_close_pmtl}  
\end{subfigure}
\begin{subfigure}[b]{0.25\textwidth}
\centering
\includegraphics[width=.98\linewidth]{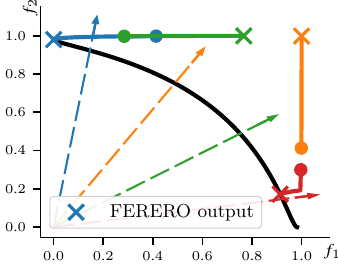}
\caption{FERERO}
\label{sfig:syn_init_close_ferero}  
\end{subfigure}
\begin{subfigure}[b]{0.25\textwidth}
\centering
\includegraphics[width=.98\linewidth]{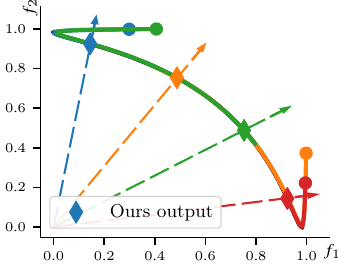}
\caption{FOOPS}
\label{sfig:syn_init_close_ours}  
\end{subfigure}
\caption{Outputs (colored markers) and optimization trajectories (colored curves) of different methods when initial objectives are near the Pareto front. Dashed arrows with different colors represent different preferences.}
\label{fig:syn_init_close}  
\end{figure}

\begin{figure*}[ht]
\centering
\begin{subfigure}[b]{0.32\textwidth}
  \centering
  \includegraphics[width=.98\linewidth]{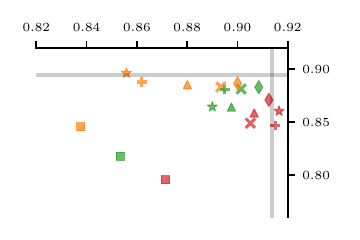}
  \caption{Multi-MNIST accuracy}
  \label{sfig:mnist_acc}
\end{subfigure}
\begin{subfigure}[b]{0.32\textwidth}
  \centering
  \includegraphics[width=.98\linewidth]{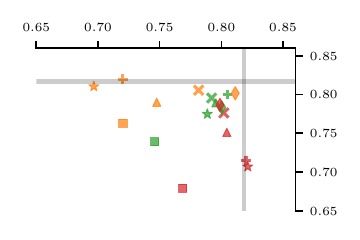}
  \caption{Multi-Fashion accuracy}
  \label{sfig:fashion_acc}  
\end{subfigure}
\begin{subfigure}[b]{0.32\textwidth}
  \centering
  \includegraphics[width=.98\linewidth]{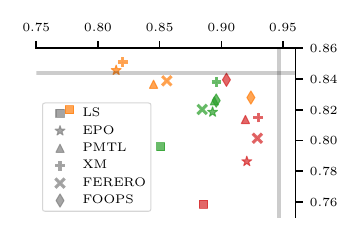}
  \caption{Multi-F+M accuracy}
  \label{sfig:fashion_and_mnist_acc}  
\end{subfigure}
\begin{subfigure}[b]{0.32\textwidth}
\centering
\includegraphics[width=.98\linewidth]{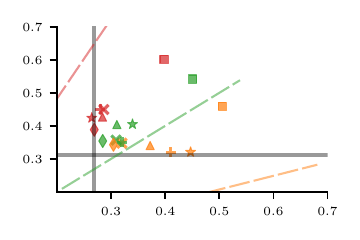}
\caption{Multi-MNIST loss}
\label{sfig:mnist_loss_app}  
\end{subfigure}
\begin{subfigure}[b]{0.32\textwidth}
\centering
\includegraphics[width=.98\linewidth]{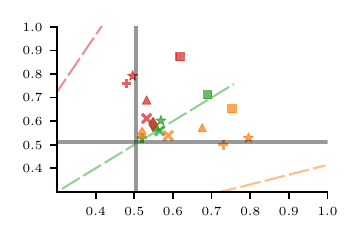}
\caption{Multi-Fashion loss}
\label{sfig:fashion_loss_app}  
\end{subfigure}
\begin{subfigure}[b]{0.32\textwidth}
\centering
\includegraphics[width=.98\linewidth]{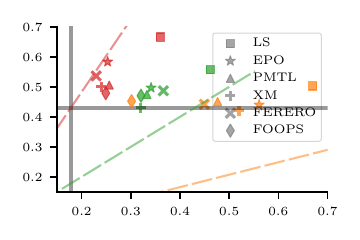}
\caption{Multi-F+M loss}
\label{sfig:fashion_and_mnist_loss_app}  
\end{subfigure}
\vspace{-3mm}
\caption{
Losses and accuracies of various methods with different preferences across three image datasets. The horizontal and vertical axes represent results for objective 1 and objective 2, respectively. Different colored dashed arrows indicate various preference vectors. Different markers denote the solutions obtained by different methods, with marker colors matching the preferences.
}
\vspace{-3mm}
\label{fig:img_class_MF}  
\end{figure*}

\begin{table*}[ht]
\small
\caption{Hypervolumes of different methods $\uparrow$
($\times 10^{-2}$).}
\label{tab:hyperv}
\centering
\begin{tabular}{l|cccccc}
\toprule 
{Datasets} & {LS} & {PMTL} & {EPO} 
& {XM} & {FERERO} & FOOPS\\
\midrule
Mt-M loss   
&1.68 &1.41 &1.35 &1.42 
&{1.95{\tiny$\pm$0.21}} &\textbf{2.62{\tiny$\pm$0.21}}\\
Mt-F loss 
&6.75 &5.90 &6.02 &6.77 
&{7.76{\tiny$\pm$0.18}} &\textbf{8.32{\tiny$\pm$0.37}}\\
Mt-F+M  loss 
&3.63 &3.03 &3.76 &{3.89} 
&3.82{\tiny$\pm$0.21} &\textbf{4.80{\tiny$\pm$0.45}}\\
Mt-M accuracy 
&0.19 &0.15 &0.15 &0.16 
&{0.25{\tiny$\pm$0.04}} &\textbf{0.33{\tiny$\pm$0.02}}\\
Mt-F accuracy 
&0.99 &0.87 &0.87 &0.99 
&{1.13{\tiny$\pm$0.07}} &\textbf{1.22{\tiny$\pm$0.07}}\\
Mt-F+M accuracy 
&0.48 &0.40 &0.50 &0.52 
&{0.53{\tiny$\pm$0.04}} &\textbf{0.72{\tiny$\pm$0.06}}\\
\bottomrule
\end{tabular}
\end{table*}

\textbf{Example~\ref{exmp:kkt_suboptimal}.} Following~\cite{lin2019pareto,mahapatra2020multi}, 
the first objective we consider is \eqref{eq:obj_synthetic1} in Example~\ref{exmp:kkt_suboptimal}, but with $q = 20$.
The results for the experiments with hard initialization are displayed in Figure~\ref{fig:syn_init_close}.
They show that under certain initializations and preferences, 
algorithms developed 
under~\eqref{eq:constrained_vo} such as PMTL and FERERO (with $A=I$
therein) could fail to reach the Pareto front.
It further justifies our Proposition~\ref{prop:kkt_not_ps} 
that the KKT solution to~\eqref{eq:constrained_vo},
with preferences modelled by the constraints at the lower level,
can be suboptimal for $\min_{x\in {\cal X}} F(x)$.
In contrast, FOOPS successfully converges to 
preferred Pareto optimal solutions 
under different preferences.
It demonstrates the benefit of the OPS formulation over~\eqref{eq:constrained_vo}, by prioritizing the attainment of weak Pareto optimality instead of the optimality of the preference function.

\begin{wraptable}{r}{0.6\textwidth}
\caption{WERs $\downarrow$ (\%) for speech recognition. 
}
\label{tab:lib_ais} 
\centering
\begin{tabular}{@{}l|cc c@{}}
\toprule
Method & English & Chinese & Average\\
\midrule
Komatsu et al.~\citep{ctclibrispeech} 
&7.11 &- &-\\
w/o CPC~\citep{saif24_interspeech} 
& 11.8 &10.2 &11.0 \\
Init. (M2ASR)~\citep{saif24_interspeech} 
&7.3 &6.2 &6.7 \\
{LS-FT} &6.8 &5.9 &6.4 \\
FERERO-FT &\textbf{5.4} & {4.9} & \textbf{5.1}\\
FOOPS-FT &{5.7} & \textbf{4.7} & \textbf{5.1} \\
\bottomrule
\end{tabular}
\end{wraptable}
\textbf{Multi-patch image classification.} 
Following~\cite{momma2022multi}, we use  Multi-MNIST (Mt-M), Multi-Fashion (Mt-F), and Multi-Fashion+MNIST (Mt-F+M) for image classification.
The two tasks or objectives in all three datasets are to classify the top-left and the bottom-right images, respectively.
We use LeNet as the backbone neural network.
The losses of different methods given different preference vectors are plotted in Figure~\ref{fig:img_class_MF}.
Experiments for our method are repeated 5 times. Hypervolumes with means and standard deviations are reported in Table~\ref{tab:hyperv}.
The results for other methods in Table~\ref{tab:hyperv} are referenced from~\cite{momma2022multi,chen2024ferero}.
The results show that FOOPS is better at obtaining large hypervolumes, 
see Table~\ref{tab:hyperv}, 
but worse at aligning with preferences compared to other methods,
see Figure~\ref{fig:img_class_MF}.

\textbf{Multi-lingual speech recognition.}
We use the proposed method to fine-tune a pre-trained multi-lingual speech recognition model.
The datasets include Librispeech (100 hours)~\citep{panayotov2015librispeech}, and AISHELL v1~\citep{bu2017aISHELL}.
The model architecture is a conformer with 8 blocks. 
The speech recognition Connectionist Temporal Classification (CTC) losses in Chinese and English are denoted as $f_{t}^{\rm ch}$ and $f_{t}^{\rm en}$, respectively. We also use the self-supervised Contrastive Predictive Coding (CPC) loss $f_{p}$ for representation learning, 
i.e.,
{
\begin{subequations}
\begin{align} 
  & \min f_0(x) \coloneqq 
\| f_{t}^{\rm ch}(x) - f_{t}^{\rm en}(x) \|^2 \\
&\mathrm{s.t.}~~~
x\in \mathop{\arg\min}~ 
F(x) \coloneqq \big(f_{p}(x), 
f_{t}^{\rm ch}(x), f_{t}^{\rm en}(x) \big)^\top 
\end{align}
\end{subequations}
}
where the lower-level objective $f_p$ ensures the model learns a good representation, and the upper-level objective ensures the difference of the 
performances on both languages is small; see more details in Appendix~\ref{sec_app:experiment}.
The results are reported in Table~\ref{tab:lib_ais}, 
which show that FOOPS demonstrate competitive average performance on different languages, but is less good at optimizing the fairness preference function $f_0$. This observation is consistent with that in Figure~\ref{fig:img_class_MF}.

\section{Conclusions} 
\label{sec:conclusions}
In this work, we cast preference-guided multi-objective learning
as an optimization on the Pareto set problem, and proposes a 
first-order penalty method to solve the problem, where
the penalty function is the polynomial of a smoothed merit function.
For the theoretical analysis, first we prove the properties of the merit function, including its relation to weak Pareto optimality, and 
the  H\"{o}lderian error bound. Then we discuss the 
relation of solutions to the penalty reformulation and the original problem. Finally, we discuss the algorithms to solve the penalty problem and their convergence guarantees.
Our theoretical analysis is of independent interest to simple bilevel optimization problem.
For the experiment, we apply the proposed method to synthetic and real-world experiments, which demonstrate the effectiveness of the proposed method 
in finding preference-guided optimal solutions.

\clearpage

{\small
\bibliographystyle{abbrvnat}
\bibliography{myabrv,opt,MOO_opt,MTL,pareto_set_opt,bilevel,speech}
}


\clearpage
\appendix
\onecolumn

\begin{center}
{\Large \bf Appendix}
\end{center}

\allowdisplaybreaks

Throughout the paper, we assume $f_m(x), m = 0, \ldots, M$ are twice continuously differentiable and bounded below, and the minimizer of $\lambda^\top F(x)$ exists for all $\lambda\in \Delta^M$.

For notation simplicity, we use $\mathrm{LSE}_\tau: \R^M \to \R$ to denote the Log-sum-exp function with parameter $\tau$.
We say a function is $\mu$-(weakly) convex, with $\mu \in \R$. If $\mu>0$, the function is strongly convex, if $\mu = 0$, the function is convex, and if $\mu<0$, the function is weakly convex.

\paragraph{Organization of the appendix.}
We organize the proof in the appendix as follows.

In Appendix~\ref{sec_app:related_work}, we discuss additional related work
on (semivectorial) bilevel optimization
and limitation of linear scalarization and preference as constraint formulations.
\begin{itemize}
  \item Appendix~\ref{sub_app:blo_work}: additional related work on bilevel optimization
  \item Appendix~\ref{sub_app:limitation_ls_kkt}: 
  limitations of linear scalarization and preference as constraint
\end{itemize}

In Appendix~\ref{sec_app:proof_property}, we prove the basic properties of the merit function.
\begin{itemize}
  \item Appendix~\ref{sub_app:cont_h_v}: continuity of the merit function 
  \item Appendix~\ref{sub_app:proof_v_weak_po}: 
  relations of $v_{l,\tau}$ and weak Pareto optimality
\end{itemize}

In Appendix~\ref{sec_app:subanalytic_property}, we discuss (global) subanalyticity and prove related properties such as the H\"{o}lderian error bound (HEB) of the merit function.
\begin{itemize}
  \item Appendix~\ref{sub_app:proof_global_suba_v}: proof of global subanalyticity of $v_{l,\tau}$
  \item Appendix~\ref{sub_app:examples_global_suba}: examples of globally subanalytic functions and their HEB
  \item Appendix~\ref{sub_app:relations_heb_qg}: relations of proximal error bound (EB), proximal KL, and HEB
\end{itemize}

In Appendix~\ref{sec_app:proof_relation}, we prove the relations of different formulations.
\begin{itemize}
  \item Appendix~\ref{sub_app:proof_relation_asymptotic}: global solutions relation in the asymptotic setting
  \item Appendix~\ref{sub_app:global_relation}: $\epsilon$-global solutions relation
  \item Appendix~\ref{sub_app:local_relation}: local solutions relation
  \item Appendix~\ref{sub_app:stationary_relation}: $\epsilon$-stationary solutions relation
\end{itemize}

In Appendix~\ref{sec_app:proof_convergence}, we prove the convergence of the proposed algorithm.

In Appendix~\ref{sec_app:experiment}, we provide implementation details and additional experiment results.

\section{Extended discussion of related works}
\label{sec_app:related_work}

In this section, we provide an extended review of recent works that are closely related to ours.

\subsection{Bilevel optimization}
\label{sub_app:blo_work}

Bilevel optimization (BLO) is a classical problem that dates back to~\cite{stackelberg1952market,vicente1994_blo_bib_review,luo1996_equilibrium}.
Gradient-based approaches with non-asymptotic convergence analysis and applications to machine learning were studied in e.g.,~\cite{ghadimi2018approximation_blo,chen2021closing_gap,ji2021bilevel,liu2022survey_blo,hong2023two_time_blo}.
These works focus on problems with (strongly) convex lower-level (LL) objectives.
Similarly, in simple bilevel optimization with shared optimization variable
in the upper- and lower-level objectives, 
a line of works focus on convex LL objectives, 
including e.g.,~\cite{Jiang2023_cgbio,Merchav2023_bisg,GiangTran2023_ircg,Samadi2024_rapm,Doron2023_simplebilevel}.
For nonconvex LL objectives, a pessimistic algorithm with asymptotic convergence guarantee was proposed in~\cite{liu2021nonconvex_blo}, 
a momentum-based algorithm with non-asymptotic analysis was proposed in~\cite{huang2023momentum_nc_blo},
the stationary metric was studied for BLO with lower-level PL objective, and an alternating descent algorithm  with non-asymptotic analysis  was proposed in~\cite{xiao2023alternating}.
In Table~\ref{tab:comparison_methods_simple_blo}, 
we provide a summary of the bilevel optimization works 
with possibly nonconvex lower-level objectives
which may satisfy the H\"{o}lderian error bound (HEB).

\paragraph{Semivectorial BLO.}
OPS can be seen as a semivectorial BLO problem, where the bilevel program 
has a vector-valued lower-level (LL) objective
and a scalar-valued upper-level (UL) objective.
To solve such problems, one straightforward approach is to convert the LL vector-valued objective to a scalar-valued objective through scalarization, and to optimize the scalarization parameter in the upper level, see, e.g.,~\cite{roy2023opt_pareto_set}. However, it has been shown that this reformulation could induce additional local minimizers or stationary solutions~\cite{Dempe2020Semivectorial_bilevel,Benko2021Implicit_opt}. Furthermore, this reformulation might require stronger constraint qualifications than the original problem~\cite{Benko2021Implicit_opt}.
Alternatively, penalty-based reformulations have been considered~\cite{Bonnel2006Semivectorial_penalty}, where the penalty function is defined as the maximum improvement amount of the vector-valued objective.
However, no practical implementation, or relation of solutions to the original problem in nonconvex settings, 
or non-asymptotic convergence analysis are provided for the reformulation.
Indeed, according to~\cite{giovannelli2024bilevel_moo}, 
very limited works study the computational algorithms 
for semivectorial BLO problem.
In~\cite{giovannelli2024bilevel_moo}, deterministic and stochastic risk-neutral and risk-averse algorithms are proposed, under the assumption that the LL objective is strictly convex w.r.t. the LL variable, and requiring the 
second-order derivative of the objective. 

Another line of research study the problem
with vector-valued upper-level (UL) objective, 
and scalar-valued LL objective~\cite{ye2021MOML}.
It is sometimes also referred to as the multi-objective BLO problem.
For a more detailed review, see a survey~\cite{MOBLO_Survey2023}
and the references therein.

\begin{table}
\centering
\fontsize{8}{9}\selectfont 
\caption{Comparison with existing methods for 
(simple) bilevel optimization with lower-level 
scalar objective, ``SC'' and ``C'' represent ``strongly convex'' and ``convex'', respectively; ``comp'' represents ``compact set'';
``Lip'' represents ``Lipschitz continuous''.
For non-simple bilevel optimization problem, 
the lower-level and upper-level properties are all w.r.t. 
the lower-level variable
for a meaningful comparison.
The lower-level objective in our problem is $v_{l,\tau}(x) + \tau\ln M$.
}
\label{tab:comparison_methods_simple_blo}
\begin{tabular}{c|cccc}
\toprule
Method  & lower-level HEB &other lower-level properties & upper-level & first-order  \\
\midrule
\multicolumn{5}{c}{non-simple bilevel optimization}\\
\hline
IAPTT-GM~\cite{liu2021nonconvex_blo} 
& - & smooth, comp &smooth, comp &\xmark\\
\hline
BOME~\cite{liu2022bome}  & $\eta=2$ &PL, Lip, smooth &Lip, smooth & \cmark\\
\hline
\multirow{2}{*}{PBGD~\cite{shen2023penalty}}
& \multirow{2}{*}{$\eta = 2$}
& PL, smooth &Lip, smooth
& \multirow{2}{*}{\cmark} \\
& & C, smooth &Lip, smooth\\
\hline
GALET~\cite{xiao2023alternating}
  &$\eta = 2$
&PL, smooth &Lip, smooth 
& \xmark \\
\hline
AGILS~\cite{bai2024_agils} 
& $\eta \geq 1$ & KL, weakly C (composite) & smooth &\cmark\\
\hline
\multirow{2}{*}{MEHA~\cite{liu2024moreau}} 
& \multirow{2}{*}{-}
& smooth & smooth
& \multirow{2}{*}{\cmark} \\
& &weakly C (composite) &smooth\\
\hline
SLM~\cite{lu2023slm_blo}  & $\eta=2$ &PL, smooth &smooth, comp &\cmark \\
\hline
\multicolumn{5}{c}{simple bilevel optimization}\\
\hline
\multirow{2}{*}{CG-BiO~\cite{Jiang2023_cgbio}} 
& \multirow{2}{*}{$\eta\geq 1$}
& C, smooth & C, smooth 
& \multirow{2}{*}{\cmark} \\
& &C, smooth &non-C, smooth
\\
\hline
R-APM~\cite{Samadi2024_rapm} 
& $\eta = 1$ & C, composite &C, smooth &\cmark \\
\hline 
\multirow{3}{*}{PB-APG~\cite{chen2024penalty_simple_blo}} 
& \multirow{3}{*}{$\eta\geq 1$}
& C, composite & C, composite 
& \multirow{3}{*}{\cmark} \\
& &C, composite &SC, composite\\
& &nonsmooth, Lip & nonsmooth, Lip
\\
\hline
\multirow{2}{*}{FOOPS (ours)} 
& \multirow{2}{*}{\makecell{
$\eta > 0$,\\
(modified by $\theta = \frac{\eta_p}{\eta} \geq \frac{1}{\eta}$)}}
& subanalytic (provable) &locally Lip & \multirow{2}{*}{\cmark} \\
& &KL (provable)   &locally Lip \\
\bottomrule
\end{tabular}
\end{table}

\subsection{Limitation of linear scalarization 
and preference as constraint}
\label{sub_app:limitation_ls_kkt}

Besides using empirical results on Example~\ref{exmp:kkt_suboptimal}, we also provide theoretical justifications to show the limitations of LS and KKT solutions for preference-guided MOL.

\paragraph{Limitation of linear scalarization.}
We first discuss the limitation of 
linear scalarization (LS) for preference-guided MOL.
It is known that LS is not good at handling 
nonconvex Pareto front.
Prior works have shown that 
the optimality condition of LS is not a necessary condition
for Pareto optimality,
see e.g.,~{\citep[Proposition~3.3]{athan_quasi_1994}}. 
As a result, the solution set of LS, even by enumerating all possible weights of objectives, does not include all Pareto optimal solutions, and thus it does not include Pareto optimal solutions under certain preferences.

Below we provide the proof of Proposition~\ref{prop:ls_fail_exmp},
which shows that under certain initializations, 
gradient descent on the LS objective does not converge to 
a preferred Pareto optimal solution in Example~\ref{exmp:kkt_suboptimal}.
\begin{proof}[Proof of Proposition~\ref{prop:ls_fail_exmp}]
In Example~\ref{exmp:kkt_suboptimal}, 
there exists a solution $x^* \in (-1, 1)$, which is an optimal solution to 
both~\eqref{eq:intro_bi_vector} and~\eqref{eq:constrained_vo}. 
And there exists $\lambda = [\lambda_1, \lambda_2]^\top \in \Delta^2$ 
with $\lambda_2 = \lambda_1 \frac{(1-x^*)e^{-(x^* - 1)^2}}{(1+x^*)e^{-(x^* + 1)^2}}$ that $\nabla F(x^*) \lambda = 0$.
To show this, let $c $ denote a positive constant, note that
\begin{align*}
\nabla F(x) \lambda 
=& 2\lambda_1 e^{-(x-1)^2} (x - 1)
+ 2\lambda_2 e^{-(x+1)^2} (x + 1)  \\
=& c \Big( 2 e^{-(x-1)^2} (x - 1) (1+x^*)e^{-(x^* + 1)^2} 
+ 2 e^{-(x+1)^2} (x + 1) (1-x^*)e^{-(x^* - 1)^2} \Big) \\
=& 2c  (x + 1) (x^* + 1) e^{-(x+1)^2 -(x^* + 1)^2}
\Big(  e^{ 4x} \frac{x - 1}{x + 1} 
-  \frac{ x^* - 1 }{x^* + 1} e^{ 4 x^*  } \Big) .
\numberthis 
\end{align*}
Let $r(x) = \frac{x - 1}{x + 1} e^{ 4x} $.
Then $ r(x) > r(x^*) \iff \nabla F(x) \lambda > 0$
and vice versa for $ r(x) < r(x^*)$.
The above equation implies that when $r(x') = r(x^*)$, 
$\nabla F(x') \lambda = 0$.
Therefore, $\nabla F(x^*) \lambda  = 0$.
Also observing that there exists different points 
$-1 < x_1 < x^* < x_2 < 1 $ that
\begin{align}
  r(x_1) = r(x_2) = r(x^*) .
\end{align}
This means that $x_1, x_2$ are all stationary points of $\lambda^\top F(x)$.
Furthermore, \\
1. for $x' \in (x_1, x^*) \cup (x_2, 1)$, $r(x') > r(x^*)$, 
thus $\nabla F(x') \lambda > 0$, then gradient descent (GD)  on $\lambda^\top F(x)$ starting from $x'\in (x_1, x^*)$ with sufficiently small step size converges to $x_1$, 
and it converges to $x_2$ if starting from  $x'\in (x_2, 1)$; \\
2. for $x' \in (-1, x_1) \cup (x^*, x_2)$, $r(x') < r(x^*)$, 
thus $\nabla F(x') \lambda < 0$, then GD  on $\lambda^\top F(x)$ starting from $x'\in (-1, x_1)$ with sufficiently small step size converges to $x_1$, 
and it converges to $x_2$ if starting from $x'\in (x^*, x_2)$.

This proves that they will not converge to $x^*$.
\end{proof}

\paragraph{Limitation of preference as constraint.}
We then discuss in more detail of the limitation
of modeling preference by constraints.
We verify Proposition~\ref{prop:kkt_not_ps} by constructing an example with a KKT but non-Pareto stationary solution. Using Example~\ref{exmp:kkt_suboptimal}, and noticing that one solution that PMTL converges to, denoted as $x^*$, has objective value that is approximately $F(x^*) \approx [0.80; 0.99]$, which satisfies the feasibility condition that $H(x^*) = 0$.
Furthermore, its gradient at $x^*$ can be computed approximately as
\begin{subequations}
\begin{align*}
\nabla f_1(x^*) \approx &
0.5062 ; \numberthis \\
\nabla f_2(x^*) \approx & 0.0002 . \numberthis 
\end{align*}      
\end{subequations}
Clearly, $x^*$ is not Pareto stationary.
By the KKT stationarity condition, we further have
\begin{align*}
&\nabla F(x) \lambda_f + \nabla H(x)\lambda_h 
= \nabla F(x) \Big(\lambda_f + 
\begin{bmatrix}
5 \\ -4
\end{bmatrix}\lambda_h \Big) \\
=& \nabla F(x) \Big(\lambda_f + 
\begin{bmatrix}
5\lambda_h \\ -4\lambda_h
\end{bmatrix} \Big) = 0, 
~~\text{for some}~~\lambda_f \in \Delta^2,
\lambda_h \in \R . \numberthis
\end{align*}
It can be verified that  
$0.5062(\lambda_{f,1} + 5\lambda_h)
+0.0002(1 - \lambda_{f,1} - 4\lambda_h ) = 0$
has solutions, e.g., $\lambda_f = [0; 1]$, 
$\lambda_h \approx -7.9 \times 10^{-5}$.
Therefore, $x^*$ is a KKT point.

We use another example with strongly convex objectives to prove  Proposition~\ref{prop:kkt_not_ps} that a KKT point 
to~\eqref{eq:constrained_vo} is not necessarily 
Pareto stationary.
\begin{example}\label{exmp:sc_kkt_not_ps}
Let $M=2, q = 2$, and $M_g=0, M_h=1$. Let ${\cal X} = \R^2$, and $x = [x_1; x_2]$. The objective $F$ and constraint $H$ is defined as
\begin{subequations}
\begin{align}
F(x) =& \big((x_1-1)^2+ x_2^2,  ~~0.5x_1^2+x_2^2 \big) \\
H(x) =& 9f_1(x) - 8f_2 (x)
\end{align}    
\end{subequations}
\end{example}

\begin{proof}[Proof of Proposition~\ref{prop:kkt_not_ps}]
\label{proof:kkt_not_ps}
In Example~\ref{exmp:sc_kkt_not_ps}, the gradient of $F$ can be computed by 
\begin{align}
\nabla F(x) = 
\begin{bmatrix}
2(x_1 - 1 )  & x_1 \\
2 x_2  & 2x_2 
\end{bmatrix}.
\end{align}
For $x = [3; 0]$, $\nabla F(x) = \begin{bmatrix}
4  & 3 \\0  & 0 
\end{bmatrix}$.
Let $\Delta^M$ denote the $(M-1)$-simplex.
Apparently, there exists no $\lambda\in \Delta^2$ such that
$\nabla F(x) \lambda = 0$. Therefore, $x$ is not Pareto stationary.

Then we check whether $x$ satisfies the KKT condition.
First, it satisfies the feasiblity condition since 
$H(x) =0 $.
Second, by invoking the KKT stationarity condition, 
for some $\lambda_f \in \Delta^2,
\lambda_h \in \R$, we have 
\begin{align*}
&\nabla F(x) \lambda_f + \nabla H(x)\lambda_h 
= \nabla F(x) \Big(\lambda_f + 
\begin{bmatrix}
9 \\ -8
\end{bmatrix}\lambda_h \Big) 
= \begin{bmatrix}
4  & 3 \\0  & 0 
\end{bmatrix} \Big(\lambda_f + 
\begin{bmatrix}
9\lambda_h \\ -8\lambda_h
\end{bmatrix} \Big) = 0
. \numberthis
\end{align*}
It can then be verified that the above holds true
when $\lambda_f = [0, 1] \in \Delta^2$, 
and $\lambda_h = -0.25$.

Therefore, $x$ is a KKT point but not a Pareto stationary point.
The proof is complete.
\end{proof}

\section{Proof of the properties of the merit functions} 
\label{sec_app:proof_property}
For convenience, we define the merit function $u_{l}(x)$ and restate 
the smoothed merit function $v_{l,\tau}(x)$ below. 
\begin{align}
\label{eq:ul_app}
u_{l}(x) \coloneqq &
\max_{y\in {\cal X}} \min_{m\in [M]}
\Big\{ f_m(x) -f_m(y) 
 - \frac{l}{2}\|x - y\|^2  \Big\} \\
\label{eq:vl_app}
v_{l,\tau}(x) \coloneqq &
- \min_{y\in {\cal X}} \Big\{ \tau\ln \Big(\sum_{m=1}^M e^{\frac{f_m(y) - f_m(x)}{\tau} }
\Big) + \frac{l}{2}\|x - y\|^2 \Big\} .
\end{align}
Correspondingly, we define $h_{l,\tau}(x,y)$ below for analysis.
Note that $v_{l,\tau}(x) = -\min_{y\in {\cal X}} h_{l,\tau}(x,y)$.
\begin{align}
\label{eq:h_ltau}
h_{l,\tau}(x,y) 
\coloneqq & \tau\ln \Big(\sum_{m=1}^M e^{\frac{f_m(y) - f_m(x)}{\tau} } \Big) + \frac{l}{2}\|x - y\|^2 .
\end{align}

\subsection{Auxiliary lemmas}
\label{sub_app:aux_property_v_po}

\begin{lemma}[Restatement of~{\citep[Theorems~3.1 and 3.3]{tanabe2022new}}]
\label{lemma:u_l0}
For $l\geq 0$, consider the merit function $u_{l}(x)$ defined in~\eqref{eq:ul_app}. 
Then $u_{l}(x) \geq 0$ for $l\geq 0$. 
$u_{0}(x)=0$ if and only if $x$ is weakly Pareto optimal.
For $l > 0$,
$x$ is weakly Pareto optimal implies $u_{l}(x)=0$;
furthermore, if $f_m(x)$ is convex for all $m\in [M]$,
then $u_{l}(x)=0$ implies $x$ is weakly Pareto optimal.
\end{lemma}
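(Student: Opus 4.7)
The plan is to break the four assertions of the lemma apart and attack them in increasing order of difficulty, relying only on the definition of $u_l$ in \eqref{eq:ul_app}. All statements reduce to basic manipulations, but the last one genuinely requires the convexity hypothesis.

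\textbf{Step 1 (nonnegativity).} For any $l \geq 0$, I will simply plug $y = x$ into the definition. Then the inner expression $\min_{m \in [M]}\{f_m(x) - f_m(x) - \tfrac{l}{2}\|x-x\|^2\}$ equals zero, and since $u_l(x)$ is the supremum over $y \in \mathcal{X}$ of this quantity, we conclude $u_l(x) \geq 0$.

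\textbf{Step 2 (characterization for $l=0$).} Notice that $u_0(x) \leq 0$ is equivalent to the statement: for every $y \in \mathcal{X}$, there exists $m \in [M]$ with $f_m(y) \geq f_m(x)$. But this is exactly the negation of ``there exists $y$ with $F(y) < F(x)$'', i.e., weak Pareto optimality. Combined with Step~1, $u_0(x) = 0$ iff $x$ is weakly Pareto optimal.

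\textbf{Step 3 (weak Pareto optimal implies $u_l = 0$ when $l>0$).} Suppose $x$ is weakly Pareto optimal. Then by the same logic as in Step~2, for every $y \in \mathcal{X}$ there is some $m$ with $f_m(x) - f_m(y) \leq 0$; subtracting the nonnegative quantity $\tfrac{l}{2}\|x-y\|^2$ only makes things smaller, so $\min_m \{f_m(x) - f_m(y) - \tfrac{l}{2}\|x-y\|^2\} \leq 0$. Taking the supremum over $y$ yields $u_l(x) \leq 0$, and Step~1 gives equality.

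\textbf{Step 4 (converse under convexity).} This is the main obstacle. Suppose $f_m$ is convex for every $m \in [M]$ and $x$ is not weakly Pareto optimal; I will show $u_l(x) > 0$. By assumption there is $y^* \in \mathcal{X}$ with $f_m(y^*) < f_m(x)$ for all $m$. Introduce the convex combination $y_\alpha = x + \alpha(y^* - x) \in \mathcal{X}$ for $\alpha \in (0,1]$ (using convexity of $\mathcal{X}$). Convexity of each $f_m$ gives
\begin{equation*}
f_m(x) - f_m(y_\alpha) \;\geq\; \alpha\bigl(f_m(x) - f_m(y^*)\bigr),
\end{equation*}
so setting $c := \min_m\{f_m(x)-f_m(y^*)\} > 0$ and $R := \|x-y^*\|^2$, we obtain
\begin{equation*}
\min_{m\in[M]}\Bigl\{f_m(x)-f_m(y_\alpha)-\tfrac{l}{2}\|x-y_\alpha\|^2\Bigr\} \;\geq\; \alpha c - \tfrac{l\alpha^2}{2} R.
\end{equation*}
For $\alpha \in (0, 2c/(lR))$ the right-hand side is strictly positive, which forces $u_l(x) > 0$, contradicting $u_l(x) = 0$. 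Taking the contrapositive completes the proof.

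The only nontrivial piece is Step~4, where the challenge is that the quadratic penalty $\tfrac{l}{2}\|x-y\|^2$ can kill the gap $f_m(x)-f_m(y)$ for a fixed dominating point $y^*$; the remedy is the linear-vs-quadratic scaling obtained by moving along the segment toward $y^*$, which requires both convexity of the objectives (to linearize the improvement) and convexity of $\mathcal{X}$ (to stay feasible).
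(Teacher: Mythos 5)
Your proof is correct in all four steps, including the only delicate point (Step 4), where the linear-in-$\alpha$ improvement from convexity beats the quadratic penalty $\tfrac{l}{2}\alpha^2\|x-y^*\|^2$ for small $\alpha$ along the segment toward the dominating point (with $R>0$ guaranteed since $y^*\neq x$, and $\alpha$ implicitly also restricted to $(0,1]$). The paper itself gives no proof of this lemma — it is stated as a restatement of Theorems~3.1 and~3.3 of Tanabe et al.\ and the argument is deferred to that reference — and your self-contained derivation follows exactly the standard route used there, so there is nothing to reconcile.
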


\begin{proposition}[Smoothness implies weak convexity]
\label{prop:smooth_imply_weak_convex}
If a locally Lipschitz function $f$ is $\ell_{f,1}$-smooth, 
then it is also $-\ell_{f,1}$-weakly convex.
\end{proposition}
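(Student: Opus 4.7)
The plan is to unpack the definition and reduce the statement to a familiar tangent-line inequality. Recall that, by definition, $f$ is $-\ell_{f,1}$-weakly convex precisely when $g(x) \coloneqq f(x) + \frac{\ell_{f,1}}{2}\|x\|^2$ is convex. So I need to show that $\ell_{f,1}$-smoothness of $f$ implies convexity of $g$.

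First I would extract the two-sided quadratic bound from smoothness. Writing
\[
f(y) - f(x) - \langle \nabla f(x), y - x \rangle = \int_0^1 \langle \nabla f(x + t(y-x)) - \nabla f(x), y - x \rangle \, dt
\]
and applying Cauchy--Schwarz together with $\|\nabla f(x + t(y-x)) - \nabla f(x)\| \leq \ell_{f,1} t \|y - x\|$ yields the standard lower bound
\[
f(y) \geq f(x) + \langle \nabla f(x), y - x \rangle - \tfrac{\ell_{f,1}}{2} \|y - x\|^2
\]
for all $x,y$ in the domain.

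Next I would verify the gradient inequality for $g$. Since $\nabla g(x) = \nabla f(x) + \ell_{f,1} x$, a direct expansion gives
\[
g(y) - g(x) - \langle \nabla g(x), y - x \rangle = \bigl(f(y) - f(x) - \langle \nabla f(x), y - x\rangle\bigr) + \tfrac{\ell_{f,1}}{2}\|y - x\|^2,
\]
where I used the identity $\tfrac{1}{2}\|y\|^2 - \tfrac{1}{2}\|x\|^2 - \langle x, y - x \rangle = \tfrac{1}{2}\|y-x\|^2$. The previous lower bound makes the right-hand side nonnegative, so $g$ satisfies the tangent-line inequality everywhere, hence is convex. By the definition of weak convexity, $f$ is $-\ell_{f,1}$-weakly convex.

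The argument is entirely routine, so I do not anticipate any genuine obstacle; the only thing worth flagging is the implicit use of differentiability, which is already built into the notion of $\ell_{f,1}$-smoothness (i.e., $\nabla f$ exists and is $\ell_{f,1}$-Lipschitz), so local Lipschitzness of $f$ itself plays no role in the proof.
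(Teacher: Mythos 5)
Your proof is correct. The paper states Proposition~\ref{prop:smooth_imply_weak_convex} without proof, treating it as a standard fact, and your argument is the canonical one: the descent-lemma lower bound $f(y) \geq f(x) + \langle \nabla f(x), y-x\rangle - \tfrac{\ell_{f,1}}{2}\|y-x\|^2$ combined with the identity $\tfrac{1}{2}\|y\|^2 - \tfrac{1}{2}\|x\|^2 - \langle x, y-x\rangle = \tfrac{1}{2}\|y-x\|^2$ shows $g(x) = f(x) + \tfrac{\ell_{f,1}}{2}\|x\|^2$ satisfies the tangent-line inequality, hence is convex, which is exactly the paper's definition of $-\ell_{f,1}$-weak convexity. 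Your observation that the local Lipschitzness hypothesis plays no role beyond matching the definition's phrasing is also accurate.
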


\begin{lemma}[Log-sum-exp function preserves weak convexity]
\label{lemma:log_sum_exp_preserve_weak_convexity}
Let $f_m(x), m\in [M]$ be weakly convex with modulus $\mu_m \in \R$. 
Let $\bar{\mu} = \min_{m\in [M]}~\mu_m$.
Then $\ln \big( \sum_{m=1}^{M} e^{f_m(x)} \big)$ is weakly convex with modulus $\bar{\mu}$.
\end{lemma}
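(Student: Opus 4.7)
The plan is to reduce the claim to the classical fact that the log-sum-exp map composed with convex functions is convex, using an exponential factorization to absorb the quadratic shift inside the logarithm.

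First, for each $m\in[M]$, I would define the auxiliary function
\[
\phi_m(x) := f_m(x) - \frac{\bar\mu}{2}\|x\|^2 = \Big(f_m(x)-\frac{\mu_m}{2}\|x\|^2\Big) + \frac{\mu_m-\bar\mu}{2}\|x\|^2,
\]
and observe that $\phi_m$ is convex: the first bracket is convex by weak convexity of $f_m$, while the second is a nonnegative multiple of $\|x\|^2$ because $\mu_m\ge\bar\mu$ by definition of $\bar\mu$. Thus each $\phi_m$ is a sum of two convex functions.

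Next, I would factor $e^{(\bar\mu/2)\|x\|^2}$ out of every exponential in the sum inside the logarithm, which yields the identity
\[
\ln\!\Big(\sum_{m=1}^M e^{f_m(x)}\Big) - \frac{\bar\mu}{2}\|x\|^2 \;=\; \ln\!\Big(\sum_{m=1}^M e^{\phi_m(x)}\Big).
\]
This is the key algebraic step: it converts an additive shift outside the log (which does not interact nicely with $\mathrm{LSE}$) into an additive shift applied uniformly to each $f_m$ inside the log, which \emph{does} interact nicely because $\mathrm{LSE}$ is invariant under such a common shift up to an additive term.

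Finally, I would invoke the standard composition rule: the function $\mathrm{LSE}(z_1,\dots,z_M)=\ln\!\big(\sum_m e^{z_m}\big)$ is convex and componentwise nondecreasing, so its composition with convex functions $\phi_m$ is convex. Hence the right-hand side above is convex in $x$, which by the paper's definition is exactly the statement that $\ln\!\big(\sum_m e^{f_m(x)}\big)$ is $\bar\mu$-weakly convex. I do not foresee a real obstacle here; the only mildly non-routine ingredient is the composition rule, which if needed can be verified in one line by applying convexity of each $\phi_m$ pointwise and then using the monotone convexity of $\mathrm{LSE}$.
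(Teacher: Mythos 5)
Your proposal is correct and follows essentially the same route as the paper's own proof: both establish convexity of $f_m(x)-\frac{\bar\mu}{2}\|x\|^2$ from $\mu_m\ge\bar\mu$, factor $e^{-\frac{\bar\mu}{2}\|x\|^2}$ through the sum inside the logarithm, and invoke the fact that log-sum-exp preserves convexity. Your write-up is merely more explicit about the two ingredients (the decomposition showing each $\phi_m$ is convex, and the monotone-convex composition rule) that the paper states without elaboration.
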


\begin{proof}[Proof of Lemma~\ref{lemma:log_sum_exp_preserve_weak_convexity}]
By definition, and since $\bar{\mu} = \min_{m\in [M]}~\mu_m$, we have $f_m(x) - \frac{\bar{\mu}}{2} \|x\|^2$ is convex for all $m\in [M]$.
Also because the Log-sum-exp function preserves convexity, we have that
$\ln \big( \sum_{m=1}^{M} e^{f_m(x) - \frac{\bar{\mu}}{2} \|x\|^2} \big)$ is convex.
Further rearranging this function, we have
\begin{align}
\ln \Big( \sum_{m=1}^{M} e^{f_m(x) - \frac{\bar{\mu}}{2} \|x\|^2} \Big)
= \ln \Big( e^{-\frac{\bar{\mu}}{2} \|x\|^2} \big( \sum_{m=1}^{M} e^{f_m(x)} \big) \Big)
= \ln \Big( \sum_{m=1}^{M} e^{f_m(x)} \Big) - \frac{\bar{\mu}}{2} \|x\|^2
\end{align}
which is convex.
By definition, this implies that 
$\ln \Big( \sum_{m=1}^{M} e^{f_m(x)} \Big)$ 
is weakly convex with modulus $\bar{\mu}$.
The proof is complete.
\end{proof}

\begin{corollary}
\label{crlr:h_ltau_unique_y}
If $f_m(x), m\in [M]$ is weakly convex with modulus $\mu_m\in \R$, and $l + \min_{m\in [M]}\mu_m > 0$, then $h_{l,\tau}(x,y)$ defined in~\eqref{eq:h_ltau} is strictly convex w.r.t. $y$, and the solution to $\min_{y\in {\cal X}} h_{l,\tau}(x,y)$ is a singleton.
\end{corollary}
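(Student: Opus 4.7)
The plan is to transfer weak convexity from each $f_m$ through the log-sum-exp composition using Lemma~\ref{lemma:log_sum_exp_preserve_weak_convexity}, then absorb the quadratic regularizer to obtain strong convexity of $h_{l,\tau}(x,\cdot)$. Existence of a minimizer will follow from the resulting coercivity, and uniqueness from strict convexity.

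First, I would fix $x$ and analyze $h_{l,\tau}(x,\cdot)$ as a function of $y$ alone. The idea is to propagate the weak-convexity moduli of the $f_m$ through two elementary operations: (i) an affine shift/scaling, mapping $f_m(y)$ to $(f_m(y)-f_m(x))/\tau$, which turns $\mu_m$-weak convexity into $(\mu_m/\tau)$-weak convexity since shifting by the constant $-f_m(x)/\tau$ preserves convexity and dividing by $\tau>0$ rescales the Hessian-lower-bound by $1/\tau$; and (ii) the LSE composition, where Lemma~\ref{lemma:log_sum_exp_preserve_weak_convexity} yields that $\ln\bigl(\sum_{m=1}^M e^{(f_m(y)-f_m(x))/\tau}\bigr)$ is weakly convex in $y$ with modulus $\bar\mu/\tau$, where $\bar\mu=\min_{m\in[M]}\mu_m$. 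Multiplying by $\tau>0$ scales the modulus back, so the first term of $h_{l,\tau}(x,\cdot)$ is $\bar\mu$-weakly convex in $y$.

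Second, I would note that $\frac{l}{2}\|x-y\|^2$ is $l$-strongly convex in $y$. Adding a $\bar\mu$-weakly convex function and an $l$-strongly convex function yields a function whose weak-convexity modulus is $l+\bar\mu$. Under the hypothesis $l+\bar\mu>0$, this makes $h_{l,\tau}(x,\cdot)$ \emph{strongly} convex on $\mathcal{X}$, and hence strictly convex, establishing the first conclusion of the corollary.

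Third, for the singleton claim, I would invoke the standard fact that a strongly convex function on a nonempty closed convex set is coercive, so its sublevel sets are bounded and the infimum is attained. Strict convexity then rules out two distinct minimizers, so $\arg\min_{y\in\mathcal{X}} h_{l,\tau}(x,y)$ is a singleton. The only subtle points, and the part I would be most careful about, are tracking how the weak-convexity modulus transforms under the $1/\tau$ rescaling and the outer multiplication by $\tau$, and verifying that the affine shift by $-f_m(x)$ (with $x$ fixed) does not alter the modulus before Lemma~\ref{lemma:log_sum_exp_preserve_weak_convexity} is applied; these are routine but worth checking explicitly so that the conclusion $l+\bar\mu>0$ cleanly implies strong convexity.
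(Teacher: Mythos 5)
Your proposal is correct and follows essentially the same route as the paper: both rescale the weak-convexity moduli through the $1/\tau$ scaling, invoke Lemma~\ref{lemma:log_sum_exp_preserve_weak_convexity} to get $\bar{\mu}$-weak convexity of the LSE term in $y$, and then absorb the $l$-strongly convex quadratic to conclude strict (in fact strong) convexity under $l+\bar{\mu}>0$. Your added coercivity step to guarantee that the minimizer is attained is a small but worthwhile refinement that the paper's own proof leaves implicit.
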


\begin{proof}[Proof of Corollary~\ref{crlr:h_ltau_unique_y}]
By the $\mu_m$-weak convexity, $\frac{1}{\tau} f_m(y)$ is $\frac{\bar{\mu}}{\tau}$-weakly convex w.r.t. $y$.
Therefore, combining with Lemma~\ref{lemma:log_sum_exp_preserve_weak_convexity}, 
the function $\tau\ln \Big(\sum_{m=1}^M e^{\frac{f_m(y) - f_m(x)}{\tau} } \Big)$ is $\bar{\mu}$-weakly convex w.r.t. $y$, where $\bar{\mu} = \min_{m\in [M]}~\mu_m$.
Since $l + \min_{m\in [M]}\mu_m = l + \bar{\mu} > 0$, $h_{l,\tau}(x,y)$ is strictly convex w.r.t. $y$, and the solution to $\min_{y\in {\cal X}} h_{l,\tau}(x,y)$ is unique.
The proof is complete.
\end{proof}

\begin{lemma}\label{lemma:exist_min_v}
If $f_m(x), m\in [M]$ is continuous and weakly convex with modulus $\mu_m$,
and $l \geq -\min_{m\in [M]}\mu_m$, then there exists $x\in {\cal X}$ such that $v_{l,\tau}(x) = -\tau \ln M$.
\end{lemma}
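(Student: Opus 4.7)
The plan is to exhibit an explicit $x^* \in \mathcal{X}$ at which the inner minimization defining $v_{l,\tau}(x^*)$ is attained at $y = x^*$. The first step is the elementary upper bound: substituting $y = x$ into $h_{l,\tau}$ gives $h_{l,\tau}(x,x) = \tau \ln M$ for every $x$, so $\min_{y\in\mathcal{X}} h_{l,\tau}(x,y) \leq \tau \ln M$ and hence $v_{l,\tau}(x) \geq -\tau \ln M$ on all of $\mathcal{X}$. Thus the claim reduces to finding an $x^* \in \mathcal{X}$ at which the choice $y = x^*$ in fact achieves the inner minimum.

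Next I would exploit the convexity of $h_{l,\tau}(x,\cdot)$ built into the hypothesis. By Lemma~\ref{lemma:log_sum_exp_preserve_weak_convexity}, the log-sum-exp term in $h_{l,\tau}(x,y)$ is $\bar{\mu}$-weakly convex in $y$ with $\bar{\mu} = \min_m \mu_m$; adding $\tfrac{l}{2}\|x-y\|^2$ with $l \geq -\bar{\mu}$ makes $h_{l,\tau}(x,\cdot)$ convex on $\mathcal{X}$. Consequently the first-order optimality condition is both necessary and sufficient for a global minimizer of $h_{l,\tau}(x,\cdot)$ over $\mathcal{X}$. Setting $g(x) \coloneqq \tfrac{1}{M}\sum_{m=1}^{M} f_m(x)$, a direct calculation gives
\[
\nabla_y h_{l,\tau}(x,y)\big|_{y=x} \;=\; \tfrac{1}{M}\sum_{m=1}^{M} \nabla f_m(x) \;=\; \nabla g(x),
\]
because the softmax weights $\pi_m$ all equal $1/M$ at $y=x$ and the quadratic regularizer contributes $l(y-x)\big|_{y=x}=0$. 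I would then take $x^*$ to be any minimizer of $g$ over $\mathcal{X}$; the first-order necessary condition for this constrained minimum gives $-\nabla g(x^*) \in N_{\mathcal{X}}(x^*)$, which by the identity above is exactly the first-order optimality condition for $y = x^*$ to minimize $h_{l,\tau}(x^*,\cdot)$. Convexity upgrades this to global optimality, so $\min_y h_{l,\tau}(x^*,y) = h_{l,\tau}(x^*,x^*) = \tau \ln M$, and therefore $v_{l,\tau}(x^*) = -\tau \ln M$ as desired.

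The main obstacle is justifying that $g$ attains its infimum on $\mathcal{X}$. When $\mathcal{X}$ is compact this is immediate by Weierstrass from continuity of each $f_m$. When $\mathcal{X} = \R^q$ it does not follow from the bounded-below hypothesis alone, but the paper's blanket assumption that a bounded minimizer of $v_{l,\tau}$ exists can be invoked: the argument above shows that the construction is self-consistent, since any point at which $g$ achieves its infimum is one at which $v_{l,\tau}$ hits its global lower bound $-\tau\ln M$. Alternatively, under a mild coercivity condition on $\sum_m f_m$ the existence of $x^* \in \arg\min g$ follows directly, and the remainder of the proof is routine.
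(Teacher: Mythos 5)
Your proposal is correct and follows essentially the same route as the paper's own proof: establish convexity of $h_{l,\tau}(x,\cdot)$ in $y$ via the log-sum-exp weak-convexity lemma, observe that $\nabla_y h_{l,\tau}(x,y)\big|_{y=x}$ reduces to the uniform average $\tfrac{1}{M}\sum_m \nabla f_m(x)$, and take $x^*$ to be a minimizer of that average so that the first-order optimality conditions coincide and $y=x^*$ attains the inner minimum with value $\tau\ln M$. The paper handles the attainment of $\arg\min_x \tfrac{1}{M}\sum_m f_m(x)$ the same way you do, by appealing to compactness of $\mathcal{X}$ or coercivity (its blanket assumption that minimizers of $\lambda^\top F$ exist for $\lambda\in\Delta^M$), so your treatment of that point is consistent with the paper's.
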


\begin{proof}[Proof of Lemma~\ref{lemma:exist_min_v}]
From Corollary~\ref{crlr:h_ltau_unique_y}, since $l \geq -\min_{m\in [M]}\mu_m$,  $h_{l,\tau}(x,y)$ is convex w.r.t. $y$. 
Let $P $ be the indicator function defined on $\cal X$.
Then,  $0 \in \nabla_y h_{l,\tau}(x,y) + \partial P(y) $ if and only if $y = \arg\min_{y\in {\cal X}} h_{l,\tau}(x,y)$.

By the definition of $h_{l,\tau}(x,y)$, the gradient $\nabla_y h_{l,\tau}(x,y) $ can be derived as
\begin{align}
\nabla_y h_{l,\tau}(x,y) 
= \sum_{m=1}^M \frac{e^{\frac{f_m(y) - f_m(x)}{\tau}}}{\sum_{m=1}^M e^{\frac{f_m(y) - f_m(x)}{\tau}}} \nabla f_m(y) + l(y - x) .
\end{align}
When $y = x$, it can be further derived that
\begin{align}\label{eq:nabla_y_h_yeqx}
\nabla_y h_{l,\tau}(y,y) 
= \frac{1}{M}\sum_{m=1}^M \nabla f_m(y) .
\end{align}
Recall that $\lambda^\top F(x)$ is lower bounded for all $\lambda\in \Delta^M$.
And $\lambda^\top F(x)$ is continuous since $f_m$ are continuous
for all $m\in [M]$.
We assume either ${\cal X}$ is compact, or 
${\cal X} = \R^q$ and $f_m$ is coercive for all $m\in [M]$.
Then the solution to $\min_{x\in {\cal X}} \lambda^\top F(x)$ exists.
Let $x^* = \arg\min_{x\in {\cal X}} \frac{1}{M}\sum_{m=1}^M f_m(x) $, which implies
\begin{align}\label{eq:nabla_x_h_uniform}
  0 \in \frac{1}{M}\sum_{m=1}^M \nabla f_m(x^*) + \partial P(x^*).
\end{align}
Combining \eqref{eq:nabla_x_h_uniform} with \eqref{eq:nabla_y_h_yeqx}, we have that $0 \in \nabla_y h_{l,\tau}(x, y) + \partial P(y)\mid_{(x,y)=(x^*,x^*)}$.
Therefore, $x^* \in \arg\min_{y\in {\cal X}} h_{l,\tau}(x^*, y) $, and thus
\begin{align}
  \min_{y\in {\cal X}} h_{l,\tau}(x^*, y) = h_{l,\tau}(x^*, x^*)
  = \tau\ln \Big(\sum_{m=1}^M e^{\frac{f_m(x^*) - f_m(x^*)}{\tau} } \Big) 
  = \tau \ln M .
\end{align}
By definition,  $v_{l,\tau}(x^*) $ can be computed by
\begin{align}
  v_{l,\tau}(x^*) = - \min_{y\in {\cal X}} h_{l,\tau}(x^*, y) = - \tau \ln M
\end{align}
which completes the proof.
\end{proof}

\subsection{Continuity of the merit function}
\label{sub_app:cont_h_v}

\begin{lemma}[Continuity of $h_{l,\tau}$, $v_{l,\tau}$, and $p$]
\label{lemma:continuous_v}
1) If $f_m(x)$ is continuous for all $m\in [M]$, then the merit function $v_{l,\tau}(x)$ is lower semi-continuous. \\
2) 
Given a bounded set ${\cal X}_C$, suppose $f_m$ is 
$\ell_f$-Lipschitz continuous on ${\cal X}_C$ for $m=0,\ldots, M$.
Let $\ell_x = \sup_{x\in {\cal X}_C} \|x\|$. 
Then $h_{l,\tau}(x, y)$ is 
 $(\ell_f + 2 l \ell_x)$-Lipschitz continuous 
w.r.t. both $x\in {\cal X}_C$ and $y \in {\cal X}_C$,  $v_{l,\tau}(x)$ is 
$\ell_{v_{l,\tau}}$-Lipschitz continuous  on ${\cal X}_C$ with $\ell_{v_{l,\tau}} = \ell_f + 2l \ell_x$,
and for $\theta \geq 1$, $p(x)$ is $\ell_{p}$-Lipschitz continuous on ${\cal X}_C$ with $\ell_{p} = \theta\big(2 \ell_x \big)^{\theta - 1} \ell_{v_{l,\tau}}^{\theta}$.
\end{lemma}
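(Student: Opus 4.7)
The plan is to handle the three claims in sequence using elementary facts about pointwise suprema/infima of continuous families and about the log-sum-exp (LSE) operator, together with bookkeeping on the bounded set ${\cal X}_C$.

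For Part 1 (lower semi-continuity of $v_{l,\tau}$), I would rewrite
\[
v_{l,\tau}(x) = \sup_{y \in {\cal X}} \bigl[-h_{l,\tau}(x,y)\bigr].
\]
Continuity of each $f_m$ makes $h_{l,\tau}(x,y)$ jointly continuous in $(x,y)$, so $x \mapsto -h_{l,\tau}(x,y)$ is continuous for every fixed $y$. A pointwise supremum of a family of continuous functions is lower semi-continuous, and the conclusion is immediate.

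For Part 2, I would first establish the Lipschitz bound for $h_{l,\tau}(\cdot, y)$ uniformly in $y\in{\cal X}_C$; the argument for Lipschitzness in $y$ is symmetric. I split $h_{l,\tau}$ into the LSE term $\tau\ln\bigl(\sum_m e^{(f_m(y)-f_m(x))/\tau}\bigr)$ and the quadratic $\tfrac{l}{2}\|x-y\|^2$. The map $z\mapsto \tau\ln(\sum_m e^{z_m/\tau})$ is $1$-Lipschitz with respect to $\|\cdot\|_\infty$, since its partial derivatives are the entries of a probability vector; combined with the $\ell_f$-Lipschitzness of each $f_m$, this contributes at most $\ell_f\|x_1-x_2\|$. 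For the quadratic piece,
\[
\bigl|\|x_1-y\|^2 - \|x_2-y\|^2\bigr| = \bigl|(x_1-x_2)^\top(x_1+x_2-2y)\bigr| \le (\|x_1\|+\|x_2\|+2\|y\|)\|x_1-x_2\| \le 4\ell_x\|x_1-x_2\|,
\]
so multiplying by $l/2$ gives $2l\ell_x\|x_1-x_2\|$. Summing yields the claimed constant $\ell_f+2l\ell_x$. Lipschitzness of $v_{l,\tau}(x)=-\inf_y h_{l,\tau}(x,y)$ then follows from the standard two-point trick for infima of uniformly Lipschitz families: for any $\varepsilon>0$ take $y_\varepsilon$ nearly minimizing at $x_2$, use $-v_{l,\tau}(x_1)\le h_{l,\tau}(x_1,y_\varepsilon)\le h_{l,\tau}(x_2,y_\varepsilon)+\ell_{v_{l,\tau}}\|x_1-x_2\|$, symmetrize, and let $\varepsilon\downarrow 0$.

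For $p(x)=(v_{l,\tau}(x)+\tau\ln M)^{\theta}$ with $\theta\ge 1$, I would apply the elementary inequality $|a^\theta-b^\theta|\le\theta\max(a,b)^{\theta-1}|a-b|$ valid for $a,b\ge 0$. Part~2 of Proposition~\ref{prop:vl_property} gives $v_{l,\tau}(x)+\tau\ln M\ge 0$, so only a uniform upper bound on $v_{l,\tau}(x)+\tau\ln M$ over ${\cal X}_C$ is needed. Using $h_{l,\tau}(x,x)=\tau\ln M$ together with Lipschitzness of $h_{l,\tau}(x,\cdot)$, I would write
\[
v_{l,\tau}(x)+\tau\ln M = h_{l,\tau}(x,x)-h_{l,\tau}(x,y^*_{l,\tau}(x)) \le \ell_{v_{l,\tau}}\,\|x-y^*_{l,\tau}(x)\| \le 2\ell_x\,\ell_{v_{l,\tau}},
\]
provided $y^*_{l,\tau}(x)\in{\cal X}_C$. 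Chaining this with the Lipschitz bound on $v_{l,\tau}$ recovers $\ell_p = \theta(2\ell_x)^{\theta-1}\ell_{v_{l,\tau}}^{\theta}$.

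The main obstacle, which I would flag explicitly, is the uniform upper bound on $v_{l,\tau}(x)+\tau\ln M$ used in the last step. It requires that the (near-)minimizer $y^*_{l,\tau}(x)$ of $h_{l,\tau}(x,\cdot)$ stays in ${\cal X}_C$, which in practice is ensured by choosing ${\cal X}_C$ large enough and by the regularizer $\tfrac{l}{2}\|x-y\|^2$ (when $l$ is not too small) pulling the minimizer close to $x$. This is the only nontrivial ingredient; everything else reduces to the two-point Lipschitz estimate and the LSE gradient bound.
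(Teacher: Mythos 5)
Your proof is correct and reaches the same constants, but by a genuinely different route from the paper's. For part 1) the paper runs an explicit sequential argument (take $x_t\to\bar x$, pick a near-minimizer $\bar y$ at $\bar x$, use joint continuity of $h_{l,\tau}$ to bound $\limsup_t \min_y h_{l,\tau}(x_t,y)$), whereas you simply invoke that a pointwise supremum of continuous functions is lower semi-continuous; both are fine, yours is shorter. For part 2) the paper works entirely through gradients: it bounds $\|\nabla_x h_{l,\tau}\|$ and $\|\nabla_y h_{l,\tau}\|$ by $\ell_f+l(\|x\|+\|y\|)$, bounds $\|\nabla v_{l,\tau}\|$ via the Danskin formula~\eqref{eq:grad_v_main} (which itself relies on strong convexity of $h_{l,\tau}(x,\cdot)$ and uniqueness of $y^*_{l,\tau}(x)$), and bounds $\|\nabla p\|$ by the chain rule together with $v_{l,\tau}(x)+\tau\ln M\le \ell_{v_{l,\tau}}\|x-x^*\|\le 2\ell_{v_{l,\tau}}\ell_x$ using a bounded minimizer $x^*$ of $v_{l,\tau}$. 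You instead use two-point estimates throughout: nonexpansiveness of the LSE map plus a direct bound on the quadratic term for $h_{l,\tau}$, the standard infimum two-point trick for $v_{l,\tau}$, and the inequality $|a^\theta-b^\theta|\le\theta\max(a,b)^{\theta-1}|a-b|$ for $p$, with the upper bound on $v_{l,\tau}(x)+\tau\ln M$ obtained from $h_{l,\tau}(x,x)=\tau\ln M$ and the minimizer $y^*_{l,\tau}(x)$ rather than from $x^*$. Your route buys something: it never needs differentiability of $v_{l,\tau}$, so it does not presuppose the Danskin machinery. One caveat: the boundedness issue you flag at the end is not confined to the last step. The two-point trick for $v_{l,\tau}$ also needs the near-minimizer $y_\varepsilon$ (taken over all of ${\cal X}$, not ${\cal X}_C$) to satisfy $\|y_\varepsilon\|\le\ell_x$, since your Lipschitz constant for $h_{l,\tau}(\cdot,y)$ depends on $\|y\|$; the paper's gradient bound $\|\nabla v_{l,\tau}(x)\|\le\ell_f+l\|x\|+l\|y^*_{l,\tau}(x)\|$ has exactly the same implicit requirement, so this is a shared gap rather than a defect of your argument, and it is indeed resolved by the regularizer keeping $y^*_{l,\tau}(x)$ near $x$ (cf.\ Lemma~\ref{lemma:value_gap_y_star}).
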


\begin{proof}[Proof of Lemma~\ref{lemma:continuous_v}]
\emph{Proof of 1).}
Recall that $v_{l,\tau}(x) = -\min_{y\in {\cal X}} h_{l,\tau}(x,y)$, and 
$h_{l,\tau}(x,y) = \mathrm{LSE}_\tau \big( f_m(y) - f_m(x) \big) + \frac{l}{2}\|x - y\|^2$.
Since $f_m(x)$ is continuous for all $m\in [M]$, and the LSE function is continuous, we have $h_{l,\tau}(x,y)$ is continuous w.r.t. $x$ and $y$.

For any sequence $\{x_t\} \subseteq {\cal X}$ satisfying $\lim_{t\to\infty} x_t = \bar{x} \in$ ${\cal X}$, given any $\epsilon>0$, let $\bar{y} \in {\cal X}$ satisfy $h_{l,\tau}(\bar{x}, \bar{y}) \leq \min_{y\in {\cal X}} h_{l,\tau}(\bar{x}, y)+\epsilon$. As $h_{l,\tau}$ is continuous at $(\bar{x}, \bar{y})$, there exists $T>0$ such that
\begin{align}
\min_{y\in {\cal X}} h_{l,\tau}({x}_t, y) 
\leq h_{l,\tau}({x}_t, \bar{y}) 
\leq h_{l,\tau}(\bar{x}, \bar{y}) + \epsilon 
\leq \min_{y\in {\cal X}} h_{l,\tau}(\bar{x}, y)  + 2 \epsilon, \quad \forall t>T,  
\end{align}
and thus
\begin{align}
\limsup _{t \rightarrow \infty} 
~\min_{y\in {\cal X}} h_{l,\tau} (x_t, y ) 
\leq \min_{y\in {\cal X}} h_{l,\tau} (\bar{x}, y ) + 2 \epsilon .
\end{align}
As the above inequality holds for any $\epsilon > 0$, 
we obtain, 
\begin{align}
  \limsup _{t \rightarrow \infty} 
~\min_{y\in {\cal X}} h_{l,\tau} (x_t, y ) 
\leq \min_{y\in {\cal X}} h_{l,\tau} (\bar{x}, y ) 
\end{align}
which proves that $v_{l,\tau}(x)$ is lower semi-continuous.

\emph{Proof of 2).}
We prove the Lipschitz continuity of $h_{l,\tau}(x,y)$ below. 
We define
\begin{align}\label{eq:pi_m_xy}
\pi_m(x,y) \coloneqq 
\frac{e^{\frac{1}{\tau}(f_m(y) - f_m(x))}} {\sum_{m=1}^M e^{\frac{1}{\tau}(f_m(y) - f_m(x))} }.
\end{align}
Note that
\begin{align}
& \|\nabla_x  h_{l,\tau}(x,y) \| 
\leq \Big\| \sum_{m=1}^M \pi_m(x, y) \nabla f_m(x) \Big\| +  \| l(x - y) \|
\leq \ell_f + l ( \|x\| + \|y\|)
\leq \ell_f + 2 l \ell_x,  \\
& \|\nabla_y  h_{l,\tau}(x,y) \| 
\leq \Big\| \sum_{m=1}^M \pi_m(x, y) \nabla f_m(y) \Big\| +  \| l(x - y) \|
\leq \ell_f + l ( \|x\| + \|y\|)
\leq \ell_f + 2 l \ell_x .
\end{align}
Therefore, $h_{l,\tau}(x,y)$ is $(\ell_f + 2 l \ell_x)$-Lipschitz continuous 
w.r.t. both $x\in {\cal X}_C$ and $y \in {\cal X}_C$.

Next we prove the Lipschitz continuity of the merit function $v_{l,\tau}$ under additional assumptions.
From Assumption~\ref{assmp:local_lip_f}, the functions $f_m(x), m=0,\ldots, M$ are $\ell_f$-Lipschitz on a bounded set ${\cal X}_C$ where $\|x\|\leq \ell_x$. 
From~\eqref{eq:grad_v_main}, we can compute $\nabla v_{l,\tau}(x)$.
We then derive the bound of $\|\nabla v_{l,\tau}(x)\|$ below.
\begin{align*}
\|\nabla v_{l,\tau}(x)\| 
=& \Big\|\sum_{m=1}^M \pi_m(x) \nabla f_m(x) - l(x - y^*_{l,\tau}(x)) \Big\| \\
\leq & \ell_f + l \|x\| + l \|y^*_{l,\tau}(x)\|
\leq \ell_f + 2 l \ell_x  
\numberthis \label{eq:bound_nabla_v_ltau_1}
\end{align*}
which proves that $v_{l,\tau}(x)$ is $(\ell_f + 2l \ell_x)$-Lipschitz continuous on $\cal X$. 

Recall that $p(x) = \big(v_{l,\tau}(x) + \tau \ln M \big)^{\theta}$.
For $\theta \geq 1$, the gradient of $p(x)$ is given by
\begin{align}
\nabla p(x) = \theta \big(v_{l,\tau}(x) + \tau \ln M \big)^{\theta - 1}
\nabla v_{l,\tau}(x)
\end{align}
Note that $v_{l,\tau}(x) + \tau \ln M$ is bounded on a compact set since 
$v_{l,\tau}(x)$ is Lipschitz on this set, i.e.,
\begin{align}
v_{l,\tau}(x) + \tau \ln M 
\leq \ell_{v_{l,\tau}} \|x - x^*\| \leq 2 \ell_{v_{l,\tau}} \ell_x
~~\text{with}~~\ell_{v_{l,\tau}} = \ell_f + 2 l \ell_x .
\end{align}
Then $\|\nabla p(x)\|$ can be bounded by 
\begin{align}
\|\nabla p(x) \| 
\leq & \theta \ell_{v_{l,\tau}} 
\big(v_{l,\tau}(x) + \tau \ln M \big)^{\theta - 1}
\leq \theta \ell_{v_{l,\tau}} 
\big(v_{l,\tau}(x) + \tau \ln M \big)^{\theta - 1}  \\
\leq & \theta \ell_{v_{l,\tau}} 
\big(2 \ell_{v_{l,\tau}} \ell_x \big)^{\theta - 1}
= \theta\big(2 \ell_x \big)^{\theta - 1} \ell_{v_{l,\tau}}^{\theta} 
\end{align}
which proves that $p(x)$ is $\Big(\theta\big(2 \ell_x \big)^{\theta - 1} \ell_{v_{l,\tau}}^{\theta} \Big)$-Lipschitz continuous on $\cal X$. 
\end{proof}

\begin{corollary}[Lipschitz continuity of $\varphi_\gamma$]
\label{crlr:lip_cont_varphi}
Under the same settings as Lemma~\ref{lemma:f_smooth_boundset_imply_lip},
let $\ell_x = \sup_{x\in {\cal X}_C} \|x\|$. 
Given $\gamma > 0$, 
$\varphi_\gamma (x)$ is $\big( \gamma \ell_p + \ell_f \big)$-Lipschitz continuous on ${\cal X}_C$. 
\end{corollary}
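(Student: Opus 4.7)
The plan is to derive this as a direct consequence of Lemma~\ref{lemma:continuous_v} together with the Lipschitz continuity of $f_0$, using the triangle inequality. Since $\varphi_\gamma(x) = f_0(x) + \gamma p(x)$ is a linear combination of two functions whose Lipschitz constants on ${\cal X}_C$ are already in hand, the Lipschitz constant of $\varphi_\gamma$ is simply the corresponding linear combination of those constants.

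More concretely, I would proceed in three short steps. First, invoke the hypotheses inherited from the referenced lemma, which guarantee that $f_m$ is $\ell_f$-Lipschitz on ${\cal X}_C$ for every $m=0,\ldots,M$; in particular, $f_0$ is $\ell_f$-Lipschitz on ${\cal X}_C$. Second, apply part~2 of Lemma~\ref{lemma:continuous_v} to conclude that $p(x)$ is $\ell_p$-Lipschitz on ${\cal X}_C$ with $\ell_p = \theta (2\ell_x)^{\theta-1} \ell_{v_{l,\tau}}^\theta$ (this uses $\theta \geq 1$, as stipulated in the lemma). Third, for arbitrary $x, x' \in {\cal X}_C$, combine these two facts by the triangle inequality:
\begin{align*}
|\varphi_\gamma(x) - \varphi_\gamma(x')|
&\leq |f_0(x) - f_0(x')| + \gamma\, |p(x) - p(x')| \\
&\leq \ell_f \|x - x'\| + \gamma\, \ell_p \|x - x'\|
= (\ell_f + \gamma \ell_p)\|x - x'\|.
\end{align*}
This yields the stated bound.

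There is no genuine obstacle here: the proof is essentially a one-line application of the triangle inequality, with the only care being to verify that the hypotheses of Lemma~\ref{lemma:continuous_v} (boundedness of ${\cal X}_C$, Lipschitzness of each $f_m$ on it, and $\theta \geq 1$) are in force, which is ensured by the phrase ``under the same settings.'' No new analytic machinery is required beyond what has already been developed.
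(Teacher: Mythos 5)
Your proof is correct and follows essentially the same route as the paper's: both invoke the $\ell_p$-Lipschitz continuity of $p$ from Lemma~\ref{lemma:continuous_v} and the $\ell_f$-Lipschitz continuity of $f_0$, then combine them by the triangle inequality. Your version merely writes out the one-line estimate explicitly, which the paper leaves implicit.
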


\begin{proof}[Proof of Corollary~\ref{crlr:lip_cont_varphi}]
Recall that $\varphi_\gamma (x) = f_0(x) + \gamma p(x)$.
The proof directly follows by applying the
$\ell_p$-Lipschitz continuity of $p$
from Lemma~\ref{lemma:continuous_v},
and the $\ell_f$-Lipschitz continuity of $f_0$
from Assumption~\ref{assmp:local_lip_f}.
\end{proof}

\begin{lemma}[Lipschitz continuity of $y_{l,\tau}^*(x)$]
\label{lemma:lip_cont_ystar}
Under the same settings as Lemma~\ref{lemma:f_smooth_boundset_imply_lip},
recall that 
\begin{align}
y^*_{l,\tau}(x) \coloneqq \mathop{\arg\min}_{y\in {\cal X}} h_{l,\tau}(x,y) 
= \mathop{\arg\min}_{y\in {\cal X}} \Big\{ \tau\ln \Big(\sum_{m=1}^M e^{\frac{f_m(y) - f_m(x)}{\tau} } \Big) + \frac{l}{2}\|x - y\|^2 \Big\} .
\end{align}
For $l - \ell_{f,1} \geq \mu_{h_y} > 0$,
there exists $\ell_{y_{l,\tau}^*} = \frac{2M \ell_f}{\tau} \Big(\frac{\ell_f^2}{\tau} + \ell_{f,1} \Big) + \frac{4M \ell_f^3}{\tau^2} > 0$ that for all $x, x' \in {\cal X}_C $, the following holds
\begin{align}
\|y_{l,\tau}^*(x) - y_{l,\tau}^*(x')\| \leq \ell_{y_{l,\tau}^*} \|x - x'\| .
\end{align}
\end{lemma}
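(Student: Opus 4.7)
The plan is to combine the $\mu_{h_y}$-strong convexity of $h_{l,\tau}(x,\cdot)$ (which holds by Lemma~\ref{lemma:log_sum_exp_preserve_weak_convexity} and the fact that $\ell_{f,1}$-smoothness implies $-\ell_{f,1}$-weak convexity of each $f_m$, together with the regularizer $\tfrac{l}{2}\|x-y\|^2$) with a sensitivity-of-argmin argument. Under the hypothesis $l - \ell_{f,1} \geq \mu_{h_y} > 0$, $y_{l,\tau}^*(x)$ is single-valued by Corollary~\ref{crlr:h_ltau_unique_y}, so it makes sense to bound $\|y_{l,\tau}^*(x) - y_{l,\tau}^*(x')\|$ in terms of $\|x-x'\|$.

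First, I would write the first-order optimality (variational inequality) at both $x$ and $x'$,
\begin{equation*}
\langle \nabla_y h_{l,\tau}(x, y_{l,\tau}^*(x)),\ y - y_{l,\tau}^*(x)\rangle \geq 0,\qquad
\langle \nabla_y h_{l,\tau}(x', y_{l,\tau}^*(x')),\ y' - y_{l,\tau}^*(x')\rangle \geq 0,
\end{equation*}
evaluate each at the other's minimizer, and add. Combining the resulting inequality with the $\mu_{h_y}$-strong monotonicity of $\nabla_y h_{l,\tau}(x,\cdot)$ at the pair $(y_{l,\tau}^*(x), y_{l,\tau}^*(x'))$ and a Cauchy--Schwarz step yields
\begin{equation*}
\mu_{h_y}\,\|y_{l,\tau}^*(x) - y_{l,\tau}^*(x')\|
\;\leq\; \|\nabla_y h_{l,\tau}(x, y_{l,\tau}^*(x')) - \nabla_y h_{l,\tau}(x', y_{l,\tau}^*(x'))\|.
\end{equation*}
This reduces the claim to bounding the Lipschitz constant of the map $x \mapsto \nabla_y h_{l,\tau}(x,y)$ at fixed $y \in \mathcal{X}_C$.

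For that bound I would use the explicit formula
$\nabla_y h_{l,\tau}(x,y) = \sum_{m=1}^M \pi_m(x,y)\nabla f_m(y) + l(y-x)$
with $\pi_m(x,y)$ as in~\eqref{eq:pi_m_xy}, and split into the softmax-weight term and the affine term $l(y-x)$. Differentiating $\pi_m$ in $x$ via the chain rule gives
$\nabla_x \pi_m(x,y) = -\tfrac{1}{\tau}\pi_m\big(\nabla f_m(x) - \sum_n \pi_n \nabla f_n(x)\big)$,
so that $\|\nabla_x [\sum_m \pi_m(x,y)\nabla f_m(y)]\|_{\mathrm{op}}$ is controlled by $\ell_f^2/\tau$ using $\sum_m \pi_m = 1$ and the Lipschitz constant $\ell_f$ of each $f_m$ on $\mathcal{X}_C$. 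A mean-value/telescoping step over $[x,x']$, together with the $\ell_{f,1}$-smoothness of $\nabla f_m$ used to compare $\nabla f_m(y)$ at different arguments when the bound is passed through the integral form, produces a Lipschitz constant of the desired shape $\tfrac{M\ell_f}{\tau}(\tfrac{\ell_f^2}{\tau} + \ell_{f,1}) + \tfrac{M\ell_f^3}{\tau^2}$ (up to the prefactors stated in the lemma).

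The main obstacle will be the bookkeeping for the softmax Jacobian: getting the stated constant requires carefully keeping the cross terms from $\nabla_x \pi_m$ weighted by $\pi_m$ so that the $\sum_m \pi_m = 1$ normalization absorbs what would otherwise be a loose $M$-factor, and handling the $l\,I$ contribution from $\nabla_x[l(y-x)]$ so that, after dividing by $\mu_{h_y} = l - \ell_{f,1}$, the explicit $l$-dependence does not survive in the final constant. Everything else---existence and uniqueness of $y_{l,\tau}^*(x)$, the variational inequalities, strong monotonicity, and Cauchy--Schwarz---is standard.
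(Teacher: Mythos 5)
Your core argument is correct and is, in substance, the standard perturbation estimate that the paper invokes by citation (Theorem~2F.7 of Dontchev--Rockafellar): uniqueness of $y^*_{l,\tau}(x)$ from Corollary~\ref{crlr:h_ltau_unique_y}, the two variational inequalities added together, strong monotonicity of $\nabla_y h_{l,\tau}(x,\cdot)$, and Cauchy--Schwarz do give
$\mu_{h_y}\,\|y_{l,\tau}^*(x)-y_{l,\tau}^*(x')\|\le\|\nabla_y h_{l,\tau}(x,y_{l,\tau}^*(x'))-\nabla_y h_{l,\tau}(x',y_{l,\tau}^*(x'))\|$,
and your softmax Jacobian formula for $\nabla_x\pi_m$ is right. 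Where you diverge from the paper is in the quantity you then bound: you control the $x$-variation of the \emph{partial gradient} $\nabla_y h_{l,\tau}(\cdot,y)$ at fixed $y$, whereas the paper's written proof controls the $x$-variation of the \emph{Hessian} $\nabla^2_{yy}h_{l,\tau}(\cdot,y)$, which is where the $\ell_{f,1}$ term and the extra $M$ factors in the stated constant come from. Your route is the textbook one and yields a cleaner bound: since $\nabla_y h_{l,\tau}(x,y)-\nabla_y h_{l,\tau}(x',y)=\sum_m(\pi_m(x,y)-\pi_m(x',y))\nabla f_m(y)+l(x'-x)$ and $\sum_m\pi_m=1$ absorbs the $M$ factor exactly as you anticipate, you end up with a Lipschitz constant of the form $\mu_{h_y}^{-1}\bigl(2\ell_f^2/\tau+l\bigr)$.

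The one genuine issue is the constant. The obstacle you flag at the end is real and, on your route, not resolvable: the $l(y-x)$ term contributes $l\|x-x'\|$ to the numerator, and after dividing by $\mu_{h_y}\le l-\ell_{f,1}$ the explicit $l$- and $\mu_{h_y}$-dependence survives, so you cannot recover the stated $\ell_{y_{l,\tau}^*}=\frac{2M\ell_f}{\tau}\bigl(\frac{\ell_f^2}{\tau}+\ell_{f,1}\bigr)+\frac{4M\ell_f^3}{\tau^2}$, and there is no $\ell_{f,1}$ in your numerator to produce that term either. You should state your own constant rather than the lemma's. For what it is worth, the paper's printed constant is not beyond reproach either: its intermediate display bounds $\|y^*_{l,\tau}(x)-y^*_{l,\tau}(x')\|$ by $\mu_{h_y}^{-1}$ times a Hessian difference, yet the final constant omits the $\mu_{h_y}^{-1}$ prefactor, so treat the exact value of $\ell_{y_{l,\tau}^*}$ as $O(\cdot)$ bookkeeping; your proof establishes the Lipschitz continuity claim that every downstream use of this lemma actually needs.
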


\begin{proof}
By Corollary~\ref{crlr:h_ltau_unique_y}, 
for $l + \min_{m\in [M]} \mu_m \geq \mu_{h_y} > 0$, 
the function $h_{l,\tau}(x,y) $ is $\mu_{h_y}$-strongly convex w.r.t. $y$.
Therefore, from~\citep[Theorem~2F.7]{dontchev2009implicit}, 
or using similar arguments for the proof in \citep[Lemma~15]{chen2023three},
we can derive that
\begin{align}
\|y_{l,\tau}^*(x) - y_{l,\tau}^*(x')\| 
\leq \mu_{h_y}^{-1} \|\nabla_{yy}^2 h_{l,\tau}(x,y) - \nabla_{yy}^2 h_{l,\tau}(x',y)\| .
\end{align}
Let $I_q \in \R^{q\times q}$ denote the identity matrix, then $\nabla_{yy}^2 h_{l,\tau}(x,y)$ can be further computed by
\begin{align}
  & \nabla_{yy}^2 h_{l,\tau}(x,y)
= \nabla_y \Bigg( \sum_{m=1}^M \underbrace{\frac{e^{\frac{f_m(y) - f_m(x)}{\tau}}}{\sum_{m=1}^M e^{\frac{f_m(y) - f_m(x)}{\tau}}}}_{\pi_m(x,y)} 
\nabla f_m(y)  + l (y - x) \Bigg) \nonumber \\
=& \nabla_y \Big(  \underbrace{\nabla F(y) \pi(x,y)}_{S(x,y)} + l (y - x) \Big)
~~\text{with}~~\pi(x,y) = [\pi_1(x,y), \ldots, \pi_M(x,y)]^\top
\nonumber \\
=& \frac{1}{\tau} \sum_{m=1}^M \pi_m(x,y) \nabla f_m(y) \nabla f_m(y)^\top
- \frac{1}{\tau} S(x,y)  S(x,y)^\top
+ \sum_{m=1}^M \pi_m (x,y) \nabla^2 f_m(y) + l I_q.
\end{align}
We first bound $\| S(x,y)  S(x,y)^\top -  S(x',y)  S(x',y)^\top \|$ by
\begin{align}
& \| S(x,y)  S(x,y)^\top -  S(x',y)  S(x',y)^\top \| 
\leq \Big(\|S(x,y) \| + \|S(x',y) \| \Big) \|S(x,y) - S(x',y)\| .
\end{align}
Then from Assumptions~\ref{assmp:smooth_f} and~\ref{assmp:local_lip_f}, we can bound $\|\nabla_{yy}^2 h_{l,\tau}(x,y) - \nabla_{yy}^2 h_{l,\tau}(x',y)\| $ by 
\begin{align*}
  & \|\nabla_{yy}^2 h_{l,\tau}(x,y) - \nabla_{yy}^2 h_{l,\tau}(x',y)\| \\
\leq & \frac{1}{\tau} \sum_{m=1}^M \|\pi_m (x,y) - \pi_m (x',y) \| \|\nabla f_m(y)\|^2
+ \frac{1}{\tau} \Big(\|S(x,y) \| + \|S(x',y) \| \Big) \|S(x,y) - S(x',y)\| \\
&+ \sum_{m=1}^M \|\pi_m (x,y) - \pi_m (x',y) \| \|\nabla^2 f_m(y)\|  \\
\leq & \sum_{m=1}^M \|\pi_m (x,y) - \pi_m (x',y) \| \Big(\frac{\ell_f^2}{\tau}
+ \ell_{f,1} \Big) + \frac{2\ell_f}{\tau}  \|S(x,y) - S(x',y)\|
\numberthis
\end{align*}
where $\|\pi_m (x,y) - \pi_m (x',y) \|$ can be further bounded by
\begin{align}
  \|\pi_m (x,y) - \pi_m (x',y) \| \leq \frac{2\ell_f}{\tau} \|x - x'\| .
\end{align}
Similarly, $\|S(x,y) - S(x',y)\|$ can be further bounded by
\begin{align}
\|S(x,y) - S(x',y)\| 
\leq \sum_{m=1}^{M} \big(\pi_m(x,y) - \pi_m(x',y) \big) \nabla f_m(y)
\leq \frac{2M \ell_f^2}{\tau} \|x - x'\| .
\end{align}
The proof is complete with $\ell_{y_{l,\tau}^*} = \frac{2M \ell_f}{\tau} \Big(\frac{\ell_f^2}{\tau} + \ell_{f,1} \Big) 
+ \frac{4M \ell_f^3}{\tau^2}$.
\end{proof}

\subsection{Proof of Proposition~\ref{prop:vl_property}: 
relations of $v_{l,\tau}$ and weak Pareto optimality}
\label{sub_app:proof_v_weak_po}

\begin{proof}[Proof of Proposition~\ref{prop:vl_property}]
We prove each property as follows.  
  
\emph{Property 1.} For the first argument, by the property of the Log-sum-exp function~\cite{Nesterov2005_smooth_min}, and since taking $\min_{y\in {\cal X}}$ preserves inequality, we have that
\begin{align}\label{eq:u0_vltau_ineq_relation}
  u_{l}(x) - \tau \ln M \leq v_{l,\tau} (x)\leq u_{l}(x) .
\end{align}
This implies that, as $\tau \downarrow 0$, $v_{l,\tau} (x)$ uniformly converges to $u_{l}(x)$. Also recall from Lemma~\ref{lemma:u_l0} that $x$ is weakly Pareto optimal if and only if $u_{l} = 0$. Therefore, $x$ is weakly Pareto optimal if and only if $\lim_{\tau\downarrow 0} v_{l,\tau}(x)=0$. The first argument is proved.

For the second argument, from~\cite{Nesterov2005_smooth_min}, we have that
\begin{align*}
& \tau\ln \Big(\sum_{m=1}^M e^{\frac{f_m(y) - f_m(x)}{\tau} } \Big) + \frac{l}{2}\|x - y\|^2 
\leq  \tau \ln M
+ \max_{m\in [M]} \{f_m(y) - f_m(x)\} + \frac{l}{2}\|x - y\|^2 .
\numberthis
\end{align*}

Since taking $\min_{y\in {\cal X}}$ preserves inequality, it implies that
\begin{align*}
& \min_{y\in {\cal X}} \Big\{ \tau\ln \Big(\sum_{m=1}^M e^{\frac{f_m(y) - f_m(x)}{\tau} } \Big) + \frac{l}{2}\|x - y\|^2 \Big\}
\leq  \tau \ln M
+ \min_{y\in {\cal X}} \Big\{\max_{m\in [M]} \{f_m(y) - f_m(x)\} + \frac{l}{2}\|x - y\|^2  \Big\}
\numberthis
\end{align*}
which proves that 
\begin{align*}
v_{l,\tau}(x) \geq &
- \min_{y\in {\cal X}} \Big\{\max_{m\in [M]} \{f_m(y) - f_m(x)\} + \frac{l}{2}\|x - y\|^2 \Big\}
- \tau \ln M \\
=& u_{l}(x) - \tau \ln M \geq - \tau \ln M 
\numberthis
\end{align*}
where the last inequality holds because $u_{l}(x) \geq 0$.
Furthermore, there exists $x\in {\cal X}$
such that $v_{l,\tau}(x) = - \tau \ln M$
by Lemma~\ref{lemma:exist_min_v}.
Then $\min_{x\in {\cal X}} v_{l,\tau}(x) = - \tau \ln M$.

\emph{Property 2.}
For the first argument, by Lemma~\ref{lemma:u_l0}, if $x$ is weakly Pareto optimal, then $u_{l}(x) = 0$. 
Furthermore, by Property-1, $u_{l}(x) \geq v_{l,\tau}(x)$,
which proves $v_{l,\tau}(x) \leq 0$.

Conversely, for the second argument, 
\emph{for condition a)}, $l=0$,
$v_{0,\tau}(x) \leq 0$ implies that
\begin{align}
\bar{u}(x) \leq v_{0,\tau}(x) + \tau \ln M \leq  \tau \ln M .
\end{align}
By the definition of $\bar{u}$, for all $z \in {\cal X}$,
it holds that
\begin{align}
\min_{m\in [M]} \{ f_m(x) - f_m(z) \} \leq \bar{u}(x) 
\leq \tau \ln M .
\end{align}
In other words, there exists no $z\in {\cal X}$ and $z \neq x$ such that, 
$F(z) < F(x) - \tau \ln M $.

\emph{For condition b)}, $l > 0$, $v_{l,\tau}(x) \leq -\tau \ln M$ implies that
\begin{align}
0 \leq u_l (x) \leq
v_{l,\tau}(x) + \tau \ln M \leq 0 .
\end{align}
By the definition of $u_l (x)$, it implies
\begin{align}
\min_{m\in [M]} \{ f_m(x) - f_m(y) \} 
- \frac{l}{2} \|x - y\|^2 \leq 0
\end{align}
By the convexity of $\cal X$, take $z \in {\cal X}$, and $t \in (0,1)$,
then $(1 -t)x +t z \in {\cal X}$.
Let $y = (1 -t)x +t z$, and plug it into the above inequality, we have
\begin{align}
\min_{m\in [M]} \{ f_m(x) - f_m( (1 -t)x +t z ) \} 
- \frac{l}{2} \|x - y\|^2 \leq 0
\end{align}
By the $(1,0)$-point-quasar convexity
of $f_m$ at $x$ for all $m\in [M]$, 
$f_m((1 -t) x + t z) \leq t f_m(z) + (1-t) f_m(x)$.
Therefore,
\begin{align}
\min_{m\in [M]} \{ t (f_m(x) - f_m(z)) \} - \frac{l}{2} \|t(z - x)\|^2 
\leq 0 .  
\end{align}
Dividing both sides by $t$ and letting $t \downarrow 0$, we have
that for all $z \in {\cal X}$, 
\begin{align}
\min_{m\in [M]} \{ f_m(x) - f_m(z) \} \leq 0.  
\end{align}
This proves $x$ is weakly Pareto optimal.

The proof of the properties of the smoothed merit function is complete. 
\end{proof}

\subsection{Examples of point quasar convex functions}
\label{sub_app:example_quasar}

By definition, $\mu$-(strongly) convex functions are 
$(1, \mu)$-(strongly) point quasar convex everywhere.
This quasar convexity property is also closely related to 
star convexity, restricted secant condition, variational coherence,
PL condition, invexity, etc.
For a more detailed discussion and more examples, refer to e.g.,~\citep[Appendix~A, D.2, D.3]{hinder2020near_star_quasar}.

\subsubsection{Examples condition 2-b) in Proposition~\ref{prop:vl_property}}

\begin{example}\label{exmp:v_f_SC}
For all $m\in [M]$, $f_m$ are strongly convex on ${\cal X}$.  
\end{example}
Example~\ref{exmp:v_f_SC} is the simplest case
covered by our method. But existing works which focus on such cases
usually require second-order information in the algorithm,
as summarized in Table~\ref{tab:comparison_methods}.

\begin{example}\label{exmp:x1x2_square}
For $x = [x_1; x_2] \in \R^2$, let ${\cal X} = \R^2$.
Define $f_1(x) = x_1^2 x_2^2$,
$f_2(x) = x_1^4 x_2^4$.  
\end{example}
In Example~\ref{exmp:x1x2_square}, $f_1, f_2$ are nonconvex but star-convex~\cite{hinder2020near_star_quasar} at $x^*= [0; 0]$.
Furthermore, $x^* $ satisfies that $v_{l,\tau}(x^*) = -\tau \ln M$.
And both $f_1, f_2$ satisfies the $(1,0)$-point quasar-convexity at $x^*$
within ${\cal X}$.

\subsection{Proof of gradient of the smoothed merit function}

\begin{lemma}[Gradient and directional derivative of $v_{l,\tau}$]
\label{lemma:direction_derivative_v}
The gradient  of $v_{l,\tau}$ can be computed by
\begin{align}
& \nabla v_{l,\tau}(x) = 
\sum_{m=1}^M \pi_m(x) \nabla f_m(x) - l(x - y^*_{l,\tau}(x)), 
~~\text{with}~~
\pi_m(x) \coloneqq \frac{e^{\frac{1}{\tau}(f_m(y^*_{l,\tau}(x)) - f_m(x))}} {\sum_{m=1}^M e^{\frac{1}{\tau}(f_m(y^*_{l,\tau}(x)) - f_m(x))} }.
\end{align}  

For all $z \in {\cal X}$,
the directional derivative of $v_{l,\tau}$,
denoted as $v_{l,\tau}'(x ; z-x)$, can be computed by
\begin{align}
v_{l,\tau}'(x ; z-x) = \sum_{m=1}^M \pi_m(x,y^*_{l,\tau}(x)) f_m^{\prime}(x ; z-x) - 
l \big(x- y^*_{l,\tau}(x) \big)^{\top}(z-x)  .
\end{align}  
\end{lemma}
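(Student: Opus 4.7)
\textbf{Proof plan for Lemma~\ref{lemma:direction_derivative_v}.} The plan is to apply a Danskin-type envelope theorem to the parametric minimization $v_{l,\tau}(x)=-\min_{y\in\mathcal X}h_{l,\tau}(x,y)$. To invoke it, I need (i) uniqueness of the inner minimizer $y^*_{l,\tau}(x)$, (ii) continuity (or Lipschitz continuity) of $x\mapsto y^*_{l,\tau}(x)$, and (iii) a formula for the partial derivative of $h_{l,\tau}$ in $x$ at the optimum. Uniqueness is already available: under Assumption~\ref{assmp:weak_c_f} together with $l+\mu>0$, Lemma~\ref{lemma:log_sum_exp_preserve_weak_convexity} shows the LSE term in $h_{l,\tau}(x,\cdot)$ is $\mu$-weakly convex in $y$ and adding $\tfrac{l}{2}\|x-y\|^2$ makes $h_{l,\tau}(x,\cdot)$ strongly convex, so Corollary~\ref{crlr:h_ltau_unique_y} gives that $y^*_{l,\tau}(x)$ is a singleton. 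Continuity of $y^*_{l,\tau}(\cdot)$ follows from Lemma~\ref{lemma:lip_cont_ystar}.

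\textbf{Gradient formula.} Because $f_m$ is $C^1$ by the standing twice-differentiability assumption, $h_{l,\tau}$ is $C^1$ in $x$, and a direct computation using the chain rule on the LSE gives
\begin{equation*}
\nabla_x h_{l,\tau}(x,y)=-\sum_{m=1}^M \pi_m(x,y)\,\nabla f_m(x)+l(x-y),
\end{equation*}
with $\pi_m(x,y)$ as defined in \eqref{eq:pi_m_xy}. By the classical Danskin theorem (applicable because the inner problem has a unique, continuous minimizer and the integrand is jointly $C^1$ in a neighborhood), $v_{l,\tau}$ is differentiable and $\nabla v_{l,\tau}(x)=-\nabla_x h_{l,\tau}(x,y^*_{l,\tau}(x))$. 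Substituting $y=y^*_{l,\tau}(x)$ and setting $\pi_m(x):=\pi_m(x,y^*_{l,\tau}(x))$ yields precisely the stated expression for $\nabla v_{l,\tau}(x)$.

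\textbf{Directional derivative formula.} For any $z\in\mathcal X$, I would compute the directional derivative of $h_{l,\tau}(\cdot,y)$ at $x$ in direction $z-x$ using the standing directional differentiability of each $f_m$ and the smoothness of $\mathrm{LSE}_\tau$: by the chain rule for compositions of smooth outer with directionally differentiable inner maps,
\begin{equation*}
[h_{l,\tau}(\cdot,y)]'(x;z-x)=-\sum_{m=1}^M \pi_m(x,y)\,f'_m(x;z-x)+l(x-y)^\top(z-x).
\end{equation*}
Then I would apply the directional form of Danskin's theorem (valid since the argmin is a singleton that varies continuously in $x$ and $h_{l,\tau}$ is jointly continuous with directional derivatives in $x$ uniformly over $y$ in a neighborhood of $y^*_{l,\tau}(x)$):
\begin{equation*}
v'_{l,\tau}(x;z-x)=-[h_{l,\tau}(\cdot,y^*_{l,\tau}(x))]'(x;z-x),
\end{equation*}
which gives the stated expression once $y=y^*_{l,\tau}(x)$ is substituted.

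\textbf{Main obstacle.} The routine pieces are the LSE chain rule and the explicit gradient of $h_{l,\tau}$ in $x$. The delicate step is the rigorous justification of the directional Danskin theorem here: one must argue that the ``uniform convergence'' hypothesis holds, i.e., that the difference quotients of $h_{l,\tau}(x+\alpha(z-x),y)-h_{l,\tau}(x,y)$ converge (as $\alpha\downarrow 0$) uniformly in $y$ over a neighborhood of $y^*_{l,\tau}(x)$, and combine it with the continuity of $y^*_{l,\tau}(\cdot)$ from Lemma~\ref{lemma:lip_cont_ystar}. This is where the weak convexity plus regularization (ensuring strong convexity and hence a Lipschitz argmin map) and the local Lipschitzness of $f_m$ (Assumption~\ref{assmp:local_lip_f}) are all simultaneously used; everything else is algebraic simplification.
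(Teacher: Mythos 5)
Your proposal is correct and follows essentially the same route as the paper: both establish uniqueness of $y^*_{l,\tau}(x)$ via Corollary~\ref{crlr:h_ltau_unique_y}, note the continuity of $x\mapsto y^*_{l,\tau}(x)$, and then invoke an extended Danskin-type theorem (the paper cites \citep[Proposition~5]{shen2023penalty}) to obtain $\nabla v_{l,\tau}(x)=-\nabla_x h_{l,\tau}(x,y^*_{l,\tau}(x))$ and the analogous directional-derivative identity. The ``uniform convergence'' concern you flag for the directional case is precisely what the cited Danskin-type result handles, so no further work is needed beyond what you outline.
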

    
\begin{proof}[Proof of Lemma~\ref{lemma:direction_derivative_v}]
Recall that we have defined 
\begin{align}
h_{l,\tau}(x,y)  = \tau\ln \Big(\sum_{m=1}^M e^{\frac{f_m(y) - f_m(x)}{\tau} } \Big) + \frac{l}{2}\|x - y\|^2.
\end{align}
By definition, $v_{l,\tau}(x) = -\min_{y\in {\cal X}} h_{l,\tau}(x,y)
= - h_{l,\tau}(x,y^*_{l,\tau}(x))$, with $y^*_{l,\tau}(x) \in \arg\min_{y\in {\cal X}} h_{l,\tau}(x,y)$.
If $l + \min_{m\in [M]}\mu_m \geq c > 0$,
by Corollary~\ref{crlr:h_ltau_unique_y}, $y^*_{l,\tau}(x)$ is unique.
Furthermore, $y^*_{l,\tau}(x)$ is continuous w.r.t. $x$.

By the extended Danskin-type theorem in e.g.,~\citep[Proposition~5]{shen2023penalty}, $v_{l,\tau}(x)$ is differentiable. Its gradient can be computed by
\begin{align*}
\nabla v_{l,\tau}(x) 
=& - \nabla_x h_{l,\tau}(x,y^*_{l,\tau}(x)) \\
=& \sum_{m=1}^M \frac{e^{\frac{f_m(y^*_{l,\tau}(x)) - f_m(x)}{\tau}}}{\sum_{m=1}^M e^{\frac{f_m(y^*_{l,\tau}(x)) - f_m(x)}{\tau}}} \nabla f_m(x) - l(x - y^*_{l,\tau}(x)) .
\numberthis\label{eq:grad_v_l_tau}
\end{align*}

Then given all $x, z \in {\cal X}$,
the directional derivative of $v_{l,\tau} (x)$
can be computed by
\begin{align}
  v'_{l,\tau}(x ; z - x)
= \sum_{m=1}^M \pi_m(x,y^*_{l,\tau}(x)) f_m^{\prime}(x ; z-x) - 
l \big(x- y^*_{l,\tau}(x) \big)^{\top}(z-x) \quad 
\text { for all } z \in {\cal X} 
\end{align}
where $\pi_m (x,y) = \frac{e^{\frac{f_m(y) - f_m(x)}{\tau}}}{\sum_{m=1}^M e^{\frac{f_m(y) - f_m(x)}{\tau}}} $.
The proof is complete. 
\end{proof}

\section{Subanalyticity and related properties}
\label{sec_app:subanalytic_property}

In this section, we first discuss some preliminaries on subanalyticity, 
and then prove the global subanalyticity of the merit function $v_{l,\tau}(x)$.

\begin{proposition}[\cite{Bolte2007_subanalytic}]
\label{prop:global_suba_bounded_suba}
Globally subanalytic sets are subanalytic, and conversely, any bounded subanalytic set is globally subanalytic.  
\end{proposition}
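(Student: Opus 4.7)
The plan is to prove the two implications separately, using two key facts about the map $\Phi_q$ from~\eqref{eq:Phi_q}: (i) it is real-analytic on $\R^q$ with bounded image in $[-1/2,1/2]^q$, and (ii) each coordinate $\phi(t) = t/(1+t^2)$ has derivative $(1-t^2)/(1+t^2)^2$ vanishing only at $t = \pm 1$, so $\phi$ is strictly monotone on each of $(-\infty,-1]$, $[-1,1]$, $[1,\infty)$, with $\phi(t) = \phi(1/t)$ accounting for its global two-to-one character.

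For \emph{bounded subanalytic $\Rightarrow$ globally subanalytic}, I would argue directly. If $S$ is bounded and subanalytic, then $S$ is contained in a compact cube $K \subset \R^q$, and $\Phi_q|_K$ is a proper analytic map on a compact set. By the classical preservation of subanalyticity under proper analytic maps (Bierstone--Milman; see also~\cite{Bolte2007_subanalytic}), the image $\Phi_q(S) = \Phi_q|_K(S)$ is a subanalytic subset of $\R^q$, so $S$ is globally subanalytic by Definition~\ref{def:global_suba}.

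For the converse, \emph{globally subanalytic $\Rightarrow$ subanalytic}, I would argue locally, since subanalyticity is a local property. Partition $\R^q$ into the $3^q$ semianalytic boxes $\prod_{i=1}^q I_i$ with each $I_i \in \{(-\infty,-1],\,[-1,1],\,[1,\infty)\}$; on each such box, $\Phi_q$ restricts to an analytic diffeomorphism onto a bounded semianalytic subset of $[-1/2,1/2]^q$. Fixing $x_0 \in \R^q$ and a bounded neighborhood $V$ of $x_0$, the sets $V_j = V \cap \prod_i I_i$ form a finite cover of $V$ by bounded semianalytic pieces on which $\Phi_q|_{V_j}$ is a diffeomorphism onto its image. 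Since preimages of subanalytic sets under analytic diffeomorphisms are subanalytic, each $S \cap V_j = (\Phi_q|_{V_j})^{-1}(\Phi_q(S) \cap \Phi_q(V_j))$ is subanalytic, so $S \cap V = \bigcup_j (S \cap V_j)$ is subanalytic. As this holds around every $x_0$, $S$ itself is subanalytic.

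The main obstacle is handling the non-injectivity of $\Phi_q$: the identity $\phi(t) = \phi(1/t)$ prevents a naive global preimage argument of the form $S = \Phi_q^{-1}(\Phi_q(S))$. The semianalytic partition into the $3^q$ monotone boxes is the key technical device, as it restores local injectivity and reduces the proof to a finite union of change-of-variables applications, after which subanalyticity is preserved by standard operations (finite union, preimage under analytic diffeomorphism).
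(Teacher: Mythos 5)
The paper does not actually prove this proposition---it is imported verbatim from \cite{Bolte2007_subanalytic}---so there is no in-paper argument to compare yours against; I am assessing your proof on its own terms. Your first direction is sound: a bounded subanalytic $S$ sits in a compact cube $K$, $\Phi_q|_K$ is automatically proper, and images of relatively compact subanalytic sets under real-analytic maps are subanalytic, so $\Phi_q(S)$ is subanalytic and $S$ is globally subanalytic per Definition~\ref{def:global_suba}.

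The second direction has a genuine gap, located precisely at the obstacle you flagged but did not actually remove. The identity $S \cap V_j = (\Phi_q|_{V_j})^{-1}\bigl(\Phi_q(S) \cap \Phi_q(V_j)\bigr)$ is false: injectivity of $\Phi_q$ on the box $V_j$ does not prevent $\Phi_q(S)$ from containing images of points of $S$ lying \emph{outside} $V_j$ that collide, via $\phi(t)=\phi(1/t)$, with images of points of $V_j\setminus S$. Concretely, for $q=1$, $S=\{2\}$, $V_j=[-1,1]$, one has $\Phi_1(S)=\{2/5\}=\Phi_1(\{1/2\})$, so your right-hand side is $\{1/2\}$ while $S\cap V_j=\emptyset$. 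The identity that injectivity on $V_j$ actually gives is $S\cap V_j=(\Phi_q|_{V_j})^{-1}(\Phi_q(S\cap V_j))$, which you cannot exploit because the hypothesis only makes $\Phi_q(S)$ subanalytic, not $\Phi_q(S\cap V_j)$. In fact, with $\Phi_q$ taken literally as in \eqref{eq:Phi_q} the implication itself fails: the set $S=\{1/n:n\ge 2\}\cup[1,\infty)\subset\R$ satisfies $\Phi_1(S)=(0,1/2]$ (the points $\phi(1/n)=n/(n^2+1)$ all lie in $\phi([1,\infty))=(0,1/2]$), which is semianalytic, so $S$ is globally subanalytic under Definition~\ref{def:global_suba}; yet $S$ has infinitely many connected components in every neighborhood of $0$ and hence is not subanalytic. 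The statement, and a much simpler version of your argument, is recovered with the standard choice $\Phi_q(x)=\bigl(x_1/\sqrt{1+x_1^2},\ldots,x_q/\sqrt{1+x_q^2}\bigr)$ of \cite{VanDenDriesMiller1996_ominimal}: that map is a globally injective real-analytic map, so $S=\Phi_q^{-1}(\Phi_q(S))$ holds outright, and the preimage of a subanalytic set under a real-analytic map is subanalytic---no partition into monotonicity boxes is needed. You should either correct the definition of $\Phi_q$ or restrict the non-injective version to a regime where the reciprocal collisions cannot occur.
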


\begin{lemma}[{\cite{VanDenDriesMiller1996_ominimal}}]
\label{lemma:image_global}
The image or the preimage of a globally subanalytic set by a globally subanalytic function (respectively, globally subanalytic multivalued operator) is globally subanalytic.
\end{lemma}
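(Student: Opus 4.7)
The plan is to exploit the fundamental principle that globally subanalytic sets form an o\nobreakdash-minimal structure, hence are closed under Boolean operations and, crucially, under coordinate projections. Once projection closure is in hand, the lemma reduces to a routine expression of image and preimage as projections of intersections with graphs.

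First, I would rewrite both assertions as projections of intersections. Given a globally subanalytic function $f:\R^q\to\R^m$ with graph $G_f\subset\R^q\times\R^m$ (which is globally subanalytic by Definition~\ref{def:global_suba}), and a globally subanalytic set $S\subseteq\R^q$, one has
\begin{equation}
f(S)=\pi_{\R^m}\bigl(G_f\cap (S\times\R^m)\bigr),\qquad
f^{-1}(T)=\pi_{\R^q}\bigl(G_f\cap (\R^q\times T)\bigr).
\end{equation}
The products $S\times\R^m$ and $\R^q\times T$ are globally subanalytic because $\Phi_{q+m}$ decomposes coordinatewise, reducing the question to a product of bounded subanalytic sets, which is bounded and subanalytic. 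Boolean closure of globally subanalytic sets is also routine: $\Phi_{q+m}$ is a real analytic bijection onto its image, so it transports intersections, unions, and complements between the globally subanalytic world and the bounded subanalytic world, where these operations are known to preserve subanalyticity (Proposition~\ref{prop:global_suba_bounded_suba} and standard facts from~\cite{Bierstone1988_Semianalytic}).

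The essential, and expected to be the hardest, step is closure under coordinate projection. For general unbounded subanalytic sets this fails, which is precisely the defect the compactification $\Phi_q$ is designed to cure. Given a globally subanalytic $B\subseteq\R^{q+1}$, its image $\widetilde B\coloneqq\Phi_{q+1}(B)$ is a bounded subanalytic subset of $(-1,1)^{q+1}$. Invoking Gabrielov's theorem, the projection of a relatively compact subanalytic set onto a coordinate subspace is subanalytic; so $\pi(\widetilde B)\subset(-1,1)^q$ is bounded subanalytic. Because the projection on $\R^{q+1}$ corresponds via the coordinatewise action of $\Phi$ to the projection on $(-1,1)^{q+1}$ composed with the inverse charts, pulling back through $\Phi_q^{-1}$ (which is real analytic on $(-1,1)^q$) shows that $\pi(B)$ is globally subanalytic in the sense of Definition~\ref{def:global_suba}.

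Finally, I would assemble the pieces. Combining Boolean closure with projection closure, the right-hand sides of the display above are globally subanalytic, proving the lemma for single-valued functions. For the multivalued case the argument is identical: one replaces $G_f$ by the graph of the globally subanalytic multivalued operator, which is globally subanalytic by assumption, and the same projection-of-intersection identity applies. The main technical hurdle throughout is the careful bookkeeping needed to verify that the compactification $\Phi_q$ commutes properly with products, intersections, and projections so that Gabrielov's bounded-case theorem can be invoked; every other step is formal manipulation of definitions.
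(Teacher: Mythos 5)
The paper does not prove this lemma at all: it is imported verbatim as a known fact from van den Dries--Miller, so there is no in-paper argument to compare yours against. Judged on its own, your sketch is the standard and essentially correct proof. Writing $f(S)=\pi\bigl(G_f\cap(S\times\R^m)\bigr)$ and $f^{-1}(T)=\pi\bigl(G_f\cap(\R^q\times T)\bigr)$, reducing everything to Boolean closure plus projection closure, and obtaining projection closure by transporting the set through the compactification $\Phi$ and invoking the bounded-case projection theorem (Gabrielov/Hironaka) is exactly how this is established in the o-minimality literature; the coordinatewise identity $\Phi_q\circ\pi=\pi\circ\Phi_{q+1}$ that you rely on is the whole point of defining global subanalyticity through $\Phi$. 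Two small caveats. First, your argument quietly uses that $\Phi$ is injective (``a real analytic bijection onto its image''); with the paper's literal formula $x_i/(1+x_i^2)$ this is false, since $x$ and $1/x$ have the same image --- the intended map is $x_i/\sqrt{1+x_i^2}$, and your proof is correct for that standard normalization, so this is a defect of the paper's Definition~\ref{def:global_suba} rather than of your proof. Second, the projection of a relatively compact subanalytic set being subanalytic is closer to a direct consequence of the definition of subanalytic sets than to Gabrielov's theorem proper (which concerns complements); the attribution is loose but the mathematical content you need is true. Neither point is a gap in the logic.
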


\begin{lemma}[Projection theorem~{\cite{VanDenDriesMiller1996_ominimal}}]
\label{lemma:proj_thm}
Let $\Pi(x_1,\ldots ,x_{n+1}) = (x_1,\ldots ,x_n)$ be the canonical projection from $\R^{n+1}$ onto $\R^n$. If $S$ is a globally subanalytic subset of $\R^{n+1}$, then so is $\Pi(S)$ in $\R^n$.
\end{lemma}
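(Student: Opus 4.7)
The plan is to reduce the claim to the classical projection theorem for bounded subanalytic sets, using the maps $\Phi_q$ defined in~\eqref{eq:Phi_q} as a bridge between global subanalyticity and ordinary (bounded) subanalyticity.

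First, I would verify the intertwining identity $\Phi_n \circ \Pi = \Pi \circ \Phi_{n+1}$, where on the right-hand side $\Pi$ is interpreted as the canonical projection $\R^{n+1} \to \R^n$ acting on the image of $\Phi_{n+1}$. This is immediate from the coordinate-wise definition of $\Phi_q$: both compositions send $(x_1,\ldots,x_{n+1})$ to $\bigl(\tfrac{x_1}{1+x_1^2},\ldots,\tfrac{x_n}{1+x_n^2}\bigr)$. Passing to images then yields
\begin{equation}
\Phi_n\bigl(\Pi(S)\bigr) \;=\; \Pi\bigl(\Phi_{n+1}(S)\bigr).
\end{equation}

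Second, by Definition~\ref{def:global_suba} the hypothesis that $S$ is globally subanalytic in $\R^{n+1}$ means that $\Phi_{n+1}(S)$ is subanalytic in $\R^{n+1}$. Since each coordinate of $\Phi_{n+1}$ takes values in the compact interval $[-\tfrac{1}{2},\tfrac{1}{2}]$, the set $\Phi_{n+1}(S)$ is contained in the compact box $[-\tfrac{1}{2},\tfrac{1}{2}]^{n+1}$, so it is a \emph{bounded} (equivalently, relatively compact) subanalytic set. I would then invoke the classical projection theorem for subanalytic sets, which states that the image of a relatively compact subanalytic subset of $\R^{n+1}$ under a canonical coordinate projection is subanalytic in $\R^n$. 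Applied to $\Phi_{n+1}(S)$, this gives that $\Pi\bigl(\Phi_{n+1}(S)\bigr)$ is subanalytic. Combined with the identity above, $\Phi_n\bigl(\Pi(S)\bigr)$ is subanalytic in $\R^n$, so $\Pi(S)$ is globally subanalytic by Definition~\ref{def:global_suba}.

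The main obstacle, if one wanted a self-contained proof, is the classical projection theorem for bounded subanalytic sets, whose proof rests on nontrivial machinery such as Hironaka's uniformization and Gabrielov's complement theorem. Since the lemma is attributed to~\cite{VanDenDriesMiller1996_ominimal}, the intended argument treats this as a black box; the real content is the clean reduction above via the intertwining identity together with the observation that $\Phi_q$ maps into a bounded set. This boundedness is precisely what makes the globally subanalytic class closed under arbitrary coordinate projections, whereas ordinary subanalyticity is only closed under proper ones.
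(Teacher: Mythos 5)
Your proposal is correct. Note that the paper itself offers no proof of this lemma --- it is stated as a black-box citation to the o-minimal/subanalytic geometry literature --- so there is no in-paper argument to compare against. Your reduction is the standard one given the paper's Definition~\ref{def:global_suba}: the intertwining identity $\Phi_n \circ \Pi = \Pi \circ \Phi_{n+1}$ holds coordinate-wise, $\Phi_{n+1}(S)$ is subanalytic by hypothesis and lands in the compact box $[-\tfrac{1}{2},\tfrac{1}{2}]^{n+1}$ (since $|x/(1+x^2)|\leq \tfrac{1}{2}$), and the projection of a relatively compact subanalytic set is subanalytic because the projection restricted to its closure is proper. You correctly identify that the only nontrivial ingredient is the classical bounded projection theorem, which is legitimately treated as known here, and you correctly isolate the boundedness of the range of $\Phi_q$ as the reason global subanalyticity survives arbitrary coordinate projections while ordinary subanalyticity does not. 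No gaps.
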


\begin{lemma}[Lojasiewicz factorization lemma~{\citep[Theorem~6.4]{Bierstone1988_Semianalytic}}]
\label{lemma:holderian_v_suba}
If ${\cal X}^*_{v_{l,\tau}} \coloneqq \arg\min_{x\in {\cal X}} v_{l,\tau}(x)$ is globally subanalytic, and $v_{l,\tau}(x)$ is continuous and globally subanalytic, 
then the $(\varrho, \eta)$-H\"{o}lderian error bound holds for $v_{l,\tau}(x)$ for some $\varrho, \eta > 0$.
\end{lemma}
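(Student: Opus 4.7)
The plan is to reduce the claim to the classical Lojasiewicz factorization theorem, specifically Theorem~6.4 of~\cite{Bierstone1988_Semianalytic}, by setting up two auxiliary globally subanalytic functions whose zero sets coincide. Define
\[
g(x) \coloneqq v_{l,\tau}(x) - \min_{x\in{\cal X}} v_{l,\tau}(x), \qquad
\phi(x) \coloneqq \mathrm{dist}(x, {\cal X}^*_{v_{l,\tau}}).
\]
By hypothesis, $g$ is continuous, nonnegative, and globally subanalytic, with $g^{-1}(0) = {\cal X}^*_{v_{l,\tau}}$. The function $\phi$ is continuous and nonnegative, and vanishes precisely on the closure of ${\cal X}^*_{v_{l,\tau}}$; since $v_{l,\tau}$ is lower semicontinuous by Lemma~\ref{lemma:continuous_v}, this argmin set is closed, so $\phi^{-1}(0) = g^{-1}(0)$.

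The key preparatory step is to verify that $\phi$ is itself globally subanalytic. The plan is to invoke the projection theorem (Lemma~\ref{lemma:proj_thm}): the graph of $\phi$ can be extracted from the globally subanalytic set
\[
A \coloneqq \{(x, t, y) \in \R^q \times \R \times \R^q \mid y \in {\cal X}^*_{v_{l,\tau}},\ \|x - y\|^2 \leq t^2,\ t \geq 0\},
\]
by first projecting out $y$ to obtain the epigraph of $\phi$, then exploiting that $\phi(x) = \inf\{t \geq 0 \mid (x,t) \in \Pi(A)\}$. Global subanalyticity of ${\cal X}^*_{v_{l,\tau}}$ is assumed, and $\|x-y\|^2 \leq t^2$ is polynomial, so $A$ is globally subanalytic; its projection onto the $(x,t)$ coordinates is globally subanalytic by Lemma~\ref{lemma:proj_thm}, and the graph of $\phi$ is then the boundary of this projection, obtained via another semialgebraic manipulation that preserves subanalyticity (this closure under infima over a globally subanalytic parameter set is the o-minimal ``tame'' operation).

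With $g$ and $\phi$ confirmed to be continuous, nonnegative, globally subanalytic functions on ${\cal X}$ with $\phi^{-1}(0) = g^{-1}(0)$, the Lojasiewicz factorization lemma~\citep[Theorem~6.4]{Bierstone1988_Semianalytic} applies on any compact subanalytic subset: there exist constants $\varrho, \eta > 0$ such that $\phi(x)^\eta \leq \varrho\, g(x)$ on that set, which is precisely the $(\varrho, \eta)$-HEB of Definition~\ref{def:varrho_eta_dist_bound}. The main obstacle is the verification that the distance function to a globally subanalytic set is globally subanalytic when the ambient ${\cal X}$ may be unbounded; the global (rather than merely local) subanalyticity is what lets the factorization constants be chosen uniformly on any compact subanalytic piece, as used downstream in Lemma~\ref{lemma:global_suba_X_v}. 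If ${\cal X}$ is unbounded, one then simply restricts the resulting inequality to a compact subanalytic subset ${\cal X}_C \subseteq {\cal X}$, which is the setting in which HEB is invoked throughout the paper.
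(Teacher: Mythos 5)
Your proposal is correct, but it is worth noting that the paper does not actually prove this lemma: it is stated purely as an imported citation of Theorem~6.4 of Bierstone--Milman, with the distance-function corollary already baked into the statement. What you have done is supply the standard derivation that the paper leaves implicit. Your two auxiliary functions $g(x) = v_{l,\tau}(x) - \min_{x\in{\cal X}} v_{l,\tau}(x)$ and $\phi(x) = \mathrm{dist}(x, {\cal X}^*_{v_{l,\tau}})$ are exactly the right pair to feed into the two-function Lojasiewicz inequality, and your verification that $\phi$ is globally subanalytic via the epigraph-plus-projection construction mirrors the set-theoretic manipulations the paper itself uses elsewhere (Lemma~\ref{lemma:min_preserve_suba} and Lemma~\ref{lemma:proj_thm}), so the argument is internally consistent with the paper's toolkit. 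Since your zero sets coincide rather than merely nest, the inclusion hypothesis of the factorization theorem is satisfied in the direction needed to bound $\phi^\eta$ by $g$, which is precisely Definition~\ref{def:varrho_eta_dist_bound}.

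One point deserves emphasis, and you handle it more carefully than the lemma statement does: the Lojasiewicz inequality of~\citep[Theorem~6.4]{Bierstone1988_Semianalytic} yields the constants $\varrho, \eta$ only on compact subsets, whereas the lemma as written makes no compactness restriction on ${\cal X}$. Your closing remark that the inequality must be restricted to a compact subanalytic subset ${\cal X}_C \subseteq {\cal X}$ is the correct reading; this is also how the lemma is invoked downstream in Lemma~\ref{lemma:global_suba_X_v} and in the solution-relation theorems, all of which work on a compact subanalytic set containing the relevant bounded iterates. The only minor gap in your write-up is that the extraction of the graph of $\phi$ from its epigraph is asserted rather than carried out, but the identical manipulation (removing the projection of the strict-inequality complement) appears in the paper's proof of Lemma~\ref{lemma:min_preserve_suba}, so this is a routine omission rather than a substantive one.
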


\subsection{Proof of global subanalyticity of $v_{l,\tau}$ and $p$}
\label{sub_app:proof_global_suba_v}

\begin{lemma}
\label{lemma:min_preserve_suba}
Let ${\cal X}$ and ${\cal Y}$ be two bounded subanalytic subsets in $\R^q$.
Let $f: {\cal X} \times {\cal Y} \to \R$ be a bounded subanalytic function. Then 1) the function $\phi$ below is bounded and subanalytic, thus globally subanalytic.
\begin{align}
\phi: {\cal X}  \ni x \mapsto \max_{y \in {\cal Y}} f(x, y) \in \R .
\end{align}
2) Given $x$, the solution set ${\cal Y}^*(x) \coloneqq \mathop{\arg\max}_{y\in {\cal Y}}f(x, y)$ is bounded and subanalytic, thus globally subanalytic. 
\end{lemma}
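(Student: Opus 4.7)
By Proposition~\ref{prop:global_suba_bounded_suba}, the bounded subanalytic sets ${\cal X}$, ${\cal Y}$ and the bounded set
\[ G_f \coloneqq \{(x, y, t) \in {\cal X} \times {\cal Y} \times \R : f(x,y) = t\}, \]
which is subanalytic because $f$ is, are all globally subanalytic. Boundedness of $\phi$ and of each slice ${\cal Y}^*(x) \subseteq {\cal Y}$ is immediate from the boundedness hypotheses. My strategy is to realize $\mathrm{graph}(\phi)$ and $\mathrm{graph}({\cal Y}^*)$ as Boolean combinations of projections and preimages of $G_f$, and then invoke Lemmas~\ref{lemma:image_global}--\ref{lemma:proj_thm} together with the closure of the class of globally subanalytic sets under Boolean operations.

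For part~1, define
\[ A \coloneqq \{(x, t) \in {\cal X} \times \R : \exists y \in {\cal Y},\ f(x, y) = t\}, \qquad B \coloneqq \{(x, t) \in {\cal X} \times \R : \exists y \in {\cal Y},\ f(x, y) > t\}. \]
The set $A$ is the projection of $G_f$ onto the $(x,t)$-coordinates, and $B$ is the projection of the globally subanalytic set $\{(x,y,t,s) \in {\cal X} \times {\cal Y} \times \R \times \R : (x,y,s) \in G_f,\ s > t\}$ onto $(x, t)$; both are globally subanalytic by iterated application of Lemma~\ref{lemma:proj_thm}. Assuming the maximum is attained, $\mathrm{graph}(\phi) = A \setminus B$, which is globally subanalytic, and $\phi$ is then a globally subanalytic function. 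For part~2, once $\phi$ is globally subanalytic, the map $G : (x,y) \mapsto (x, y, \phi(x))$ on ${\cal X} \times {\cal Y}$ is globally subanalytic, so $\mathrm{graph}({\cal Y}^*) = G^{-1}(G_f)$ is globally subanalytic by Lemma~\ref{lemma:image_global}. For fixed $x$, the slice ${\cal Y}^*(x)$ is the projection of $\mathrm{graph}({\cal Y}^*) \cap (\{x\} \times {\cal Y})$ onto the $y$-coordinates, again globally subanalytic by Lemma~\ref{lemma:proj_thm}.

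The main obstacle I anticipate is the set-difference step $A \setminus B = A \cap B^{c}$ in part~1: closure of the globally subanalytic class under complementation is not explicitly isolated among the preliminaries quoted in Section~\ref{sec_app:subanalytic_property}. It is nevertheless standard, because the class of globally subanalytic sets forms an o-minimal structure on $\R$~\cite{VanDenDriesMiller1996_ominimal} and hence a Boolean algebra; this is the same ingredient implicitly used in Lemma~\ref{lemma:global_suba_X_v}. A secondary subtlety, if the statement is read strictly with $\max$ rather than $\sup$, is verifying that the maximum is attained so that $\mathrm{graph}(\phi) \subseteq A$; if attainment fails one replaces $A$ by the graph of $x \mapsto \sup_{y \in {\cal Y}} f(x, y)$, whose description requires one additional $\forall \varepsilon > 0$ quantifier but remains globally subanalytic by the same o-minimal closure argument.
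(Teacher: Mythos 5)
Your proposal follows essentially the same route as the paper's proof: realize $\mathrm{Gr}(\phi)$ and $\mathrm{Gr}({\cal Y}^*)$ as Boolean combinations of projections of the (bounded, hence globally subanalytic) graph of $f$, and invoke the projection theorem plus closure of the globally subanalytic class under set difference. Your decomposition is more compact — the paper routes through a longer chain of auxiliary sets $A,B,C,D,E$ with an explicit value window ${\cal W}=[\underline{c_f},\overline{c_f}]$ — and your reliance on complementation is no heavier than the paper's own (it also takes differences such as $C\setminus\Pi_D(D)$ without citing a separate lemma), so that is not a gap.

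Two small technical points where the paper's version is tighter. First, your set $B=\{(x,t):\exists y,\ f(x,y)>t\}$ and the four-variable set you project it from are unbounded in the $t$-direction, so you cannot directly conclude global subanalyticity from "bounded subanalytic $\Rightarrow$ globally subanalytic"; you need to intersect with ${\cal X}\times[\underline{c_f},\overline{c_f}]$ (the paper's ${\cal W}$), which is harmless since $A\subseteq{\cal X}\times[\underline{c_f},\overline{c_f}]$ and hence $A\setminus B=A\setminus(B\cap({\cal X}\times[\underline{c_f},\overline{c_f}]))$. Second, your identity $\mathrm{Gr}(\phi)=A\setminus B$ genuinely requires the maximum to be attained (you correctly flag this); the paper's construction instead characterizes $\phi(x)$ as the minimal $w$ with $\sup_{y}f(x,y)\leq w$, which works for the supremum without an attainment hypothesis. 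Both issues are one-line fixes and do not affect the validity of the overall argument.
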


\begin{proof}
The proof mainly adopts the proof idea of~\citep[Lemma~4.18]{KOSIBA2025_suba_multif}. The key difference is that instead of using local boundedness of the functions, we start from global boundedness of $f$ on ${\cal X}$ and derive the global boundedness of $\phi$, which, combined with subanalyticity, leads to the  global subanalyticity of $\phi$ on ${\cal X}$. 

Let $\mathrm{Gr}(\cdot)$ denote the graph of a function.
Since $f$ is bounded, let $\overline{c_f}, \underline{c_f}$ denote its upper and lower bound, respectively.
And define ${\cal W} \coloneqq \{w\in \R \mid  \underline{c_f} \leq w \leq \overline{c_f} \}$.
Then ${\cal W} \subset \R$ is bounded.
Consider the following set
\begin{align}
A = \{(x,y,z,w)\in {\cal X}\times {\cal Y}\times \R \times {\cal W}
 \mid (x,y,z)\in \mathrm{Gr}(f(x,y)), z \leq w \} .
\end{align}
Note that $x,y$ are bounded, and so is $z$ because of the boundedness of $f$.
Furthermore, $w$ is bounded by definition. Thus $A$ is bounded. 
Also note that $A$ is subanalytic because $f$ is subanalytic, its graph is subanalytic, and the Cartesian product of two subanalytic sets is subanalytic.
Therefore, $A$ is globally subanalytic.

Define the following projection $\Pi_A$ and the set $B$ by canonical projection of $A$.
\begin{align}
& \Pi_A : {\cal X}\times {\cal Y}\times \R \times {\cal W}  
\ni (x,y,z,w) \mapsto (x,y,w) \in {\cal X}\times {\cal Y}\times {\cal W} . \\
& B \coloneqq \Pi_A (A) = \{  (x,y,w) \in {\cal X}\times {\cal Y}\times {\cal W}
\mid f(x,y)\leq w  \}.
\end{align}
Then  $B$ is globally subanalytic based on Lemma~\ref{lemma:proj_thm}.

We also define the following auxiliary sets
\begin{align}
& B_R = \{(x,y,w) \in {\cal X}\times {\cal Y}\times {\cal W} \},  \\
& B_R \backslash B = \{(x,y,w) \in {\cal X}\times {\cal Y}\times {\cal W}
\mid f(x,y) > w  \} .
\end{align}

Then we define the projection $\Pi_B$ and define the following set $C$
by canonical projection of $B$.
\begin{align}
&\Pi_B: {\cal X}\times {\cal Y}\times {\cal W}   
\ni (x,y, w) \mapsto (x, w) \in {\cal X} \times {\cal W} , \\
&C \coloneqq \Pi_B(B) \backslash \Pi_B(B_R\backslash B) 
= \{ (x,w) \in {\cal X} \times {\cal W} 
\mid \sup_{y\in {\cal Y}} f(x,y) \leq w   \} .
\end{align}
Since $C$ is bounded and subanalytic, it is globally subanalytic.

We further define the following subanalytic set and projection
\begin{align}
&D \coloneqq \{(x,w_1,w_2) \in {\cal X} \times {\cal W} \times {\cal W} 
\mid (x,w_1)\in C, (x,w_2)\in C , w_1 > w_2  \} ,\\
&\Pi_D: {\cal X} \times {\cal W} \times {\cal W} \ni (x,w_1,w_2)
\mapsto (x, w_1) \in {\cal X} \times {\cal W} .
\end{align}
Finally, observe that
\begin{align}
&C \backslash \Pi_D(D) \ni (x, w)
\iff  ~~\text{there exists no}~~ w' ~~\text{such that}~~ 
(x, w'), (x, w) \in C ~\text{and}~ w > w' \\
&\iff  ~~\text{there exists no}~~ w' ~~\text{such that}~~ 
\sup_{y\in {\cal Y}} f(x,y) \leq w'
~\text{and}~ \sup_{y\in {\cal Y}} f(x,y) \leq w ~\text{and}~ w > w' 
\end{align}
which means that $\mathrm{Gr}(\phi(x)) = C \backslash \Pi_D(D)$. 
Since $\mathrm{Gr}(\phi(x))$ is bounded and subanalytic, thus it is globally subanalytic.
By definition, $\phi$ is also globally subanalytic.

Next we proceed to prove the global subanalyticity of ${\cal Y}^*(x)$ given $x$.
Define the following set $E$ and projection $\Pi_E$
\begin{align}
& E \coloneqq \{ (x,y,w)\in {\cal X}\times {\cal Y}\times {\cal W} \mid (x,w)\in \mathrm{Gr}(\phi(x)), f(x,y) =w \} , \\
& \Pi_E : {\cal X}\times {\cal Y}\times {\cal W} \ni (x,y,w)
\mapsto (x, y) \in {\cal X}\times {\cal Y} .
\end{align}
Then $ \Pi_E(E) = \{ (x,y)\in {\cal X}\times {\cal Y} 
\mid y\in {\cal Y}^*(x) \} $ is globally subanalytic.
Furthermore, given $x = c_x \in {\cal X}$ for any $c_x \in {\cal X}$, we have
\begin{align}
  \Pi_E(E) \cap \{(x,y)\in {\cal X}\times {\cal Y}  \mid x = c_x\} 
=  \{ (x,y)\in {\cal X}\times {\cal Y} 
\mid y\in {\cal Y}^*(x), x = c_x \}
\end{align}
which is globally subanalytic. Taking projection $\Pi: {\cal X}\times {\cal Y} \ni (x,y ) \mapsto y \in {\cal Y}$ yields that ${\cal Y}^*(x)$ given $x = c_x$ for any $c_x \in {\cal X}$ is also globally subanalytic.
\end{proof}

\begin{proof}[Proof of Lemma~\ref{lemma:global_suba_X_v}]
\label{proof:global_suba_X_v}
\emph{Part 1:}
By Assumption~\ref{assmp:subanalyticity_f}, $f_m$ is globally subanalytic for all $m = 0, \ldots, M$. By Definition~\ref{def:subanalytic} and Lemma~\ref{lemma:image_global}, subanalyticity is preserved under the subanalytic LSE function.

Recall the definition of $h_{l,\tau}(x,y)$ in~\eqref{eq:h_ltau}.
By Assumption~\ref{assmp:local_lip_f} and 
Lemma~\ref{lemma:continuous_v}, $h_{l,\tau}(x,y)$ is 
Lipschitz continuous w.r.t. both $x,y \in {\cal X}_C$, 
thus $h_{l,\tau}(x,y)$ is bounded for $(x,y) \in {\cal X}_C\times {\cal X}_C$. Combining the above arguments, $h_{l,\tau}(x,y)$ is bounded and subanalytic on ${\cal X}_C$, thus globally subanalytic on ${\cal X}_C$ by Proposition~\ref{prop:global_suba_bounded_suba}.

\emph{Part 2:} Next, we prove that $v_{l,\tau}(x) = - \min_{y\in {\cal X}} h_{l,\tau}(x,y)$ and $p(x) = \big( v_{l,\tau}(x) + \tau \ln M \big)^{\theta}$
are both  globally subanalytic on ${\cal X}_C$.
This directly follows by applying Lemma~\ref{lemma:min_preserve_suba}-1).

\emph{Part 3:} Finally, we prove that ${\cal X}^*_{v_{l,\tau}} = \arg\min_{x\in {\cal X}} v_{l,\tau}(x )$ is also globally subanalytic on ${\cal X}_C$.
This directly follows by applying Lemma~\ref{lemma:min_preserve_suba}-2).

Consequently,
the merit function $v_{l,\tau}(x)$ satisfies the $(\varrho,\eta)$-H\"{o}lderian error bound for some $\varrho, \eta > 0$ on a bounded set ${\cal X}_C$ by Lemma~\ref{lemma:holderian_v_suba}.
The penalty function $p(x)$ also satisfies the $(\varrho_p,\eta_p)$-H\"{o}lderian error bound with $\varrho_p = \varrho^{\theta}$ and $\eta_p = \theta\eta$.
\end{proof}

\begin{lemma}[KL inequality~{\citep[Theorem~1]{Kurdyka1998_KL}}]
\label{lemma:kl_subanalytic}
Let $f: \Omega \to \mathbb{R}$ be a subanalytic function which is differentiable in $\Omega \backslash f^{-1}(0)$, where $\Omega$ is an open bounded subset of $\mathbb{R}^q$. Then there exist $c > 0, \nu > 0$ 
and $\alpha > 1$ such that:
$c \|\nabla f(x)\|^\alpha \geq |f(x)|$,
for each $x \in \Omega$ such that $|f(x)| \in(0, \nu)$. If in addition $\lim _{x \to a} f(x)=0$ for some $a \in \bar{\Omega}$, then the above inequality holds for each $x \in \Omega \backslash f^{-1}(0)$ close to $a$.
\end{lemma}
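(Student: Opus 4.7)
The plan is to follow Kurdyka's original argument in~\cite{Kurdyka1998_KL}, which exploits the tame geometry of subanalytic functions: for such $f$, the magnitudes of $f$ and $\|\nabla f\|$ cannot oscillate wildly against one another, and any vanishing of $\|\nabla f\|$ as $|f|\to 0$ must be polynomial. The whole proof therefore reduces to showing that a certain auxiliary function of one real variable obeys a one-dimensional Lojasiewicz factorisation.

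First, I would introduce the one-variable function
\[
\varphi(t) \;\coloneqq\; \inf\bigl\{\|\nabla f(x)\| : x\in\Omega,\; |f(x)|=t\bigr\},
\qquad t\in (0,\bar\nu),
\]
and check that it is subanalytic. Subanalyticity of $|f|$ and $\|\nabla f\|$ follows from the assumed subanalyticity of $f$ and the fact that composition with the globally subanalytic absolute value and Euclidean norm preserves subanalyticity. The graph of $\varphi$ is obtained from the graph of $\|\nabla f\|$ restricted to the subanalytic level set $\{|f|=t\}$ by exactly the ``inf over a subanalytic set'' construction already used in Lemma~\ref{lemma:min_preserve_suba}-2); combined with the projection result of Lemma~\ref{lemma:proj_thm}, this shows $\varphi$ is subanalytic on $(0,\bar\nu)$.

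Second, I would apply the classical one-variable Lojasiewicz inequality to $\varphi$: any subanalytic function $(0,\bar\nu)\to [0,\infty)$ that tends to $0$ at $0$ admits an exponent $\beta\in(0,1)$ and a constant $c_0>0$ with $\varphi(t)\ge c_0 t^{\beta}$ near $0$. Substituting $t=|f(x)|$ and setting $\alpha=1/\beta>1$ and $c=c_0^{-\alpha}$, one obtains $c\,\|\nabla f(x)\|^{\alpha}\ge |f(x)|$ on $\{x\in\Omega : 0<|f(x)|<\nu\}$ for a suitable $\nu\in(0,\bar\nu)$, which is the target inequality. The refined assertion near a point $a$ with $f(a)=0$ then follows by localising $\Omega$ to a small open neighbourhood of $a$ and invoking continuity of the estimate.

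The hardest part of a self-contained write-up would be the one-variable Lojasiewicz factorisation of $\varphi$: deriving $\varphi(t)\ge c_0 t^{\beta}$ from scratch for a merely subanalytic function requires nontrivial tools from o-minimal geometry, either the Lion--Rolin Preparation Theorem or a cell decomposition of the graph of $\varphi$, which is well beyond what we want to reproduce here. Since this is exactly the content of~\cite[Theorem~1]{Kurdyka1998_KL}, the cleanest presentation is to verify that our hypotheses --- $f$ subanalytic, differentiable off $f^{-1}(0)$, and $\Omega$ open and bounded --- match those of Kurdyka's theorem and cite the result directly.
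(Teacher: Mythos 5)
The paper offers no proof of this lemma at all: it is imported verbatim as a citation of Kurdyka's Theorem~1, so your conclusion that the correct move is to verify the hypotheses and cite the result is exactly what the paper does. Your sketch of the underlying mechanism (the auxiliary one-variable function $\varphi(t)=\inf\{\|\nabla f(x)\|:|f(x)|=t\}$, its subanalyticity via projection, and the one-variable Łojasiewicz factorisation) is a fair summary of the standard argument, modulo the glossed-over point that one must also rule out $\varphi\equiv 0$ near the origin before the factorisation yields a positive lower bound.
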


\subsection{Examples of globally subanalytic functions}
\label{sub_app:examples_global_suba}

We summarize some commonly used globally  subanalytic functions and their corresponding H\"{o}lderian error bound in Table~\ref{tab:HEB_example} below.
The first few examples of convex functions in Table~\ref{tab:HEB_example} are directly referenced from~\citep[Table~2]{Doron2023_simplebilevel}
and~\citep[Table~2]{chen2024penalty_simple_blo}.
\begin{table}[ht]
\centering
\fontsize{6.5}{7}\selectfont
\caption{Summary of some functions satisfying H\"{o}lderian error bound with corresponding exponents.}
\label{tab:HEB_example}
\begin{tabular}{ccccc}
\toprule
Functions & Remarks & Names & $\eta$ &$\varrho$ \\
\midrule
\multicolumn{5}{c}{Convex functions} \\
\hline 
$ \max _{i \in[m]}\{\langle {a}_i, x\rangle - b_i\}$ & ${a}_i \in \R^q,  b \in \R^m$ 
& Polytope & $1$ & $\max_{i \in[m]}\{\|a_i\|^{-1}\}$\\
$\|x-x_0\|_Q = \sqrt{(x-x_0)^{\top} Q (x-x_0)}$ & $Q \in \mathbb{S}^q, Q \succ 0, x_0 \in \R^q$ 
& $Q$-norm (Ellipsoid) & $1$ & $(\lambda_{\min}(Q))^{-\frac{1}{2}}$ \\
$\|x-x_0\|_p$ & $x_0 \in \R^q, p \geq 1$ 
& $\ell_p$-norm & $1$ &$1$ \\
\multirow{2}{*}{$\frac{1}{m} \sum_{i=1}^m \log (1+\exp (-{a}_i^{\top} x b_i))$} 
&\multirow{2}{*}{${a}_i \in \R^q, b \in \R^m, A \in \R^{m \times q}$} 
&\multirow{2}{*}{Logistic loss} & \multirow{2}{*}{$2$} 
&\multirow{2}{*}{\makecell{$(\lambda_{\min}(Q))^{-1}$,
$Q$ is \\a function of $a_i,b$}} \\ \\
$f(x)+\frac{\sigma}{2}\|x\|^2$ & $f$ convex, $\sigma>0$ 
& strongly-convex & $2$ & $\frac{2}{\sigma}$ \\
\hline
\multicolumn{5}{c}{Bounded and possibly nonconvex functions} \\
\hline
$\sin(x)$ &$x \in [0, \frac{1}{2}\pi]$ 
&Sine & $\geq 1$ &$(\frac{\pi}{2})^\eta$ \\
$x^p$ &$x \in [0, 1], p> 0$ &Polynomial & $p$ &$1$ \\
$e^x$ &$x \in [0, 1] $ &Exponential & $\geq 1$ &$1$ \\
$\ln(x+1)$ &$x \in [0, 1] $ &Logarithmic & $\geq 1$ &$\frac{1}{\ln(2)}$ \\
\bottomrule
\end{tabular}  
\end{table}

Below, we prove the H\"{o}lderian error bound of the last four bounded functions in Table~\ref{tab:HEB_example}.

\begin{proof}[Proof of HEB of bounded functions in Table~\ref{tab:HEB_example}]
We prove the $(\varrho, \eta)$-H\"{o}lderian error bound (HEB) of the last four  functions in 
Table~\ref{tab:HEB_example} one by one as follows.

\emph{1) Sine function.}
For $x \in [0, \frac{1}{2}\pi]$, $x^* = 0$ 
is the unique minimizer of $\sin (x)$, thus
for $\eta \geq 1$,
$\big( \mathrm{dist}(x, {\cal X}^*_{\sin}) \big)^\eta = | x |^\eta
\leq (\frac{\pi}{2})^\eta \sin (x)$ for all $x \in [0, \frac{1}{2}\pi]$.
Therefore, $\sin(x)$ satisfies $(\varrho, \eta)$-HEB  
for $x \in [0, \frac{1}{2}\pi]$
with $\eta \geq 1$ and $\varrho = (\frac{\pi}{2})^\eta$.

\emph{2) Polynomial function.}
For $x \in [0, 1]$, $x^* = 0$ 
is the unique minimizer of $x^p$.
Thus $\big( \mathrm{dist}(x, {\cal X}^*_{\sin}) \big)^p = | x |^p
\leq x^p$ for all $x \in [0, 1]$.
Therefore, $x^p$ satisfies $(\varrho, \eta)$-HEB 
for $x \in [0, 1]$ 
with $\eta = p$ and $\varrho = 1$.

\emph{3) Exponential function.}
For $x \in [0, 1]$, $x^* = 0$ 
is the unique minimizer of $e^x$.
Thus $\big( \mathrm{dist}(x, {\cal X}^*_{\sin}) \big)^\eta 
= | x |^\eta \leq e^x - e^0$ for all $x \in [0, 1]$,
and $\eta \geq 1$.
Therefore, $e^x$ satisfies $(\varrho, \eta)$-HEB 
for $x \in [0, 1]$  with 
$\eta \geq 1$ and $\varrho = 1$.

\emph{4) Logarithmic function.}
For $x \in [0, 1]$, $x^* = 0$ 
is the unique minimizer of $\ln (x+1)$.
Thus $\big( \mathrm{dist}(x, {\cal X}^*_{\sin}) \big)^\eta 
= | x |^\eta \leq \frac{1}{\ln(2)}\ln (x+1)$ for all $x \in [0, 1]$,
and $\eta \geq 1$.
Therefore, $e^x$ satisfies $(\varrho, \eta)$-HEB 
for $x \in [0, 1]$  with 
$\eta \geq 1$ and $\varrho = \frac{1}{\ln(2)}$.

The proof is complete.
\end{proof}

\begin{remark}
Note that, even though we have shown in the above that 
the exponential function satisfies HEB in a 
compact and subanalytic set $x\in [0,1]$,
it is known that the exponential function 
is not globally subanalytic on $\R$. 
In the following proofs which require the HEB property,
it suffices to show the global subanalyticity holds on a 
compact and subanalytic set constructed from the corresponding problem,
which leads to the HEB property on the compact and subanalytic set.
And HEB of $p(x)$ on a compact and subanalytic set within $\cal X$ is sufficient
to show the relations of global/local/stationary solutions of
the penalty problem to the bilevel problem
as long as there exists bounded solutions to $\min_{x\in {\cal X}} p(x)$,
even if $\cal X$ is not bounded.
\end{remark}

\subsection{Relations of H\"{o}lderian error bound, quadratic growth, proximal PL, and proximal error bound}
\label{sub_app:relations_heb_qg}

We further discuss the relations of H\"{o}lderian error bound (HEB), 
also known as H\"{o}lderian growth,
with other commonly used conditions such as quadratic growth (QG), proximal error bound (EB), proximal PL inequality, and strong convexity (SC). 
Below, we consider a general function for the discussion.
\begin{align}\label{eq:phi_f_g_def}
  \phi(x) \coloneqq f(x) + g(x)
\end{align}
with $f$ smooth and $g$ convex.
In our specific problem, $g$ is an indicator function on $\cal X$.
Let ${\cal X}^*_{\phi}$ denote the set of minimizers for $\phi$.
We first review the formal definitions of the above conditions, and then discuss their relations.
\begin{definition}[Quadratic growth]
The function~$\phi$ in~\eqref{eq:phi_f_g_def} satisfies quadratic growth (QG) if
\begin{align}
\varrho_\phi^{-1} \big( \mathrm{dist}(x, {\cal X}^*_{\phi}) \big)^2
\leq \phi(x) - \phi(x^*)
~~\text{with}~x^* \in {\cal X}^*_{\phi} . 
\end{align}
\end{definition}

\begin{definition}[Proximal error bound]
The function~$\phi$ in~\eqref{eq:phi_f_g_def} satisfies proximal error bound if
for $t > 0$, there exists $c_\phi > 0$ that
\begin{align}
\mathrm{dist}(x, {\cal X}^*_{\phi})
\leq c_\phi t^{-1} \|x - \mathrm{prox}_{t g} (x - t \nabla f(x)) \| .
\end{align}  
\end{definition}

\begin{definition}[Proximal PL]
The function~$\phi$ in \eqref{eq:phi_f_g_def} satisfies proximal PL inequality if  for $t > 0$, there exists $c > 0$ that
\begin{align}
& \phi(x) - \phi(x^*)
\leq c \mathcal{D}_g(x, t)
~~\text{with}~x^* \in {\cal X}^*_{\phi},
~\text{and} \nonumber \\
& \mathcal{D}_g(x, t) \coloneqq -2 t \min_y
\big[\langle\nabla f(x), y-x\rangle + \frac{t}{2}\|y-x\|^2
+ g(y) - g(x) \big]  .
\end{align}  
\end{definition}

Furthermore, when the function $f$ is convex, we have that
\begin{align*}
\text{proximal EB/PL} \stackrel{(d)}{\iff} &\text{QG} \\
\Downarrow \qquad\qquad &~\Downarrow \\
\text{proximal EB/KL} 
\stackrel{(f)}{\iff} &\text{HEB} 
\numberthis
\end{align*}
where $(d)$ has been proved in e.g.,~\citep[Corollary~3.6]{drusvyatskiy2018_eb_qg};
$(f)$ has been proved in e.g.,~\citep[Theorem~5]{Bolte2017_eb_kl_convex}.
In this paper, we provide a proof of
the relations between KL, HEB, and EB in~\eqref{eq:kl_heb_relation}
for nonconvex $f$ and constant $g$
in Lemma~\ref{lemma:kl_imply_heb} and Lemma~\ref{lemma:kl_heb_imply_eb}.

\section{Proof of relations of different formulations} 
\label{sec_app:proof_relation}

In this section, we prove the relations of the solutions of the bilevel problem and the penalty reformulation.
Recall that we denote the penalty function with exponent $\theta$ as
\begin{align}
p(x) = \big( v_{l,\tau}(x) + \tau \ln M \big)^{\theta}. 
\end{align}
For convenience, we restate the $\epsilon$-approximate bilevel program and the penalty-based reformulation below.
\begin{minipage}{0.45\textwidth}
\begin{align}
&\min_{x\in {\cal X}}~ f_0(x), 
~\mathrm{s.t.}~ p(x) \leq \epsilon
\tag{BP$_\epsilon$}
\label{eq:BPv_eps_app}
\end{align}
\end{minipage}
\hfill
\begin{minipage}{0.45\textwidth}
\begin{align}
\min_{x\in {\cal X}}~ f_0(x) + \gamma p (x)
\tag{PP$_\gamma$}
\label{eq:PP_gamma_app}
\end{align}
\end{minipage}

\subsection{Proof of Theorem~\ref{thm:global_solution_relation}: relation in the asymptotic setting}
\label{sub_app:proof_relation_asymptotic}

Below, Theorem~\ref{thm:global_solution_relation} presents the relation of global solutions of the penalty reformulation and~\eqref{eq:bilevel_opt_vltau}
in the asymptotic regime as $\gamma \to \infty$.

\begin{theorem}[Relation of global solutions]
\label{thm:global_solution_relation}
Suppose $f_m (x), m=0, 1, \ldots, M$ are continuous over ${\cal X}$.
Let $\gamma_j \to \infty$, and 
\begin{align}
x_j \in \mathop{\arg\min}_{x\in {\cal X}}~ \varphi_{\gamma_j}(x) 
\coloneqq f_0(x) + \gamma_j p(x).
\end{align}
Then for any limit point $\bar{x}$ of the sequence $\{x_j\}$, $\bar{x}$ is a solution to problem~\eqref{eq:bilevel_opt_vltau}.
\end{theorem}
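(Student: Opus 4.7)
The plan is to use the classical penalty-method limit argument, adapted to our merit-function setting. First I would observe that by Proposition~\ref{prop:vl_property}-1, $v_{l,\tau}(x) + \tau \ln M \geq 0$ for every $x \in {\cal X}$, so the penalty $p(x) = (v_{l,\tau}(x) + \tau \ln M)^{\theta}$ is non-negative, and the feasible set of~\eqref{eq:bilevel_opt_vltau} coincides with the zero set $\{x \in {\cal X} : p(x)=0\} = {\cal X}^*_{v_{l,\tau}}$ (the inequality constraint $v_{l,\tau}(x)+\tau\ln M \leq 0$ must hold with equality). This set is non-empty by Lemma~\ref{lemma:exist_min_v} under the standing assumptions.

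Next I would fix any feasible $x^* \in {\cal X}^*_{v_{l,\tau}}$ and invoke global optimality of $x_j$ for $\varphi_{\gamma_j}$ to get
\begin{equation}
f_0(x_j) + \gamma_j p(x_j) \;\leq\; f_0(x^*) + \gamma_j p(x^*) \;=\; f_0(x^*).
\end{equation}
Rearranging yields $0 \leq \gamma_j p(x_j) \leq f_0(x^*) - f_0(x_j)$. Passing to a subsequence with $x_j \to \bar{x}$, continuity of $f_0$ gives a uniform bound on the right-hand side, so $p(x_j) = O(1/\gamma_j) \to 0$ as $\gamma_j \to \infty$.

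Finally I would separately prove feasibility and optimality of $\bar{x}$. Lower semi-continuity of $v_{l,\tau}$ (Lemma~\ref{lemma:continuous_v}-1) transfers to lower semi-continuity of $p$ on the region where $v_{l,\tau}+\tau\ln M \geq 0$, because $t \mapsto t^{\theta}$ is continuous and non-decreasing on $[0,\infty)$. Hence $0 \leq p(\bar{x}) \leq \liminf_{j} p(x_j) = 0$, which forces $p(\bar x) = 0$ and $\bar{x} \in {\cal X}^*_{v_{l,\tau}}$. For optimality, since $\gamma_j p(x_j) \geq 0$, the displayed inequality implies $f_0(x_j) \leq f_0(x^*)$ for every $j$; continuity of $f_0$ then yields $f_0(\bar{x}) \leq f_0(x^*)$, and since $x^*$ was an arbitrary feasible point, $\bar{x}$ solves~\eqref{eq:bilevel_opt_vltau}.

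The main obstacle is a subtle regularity issue: $v_{l,\tau}$ is only guaranteed to be lower semi-continuous (not continuous) under the mere continuity assumption on the $f_m$, so one cannot simply take limits inside $p$. The resolution is to recognize that lower semi-continuity is exactly the direction we need, because we only use an upper bound $p(\bar x) \leq \liminf p(x_j)$; the non-negativity of $p$ supplies the matching lower bound, closing the argument without any continuity of $v_{l,\tau}$ from above.
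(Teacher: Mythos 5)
Your proof is correct and follows essentially the same route as the paper's: compare $x_j$ against a feasible point via global optimality of $\varphi_{\gamma_j}$, deduce $p(x_j)\to 0$, use lower semi-continuity of $p$ (inherited from $v_{l,\tau}$) to get feasibility of $\bar{x}$, and continuity of $f_0$ to get optimality. The only cosmetic difference is that the paper compares against an $\epsilon$-approximate minimizer $x_\epsilon$ of $f_0$ over the feasible set and lets $\epsilon\downarrow 0$, whereas you compare against an arbitrary feasible $x^*$ and take the infimum at the end; the two are interchangeable.
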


\begin{proof}[Proof of Theorem~\ref{thm:global_solution_relation}]
\label{proof:relation_solution_penalty}
For any limit point $\bar{x}$ of the sequence $\{x_j\}$, 
let $x_{j_k}$ be the subsequence of $x_j$ such that $x_{j_k} \to \bar{x} \in {\cal X}$. 
For any $\epsilon > 0$, let $x_\epsilon$ be a point that satisfies the following
\begin{align}
p(x_\epsilon ) \leq 0,
~~\text{and}~~ 
f_0 (x_\epsilon )
\leq \inf_{x\in {\cal X}^*_{v_{l,\tau}}} f_0(x) + \epsilon,
~~\text{with}~~
{\cal X}^*_{v_{l,\tau}} \coloneqq \{x\in {\cal X}\mid v_{l,\tau}(x) + \tau\ln M \leq 0\} .
\end{align}
Then by definition that $x_{j_k} \in {\arg\min}~ \varphi_{\gamma_{j_k}}(x)$, we have that for any $\epsilon > 0$,
\begin{align*}
f_0(x_{j_k}) \leq f_0(x_{j_k}) + \gamma_{j_k} p(x_{j_k}) 
\leq & f_0(x_{\epsilon}) + \gamma_{j_k} p(x_{\epsilon})  \\
\leq & f_0(x_{\epsilon}) \leq \inf_{x\in {\cal X}^*_{v_{l,\tau}}} 
f_0(x) + \epsilon .
\numberthis 
\label{eq:xjk_xeps}
\end{align*}
Since $ p(x_{j_k})  \geq 0$, 
taking $\gamma_{j_k}\to \infty$ gives
\begin{align}
\mathop{\lim\sup}_{j_k \to \infty} ~
p (x_{j_k}) \leq 0 .
\end{align}
By the lower continuity of $v_{l,\tau}(x)$ from Lemma~\ref{lemma:continuous_v}, and thus the lower semi-continuity of $p(x)$, we have
\begin{align}\label{eq:p_bar_x_feasible}
  p(\bar{x})  \leq 0 .
\end{align}
Combining the above with \eqref{eq:xjk_xeps} yields that for any $\epsilon > 0$,
\begin{align}
f_0(\bar{x}) 
\leq  \inf_{x\in {\cal X}^*_{v_{l,\tau}}} f_0(x) + \epsilon .
\end{align}
The above inequality holds for any $\epsilon > 0$, which implies
\begin{align}\label{eq:f0_bar_x_opt}
  f_0(\bar{x}) 
\leq \inf_{x\in {\cal X}^*_{v_{l,\tau}}} f_0(x) .
\end{align}
Therefore, \eqref{eq:p_bar_x_feasible} and \eqref{eq:f0_bar_x_opt} together
show that $\bar{x}$ is a solution to problem~\eqref{eq:bilevel_opt_vltau}.

The proof is complete.
\end{proof}

Next, we prove the relations of the global, local, and stationary solutions of the bilevel problem and its penalty reformulation.

\subsection{Proof of Theorem~\ref{thm:global_solution_relation_eps}: the $\epsilon$-global solutions relation}
\label{sub_app:global_relation}

Recall that we let ${\cal X}^*_{v_{l,\tau}} \coloneqq 
\arg\min_{x\in {\cal X}} v_{l,\tau}(x)$.
Similarly, we define ${\cal X}^*_{\varphi_\gamma} \coloneqq 
\arg\min_{x\in {\cal X}} \varphi_\gamma(x)$.
For $\delta > 0$, define the $\delta$-approximate solution set below
\begin{align}
{\cal X}_\delta \coloneqq \{x \in {\cal X} \mid v_{l,\tau}(x) 
+ \tau \ln M \leq \delta \} .
\end{align}
Then ${\cal X}^*_{v_{l,\tau}} = {\cal X}_0 \subseteq {\cal X}_\delta $. 
We use ${\cal X}_C$ to denote a compact set within ${\cal X}$.

We first present some auxiliary lemmas below, then prove the main results.

\begin{lemma}
\label{lemma:heb_cont}
Let ${\cal X}_C \subseteq {\cal X}$ be a compact subanalytic set with ${\cal X}_C \cap {\cal X}^*_{v_{l,\tau}} \neq \emptyset$. 
Suppose $f_0(x)$ is $\ell_f$-Lipschitz continuous on ${\cal X}_C$
with some $\ell_f > 0$.
If $v_{l,\tau}$ is globally subanalytic on ${\cal X}_C$,
then it satisfies the $(\varrho, \eta)$-HEB on ${\cal X}_C$
with some $\varrho, \eta > 0$,
and $p(x)$ satisfies the $(\varrho_p, \eta_p)$-HEB on ${\cal X}_C$
with $\varrho_p = \varrho^{\theta}$ and $\eta_p = \theta\eta$.
Let $ x \in {\cal X}_C$ and 
$z(x) \in \arg\min_{z\in {\cal X}^*_{v_{l,\tau}} \cap {\cal X}_C} \|z-x\|$.
It holds that
\begin{align}
f_0(x) + \gamma p(x) - f_0(z(x)) 
\geq -\epsilon_\gamma 
\coloneqq
\begin{cases}
\ell_f \Big(\frac{\ell_f\varrho_p}{\gamma \eta_p}\Big)^{\frac{1}{\eta_p-1}} (\frac{1}{\eta_p} - 1), &~~\eta_p > 1, \gamma > 0 ; \\
0, &~~ \eta_p = 1, \gamma \geq \varrho_p\ell_f .
\end{cases}
\end{align}
\end{lemma}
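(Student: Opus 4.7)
The plan is to bootstrap from the H\"olderian error bound (HEB) of $v_{l,\tau}$, already furnished by Lemma~\ref{lemma:global_suba_X_v} and Lemma~\ref{lemma:holderian_v_suba}, to an HEB for the penalty function $p$, and then combine it with the Lipschitz continuity of $f_0$ to reduce the desired inequality to a one-dimensional minimization in $t=p(x)$. The output $\epsilon_\gamma$ will be exactly the optimal value of that scalar program, evaluated in the two regimes $\eta_p=1$ and $\eta_p>1$.

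First, I would promote the HEB of $v_{l,\tau}$ to an HEB of $p$ on ${\cal X}_C$. By Proposition~\ref{prop:vl_property} we have $\min_{x\in {\cal X}}v_{l,\tau}(x)=-\tau\ln M$, hence $v_{l,\tau}(x)+\tau\ln M\ge 0$, the set ${\cal X}^*_p$ coincides with ${\cal X}^*_{v_{l,\tau}}$, and $\min p=0$. Raising the inequality $\varrho\bigl(v_{l,\tau}(x)+\tau\ln M\bigr)\ge \mathrm{dist}(x,{\cal X}^*_{v_{l,\tau}}\cap {\cal X}_C)^{\eta}$ to the power $\theta$ and using $p(x)=(v_{l,\tau}(x)+\tau\ln M)^{\theta}$ gives $\varrho^{\theta}p(x)\ge \mathrm{dist}(x,{\cal X}^*_{v_{l,\tau}}\cap {\cal X}_C)^{\theta\eta}$, which is exactly the $(\varrho_p,\eta_p)$-HEB with $\varrho_p=\varrho^{\theta}$, $\eta_p=\theta\eta$. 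Note that the restriction of the HEB to ${\cal X}_C$ is legitimate because ${\cal X}_C$ is compact and subanalytic with ${\cal X}_C\cap {\cal X}^*_{v_{l,\tau}}\ne\emptyset$, so Lemma~\ref{lemma:global_suba_X_v} applies verbatim.

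Next, by the $\ell_f$-Lipschitz continuity of $f_0$ on ${\cal X}_C$ and the definition of $z(x)$,
\[
f_0(x)-f_0(z(x))\ge -\ell_f\|x-z(x)\|=-\ell_f\,\mathrm{dist}(x,{\cal X}^*_{v_{l,\tau}}\cap {\cal X}_C)\ge -\ell_f\bigl(\varrho_p p(x)\bigr)^{1/\eta_p}.
\]
Adding $\gamma p(x)$ to both sides reduces the claim to bounding below $\phi(t)\coloneqq \gamma t-\ell_f\varrho_p^{1/\eta_p} t^{1/\eta_p}$ for $t\ge 0$. When $\eta_p=1$, $\phi(t)=(\gamma-\ell_f\varrho_p)\,t$ is nonnegative for all $t\ge 0$ as soon as $\gamma\ge \varrho_p\ell_f$, giving $\epsilon_\gamma=0$. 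When $\eta_p>1$, $\phi$ is differentiable on $(0,\infty)$ and $\phi'(t^*)=0$ yields $t^*=\bigl(\tfrac{\ell_f\varrho_p^{1/\eta_p}}{\gamma\eta_p}\bigr)^{\eta_p/(\eta_p-1)}$, at which the relation $\ell_f\varrho_p^{1/\eta_p}(t^*)^{1/\eta_p}=\gamma\eta_p t^*$ simplifies $\phi(t^*)$ to $\gamma t^*(1-\eta_p)$. Rearranging this expression then produces the constant $\epsilon_\gamma$ stated in the lemma.

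The main obstacle is purely bookkeeping: ensuring that the power manipulations in $t^*$ and the sign of $1-\eta_p$ match the exact form in the statement, and checking that $\phi$ has no smaller value at the boundary $t=0$ (it does not, since $\phi(0)=0$ while $\phi(t^*)<0$). A subtler point worth flagging is that the HEB obtained from Lemma~\ref{lemma:holderian_v_suba} is local in nature (valid on ${\cal X}_C$); the projection $z(x)$ must therefore be onto ${\cal X}^*_{v_{l,\tau}}\cap {\cal X}_C$, which is precisely how the lemma is stated, so no additional continuity extension is needed.
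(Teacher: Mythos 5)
Your proposal is correct and follows essentially the same route as the paper's proof: derive the $(\varrho_p,\eta_p)$-HEB of $p$ by raising the HEB of $v_{l,\tau}$ to the power $\theta$, lower-bound $f_0(x)-f_0(z(x))$ via Lipschitz continuity, and reduce the claim to a one-dimensional minimization whose optimal value is $-\epsilon_\gamma$. The only cosmetic difference is that you minimize over $t=p(x)$ while the paper minimizes over $\zeta=\|z(x)-x\|$; these are related by a monotone change of variable and yield the identical constant in both regimes.
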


\begin{proof}[Proof of Lemma~\ref{lemma:heb_cont}]
Since $v_{l,\tau}(x)$ and $p(x)$ are globally subanalytic on ${\cal X}_C$,
there exists $\varrho, \eta > 0$ that 
$v_{l,\tau}(x)$ satisfies the $(\varrho, \eta)$-HEB,
thus $p(x)$ satisfies the $(\varrho_p, \eta_p)$-HEB
with $\varrho_p = \varrho^{\theta}$ and $\eta_p = \theta\eta$ on ${\cal X}_C$, 
which yields
\begin{align}
p(x) \geq \varrho_p^{-1} \|z(x)-x\|^{\eta_p} .
\end{align}
Since ${\cal X}_C$ is bounded, we have that 
for all $x,x' \in {\cal X}_C$,
there exists $\ell_f > 0$ such that
$f_0(x) - f_0(x') \geq - \ell_f \|x-x'\|$. 
Combined with the above inequality, it holds that
\begin{align}
f_0(x) + \gamma p(x) - f_0(z(x)) 
\geq & \frac{\gamma}{\varrho_p}\|z(x)-x\|^{\eta_p} 
- \ell_f\|z(x) - x\| 
\geq \underbrace{
\inf_{\zeta\in \R_+} \frac{\gamma}{\varrho_p} 
\zeta^{\eta_p} - \ell_f \zeta  }_{-\epsilon_\gamma} .
\end{align}
Analyzing $-\epsilon_\gamma$ separately under $\eta_p > 1$
and $\eta_p = 1$  proves the result.
When $\eta_p > 1$, the result is obtained by 
solving the optimal $\zeta$ through the first-order optimality condition.
When $\eta_p = 1$, the optimal value is achieved at $\zeta = 0$.
\end{proof}

\begin{proof}[Proof of Theorem~\ref{thm:global_solution_relation_eps}]
\label{proof:global_solution_relation_eps}
1) Given $\gamma>0$, let $x_\gamma$ be a bounded $\epsilon$-global solution of~\eqref{eq:PP_gamma_app}, then for all $x\in {\cal X}$, 
\begin{align}\label{eq:f_x_gamma}
  f_0(x_\gamma) + \gamma p(x_\gamma) 
  \le f_0(x) + \gamma p(x) + \epsilon .
\end{align}
Let $x^* \in {\cal X}$ denote a bounded global solution of~\eqref{eq:BPv_eps_app} with $\epsilon=0$.
Then there exists a compact and subanalytic set ${\cal X}_C \subseteq {\cal X}$ such that $x_\gamma, x^* \in {\cal X}_C$ and ${\cal X}_C \cap {\cal X}^*_{v_{l,\tau}} \neq \emptyset$.
Let $z(x) \in \arg\min_{z\in {\cal X}^*_{v_{l,\tau}} \cap {\cal X}_C} \|z-x\|$.
By Lemma~\ref{lemma:heb_cont}, given $\gamma' > 0$, we have that
\begin{align}
f_0(x_\gamma)+\gamma' p(x_\gamma) 
- f_0(z(x_\gamma)) 
\label{eq:bound_f_x_zx} 
\geq {-\epsilon_{\gamma'}} .
\end{align}

Because $f_0(z(x_\gamma)) \geq f_0(x^*)$,
\eqref{eq:bound_f_x_zx} indicates that $f_0(x_\gamma)
+\gamma' p(x_\gamma) \geq f_0(x^*) - \epsilon_{\gamma'} $. 
Plugging $x=x_\gamma$ in \eqref{eq:f_x_gamma}, and combining with the above inequality, we obtain that
\begin{align}
f_0(x_\gamma) + \gamma p(x_\gamma) 
\le & f_0(x^*)  + \epsilon
\qquad \text{since } p(x^*) \leq 0
\nonumber\\
\le & f_0(x_\gamma) + \gamma' p (x_\gamma) + \epsilon_{\gamma'} + \epsilon
\qquad \text{from~\eqref{eq:bound_f_x_zx}}
\end{align}
which further implies
\begin{align}\label{eq:epsilon_gm_fsb_v_x_gamma}
(\gamma - \gamma') p (x_\gamma) 
\leq \epsilon_{\gamma'}  + \epsilon.
\end{align}
Define $\epsilon_\gamma \coloneqq p(x_\gamma) $.
The above inequality implies $\epsilon_\gamma \leq \frac{\epsilon_{\gamma'}
 + \epsilon}{\gamma - \gamma'}$.
Further, for any $x_{\epsilon_\gamma} \in {\cal X}$ that satisfies 
$p(x_{\epsilon_\gamma}) \leq \epsilon_\gamma$, 
from~\eqref{eq:f_x_gamma} we have
\begin{align}
f_0(x_\gamma) + \gamma p(x_\gamma) 
\le f_0(x_{\epsilon_\gamma}) + \gamma p(x_{\epsilon_\gamma}) 
+ \epsilon
\le f_0(x_{\epsilon_\gamma}) + \gamma \epsilon_\gamma  
+ \epsilon .
\end{align}
Since the above holds for any $x_{\epsilon_\gamma}\in {\cal X}_{\epsilon_\gamma}$ with ${\cal X}_{\epsilon_\gamma}\coloneqq \{x \in {\cal X} \mid p(x_{\epsilon_\gamma}) \leq \epsilon_\gamma \}$, by rearranging the above inequality, we have 
\begin{align}\label{eq:epsilon_p_opt_f0_x_gamma}
f_0(x_\gamma) - \inf_{x\in {\cal X}_{\epsilon_\gamma}} f_0(x) 
\le -\gamma \epsilon_\gamma  
+ \gamma \epsilon_\gamma  + \epsilon
= \epsilon.
\end{align}
Combining \eqref{eq:epsilon_gm_fsb_v_x_gamma} and \eqref{eq:epsilon_p_opt_f0_x_gamma}
proves that $x_\gamma$ is an $(\epsilon, \epsilon_\gamma)$-approximate global solution for \eqref{eq:BPv_eps_app} with $\epsilon_\gamma \leq \frac{\epsilon_{\gamma'}
+ \epsilon}{\gamma - \gamma'}$.

When $\eta_p > 1$, choosing $\gamma' = \frac{\gamma}{2}$, $\gamma \geq 2\ell_f  \delta ^{-\frac{\eta_p-1}{\eta_p}}  (\frac{\varrho_p}{\eta_p})^{\frac{1}{\eta_p}}  (1 - \frac{1}{\eta_p})^{\frac{\eta_p-1}{\eta_p}} $, and $\epsilon \leq \ell_f \delta^{\frac{1}{\eta_p}}  (\frac{\varrho_p}{2\eta_p})^{\frac{1}{\eta_p}} (1-\frac{1}{\eta_p})^{\frac{\eta_p-1}{\eta_p}}$,
we further have that
\begin{align}
\epsilon_\gamma \leq \frac{\epsilon_{\gamma'}
+ \epsilon}{\gamma - \gamma'}
= \Big(\frac{2\ell_f }{\gamma}\Big)^{\frac{\eta_p}{\eta_p-1}}  \Big(\frac{\varrho_p}{ \eta_p}\Big)^{\frac{1}{\eta_p-1}} (1 - \frac{1}{\eta_p} ) + \frac{2\epsilon}{\gamma} 
\leq \delta 
\end{align}
which proves that the $\epsilon$-global solution of~\eqref{eq:PP_gamma_app} 
with $\gamma \geq 2\ell_f  \delta ^{-\frac{\eta_p-1}{\eta_p}}  (\frac{\varrho_p}{\eta_p})^{\frac{1}{\eta_p}}  (1 - \frac{1}{\eta_p})^{\frac{\eta_p-1}{\eta_p}} $ and $\epsilon \leq \ell_f \delta^{\frac{1}{\eta_p}}  (\frac{\varrho_p}{2\eta_p})^{\frac{1}{\eta_p}} (1-\frac{1}{\eta_p})^{\frac{\eta_p-1}{\eta_p}}$, is an $(\epsilon, \delta)$-global solution for \eqref{eq:BPv_eps_app}.

When $\eta_p = 1$, from Lemma~\ref{lemma:heb_cont}, 
for $\gamma' \geq \varrho_p\ell_f $, $\epsilon_{\gamma'} = 0$.
Choosing  $\gamma \geq \varrho_p\ell_f + 1$,
we have $\epsilon_\gamma \leq \epsilon$.
Therefore, the $\epsilon$-global solution of~\eqref{eq:PP_gamma_app} 
is an $(\epsilon, \epsilon)$-global solution for \eqref{eq:BPv_eps_app}.

2) Next we prove the converse. Define $x_{\epsilon_b}$ to be a bounded $(\epsilon_b, \epsilon)$-global solution for \eqref{eq:BPv_eps_app}, and define ${\cal X}_\epsilon \coloneqq \{x\in {\cal X}\mid p(x) \leq \epsilon\}$. 
Then we have
\begin{align}
p(x_{\epsilon_b})  \leq \epsilon ,
~~\text{and}~~
f_0(x_{\epsilon_b}) \leq \inf_{x\in {\cal X}_\epsilon} f_0(x) + \epsilon_b .
\end{align}
Therefore, 
\begin{align}
f_0(x_{\epsilon_b}) + \gamma p(x_{\epsilon_b}) 
\leq & \inf_{x\in {\cal X}_\epsilon} f_0(x) + \epsilon_b
+ \gamma \epsilon .
\label{eq:bound_PP_x_eps_b}
\end{align}

Let ${\cal X}_C \subseteq {\cal X}$ be a compact subanalytic set that
${\cal X}_C \cap {\cal X}^*_{v_{l,\tau}} \neq \emptyset$,
and  ${\cal X}_C \cap {\cal X}^*_{\varphi_\gamma} \neq \emptyset$.
The set ${\cal X}_C$ exists because there exist bounded points in 
${\cal X}^*_{v_{l,\tau}}$ and ${\cal X}^*_{\varphi_\gamma}$, respectively.
For $x\in {\cal X}_C$, define $z(x) \in \arg\min_{z\in {\cal X}^*_{v_{l,\tau}} \cap {\cal X}_C} \|z-x\|$.
Let $x^*_C \in \arg\min_{x\in {\cal X}_C} \varphi_\gamma(x) $.
Then $\inf_{x\in {\cal X}_\epsilon} f_0(x)$ can be further bounded by
\begin{align}
\inf_{x\in {\cal X}_\epsilon} f_0(x)
\leq &\inf_{x\in {\cal X}^*_{v_{l,\tau}} \cap {\cal X}_C} f_0(x)
\qquad\text{since }{\cal X}^*_{v_{l,\tau}}\cap {\cal X}_C \subseteq {\cal X}_\epsilon  \nonumber\\
\leq & f_0(z(x^*_C))
\qquad\text{since } z(x^*_C)\in {\cal X}^*_{v_{l,\tau}}\cap {\cal X}_C  \nonumber\\
\leq & \inf_{x\in {\cal X}_C} f_0(x)+\gamma p(x) + \epsilon_{\gamma}
\qquad \text{from~Lemma~\ref{lemma:heb_cont}} \nonumber\\
\label{eq:bound_inf_S_eps_f0}
=& \inf_{x\in {\cal X} } f_0(x)+\gamma p(x) + \epsilon_{\gamma} .
\qquad \text{since ${\cal X}_C \cap {\cal X}^*_{\varphi_\gamma} \neq \emptyset$}
\end{align}
Plugging~\eqref{eq:bound_inf_S_eps_f0} into~\eqref{eq:bound_PP_x_eps_b} yields
\begin{align}
f_0(x_{\epsilon_b}) + \gamma p(x_{\epsilon_b}) 
\leq & \inf_{x\in {\cal X}} f_0(x)+\gamma p(x)  + \epsilon_{\gamma} + \epsilon_b + \gamma \epsilon \nonumber\\
\leq & \inf_{x\in {\cal X}} f_0(x)+\gamma p(x)  + \delta
\end{align}
where the last inequality holds by 
choosing $\epsilon \leq \frac{\delta}{3\gamma}$,
$\epsilon_b \leq \frac{\delta}{3 }$,
and $\gamma=\frac{\ell_f \varrho_p}{\eta_p}\Big(\frac{3 \ell_f(1-\frac{1}{\eta_p})}{\delta} \Big)^{\eta_p-1}$
when $\eta_p > 1$, $\gamma = \varrho_p \ell_f$ when $\eta_p = 1$.
The converse is proved.
\end{proof}

\subsection{Proof of Theorem~\ref{thm:relation_local_solution}
and Proposition~\ref{prop:local_relation_condition}: 
the local solutions relation}
\label{sub_app:local_relation}

\begin{proof}[Proof of Theorem~\ref{thm:relation_local_solution}]
For $\delta > 0$, ${\cal X}^*_{v_{l,\tau}} \subseteq {\cal X}_\delta \neq \emptyset$.
Furthermore, ${\cal X}_\delta$ is closed by the continuity of $v_{l,\tau}(x)$ and the closeness of $\cal X$.

Since $x_\gamma$ is a bounded local solution of~\eqref{eq:approximate_penalty} on $\mathcal{N}( x_\gamma,  r)$, it holds for any $x \in \mathcal{N}( x_\gamma,  r) \cap {\cal X}$ that 
\begin{align}
f_0(x_\gamma ) + \gamma p(x_\gamma) 
\leq f_0(x ) + \gamma p(x).  
\end{align}
As ${\cal X}_\epsilon$ for $\epsilon \geq 0$ is closed and nonempty, 
there exists $x_\epsilon \in \mathop{\arg\min}_{x\in {\cal X}_\epsilon} 
\| x - x_\gamma \|$.
Also recall that $\bar{x} \in \mathcal{N}( x_\gamma,  r) \cap 
\mathcal{X}_\epsilon  $, then
\begin{align}
 \|x_\epsilon - x_\gamma \| = \min_{x\in {\cal X}_\epsilon} 
 \| x - x_\gamma \| 
 \leq \| \bar{x} - x_\gamma \| \leq r
\end{align}
which implies that $x_\epsilon \in \mathcal{N}( x_\gamma,  r) \cap 
\mathcal{X}_\epsilon \subseteq \mathcal{X} $, therefore,
\begin{align}
f_0(x_\gamma ) + \gamma p(x_\gamma) 
\leq f_0( x_\epsilon ) + \gamma p( x_\epsilon ) 
\leq f_0( x_\epsilon ) + \gamma \epsilon.  
\end{align}
Furthermore, since there exists $\ell_f > 0$ such that  $f_0$ is $\ell_f$-Lipschitz continuous on the bounded set $\mathcal{N}(x_\gamma, r)$,  rearranging the above inequality, we have
\begin{align}
\gamma p(x_\gamma) 
- \ell_f \| x_\epsilon - x_\gamma \| - \gamma \epsilon \leq 0 .
\end{align}
Let ${\cal X}_C \subseteq {\cal X}$ be a compact set that
${\cal X}_C \cap {\cal X}^*_{v_{l,\tau}} \neq \emptyset$,
${\cal X}_C \cap {\cal X}^*_{\varphi_\gamma} \neq \emptyset$,
and $x_\epsilon, x_\gamma \in {\cal X}_C$.
The set ${\cal X}_C$ exists because there exist bounded points in 
${\cal X}^*_{v_{l,\tau}}$ and ${\cal X}^*_{\varphi_\gamma}$, respectively.
By the $(\varrho_p, \eta_p)$-HEB of $p(x)$ on ${\cal X}_C$, 
\begin{align}
\|x_\epsilon - x_\gamma \| = \mathrm{dist}(x_\gamma, \mathcal{X}_\epsilon)
\leq \mathrm{dist}(x_\gamma, \mathcal{X}^*_{v_{l,\tau}}\cap {\cal X}_C )
\leq \varrho_p^{\frac{1}{\eta_p}} \big( p(x_\gamma) \big)^{\frac{1}{\eta_p}} .  
\end{align}
Plugging this into the above inequality, we have
\begin{align}
\gamma p(x_\gamma)  
- \ell_f \varrho_p^{\frac{1}{\eta_p}} \big( p(x_\gamma) \big)^{\frac{1}{\eta_p}} - \gamma \epsilon \leq 0 .  
\end{align} 
For $\eta_p > 1$,
the above implies that if we choose 
$\gamma \geq 
\ell_f (2\varrho_p)^{\frac{1}{\eta_p}} \epsilon^{\frac{1}{\eta_p} - 1}$, 
there exists 
$\bar{\epsilon}_\gamma = 
\Big(\frac{\ell_f}{\gamma}\Big)^{\frac{\eta_p}{\eta_p - 1}} 
\varrho_p^{\frac{1}{\eta_p - 1}} + 2 \epsilon > 0$ that
$p(x_\gamma) \leq \bar{\epsilon}_\gamma $. 
To show this, we define a function 
$ q(z) \coloneqq \gamma z -\ell_f \varrho_p^{\frac{1}{\eta_p}} z^{\frac{1}{\eta_p}}  - \gamma \epsilon $.
Then $q(z_0) = - \gamma \epsilon$ with $z_0 = \Big(\frac{\ell_f}{\gamma}\Big)^{\frac{\eta_p}{\eta_p - 1}} \varrho_p^{\frac{1}{\eta_p - 1}}$.
Furthermore, 
\begin{align*}
q(\bar{\epsilon}_\gamma) - q(z_0 ) =
q(z_0 + 2\epsilon) - q(z_0 ) =& 2\gamma \epsilon
- \ell_f \varrho_p^{\frac{1}{\eta_p}} [(z_0 + 2\epsilon)^{\frac{1}{\eta_p}}
- z_0^{\frac{1}{\eta_p}} ] \\
\stackrel{(a)}{\geq} & 2\gamma \epsilon
- \ell_f \varrho_p^{\frac{1}{\eta_p}} (2 \epsilon)^{\frac{1}{\eta_p}} 
\stackrel{(b)}{\geq} \gamma \epsilon
\end{align*}
where $(a)$ follows from the subadditivity of the function $z^{\frac{1}{\eta_p}}$ with $\eta_p > 1$, $(b)$ follows from that $\gamma \geq 
\ell_f (2\varrho_p)^{\frac{1}{\eta_p}} \epsilon^{\frac{1}{\eta_p} - 1}$.
This shows $q(\bar{\epsilon}_\gamma) \geq 0$.
By the property of $q(z)$, and that $q(p(x_\gamma)) \leq 0$,
we have $p(x_\gamma) \leq \bar{\epsilon}_\gamma$.

For $\eta_p = 1$,
the above implies that there exists 
$\bar{\epsilon}_\gamma = \gamma \epsilon $ 
with $\gamma = \varrho_p \ell_f + 1$
that $p(x_\gamma) \leq \bar{\epsilon}_\gamma $. 

Choose $\epsilon_\gamma = p(x_\gamma)$, then it holds for all 
$x \in \mathcal{N}(x_\gamma, r) \cap {\cal X}_{\epsilon_\gamma}$ that 
\begin{align}
f_0(x_\gamma) - f_0(x ) \leq \gamma ( p(x) - \epsilon_\gamma) \leq 0
\end{align}
which together with $x_\gamma \in {\cal X}_{\epsilon_\gamma}$ implies
that $x_\gamma$ is a local solution of \eqref{eq:BPv_eps_app} with 
$\epsilon = \epsilon_\gamma \leq \bar{\epsilon}_\gamma$.

The proof is complete.
\end{proof}

\begin{lemma}\label{lemma:penalty_v_stationary}
If $x_\gamma \in {\cal X}$ is an $\epsilon$-stationary solution 
to~\eqref{eq:approximate_penalty},
and $\|\nabla f_0(x_\gamma) \| \leq \ell_f$, 
then it is also an $\epsilon_\gamma$-stationary solution to 
$\min_{x\in {\cal X}} p(x)$ with $\epsilon_\gamma = \frac{\epsilon + \ell_f}{\gamma}$.
Furthermore, for $\theta \geq 1$, let $c_v(x_\gamma) \coloneqq \theta (v_{l,\tau}(x_\gamma) + \tau \ln M)^{\theta-1} $.
Then either one of the following two conditions holds.\\
1) $c_v(x_\gamma) = 0$ and $x_\gamma$ is an optimal solution, 
thus also a stationary solution to 
$\min_{x\in {\cal X}} v_{l,\tau}(x)$;\\ 
2) $c_v(x_\gamma) > 0$, and
$x_\gamma$ is an $\epsilon_{\gamma}'$-stationary solution to 
$\min_{x\in {\cal X}} v_{l,\tau}(x)$ with $\epsilon_\gamma' = \frac{\epsilon + \ell_f}{\gamma c_v(x_\gamma)}$.
\end{lemma}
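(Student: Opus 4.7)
The plan is to extract the two stationarity estimates directly from the $\epsilon$-stationarity of $x_\gamma$ for $\varphi_\gamma = f_0 + \gamma p$ by combining the additive decomposition of $\nabla \varphi_\gamma$ with the chain rule that expresses $\nabla p$ through $\nabla v_{l,\tau}$. No auxiliary constructions are needed; the proof is essentially two applications of the triangle inequality.

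For the first claim, I would start from the hypothesis $\|\nabla f_0(x_\gamma) + \gamma \nabla p(x_\gamma)\| \le \epsilon$, which is well-defined since for $\theta \ge 1$ the function $p$ is differentiable: $v_{l,\tau}$ is smooth by Lemma~\ref{lemma:direction_derivative_v} and $t \mapsto t^\theta$ is $C^1$ on $[0,\infty)$. The reverse triangle inequality together with $\|\nabla f_0(x_\gamma)\| \le \ell_f$ yields $\gamma \|\nabla p(x_\gamma)\| \le \epsilon + \ell_f$, which after dividing by $\gamma$ is exactly the $\epsilon_\gamma$-stationarity for $\min_{x\in {\cal X}} p(x)$ with $\epsilon_\gamma = (\epsilon+\ell_f)/\gamma$.

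For the second claim, I would use the chain rule $\nabla p(x_\gamma) = c_v(x_\gamma)\, \nabla v_{l,\tau}(x_\gamma)$ and split into two cases according to whether $c_v(x_\gamma)$ vanishes. When $c_v(x_\gamma) = 0$, Proposition~\ref{prop:vl_property}(1) gives $v_{l,\tau}(x)+\tau\ln M \ge 0$ on ${\cal X}$ with minimum value $0$, so $c_v$ can vanish only if $\theta > 1$ and $v_{l,\tau}(x_\gamma) = -\tau \ln M$; hence $x_\gamma$ attains the global minimum of $v_{l,\tau}$ on ${\cal X}$ and is therefore trivially stationary. When $c_v(x_\gamma) > 0$, dividing the bound from the first part through by $c_v(x_\gamma)$ gives $\|\nabla v_{l,\tau}(x_\gamma)\| \le \epsilon_\gamma / c_v(x_\gamma) = \epsilon_\gamma'$, as required.

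The only subtlety worth flagging is conceptual rather than technical: it is Proposition~\ref{prop:vl_property}(1) that guarantees the zeros of $c_v$ correspond exactly to global minimizers of $v_{l,\tau}$, rather than to extraneous points where the polynomial factor $(v_{l,\tau}+\tau\ln M)^{\theta-1}$ might vanish for unrelated reasons. Modulo this observation, the argument is a direct algebraic consequence of the gradient identities and needs no assumptions beyond those already in force.
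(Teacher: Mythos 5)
Your core argument is the same as the paper's: decompose $\nabla\varphi_\gamma = \nabla f_0 + \gamma\nabla p$, peel off $\nabla f_0$ by a triangle inequality, then use the chain rule $\nabla p = c_v\,\nabla v_{l,\tau}$ and split on whether $c_v(x_\gamma)$ vanishes. Your handling of the degenerate case is also aligned with the paper's (and your observation that Proposition~\ref{prop:vl_property} is what ties the zeros of $c_v$ to global minimizers of $v_{l,\tau}$ is a fair point the paper leaves implicit).

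The one genuine issue is the stationarity measure. The lemma is stated for a general closed convex ${\cal X}$, and the paper accordingly measures $\epsilon$-stationarity by the projected gradient mapping, i.e.\ it starts from
$\frac{1}{\alpha}\|x_\gamma - \mathrm{Proj}_{\cal X}(x_\gamma - \alpha\nabla\varphi_\gamma(x_\gamma))\|\le\epsilon$,
compares this to the mapping for $p$ with the \emph{rescaled} step $\alpha\gamma$ via the non-expansiveness of $\mathrm{Proj}_{\cal X}$ (the cross term it picks up is exactly $\alpha\|\nabla f_0(x_\gamma)\|\le\alpha\ell_f$), and then divides by $\alpha\gamma$; the same device with step $\alpha\gamma c_v(x_\gamma)$ gives case 2). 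Your reverse-triangle-inequality argument on $\|\nabla f_0 + \gamma\nabla p\|\le\epsilon$ proves the lemma only when ${\cal X}=\R^q$ (or $x_\gamma$ is interior), where the gradient mapping collapses to the gradient norm. For constrained ${\cal X}$ the inequality $\gamma\|\nabla p(x_\gamma)\|\le\epsilon+\ell_f$ does not by itself say anything about stationarity of $\min_{x\in{\cal X}}p(x)$, and the step-size rescaling inside the projection (from $\alpha$ to $\alpha\gamma$ to $\alpha\gamma c_v$) is precisely the detail your version never has to confront. The fix is mechanical --- replace the reverse triangle inequality by the projection comparison --- but as written your proof does not establish the statement at the stated level of generality.
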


\begin{proof}[Proof of Lemma~\ref{lemma:penalty_v_stationary}]
Since $x_\gamma \in {\cal X}$ is an $\epsilon$-stationary solution 
to~\eqref{eq:approximate_penalty}, for $\alpha > 0$, we have
\begin{align}
\frac{1}{\alpha}\|x_\gamma - \mathrm{Proj}_{\cal X}
(x_\gamma - \alpha \nabla \varphi_\gamma (x_\gamma))\| \leq \epsilon.
\end{align}
By the definition that $\varphi_\gamma(x) = f_0(x) + \gamma p(x)$,
we further have that
\begin{align*}
&\|x_\gamma - \mathrm{Proj}_{\cal X}
(x_\gamma - \alpha \gamma \nabla p (x_\gamma))\| \\
\leq & \|x_\gamma - \mathrm{Proj}_{\cal X}
(x_\gamma - \alpha \nabla \varphi_\gamma (x_\gamma)) \|
+ \| \mathrm{Proj}_{\cal X}
(x_\gamma - \alpha \nabla \varphi_\gamma (x_\gamma))
- \mathrm{Proj}_{\cal X}
(x_\gamma - \alpha \gamma \nabla p (x_\gamma))\| \\
\leq & \|x_\gamma - \mathrm{Proj}_{\cal X}
(x_\gamma - \alpha \nabla \varphi_\gamma (x_\gamma)) \|
+ \|\alpha \nabla \varphi_\gamma (x_\gamma)
- \alpha \gamma \nabla p (x_\gamma) \| \\
= &\|x_\gamma - \mathrm{Proj}_{\cal X}
(x_\gamma - \alpha \nabla \varphi_\gamma (x_\gamma)) \|
+ \alpha \|\nabla f_0(x_\gamma) \| .
\numberthis
\end{align*}
Dividing both sides by $\frac{1}{\alpha \gamma}$ of the above inequality yields
\begin{align}
\frac{1}{\alpha \gamma}\|x_\gamma - \mathrm{Proj}_{\cal X}
(x_\gamma - \alpha \gamma \nabla p (x_\gamma))\| 
\leq \frac{\epsilon + \ell_f}{\gamma} 
\end{align}
which proves that $x_\gamma$ is an $\epsilon_\gamma$-stationary solution to 
$\min_{x\in {\cal X}} p(x) $ with $\epsilon_\gamma = \frac{\epsilon + \ell_f}{\gamma}$.
By the definition that $p(x_\gamma) = (v_{l,\tau}(x_\gamma) + \tau\ln M)^{\theta}$, for $\theta \geq 1$, we further have
\begin{align}
\frac{1}{\alpha \gamma }\|x_\gamma - \mathrm{Proj}_{\cal X}
(x_\gamma - \alpha \gamma \underbrace{\theta (v_{l,\tau}(x_\gamma) + \tau \ln M)^{\theta-1}}_{c_v(x_\gamma)} \nabla v_{l,\tau} (x_\gamma))\| 
\leq \frac{\epsilon + \ell_f}{\gamma } 
\end{align}
where $c_v(x_\gamma) \geq 0$. This implies
that either $v_{l,\tau}(x_\gamma) + \tau \ln M = 0$, or
$c_v(x_\gamma) > 0$, and
\begin{align}
\frac{1}{\alpha \gamma c_v(x_\gamma)}\|x_\gamma - \mathrm{Proj}_{\cal X}
(x_\gamma - \alpha \gamma c_v(x_\gamma)\nabla v_{l,\tau} (x_\gamma))\| 
\leq \frac{\epsilon + \ell_f}{\gamma c_v(x_\gamma)} .
\end{align}
The proof is complete.
\end{proof}

\begin{lemma}\label{lemma:value_gap_y_star}
Suppose Assumption~\ref{assmp:smooth_f} holds.
Recall that $y^*_{l,\tau}(x) \coloneqq 
\mathop{\arg\min}_{y\in {\cal X}} h_{l,\tau} (x,y)$.
For $l - \ell_{f,1} > 0$, we have
\begin{align}\label{eq:value_gap_y_star}
\frac{l - \ell_{f,1}}{2 }
\|x - y^*_{l,\tau}(x)\|^2 \leq
v_{l,\tau}(x) + \tau \ln M
\leq \frac{3\ell_{h_{l,\tau},1}^2 + 6 \alpha^{-2}}{2 (l - \ell_{f,1})} 
\|x - y^*_{l,\tau}(x) \|^2 .   
\end{align}
\end{lemma}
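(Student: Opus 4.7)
The plan is to identify $\phi(y):=h_{l,\tau}(x,y)$ as a function that is simultaneously $\mu$-strongly convex and $L$-smooth in $y$, with $\mu := l-\ell_{f,1}>0$ and $L:=\ell_{h_{l,\tau},1}$, and then apply standard strong-convexity/smoothness inequalities together with the identity $\phi(x)=h_{l,\tau}(x,x)=\tau\ln M$. The strong convexity of $\phi$ follows from Assumption~\ref{assmp:smooth_f} together with Proposition~\ref{prop:smooth_imply_weak_convex} (each $f_m$ is $-\ell_{f,1}$-weakly convex) and Lemma~\ref{lemma:log_sum_exp_preserve_weak_convexity} (the LSE composition preserves this modulus), after which the added $\frac{l}{2}\|x-y\|^2$ lifts it to $\mu$-strong convexity, as already used in Corollary~\ref{crlr:h_ltau_unique_y}. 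The $L$-smoothness of $\phi$ is read off from the Hessian formula already developed in Lemma~\ref{lemma:lip_cont_ystar} / \eqref{eq:grad_v_main}. The evaluation $\phi(x)=\tau\ln M$ is immediate from~\eqref{eq:h_ltau}, hence $v_{l,\tau}(x)+\tau\ln M = \phi(x)-\phi(y^*_{l,\tau}(x))$.

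For the lower bound, I would apply strong convexity of $\phi$ centered at $y^*$:
\begin{align*}
\phi(x) \ \geq\  \phi(y^*)\, +\, \langle \nabla\phi(y^*),\, x-y^*\rangle\, +\, \tfrac{\mu}{2}\|x-y^*\|^2.
\end{align*}
Since $y^*$ is the constrained minimizer of $\phi$ over $\cal X$ and $x\in\cal X$, the first-order optimality condition gives $\langle \nabla\phi(y^*),\, x-y^*\rangle\geq 0$; discarding this nonnegative term yields the left-hand inequality of~\eqref{eq:value_gap_y_star}.

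For the upper bound, I would start from $L$-smoothness of $\phi$ centered at $y^*$,
\begin{align*}
\phi(x)-\phi(y^*) \ \leq\  \langle \nabla\phi(y^*),\, x-y^*\rangle\, +\, \tfrac{L}{2}\|x-y^*\|^2,
\end{align*}
and reduce the inner product to a quadratic in $\|x-y^*\|$. I would decompose $\nabla\phi(y^*) = (\nabla\phi(y^*)-\nabla\phi(x)) + \nabla\phi(x)$; smoothness and Cauchy--Schwarz bound the first piece's contribution by $L\|x-y^*\|^2$. For the remaining $\langle \nabla\phi(x),\, x-y^*\rangle$, I introduce the projected-gradient iterate $x^+:=\mathrm{Proj}_{\cal X}(x-\alpha\nabla\phi(x))$ and combine the variational inequality $\langle x-\alpha\nabla\phi(x)-x^+,\, y^*-x^+\rangle\leq 0$ with the fixed-point identity $y^*=\mathrm{Proj}_{\cal X}(y^*-\alpha\nabla\phi(y^*))$. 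Non-expansiveness of the projection together with the $L$-Lipschitzness of $\nabla\phi$ then bounds $\|x-x^+\|$ by a multiple of $\|x-y^*\|$, and Young's inequality with parameter $\alpha$ converts the surviving cross terms of the form $\|\nabla\phi(x)\|\,\|x-y^*\|$ into purely quadratic terms at the cost of an additive $\alpha^{-2}\|x-y^*\|^2/\mu$ factor. Collecting the three $L^2/\mu$-type contributions (from smoothness centered at $y^*$, from the gradient-increment Lipschitz estimate, and from the projection-based bound on $\|x-x^+\|$) with the two $\alpha^{-2}/\mu$-type contributions (from the two Young applications) recovers the advertised constant $\frac{3L^2+6\alpha^{-2}}{2\mu}$.

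The main obstacle is the upper bound's treatment of $\langle \nabla\phi(y^*),\, x-y^*\rangle$: in contrast to the unconstrained setting this quantity does not vanish, since $-\nabla\phi(y^*)\in N_{\cal X}(y^*)$ may have arbitrary magnitude, so a bare Cauchy--Schwarz only produces an $O(\|x-y^*\|)$ bound that is insufficient. The auxiliary step size $\alpha$ is precisely the device that converts this normal-cone obstruction into a quadratic-in-$\|x-y^*\|$ bound through the projected gradient mapping, at the price of the additive $\alpha^{-2}$ contribution that one can shrink by enlarging $\alpha$.
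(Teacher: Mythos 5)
Your lower bound is exactly the paper's argument: quadratic growth of the $\mu$-strongly convex function $\phi(\cdot)=h_{l,\tau}(x,\cdot)$ at its constrained minimizer $y^*=y^*_{l,\tau}(x)$, obtained from strong convexity plus the first-order optimality condition $\langle \nabla\phi(y^*),x-y^*\rangle\ge 0$, together with the evaluation $\phi(x)=\tau\ln M$. That half is fine and matches the paper's proof.

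The upper bound is where your route both diverges from the paper and breaks. Starting from $L$-smoothness centered at $y^*$ forces you to bound $\langle\nabla\phi(y^*),x-y^*\rangle$ (equivalently, after your decomposition, $\langle\nabla\phi(x),x-y^*\rangle$) from above by $O(\|x-y^*\|^2)$, and the device you propose for this does not work: Young's inequality gives $\|\nabla\phi(x)\|\,\|x-y^*\|\le \tfrac{1}{2\epsilon}\|\nabla\phi(x)\|^2+\tfrac{\epsilon}{2}\|x-y^*\|^2$, and the term $\|\nabla\phi(x)\|^2$ is \emph{not} controlled by $\|x-y^*\|^2$ --- as $x\to y^*$ it tends to $\|\nabla\phi(y^*)\|^2$, which is precisely the normal-cone component you identified as the obstacle and which need not vanish when ${\cal X}$ is a proper subset of $\R^q$. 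So the surviving cross term is genuinely of order $\|x-y^*\|$, not $\|x-y^*\|^2$, and no choice of the Young parameter or of $\alpha$ repairs it; the ``device'' of introducing $\alpha$ does not, by itself, convert the normal-cone obstruction into a quadratic. The paper's proof never writes down $\langle\nabla\phi(y^*),x-y^*\rangle$: it bounds $\phi(x)-\phi(y^*)$ directly by $\frac{1}{2\mu\alpha^2}\|x-\mathrm{Proj}_{\cal X}(x-\alpha\nabla_y h_{l,\tau}(x,x))\|^2$ (the proximal-PL/gradient-mapping inequality invoked from strong convexity), then inserts the vanishing gradient mapping at $y^*$ via the fixed-point identity $y^*=\mathrm{Proj}_{\cal X}(y^*-\alpha\nabla_y h_{l,\tau}(x,y^*))$, and finishes with non-expansiveness of the projection, $L$-Lipschitzness of $\nabla_y h_{l,\tau}(x,\cdot)$, and $(a+b+c)^2\le 3(a^2+b^2+c^2)$, which is exactly where the constant $\frac{3L^2+6\alpha^{-2}}{2\mu}$ comes from. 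To salvage your approach you would either need an independent bound $\langle\nabla\phi(y^*),x-y^*\rangle\le C\|x-y^*\|^2$, which is false for general constrained strongly convex problems, or restrict to ${\cal X}=\R^q$, where $\nabla\phi(y^*)=0$ and your smoothness bound immediately gives $\phi(x)-\phi(y^*)\le\tfrac{L}{2}\|x-y^*\|^2$ with no projection machinery at all.
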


\begin{proof}[Proof of Lemma~\ref{lemma:value_gap_y_star}]
By definition, 
$v_{l,\tau}(x) = - h_{l,\tau}(x, y^*_{l,\tau}(x))$, 
and $h_{l,\tau}(x, x) = \tau \ln M$.
Therefore,
\begin{align*}
v_{l,\tau}(x) + \tau \ln M 
=& - h_{l,\tau}(x, y^*_{l,\tau}(x)) + h_{l,\tau}(x, x) 
\geq 
\frac{l - \ell_{f,1}}{2 }
\|x - y^*_{l,\tau}(x)\|^2
\numberthis
\end{align*}
where the last inequality follows from the 
$(l - \ell_{f,1})$-strong convexity  of $h_{l,\tau}(x, \cdot)$,
thus the quadratic growth.
The first inequality in~\eqref{eq:value_gap_y_star} is proved.

For the second inequality in~\eqref{eq:value_gap_y_star}, we have
\begin{align*}
  & v_{l,\tau}(x) + \tau \ln M 
=- h_{l,\tau}(x, y^*_{l,\tau}(x)) + h_{l,\tau}(x, x) 
\stackrel{(a)}{\leq} 
\frac{1}{2 (l - \ell_{f,1}) \alpha^2}
\|x - \mathrm{Proj}_{\cal X}(x - \alpha\nabla_y h_{l,\tau}(x, x)) \|^2 \\
\stackrel{(b)}{\leq} & 
\frac{1}{2 (l - \ell_{f,1}) \alpha^2}
\|x - \mathrm{Proj}_{\cal X}(x - \alpha\nabla_y h_{l,\tau}(x, x)) 
- y^*_{l,\tau}(x) + 
\mathrm{Proj}_{\cal X}(y^*_{l,\tau}(x) - \alpha\nabla_y h_{l,\tau}(x, y^*_{l,\tau}(x))) \|^2 \\
\stackrel{(c)}{\leq} & \frac{3\ell_{h_{l,\tau},1}^2 + 6 \alpha^{-2}}{2 (l - \ell_{f,1})} 
\|x - y^*_{l,\tau}(x)\|^2
\numberthis
\end{align*}
where $(a)$ follows from the 
$(l - \ell_{f,1})$-strong convexity  of $h_{l,\tau}(x, \cdot)$,
thus the proximal-PL inequality,
$(b)$ follows from the optimality condition of 
$\min_{y\in {\cal X}} h_{l,\tau}(x,y)$ at $y^*_{l,\tau}(x)$,
and $(c)$ follows from 
the $\ell_{h_{l,\tau},1}$-smoothness of $h_{l,\tau}(x, \cdot)$,
and the non-expansiveness of projection.
\end{proof}

\begin{corollary}
Lemma~\ref{lemma:value_gap_y_star} implies that,
for $l - \ell_{f,1} > 0$,
$v_{l,\tau}(x)$ achieves its minimum value
if and only if $y_{l,\tau}^*(x) = x$.  
\end{corollary}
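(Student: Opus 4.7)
The plan is to derive this corollary as an immediate consequence of Lemma~\ref{lemma:value_gap_y_star}, combined with Property~1 of Proposition~\ref{prop:vl_property}, which states that $\min_{x\in {\cal X}} v_{l,\tau}(x) = -\tau \ln M$. Hence $v_{l,\tau}(x)$ attains its minimum value exactly when $v_{l,\tau}(x) + \tau \ln M = 0$, so the claim reduces to showing that under $l - \ell_{f,1} > 0$, this equality is equivalent to $y^*_{l,\tau}(x) = x$.

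For the forward direction, assume $v_{l,\tau}(x) = -\tau \ln M$, so the middle term in the two-sided bound of Lemma~\ref{lemma:value_gap_y_star} is zero. The left inequality then yields
\begin{equation*}
\frac{l - \ell_{f,1}}{2} \|x - y^*_{l,\tau}(x)\|^2 \leq 0.
\end{equation*}
Since $l - \ell_{f,1} > 0$ by assumption, this forces $\|x - y^*_{l,\tau}(x)\| = 0$, and hence $y^*_{l,\tau}(x) = x$.

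For the converse, if $y^*_{l,\tau}(x) = x$, the right inequality in Lemma~\ref{lemma:value_gap_y_star} gives
\begin{equation*}
0 \leq v_{l,\tau}(x) + \tau \ln M \leq \frac{3\ell_{h_{l,\tau},1}^2 + 6\alpha^{-2}}{2(l - \ell_{f,1})} \cdot 0 = 0,
\end{equation*}
so $v_{l,\tau}(x) = -\tau \ln M$, which is the minimum value. There is no real obstacle here: the only subtlety is to invoke Proposition~\ref{prop:vl_property} to identify $-\tau \ln M$ as the global minimum, and to note that the strict positivity $l - \ell_{f,1} > 0$ (already assumed in Lemma~\ref{lemma:value_gap_y_star}) is essential for ruling out nontrivial $x - y^*_{l,\tau}(x)$ in the forward direction. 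The result is thus a short two-line argument chaining the two inequalities of the preceding lemma with the known minimum value of $v_{l,\tau}$.
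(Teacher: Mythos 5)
Your argument is correct and is exactly the intended one: the paper states this corollary without proof as an immediate consequence of Lemma~\ref{lemma:value_gap_y_star}, and your chaining of its two inequalities (left inequality for the forward direction, right inequality for the converse) together with the identification $\min_{x\in{\cal X}} v_{l,\tau}(x)=-\tau\ln M$ is precisely the two-line deduction being invoked. As a minor remark, the left inequality of the lemma already gives $v_{l,\tau}(x)+\tau\ln M\geq 0$ for all $x$, so the appeal to Proposition~\ref{prop:vl_property} is not strictly necessary; the lemma alone yields both the lower bound $-\tau\ln M$ and its attainment exactly when $y^*_{l,\tau}(x)=x$.
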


\begin{proof}[Proof of Proposition~\ref{prop:local_relation_condition}]
Since $x_\gamma$ is a local solution to~\eqref{eq:approximate_penalty}, 
it is a stationary solution to~\eqref{eq:approximate_penalty}, thus an $\epsilon$-stationary solution to $\min_{x\in {\cal X}} v_{l,\tau}$
by Lemma~\ref{lemma:penalty_v_stationary}.

We then show the $\epsilon$-stationary solution of $\min_{x\in {\cal X}} v_{l,\tau}$ is also an $\epsilon'$-optimal solution of
$\min_{x\in {\cal X}} v_{l,\tau}$ under additional assumptions of $F$.
By Lemma~\ref{lemma:direction_derivative_v},
the directional derivative of $v_{l,\tau}$,
denoted as $v_{l,\tau}'(x ; z-x)$, 
for all $z \in {\cal X}$,
can be computed by
\begin{align}
v_{l,\tau}'(x ; z-x) = 
\sum_{m=1}^M \pi_m (x, y^*_{l,\tau}(x) ) f_m^{\prime}(x ; z-x)- 
l \Big(x- y^*_{l,\tau}(x) \Big)^{\top}(z-x) 
\geq -\epsilon \|z - x\|.  
\end{align}  
Plugging $z = y^*_{l,\tau}(x)$ into the above inequality yields
\begin{align}\label{eq:z_ystar_eps}
\sum_{m=1}^M \pi_m(x, y^*_{l,\tau}(x) ) f_m^{\prime}(x ; y^*_{l,\tau}(x) - x) 
+ l \|y^*_{l,\tau}(x) - x \|^2 
\geq -\epsilon \|y^*_{l,\tau}(x) - x\| . 
\end{align}
Furthermore, let $P(x)$ be the indicator function on $\cal X$, 
by the optimality condition of $h_{l,\tau}(x,\cdot)$ at $y^*_{l,\tau}(x)$, we have
\begin{align}
0 \in \sum_{m=1}^M \pi_m(x, y^*_{l,\tau}(x)) \nabla f_m(y^*_{l,\tau}(x)) 
+ l \big( y^*_{l,\tau}(x) - x \big)  + \partial P(y^*_{l,\tau}(x)) .
\end{align}

For analysis, we construct an auxiliary function 
$R( y) \coloneqq \lambda^\top F(y) + P(y)$,
where $\lambda = [\lambda_1; \lambda_2; \ldots; \lambda_M] \in \Delta^M$ is a hyperparameter.
Then $R(y)$ is $(1,0)$-point quasar convex at $y = y^*_{l,\tau}(x)$.
By the definition of the subgradient of $R(y)$, and the $(1,0)$-point quasar convexity, for all $z \in {\cal X}$, we have
\begin{align}
\Big\langle \sum_{m=1}^M \lambda_m  \nabla f_m(y^*_{l,\tau}(x)) + \partial P(y^*_{l,\tau}(x)), 
z - y^*_{l,\tau}(x) \Big\rangle  \leq 
\sum_{m=1}^M \lambda_m \big(f_m (z) - f_m (y^*_{l,\tau}(x) ) \big) .
\end{align}
Letting $\lambda_m = \pi_m(x, y^*_{l,\tau}(x))$ given that $x\in {\cal X}$ is a stationary point of $\min_{x\in {\cal X}} v_{l,\tau}(x)$,
and plugging $y = y^*_{l,\tau}(x)$,  $l (x - y^*_{l,\tau}(x)) \in \sum_{m=1}^M \pi_m(x, y^*_{l,\tau}(x)) \nabla_y f_m(y^*_{l,\tau}(x)) + \partial P(y^*_{l,\tau}(x))$ into the above inequality yield
\begin{align}
l \langle x - y^*_{l,\tau}(x), 
z - y^*_{l,\tau}(x) \rangle \leq 
\sum_{m=1}^M \pi_m(x, y^*_{l,\tau}(x)) \Big( f_m(z) - f_m(y^*_{l,\tau}(x)) \Big).
\end{align}
Substituting $z = x$ into the above inequality, 
then $l \|x - y^*_{l,\tau}(x)\|^2 \leq \sum_{m=1}^M \pi_m(x, y^*_{l,\tau}(x)) 
\Big( f_m(x) - f_m(y^*_{l,\tau}(x)) \Big) $, 
which combined with~\eqref{eq:z_ystar_eps} yields
\begin{align}
  & \sum_{m=1}^M \pi_m(x, y^*_{l,\tau}(x)) 
\Big( f_m(y^*_{l,\tau}(x)) - f_m(x) \Big) - \epsilon \|y^*_{l,\tau}(x) - x\| 
\leq \sum_{m=1}^M \pi_m(x,y^*_{l,\tau}(x)) f_m' (x ; y^*_{l,\tau}(x) - x) 
\nonumber \\
\leq  & \sum_{m=1}^M \pi_m(x, y^*_{l,\tau}(x)) 
\Big( f_m(y^*_{l,\tau}(x)) - f_m(x) \Big) - \mu \|y^*_{l,\tau}(x) - x\|^2 
\end{align}
where the last inequality holds since $f_m(x)$ are $(1,\mu)$-strong point quasar convex for all $m\in [M]$ at $x$. 
The above inequality implies $\mu \|y^*_{l,\tau}(x) - x \| \leq \epsilon$.

Applying Lemma~\ref{lemma:value_gap_y_star} and letting $\alpha=O(1)$, we have
\begin{align}
v_{l,\tau}(x) + \tau \ln M \leq 
\frac{3\ell_{h_{l,\tau},1}^2 + 6 \alpha^{-2}}{2 (l - \ell_{f,1})} 
\|x - y^*_{l,\tau}(x) \|^2
\leq \frac{3\ell_{h_{l,\tau},1}^2 + 6 \alpha^{-2}}{2 (l - \ell_{f,1}) \mu^2}
\epsilon^2 = O(\epsilon^2). 
\end{align}
Then there exists $\bar{x} \in {\cal N}(x_\gamma, r)$ that 
$p(\bar{x}) = O(\epsilon^{2\theta})$.
\end{proof}

\subsection{Proof of Theorem~\ref{thm:relation_stationary_solution}: the $\epsilon$-stationary solutions relation}
\label{sub_app:stationary_relation}

We first discuss the stationary condition of~\eqref{eq:bilevel_opt_vltau}
when ${\cal X} = \R^q$, 
and its constraint qualifications.
Then we prove Theorem~\ref{thm:relation_stationary_solution}, the relation of $\epsilon$-stationary solutions to~\eqref{eq:approximate_penalty}
and~\eqref{eq:bilevel_opt_vltau}.
Consider a general constrained problem below
\begin{align}\label{eq:constrained_calm}
\min _{x \in \R^q} f_0(x) \quad \text { s.t. } H(x)=0
\end{align}
where $f_0: \R^{q} \to \R$, and $H: \R^{q} \to \R^{d_h}$ with $d_h \geq 1$. 

\begin{definition}[KKT condition of~\eqref{eq:constrained_calm}]
\label{def:kkt_constrained_H}
The KKT condition of~\eqref{eq:constrained_calm} is
\begin{align}
\underbrace{ H(x) = 0}_{\text{feasibility}},
\quad 
\underbrace{\nabla f_0(x) + \nabla H(x) w = 0}_{
  \text{stationarity}},
~~\text{with}~~w\in \R^{d_h}.
\end{align}
Correspondingly, the $(\epsilon', \epsilon)$-KKT condition 
of~\eqref{eq:constrained_calm} is
\begin{align}
\|  H(x)  \| \leq \epsilon,
\quad 
\|\nabla f_0(x) + \nabla H(x) w \| \leq \epsilon',
~~\text{with}~~w\in \R^{d_h}.
\end{align}
\end{definition}

\begin{definition}[Calmness~{\citep[Definition~6.4.1]{clarke1990optimization}}]
\label{def:calm}
Let $x^*$ be the global minimizer of~\eqref{eq:constrained_calm}.
If there exist 
$\epsilon, c > 0$ such that for any $u \in \R^{d_h}$ with $\|u\| \leq \epsilon$ and any $x$ that $\|x - x^* \| \leq \epsilon$ which satisfies $H(x) + u = 0$, one has
\begin{align}
f_0(x ) - f_0(x^* ) + c\|u\| \geq 0 .
\end{align}
Then the problem~\eqref{eq:constrained_calm} is said to be calm with $c$.  
\end{definition}

\begin{lemma}[{\citep[Theorem~3.6]{Ye1999_cq_eb_blo}}]
If the problem~\eqref{eq:constrained_calm}
is calm at a global solution $x^*$, then 
$x^*$ satisfies the KKT condition in 
Definition~\ref{def:kkt_constrained_H}.
\end{lemma}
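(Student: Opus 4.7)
The plan is to use calmness to turn $x^*$ into an unconstrained (local) minimizer of an exact $\ell_2$-penalty function, and then read off the KKT multiplier from the generalized subgradient of the penalty term. Feasibility $H(x^*)=0$ comes for free because $x^*$ is a global solution of \eqref{eq:constrained_calm}, so only the stationarity condition needs work.

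First I would introduce the candidate exact penalty $\psi(x) \coloneqq f_0(x) + c\,\|H(x)\|$, where $c$ is the calmness constant from Definition~\ref{def:calm}. For any $x$ with $\|x-x^*\|\le \epsilon$ and $\|H(x)\|\le \epsilon$, set $u \coloneqq -H(x)$, so that $\|u\|\le \epsilon$ and $H(x)+u=0$. The calmness inequality then yields
\begin{equation*}
f_0(x) - f_0(x^*) + c\,\|H(x)\| \;\ge\; 0,
\end{equation*}
i.e., $\psi(x)\ge \psi(x^*)$ (using $H(x^*)=0$). Since $H$ is continuous (indeed we can assume $C^1$ in this setting), shrinking $\epsilon$ if necessary guarantees $\|H(x)\|\le \epsilon$ whenever $\|x-x^*\|\le \epsilon$, so $x^*$ is an \emph{unconstrained local minimizer} of $\psi$ on $\R^q$.

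Next I would apply the first-order necessary condition $0 \in \partial \psi(x^*)$ in the sense of Clarke's generalized gradient, combined with the sum rule (valid since $f_0$ is differentiable and $x\mapsto c\|H(x)\|$ is locally Lipschitz):
\begin{equation*}
0 \;\in\; \nabla f_0(x^*) + c\,\partial\bigl(\|H(\cdot)\|\bigr)(x^*).
\end{equation*}
The chain rule for the Lipschitz composition $\|H(\cdot)\|$ at the point $x^*$ where the inner map vanishes gives $\partial(\|H(\cdot)\|)(x^*) \subseteq \{\nabla H(x^*)\,v : \|v\|\le 1\}$, because the subdifferential of the norm $\|\cdot\|$ at the origin is the unit ball. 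Hence there exists $v\in\R^{d_h}$ with $\|v\|\le 1$ such that $\nabla f_0(x^*) + \nabla H(x^*)(c\,v) = 0$, and setting $w \coloneqq c\,v$ produces the required multiplier for the stationarity half of Definition~\ref{def:kkt_constrained_H}. Combined with $H(x^*)=0$, this finishes the proof.

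The main obstacle I anticipate is bookkeeping rather than a deep issue: I must make sure that the local-minimizer-of-$\psi$ step is valid uniformly (choosing the neighborhood radius so both calmness bounds $\|x-x^*\|\le \epsilon$ and $\|u\|\le \epsilon$ hold simultaneously), and I must cite the correct chain-rule formula for $\partial(\|H(\cdot)\|)$ at a zero of $H$ so that the resulting multiplier $w$ is automatically bounded by $c$. If $\|\cdot\|$ were taken to be an arbitrary norm, the subdifferential at $0$ would be the corresponding dual unit ball and the argument would proceed identically up to replacing $\|v\|\le 1$ by a dual-norm bound.
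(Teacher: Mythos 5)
Your proposal is correct, and it is essentially the proof behind the paper's citation: the paper does not prove this lemma in-text (it is imported from~\citep[Theorem~3.6]{Ye1999_cq_eb_blo}, going back to Clarke's exact penalization result), and the standard derivation there is exactly your route --- calmness at $x^*$ makes $x^*$ an unconstrained local minimizer of the exact penalty $\psi(x)=f_0(x)+c\|H(x)\|$ (your choice $u=-H(x)$, with the radius shrunk so $\|H(x)\|\le\epsilon$, is the standard bookkeeping and is sound), after which $0\in\partial\psi(x^*)$, the sum rule for the strictly differentiable $f_0$, and the Clarke chain rule for $\|H(\cdot)\|$ at a zero of $H$ (where $\partial\|\cdot\|(0)$ is the unit ball) yield a multiplier $w=cv$ with $\|v\|\le 1$. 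Your side observation that the multiplier is automatically bounded by the calmness modulus, $\|w\|\le c$, is a genuine plus: it is precisely the boundedness of $w$ that the paper's Definition~\ref{def:eps_kkt_constrained_nabla_p} and the proof of Theorem~\ref{thm:relation_stationary_solution_app} later require.
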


Below is a lemma to show that if the objective is Lipschitz 
and the constraint satisfies error bound with exponent no
greater than one, then the calmness condition holds.
Similar results have been discussed 
in~\citep[Proposition~4.2]{Ye1999_cq_eb_blo} with exponent equal to one.
This result connects error bound with the calmness condition,
and thus the necessity of KKT condition.
\begin{lemma}
\label{lemma:calm_pl}
Let $x^*$ be a global minimizer of 
problem~\eqref{eq:constrained_calm}.
For $\epsilon < 1$, consider any $u\in \R^{d_h}$ 
and $\|u\| \leq \epsilon $,
and any $x$ that $\|x - x^*\| \leq \epsilon $ and $H(x) + u =0$.
Define $x_p = \mathrm{Proj}_{{\cal X}_H^*} (x)$,
where ${\cal X}_H^* = \{ x \in \R^q \mid H(x) = 0 \}$.
If $H(x)$ satisfies an error bound that 
$\|H(x)\| \geq \varrho_h\| x - x_p \|^{\eta_h}$
and $f_0$ is $\ell_f$-Lipschitz
for all $x \in \mathcal{N}(x^*, 2\epsilon) $ with $\epsilon < 1$, 
then the calmness condition in Definition~\ref{def:calm}
for problem~\eqref{eq:constrained_calm} holds.
\end{lemma}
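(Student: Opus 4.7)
The plan is to reduce calmness to three ingredients: a projection estimate, Lipschitz continuity of $f_0$ on the relevant neighborhood, and the postulated H\"olderian-type error bound for $H$. First, I would verify that the projection point $x_p = \mathrm{Proj}_{{\cal X}_H^*}(x)$ lies in $\mathcal{N}(x^*,2\epsilon)$: since $x^* \in {\cal X}_H^*$, one has $\|x - x_p\| \leq \|x - x^*\| \leq \epsilon$, hence $\|x_p - x^*\| \leq \|x_p - x\| + \|x - x^*\| \leq 2\epsilon$. This ensures that the $\ell_f$-Lipschitz bound on $f_0$ applies between $x$ and $x_p$.

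Next, I would use the fact that $x_p$ is feasible (i.e.\ $H(x_p) = 0$) together with the global optimality of $x^*$ over ${\cal X}_H^*$ to obtain $f_0(x_p) \geq f_0(x^*)$. Combining this with Lipschitz continuity,
\begin{align*}
f_0(x) - f_0(x^*) \;\geq\; f_0(x) - f_0(x_p) \;\geq\; -\ell_f \|x - x_p\|.
\end{align*}
At this point the problem reduces to controlling $\|x - x_p\|$ by $\|u\|$.

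Here the error bound $\|H(x)\| \geq \varrho_h \|x - x_p\|^{\eta_h}$ enters: since $H(x) = -u$, we have $\|x - x_p\| \leq (\|u\|/\varrho_h)^{1/\eta_h}$. Because $\|u\| \leq \epsilon < 1$ and $\eta_h \leq 1$ (so that $1/\eta_h \geq 1$), monotonicity of $t \mapsto t^{1/\eta_h}$ on $[0,1]$ yields $\|u\|^{1/\eta_h} \leq \|u\|$, and therefore $\|x - x_p\| \leq \varrho_h^{-1/\eta_h}\|u\|$. Plugging this into the displayed inequality gives
\begin{align*}
f_0(x) - f_0(x^*) + \ell_f\varrho_h^{-1/\eta_h}\|u\| \;\geq\; 0,
\end{align*}
which is exactly the calmness condition with $c = \ell_f\varrho_h^{-1/\eta_h}$.

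The argument is largely routine; the only place requiring care is ensuring the exponent manipulation is used in the right direction, i.e.\ that the regime $\|u\|\leq \epsilon<1$ together with $\eta_h\leq 1$ is what converts the H\"olderian bound into a linear bound in $\|u\|$. Without one of these two conditions the right-hand side would pick up a sublinear factor in $\|u\|$ and the argument would deliver only a weaker, H\"older-type, calmness; noting this subtlety is really the whole content of the lemma.
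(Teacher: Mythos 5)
Your proof is correct and follows essentially the same route as the paper's: bound $f_0(x)-f_0(x^*)$ from below via the feasible projection $x_p$ and the Lipschitz property, then control $\|x-x_p\|$ by $\|u\|$ using the error bound together with $\|u\|\le\epsilon<1$ and $\eta_h\le 1$. Your version is in fact slightly more careful than the paper's on two points — you explicitly check $x_p\in\mathcal{N}(x^*,2\epsilon)$ (which is why the Lipschitz hypothesis is stated on that larger ball) and you carry the constant $\varrho_h^{-1/\eta_h}$ correctly, where the paper writes $\ell_f/\varrho_h$.
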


\begin{proof}[Proof of Lemma~\ref{lemma:calm_pl}]
By definition, 
for $x \in \mathcal{N}(x^*, \epsilon)$
with $\epsilon \leq 1$,
\begin{align}
\epsilon \geq \| u \| = 
\|H(x)\| \geq \varrho_h\| x - x_p \|^{\eta_h} .
\end{align}  
Since $x^*$ is a global minimizer,
and $\|u\|\leq \epsilon \leq 1$, 
for $\eta_h \leq 1$,
\begin{align}
f_0(x) - f_0(x^*) \geq f_0(x) - f_0(x_p)   
\stackrel{(a)}{\geq} -\ell_f \|x - x_p\| 
\geq - \frac{\ell_f}{\varrho_h} \|u\|^{\frac{1}{\eta_h}}
\geq - \frac{\ell_f}{\varrho_h} \|u\| .
\end{align}
where $(a)$ holds because of the $\ell_f$-Lipschitz continuity
of $f_0$ on a bounded set that includes $x$ and $x_p$.
Therefore, the calmness condition in Definition~\ref{def:calm}
holds with $c = \frac{\ell_f}{\varrho_h}$.
\end{proof}

Next we show the connection of the 
KL and HEB conditions, and the relation of their exponents.

\begin{lemma}[KL implies HEB]
\label{lemma:kl_imply_heb}
Consider a function $p: \R^q \to \R$,
and $\min p(x) = 0$.
If $p$ satisfies the $(c_p,\alpha_p)$-KL inequality on $\Omega$
with exponent $\alpha_p > 1$,
then it also satisfies the $(\varrho_p, \eta_p)$-HEB on $\Omega$ 
with exponent $\eta_p = \frac{\alpha_p}{\alpha_p - 1}$,
and $\varrho_p = 
\Big(1 - \frac{1}{\alpha_p}\Big)^{- \frac{\alpha_p}{\alpha_p - 1}} 
\Big( \frac{1}{c_p} \Big)^{- \frac{1}{\alpha_p - 1}}$.
\end{lemma}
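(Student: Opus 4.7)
The plan is to prove this via the classical Łojasiewicz desingularization argument. Without loss of generality assume $\min_{x \in \Omega} p(x) = 0$, so the $(c_p,\alpha_p)$-KL inequality reads $c_p (\mathrm{dist}(0, \partial p(x)))^{\alpha_p} \geq p(x)$. For simplicity I will write $\|\nabla p(x)\|$ in place of $\mathrm{dist}(0, \partial p(x))$; the argument extends to the subdifferential setting by replacing the gradient flow with a subgradient curve. Rearranging the KL bound gives the pointwise estimate $\|\nabla p(x)\| \geq c_p^{-1/\alpha_p} p(x)^{1/\alpha_p}$, which is the only analytic input to what follows.

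First I would introduce the desingularized trajectory. Consider the gradient flow $\dot{x}(t) = -\nabla p(x(t))$ initialized at an arbitrary $x(0) \in \Omega$ with $p(x(0)) > 0$, along which $\frac{d}{dt} p(x(t)) = -\|\nabla p(x(t))\|^2$. The key step is to compute the derivative of the composition $p(x(t))^{1 - 1/\alpha_p}$ (the exponent is chosen precisely so that the KL term cancels a factor of $p^{1/\alpha_p}$), and to use the KL inequality in the form $\|\nabla p\|^2 / p^{1/\alpha_p} \geq c_p^{-1/\alpha_p} \|\nabla p\|$. This yields
\begin{equation*}
-\frac{d}{dt} p(x(t))^{1 - 1/\alpha_p} \;\geq\; (1 - 1/\alpha_p)\, c_p^{-1/\alpha_p}\, \|\nabla p(x(t))\| \;=\; (1 - 1/\alpha_p)\, c_p^{-1/\alpha_p}\, \|\dot{x}(t)\|.
\end{equation*}
Integrating from $0$ to $T$ and discarding the nonnegative term $p(x(T))^{1 - 1/\alpha_p}$ gives the trajectory length bound
\begin{equation*}
(1 - 1/\alpha_p)\, c_p^{-1/\alpha_p} \int_0^T \|\dot{x}(t)\|\, dt \;\leq\; p(x(0))^{1 - 1/\alpha_p}.
\end{equation*}
Since arc length bounds displacement and the right-hand side is uniform in $T$, the flow has finite length and converges to a limit $x^\ast$ with $\nabla p(x^\ast)=0$; the KL inequality then forces $p(x^\ast)=0$, so $x^\ast \in \mathcal{X}^\ast_p$. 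Passing to $T \to \infty$ produces $(1 - 1/\alpha_p) c_p^{-1/\alpha_p} \mathrm{dist}(x(0), \mathcal{X}^\ast_p) \leq p(x(0))^{1 - 1/\alpha_p}$, and raising both sides to the power $\alpha_p/(\alpha_p - 1)$ yields the HEB with $\eta_p = \alpha_p/(\alpha_p - 1)$ and the claimed $\varrho_p = (1 - 1/\alpha_p)^{-\alpha_p/(\alpha_p - 1)} c_p^{1/(\alpha_p - 1)}$.

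The main obstacle is justifying that the gradient flow stays in the region $\Omega$ on which the KL inequality is assumed, and actually converges to a point of $\mathcal{X}^\ast_p$. Finite length of the trajectory follows directly from the integrated inequality above on any finite subinterval where $p > 0$, and a Cauchy argument then gives the limit $x^\ast$; the delicate step is ensuring the entire trajectory remains inside $\Omega$, which is typically handled by localizing to a sublevel set $\{p \leq p(x(0))\}$ inside a neighborhood where KL holds uniformly (as in Bolte--Daniilidis--Lewis) or by a contradiction argument using continuity of $p$. In the genuinely nonsmooth setting used by the paper's definition of KL, one replaces the gradient flow by a proximal subgradient curve or, more cleanly, by iterates of proximal gradient descent whose telescoped inequalities mirror the integrated estimate above, so that no differential calculus is required.
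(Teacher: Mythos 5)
Your proposal is correct and is essentially the paper's own argument: the paper also desingularizes via $g = p^{1-1/\alpha_p}$, runs a gradient flow, bounds the arc length from below by the distance to the minimizer set, and obtains the same inequality $(1-\tfrac{1}{\alpha_p})\,c_p^{-1/\alpha_p}\,\mathrm{dist}(x_0,\mathcal{X}^*_p) \le p(x_0)^{1-1/\alpha_p}$ with the identical constant (your flow along $-\nabla p$ traces the same orbit as the paper's flow along $-\nabla g$, up to time reparametrization). Your closing remarks on keeping the trajectory inside $\Omega$ and on the nonsmooth extension flag caveats that the paper's proof also leaves informal, so there is no gap relative to the paper's standard of rigor.
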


\begin{proof}[Proof of Lemma~\ref{lemma:kl_imply_heb}]
We first define an auxiliary function 
\begin{align}
g(x) \coloneqq \big( p(x) \big)^{1 - \frac{1}{\alpha_p}} .
\end{align}  
Since $p$ satisfies the KL inequality, then 
for any $x \notin \mathcal{X}_p^*$, and thus $p(x)\neq 0$, we have
\begin{align}
\|\nabla g(x)\|^2 
= \Big(1 - \frac{1}{\alpha_p}\Big)^2 
\Bigg\|\frac{\nabla p(x)}{ (p(x))^{\frac{1}{\alpha_p}} }  \Bigg\|^2
= \Big(1 - \frac{1}{\alpha_p}\Big)^2 
\frac{\|\nabla p(x)\|^2}{ ( p(x) )^{\frac{2}{\alpha_p}}} 
\geq \underbrace{\Big(1 - \frac{1}{\alpha_p}\Big)^2 
\Big( \frac{1}{c_p} \Big)^{\frac{2}{\alpha_p}}}_{\mu_g} .
\label{eq:grad_g_bounded_mu_g}
\end{align}
Also $p$ satisfies the KL inequality implies that
$p$ is an invex function and thus $g$ is a non-negative invex function
with a closed optimal solution set and zero optimal value.
or any point $x_0 \notin \mathcal{X}_g^*$, consider solving the following differential equation for $x(t) \notin \mathcal{X}^*_p$:
\begin{align}
\frac{d x(t)}{d t} & =-\nabla g(x(t)) \\
x(t=0) & =x_0
\end{align}
Following similar arguments for proving PL implies QG,
see e.g.,~\citep[Theorem~2, Appendix~A]{karimi2016linear_pl}, there exists a $T$ such that $x(T) \in \mathcal{X}_g^*$ (and at this point the differential equation ceases to be defined). Then
\begin{align}
g(x_0)-g(x_t) 
& =\int_{x_t}^{x_0}\langle\nabla g(x), d x\rangle 
=-\int_{x_0}^{x_t}\langle\nabla g(x), d x\rangle 
=-\int_0^T\langle\nabla g(x(t)), \frac{d x(t)}{d t}\rangle d t \\
& =\int_0^T\|\nabla g(x(t))\|^2 d t 
\geq \int_0^T  \mu_g d t = \mu_g T .
\label{eq:g_0_t_grad_square}
\end{align}
As $g(x_t ) \geq 0$, this shows we need to have $T \leq  g(x_0 ) / \mu_g$, so there must be a $T$ with $x(T) \in \mathcal{X}_g^*$. The length of the orbit $x(t)$ starting at $x_0$, which we'll denote by $\mathcal{L} (x_0 )$, is given by
\begin{align}
\mathcal{L} (x_0 ) 
=\int_0^T \|d x(t) / d t\| d t=\int_0^T\|\nabla g(x(t))\| d t 
\geq \|x_0-x_p \| 
\label{eq:grad_path_x_dist}
\end{align}
where $x_p$ is the projection of $x_0$ onto $\mathcal{X}_g^*$ and the inequality follows because the orbit is a path from $x_0$ to a point in $\mathcal{X}_g^*$ (and thus it must be at least as long as the projection distance).

Then we can further bound 
\begin{align}
  g(x_0) = g(x_0) - g(x_T) 
& \stackrel{\eqref{eq:g_0_t_grad_square}}{=} \int_0^T\|\nabla g(x(t))\|^2 d t 
\stackrel{\eqref{eq:grad_g_bounded_mu_g}}{\geq} \sqrt{\mu_g } \int_0^T\|\nabla g(x(t))\| d t 
\stackrel{\eqref{eq:grad_path_x_dist}}{\geq} \sqrt{\mu_g } \|x_0 - x_p\| .
\end{align}
The proof is complete.
\end{proof}

\begin{lemma}[KL and HEB imply EB]
\label{lemma:kl_heb_imply_eb}
For $p(x)$ that satisfies the KL inequality with exponent $\alpha_p$, 
and the HEB with exponent $\eta_p$, i.e., 
\begin{align}
\text{KL:}~ c_p \|\nabla p(x)\|^{\alpha_p} \geq p(x) ,
\quad
\text{HEB:}~ p(x)  \geq \varrho_p^{-1} 
\big( \mathrm{dist}(x, {\cal X}^*_p) \big)^{\eta_p}
\end{align}
Then it holds that
\begin{align}
\|\nabla p(x)\| \geq \varrho_h^{-1} 
\big( \mathrm{dist}(x, {\cal X}^*_p) \big)^{\eta_h}
~~\text{with}~\eta_h = \frac{\eta_p}{\alpha_p},
~\text{and}~\varrho_h = \big( \varrho_p  c_p 
\big)^{\frac{1}{\alpha_p}}.
\end{align}
\end{lemma}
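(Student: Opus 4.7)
The plan is direct: chain the two given inequalities together. Starting from the KL inequality $c_p \|\nabla p(x)\|^{\alpha_p} \geq p(x)$, I would substitute the HEB lower bound $p(x) \geq \varrho_p^{-1} \big(\mathrm{dist}(x, {\cal X}^*_p)\big)^{\eta_p}$ into its right-hand side to obtain
\begin{equation*}
c_p \|\nabla p(x)\|^{\alpha_p} \geq \varrho_p^{-1} \big(\mathrm{dist}(x, {\cal X}^*_p)\big)^{\eta_p}.
\end{equation*}

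Next, I would rearrange and raise both sides to the power $1/\alpha_p$. Since both sides are nonnegative (the distance is nonnegative and $c_p, \varrho_p > 0$), this operation preserves the inequality and yields
\begin{equation*}
\|\nabla p(x)\| \geq (\varrho_p c_p)^{-1/\alpha_p} \big(\mathrm{dist}(x, {\cal X}^*_p)\big)^{\eta_p/\alpha_p},
\end{equation*}
which matches the stated conclusion with $\eta_h = \eta_p/\alpha_p$ and $\varrho_h = (\varrho_p c_p)^{1/\alpha_p}$.

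There is essentially no obstacle here; the result is a one-line algebraic composition of two inequalities. The only minor caveat worth flagging is the regime where $\mathrm{dist}(x, {\cal X}^*_p) = 0$, in which case the conclusion holds trivially, and the regime where $x$ lies outside the neighborhood on which the KL and HEB inequalities are assumed to hold. For cleanliness, I would state at the beginning of the proof that both inequalities are assumed to hold on a common subset (e.g., the intersection of the two neighborhoods guaranteed by Definitions in Appendix~\ref{sub_app:relations_heb_qg}), so that the chained inequality is valid pointwise there. No auxiliary lemmas are needed.
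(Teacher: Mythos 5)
Your proposal is correct and is essentially identical to the paper's own proof, which likewise chains the KL and HEB inequalities into $c_p \|\nabla p(x)\|^{\alpha_p} \geq p(x) \geq \varrho_p^{-1} \big( \mathrm{dist}(x, {\cal X}^*_p) \big)^{\eta_p}$ and rearranges. Your added remark about the inequalities holding on a common domain is a reasonable bit of care but does not change the argument.
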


\begin{proof}[Proof of Lemma~\ref{lemma:kl_heb_imply_eb}]
The proof directly follows from combining the 
two inequalities from KL and HEB that
\begin{align}
c_p \|\nabla p(x)\|^{\alpha_p} \geq p(x)
\geq \varrho_p^{-1} 
\big( \mathrm{dist}(x, {\cal X}^*_p) \big)^{\eta_p}  .
\end{align}  
Rearranging the above inequalities proves the result.
\end{proof}

\begin{proposition}
\label{prop:v_kl_imply_p_kl}
Recall that $p(x) = \big( v_{l,\tau}(x) + \tau \ln M \big)^{\theta}$
with $\theta > 0$.
If $v_{l,\tau}$  satisfies the $(c_v, \alpha_v)$-KL inequality on $\Omega$
with $\alpha_v > 1$,
then $p$ satisfies the $(c_p, \alpha_p)$-KL inequality on $\Omega$
with $\alpha_p = \frac{\theta}{\theta - 1 + \frac{1}{\alpha_v}} > 1$, 
and $c_p = \theta^{-\frac{\theta}{\theta-1 + \frac{1}{\alpha_v}}} 
\cdot c_v^{\frac{\theta}{(\theta-1) \alpha_v + 1}} $.
\end{proposition}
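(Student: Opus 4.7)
\textbf{Proof proposal for Proposition~\ref{prop:v_kl_imply_p_kl}.}
The plan is to work with the shifted function $w(x) \coloneqq v_{l,\tau}(x) + \tau \ln M$, which by Proposition~\ref{prop:vl_property} satisfies $w(x) \geq 0$ on $\cal X$ with minimum value $0$, and whose gradient coincides with $\nabla v_{l,\tau}(x)$. Since the KL inequality involves only gradients and function value gaps above the minimum, the $(c_v,\alpha_v)$-KL inequality for $v_{l,\tau}$ translates directly to
\[
c_v \, \|\nabla w(x)\|^{\alpha_v} \;\geq\; w(x), \qquad x \in \Omega.
\]
Noting that $p(x) = w(x)^\theta$ with $w(x) \geq 0$, a direct differentiation gives $\nabla p(x) = \theta \, w(x)^{\theta-1} \nabla w(x)$, so
\[
\|\nabla p(x)\|^{\alpha_p} = \theta^{\alpha_p} \, w(x)^{(\theta-1)\alpha_p}\, \|\nabla w(x)\|^{\alpha_p}.
\]

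Next I would substitute the KL bound on $\|\nabla w(x)\|$ into this expression. Raising the KL inequality to the power $\alpha_p/\alpha_v$ yields $\|\nabla w(x)\|^{\alpha_p} \geq c_v^{-\alpha_p/\alpha_v} \, w(x)^{\alpha_p/\alpha_v}$, which gives
\[
\|\nabla p(x)\|^{\alpha_p} \;\geq\; \theta^{\alpha_p} \, c_v^{-\alpha_p/\alpha_v} \, w(x)^{(\theta-1)\alpha_p + \alpha_p/\alpha_v}.
\]
Since we want this quantity to dominate $p(x)/c_p = w(x)^\theta / c_p$, I would match exponents of $w(x)$ to obtain $(\theta - 1)\alpha_p + \alpha_p/\alpha_v = \theta$, i.e.\ $\alpha_p(\theta - 1 + 1/\alpha_v) = \theta$, which recovers the claimed $\alpha_p = \theta/(\theta - 1 + 1/\alpha_v)$. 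The condition $\alpha_p > 1$ then follows from $\alpha_v > 1 \Rightarrow 1/\alpha_v < 1$, so $\theta - 1 + 1/\alpha_v < \theta$; one just needs $\theta - 1 + 1/\alpha_v > 0$ to ensure positivity of $\alpha_p$, which holds under the implicit assumption $\theta \geq 1$ used in the paper (and more generally whenever $\theta > 1 - 1/\alpha_v$).

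Finally, comparing coefficients yields the requirement $c_p \, \theta^{\alpha_p} \, c_v^{-\alpha_p/\alpha_v} \geq 1$, so it suffices to take $c_p = \theta^{-\alpha_p} c_v^{\alpha_p/\alpha_v}$. Substituting the derived $\alpha_p$ and simplifying the exponent of $c_v$ via $\frac{\alpha_p}{\alpha_v} = \frac{\theta/\alpha_v}{\theta - 1 + 1/\alpha_v} = \frac{\theta}{(\theta-1)\alpha_v + 1}$ gives the closed form stated in the proposition. I do not anticipate a serious obstacle here, since the argument reduces to careful exponent bookkeeping; the only mild subtleties are to verify that the KL inequality transfers correctly under the constant shift and the monomial transformation $w \mapsto w^\theta$, and to keep track of the sign of $\theta - 1 + 1/\alpha_v$ so that taking the $\alpha_p$-th root preserves the inequality.
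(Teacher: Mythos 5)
Your proposal is correct and follows essentially the same route as the paper: differentiate $p = w^\theta$ with $w = v_{l,\tau} + \tau\ln M$, substitute the KL bound $\|\nabla w\| \geq c_v^{-1/\alpha_v} w^{1/\alpha_v}$, and match exponents to read off $\alpha_p$ and $c_p$. Your added caveat that one needs $\theta - 1 + 1/\alpha_v > 0$ (e.g.\ $\theta \geq 1$) for $\alpha_p > 1$ is a fair observation that the paper's statement, which only assumes $\theta > 0$, leaves implicit.
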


\begin{proof}[Proof of Proposition~\ref{prop:v_kl_imply_p_kl}]
For $x \in \Omega$ and that $v_{l,\tau}(x) + \tau \ln M > 0$,
we have
\begin{align*}
\|\nabla p(x)\| 
= & \theta (v_{l,\tau}(x) + \tau \ln M)^{\theta-1} 
\|\nabla v_{l,\tau}(x) \| \\
\geq & \theta ( c_v )^{- \frac{1}{\alpha_v}} 
\big( v_{l,\tau}(x) + \tau \ln M \big)^{\theta-1 + \frac{1}{\alpha_v}} 
= \theta ( c_v )^{- \frac{1}{\alpha_v}} 
\big( p(x) \big)^{\frac{\theta-1 + \frac{1}{\alpha_v}}{\theta}} .
\numberthis
\end{align*}  
Rearranging the above inequality proves the result.
\end{proof}

Theorem~\ref{thm:relation_stationary_solution} requires the assumption
of the $\ell_{v,2}$-smoothness of $\nabla v_{l,\tau}$ on a bounded set.
Below we provide a sufficient condition, 
which shows that under additional assumptions of $f_m, m\in[M]$, 
the $\ell_{v,2}$-smoothness of $\nabla v_{l,\tau}$ on a bounded set
${\cal X}_C$ can be justified. 
\begin{lemma}[Smoothness of $\nabla v_{l,\tau}$]
\label{lemma:smooth_nabla_vltau}
Suppose Assumption~\ref{assmp:weak_c_f} holds,
and $l + \mu > 0$.
If $\nabla f_m$ is $\ell_{f,2}$-smooth for all $m\in[M]$
on a bounded set ${\cal X}_C$, 
and there exists $x' \in {\cal X}_C$ that 
$\nabla^2 f_m$ for all $m\in[M]$ is bounded,
then $\nabla v_{l,\tau}$ is $\ell_{v,2}$-smooth
on ${\cal X}_C$,
with $\ell_{v,2} = \ell_{h_{xx},2} (1 + \ell_{y^*_{l,\tau}})
+ \ell_{y^*_{l,\tau}, 1}\ell_{h_{xy},1}
+ \ell_{y^*_{l,\tau}} \ell_{h_{xy},2} (1 + \ell_{y^*_{l,\tau}})$.
\end{lemma}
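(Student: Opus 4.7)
The plan is to push the envelope representation $\nabla v_{l,\tau}(x) = -\nabla_x h_{l,\tau}(x, y^*_{l,\tau}(x))$ from Lemma~\ref{lemma:direction_derivative_v} one derivative further and show that the resulting Hessian of $v_{l,\tau}$ is uniformly Lipschitz on ${\cal X}_C$. Since $l + \mu > 0$, the inner map $y \mapsto h_{l,\tau}(x,y)$ is strongly convex, so $\nabla_{yy}^2 h_{l,\tau}$ is uniformly positive definite on ${\cal X}_C$ and its inverse is bounded. Applying the implicit function theorem to $\nabla_y h_{l,\tau}(x, y^*_{l,\tau}(x)) = 0$ gives $\nabla_x y^*_{l,\tau}(x) = -[\nabla_{yy}^2 h_{l,\tau}]^{-1}\nabla_{yx}^2 h_{l,\tau}$, and differentiating the envelope yields
\begin{equation*}
\nabla^2 v_{l,\tau}(x) = -\nabla_{xx}^2 h_{l,\tau}(x, y^*_{l,\tau}(x)) - \nabla_{xy}^2 h_{l,\tau}(x, y^*_{l,\tau}(x))\, \nabla_x y^*_{l,\tau}(x).
\end{equation*}

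The next step is to establish that each factor is Lipschitz on ${\cal X}_C$. The $\ell_{f,2}$-smoothness of $\nabla f_m$ means $\nabla^2 f_m$ is $\ell_{f,2}$-Lipschitz; combined with the assumed boundedness of $\nabla^2 f_m$ at the reference point $x' \in {\cal X}_C$, integrating the Lipschitz bound across the bounded ${\cal X}_C$ yields a uniform operator-norm bound on $\nabla^2 f_m$ throughout ${\cal X}_C$. Direct calculation with the LSE weights $\pi_m(x,y)$ then shows that $\nabla_{xx}^2 h_{l,\tau}(x,y)$ and $\nabla_{xy}^2 h_{l,\tau}(x,y)$ are jointly Lipschitz in $(x,y)$ with constants $\ell_{h_{xx},2}$ and $\ell_{h_{xy},2}$, and that their operator norms are bounded by $\ell_{h_{xy},1}$ on the bounded domain (the bookkeeping mirrors the argument in Lemma~\ref{lemma:lip_cont_ystar}, since $\pi_m$ is Lipschitz in $(x,y)$ by the smoothness of LSE and boundedness of $\nabla f_m$). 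To promote this to Lipschitz continuity of the Jacobian $\nabla_x y^*_{l,\tau}$, apply the triangle inequality to the implicit-function formula above, using boundedness of the resolvent $[\nabla_{yy}^2 h_{l,\tau}]^{-1}$, the joint Lipschitz estimate for $\nabla_{yx}^2 h_{l,\tau}$, and the $\ell_{y^*_{l,\tau}}$-Lipschitz continuity of $y^*_{l,\tau}$ from Lemma~\ref{lemma:lip_cont_ystar}; this produces the constant $\ell_{y^*_{l,\tau},1}$ for which $\nabla_x y^*_{l,\tau}$ is Lipschitz.

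The final assembly is a triangle-inequality bound on $\|\nabla^2 v_{l,\tau}(x)-\nabla^2 v_{l,\tau}(x')\|$ using the Hessian formula above. The $\nabla_{xx}^2 h_{l,\tau}$ contribution is controlled by $\ell_{h_{xx},2}\bigl(\|x-x'\| + \|y^*_{l,\tau}(x)-y^*_{l,\tau}(x')\|\bigr) \leq \ell_{h_{xx},2}(1+\ell_{y^*_{l,\tau}})\|x-x'\|$. The product term $\nabla_{xy}^2 h_{l,\tau}\cdot\nabla_x y^*_{l,\tau}$ is split by adding and subtracting $\nabla_{xy}^2 h_{l,\tau}(x',y^*_{l,\tau}(x'))\,\nabla_x y^*_{l,\tau}(x)$, producing a first piece controlled by $\ell_{h_{xy},1}\ell_{y^*_{l,\tau},1}\|x-x'\|$ and, after using the uniform bound $\|\nabla_x y^*_{l,\tau}\|\leq \ell_{y^*_{l,\tau}}$, a second piece controlled by $\ell_{y^*_{l,\tau}}\ell_{h_{xy},2}(1+\ell_{y^*_{l,\tau}})\|x-x'\|$. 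Summing the three pieces reproduces the stated $\ell_{v,2}$. The principal obstacle is the bookkeeping around the joint Lipschitz constants of $\nabla_{xx}^2 h_{l,\tau}$ and $\nabla_{xy}^2 h_{l,\tau}$: these blocks contain tensor contractions of the form $\sum_m \pi_m \nabla f_m (\nabla f_m)^{\top}$ and $\sum_m \pi_m \nabla^2 f_m$ coming from differentiating the LSE, and verifying their joint Lipschitzness requires uniformly bounding $\pi_m$, $\nabla f_m$, $\nabla^2 f_m$, and the $(x,y)$-Jacobians of $\pi_m$, for which the $\ell_{f,2}$-smoothness hypothesis together with the single-point Hessian bound is exactly the right input.
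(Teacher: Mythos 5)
Your proposal is correct and follows essentially the same route as the paper's proof: differentiate the envelope representation to get $\nabla^2 v_{l,\tau}(x) = -\nabla_{xx}^2 h_{l,\tau} - \nabla_{xy}^2 h_{l,\tau}\,\nabla_x y^*_{l,\tau}$ with $\nabla_x y^*_{l,\tau}$ from implicit differentiation, deduce a uniform bound on $\nabla^2 f_m$ over ${\cal X}_C$ from the single-point bound plus $\ell_{f,2}$-Lipschitzness, and split the difference of Hessians into exactly the three pieces whose constants sum to the stated $\ell_{v,2}$. The paper's proof is the same argument with the bookkeeping for $\ell_{h_{xx},2}$, $\ell_{h_{xy},1}$, $\ell_{h_{xy},2}$, $\ell_\pi$, and $\ell_{y^*_{l,\tau},1}$ carried out explicitly.
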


\begin{proof}[Proof of Lemma~\ref{lemma:smooth_nabla_vltau}]
With similar arguments as Lemma~\ref{lemma:f_smooth_boundset_imply_lip},
since $\nabla f_m$ is $\ell_{f,2}$-smooth for all $m\in[M]$
on a bounded set ${\cal X}_C$, 
and there exists $x' \in {\cal X}_C$ that 
$\nabla^2 f_m(x')$ for all $m\in[M]$ is bounded,
therefore, there exists $\ell_{f,1} > 0$
such that for $x \in {\cal X}_C$, and for all $m \in [M]$,
$\|\nabla^2 f_m(x)\| \leq \ell_{f,1}$.

Under Assumption~\ref{assmp:weak_c_f},
and with $l + \mu > 0$, 
first recall from~\eqref{eq:grad_v_main} 
that $\nabla v_{l,\tau}(x)$ can be computed by
\begin{align}
\nabla v_{l,\tau}(x)
= -\nabla_x h(x,y) \mid_{y = y^*_{l,\tau}(x)}
= \sum_{m=1}^M \pi_m(x, y) \nabla f_m(x) - l(x - y ) 
\mid_{y = y^*_{l,\tau}(x)}.
\end{align}
Because of the twice continuous differentiability of $f_m$ for $m\in [M]$, $\nabla^2 v_{l,\tau}(x)$ exists and can be computed by
\begin{align*}
\nabla^2 v_{l,\tau}(x)
=& -\nabla_{xx}^2 h(x,y) 
- \nabla y^*_{l,\tau}(x) \nabla_{xy}^2 h(x,y) \mid_{y = y^*_{l,\tau}(x)} .
\numberthis
\end{align*}
For simplicity, we simplify $y^*_{l,\tau}(x)$ as $y^*(x)$,
and $h_{l,\tau}(x, y)$ as $h(x, y)$ in 
the following derivations.
Then $\|\nabla^2 v_{l,\tau}(x) - \nabla^2 v_{l,\tau}(x')\|$
can be bounded by
{\small\begin{align}
\|\nabla^2 v_{l,\tau}(x) - \nabla^2 v_{l,\tau}(x')\| 
\leq & 
\underbrace{\|\nabla_{xx}^2 h(x,y^*(x)) - \nabla_{xx}^2 h(x',y^*(x'))\|}_{J_1} 
+ \underbrace{\|\nabla y^*(x) - \nabla y^*(x')\| 
\|\nabla_{xy}^2 h(x,y^*(x))\|}_{J_2} \nonumber \\
&+ \underbrace{\|\nabla y^*(x)\| 
\|\nabla_{xy}^2 h(x,y^*(x)) - \nabla_{xy}^2 h(x',y^*(x'))\|}_{J_3}
\end{align}}
where $J_1$ can be further bounded by 
\begin{align}
J_1 \leq \ell_{h_{xx},2} \big(  \|x - x'\| 
+  \|y^*(x) - y^*(x')\| \big)
\leq \ell_{h_{xx},2} (1 + \ell_{y^*_{l,\tau}})  \|x - x'\|  
\end{align}
with $\ell_{h_{xx},2}$ denoting the Lipschitz continuity
of $\nabla_{xx}^2 h(x,y)$ w.r.t. $[x;y]$.
Similarly, with $\ell_{h_{xy},2}$ denoting the Lipschitz continuity
of $\nabla_{xy}^2 h(x,y)$ w.r.t. $[x;y]$, $J_3$ can be bounded by
\begin{align}
J_3 \leq \ell_{y^*_{l,\tau}} 
\ell_{h_{xy},2} (1 + \ell_{y^*_{l,\tau}})  \|x - x'\| .
\end{align}
And $J_2$ can be bounded by
\begin{align}
J_2 \leq \ell_{y^*_{l,\tau}, 1}\ell_{h_{xy},1} \|x - x'\| .
\end{align}
Therefore, 
\begin{align}\label{eq:ell_v_2}
\|\nabla^2 v_{l,\tau}(x) - \nabla^2 v_{l,\tau}(x')\| 
\leq & \underbrace{\Big(\ell_{h_{xx},2} (1 + \ell_{y^*_{l,\tau}})
+ \ell_{y^*_{l,\tau}, 1}\ell_{h_{xy},1}
+ \ell_{y^*_{l,\tau}} \ell_{h_{xy},2} (1 + \ell_{y^*_{l,\tau}}) 
  \Big)}_{\ell_{v,2}} \|x - x'\| .
\end{align}
The derivation of $\ell_{y^*_{l,\tau}}$ is discussed in Lemma~\ref{lemma:lip_cont_ystar}.
We next discuss the derivation for $\ell_{h_{xy},1},\ell_{h_{xx},2},
\ell_{h_{xy},2}$ and $\ell_{y^*_{l,\tau}, 1}$.

To compute $\ell_{y^*_{l,\tau}, 1}$, from implicit differentiation, 
$\nabla y^*_{l,\tau}(x)$ can be computed by
\begin{align}
\nabla y^*_{l,\tau}(x) = - \nabla_{x y}^2 h(x, y^*(x)) 
\big[ \nabla_{yy}^2 h(x, y^*(x)) \big]^{-1}  .
\end{align}
Then $\|\nabla y^*_{l,\tau}(x) - \nabla y^*_{l,\tau}(x')\|$ can be bounded by
\begin{align*}
  & \|\nabla y^*_{l,\tau}(x) - \nabla y^*_{l,\tau}(x')\|
\leq \|\nabla_{x y}^2 h(x, y^*(x)) - \nabla_{x y}^2 h(x', y^*(x')) \|
\big \|\big[ \nabla_{yy}^2 h(x, y^*(x)) \big]^{-1} \big\| \\
&+ \| \nabla_{x y}^2 h(x', y^*(x')) \|
\|[\nabla_{yy}^2 h(x, y^*(x))]^{-1}\| \|[\nabla_{yy}^2 h(x', y^*(x'))]^{-1}\| 
\Big\|  \nabla_{yy}^2 h(x, y^*(x))
-  \nabla_{yy}^2 h(x', y^*(x')) \Big\| \\
\leq & \underbrace{\Big( \ell_{h_{xy},2} (l+\mu)^{-1} 
+ \ell_{h_{xy},1} (l+\mu)^{-2} \ell_{h_{yy},2} 
\Big) (1+\ell_{y^*_{l,\tau}})}_{\ell_{y^*_{l,\tau},1}}\|x - x'\|.
\numberthis\label{eq:ell_y_star_1}
\end{align*}
We then proceed to bound $\ell_{h_{xy,1}}$.
\begin{align}
\nabla_{xy}^2 h(x,y ) 
=& \sum_{m=1}^M \nabla_y \pi_m(x, y) \nabla f_m(x)^\top + l I_q  \nonumber \\
=& \frac{1}{\tau} \nabla F(y)\Big( \pi(x,y) \pi(x,y)^\top
-\mathrm{diag}(\pi(x,y))\Big) \nabla F(x)^\top 
- l I_q .
\end{align}
We have that
\begin{align}\label{eq:ell_h_xy_1}
\|\nabla_{xy}^2 h(x,y)\| \leq \ell_f + l \coloneqq \ell_{h_{xy},1}.
\end{align}
Furthermore, to bound $\ell_{h_{xy},2}$, we have
\begin{align*}
\|\nabla_{xy}^2 h(x,y) - \nabla_{xy}^2 h(x',y')\| 
\leq & \frac{1}{\tau}
\Big( \|\nabla F(y) - \nabla F(y')\|
\big\|\big( \pi(x,y) \pi(x,y)^\top
-\mathrm{diag}(\pi(x,y))\big) \nabla F(x)^\top \big\| \\
&+\|\nabla F(y')\| \|\pi(x,y) - \pi(x',y')\|
(\|\pi(x,y)\|+\|\pi(x',y')\|+1) \| \nabla F(x)\| \\
&+\big\|\nabla F(y') 
\big( \pi(x',y') \pi(x',y')^\top
-\mathrm{diag}(\pi(x',y'))\big) \big\|
\| \nabla F(x) - \nabla F(x')\|
\Big) \\
\leq & \underbrace{\frac{1}{\tau} \Big(2 \ell_{f,1} 
+ 3 \ell_f \ell_{\pi} 
\Big) M \ell_f }_{\ell_{h_{xy},2}} 
\big(\|x - x'\| + \|y - y'\| \big).
\numberthis\label{eq:ell_h_xy_2}
\end{align*}
To compute $\ell_{\pi}$, recall that
\begin{align}
\nabla_x \pi(x,y) =& -\frac{1}{\tau} \nabla F(x) \big( \mathrm{diag}(\pi(x,y))
- \pi(x,y) \pi(x,y)^\top \big) , \\
\nabla_y \pi(x,y) =& \frac{1}{\tau} \nabla F(y) \big( \mathrm{diag}(\pi(x,y))
- \pi(x,y) \pi(x,y)^\top \big)
\end{align}
from which we have
\begin{align}\label{eq:ell_pi}
\max\{ \|\nabla_x \pi(x,y)\|,  \|\nabla_y \pi(x,y)\|  \}
\leq \frac{2}{\tau} \sqrt{M} \ell_f \coloneqq \ell_{\pi}.
\end{align}
Next we bound $\ell_{h_{xx},2}$, and $\ell_{h_{yy},2}$.
The Hessian of $h(x, y)$ can be computed by
\begin{align*}
\nabla_{y y}^2 h(x, y) 
=&  -\frac{1}{\tau} \nabla F(y)\Big( \pi(x,y) \pi(x,y)^\top
-\mathrm{diag}(\pi(x,y))\Big) \nabla F(y)^\top
+ \sum_{m=1}^M \pi_m (x,y) \nabla^2 f_m(y) + l I_q, 
\numberthis \\
\nabla_{x x}^2 h(x, y) 
=& -\frac{1}{\tau} \nabla F(x)\Big( \pi(x,y) \pi(x,y)^\top
-\mathrm{diag}(\pi(x,y))\Big) \nabla F(x)^\top
+ \sum_{m=1}^M \pi_m (x,y) \nabla^2 f_m(x) + l I_q .
\numberthis
\end{align*}
Then we have
\begin{align*}
\|\nabla_{y y}^2 h(x, y) - \nabla_{y y}^2 h(x', y') \|
\leq & \frac{1}{\tau}
\Big( \|\nabla F(y) - \nabla F(y')\|
\big\|\big( \pi(x,y) \pi(x,y)^\top
-\mathrm{diag}(\pi(x,y))\big) \nabla F(y)^\top \big\| \\
&+\|\nabla F(y')\| \|\pi(x,y) - \pi(x',y')\|
(\|\pi(x,y)\|+\|\pi(x',y')\|+1) \| \nabla F(y)\| \\
&+\big\|\nabla F(y') 
\big( \pi(x',y') \pi(x',y')^\top
-\mathrm{diag}(\pi(x',y'))\big) \big\|
\| \nabla F(y) - \nabla F(y')\|
\Big) \\
&+\|\nabla^2 F(y) - \nabla^2 F(y')\| \|\pi(x,y)\|
+ \|\nabla^2 F(y')\| \|\pi(x,y) - \pi(x',y')\| \\
\leq & \underbrace{ \frac{1}{\tau} \Big(\big(2 \ell_{f,1} 
+ 3 \ell_f \ell_{\pi}  \big) 
M \ell_f 
+ \sqrt{M}\ell_{f,2} + \sqrt{M}\ell_{f,1}\ell_\pi \Big)}_{\ell_{h_{yy},2}} 
\big(\|x - x'\| + \|y - y'\| \big).
\numberthis\label{eq:ell_h_yy_2}
\end{align*}
With similar derivations as the above, we have
$\ell_{h_{xx},2} = \ell_{h_{yy},2}$.

Collecting the results 
in \eqref{eq:ell_v_2}, \eqref{eq:ell_y_star_1}, \eqref{eq:ell_h_xy_1},
\eqref{eq:ell_h_xy_2}, \eqref{eq:ell_pi}, \eqref{eq:ell_h_yy_2}
completes the proof.
\end{proof}

\begin{theorem}[Extended version of Theorem~\ref{thm:relation_stationary_solution}, 
relation of $\epsilon$-stationary solutions]
\label{thm:relation_stationary_solution_app}
Let ${\cal X} = \R^q$, and $\theta \geq 1$. 
Let $x_{\gamma}$ be a bounded $\epsilon$-stationary solution
to~\eqref{eq:approximate_penalty}. 
Define ${\cal X}_H^* = \{ x \in \R^q \mid H(x) = 0 \}$.
Suppose that on a compact subanalytic set ${\cal X}_C \subset \R^q$
with ${\cal X}_C \cap {\cal X}_H^* \neq \emptyset$
and $x_\gamma \in {\cal X}_C$, 
$\nabla p(x) $ exists and is $\ell_{p, 2}$-smooth,
$\nabla^2 p(x) $ exists, and $f_0$ is $\ell_f$-Lipschitz.
If on ${\cal X}_C$, $v(x)$ satisfies one of the following:\\
1. the $(\varrho,\eta)$-HEB, and $\theta = \frac{1}{\eta}$,
then $p(x)$ satisfies the $(\varrho_p,\eta_p)$-HEB with $\eta_p = 1$,
and if $p(x_\gamma) \leq \epsilon$, let $H(x) = p(x)$,
and $\gamma = \Theta(1)$, and $\epsilon = \delta$;\\
2. $(c_v,\alpha_v)$-KL with $\alpha_v \geq 2$, and $\theta = 1$,
then $p(x)$ satisfies the $(c_p,\alpha_p)$-KL 
with $\alpha_p \geq 2$, let $H(x) = \nabla p(x)$,
and $\gamma = \Omega(\delta^{-1})$, and $\epsilon = \delta$; \\
then  $x_{\gamma}$ is an $(\epsilon, \delta)$-KKT point to the 
problem~\eqref{eq:constrained_calm}
with a bounded $w \in \R^q$, i.e.,
\begin{equation}\label{eq:blo_kkt_hessianp_app}
\| H(x_\gamma) \| \leq \delta,  
~~ \| \nabla f_0(x_\gamma) + \nabla H(x_\gamma) w\| 
\leq \epsilon .
\end{equation}        
\end{theorem}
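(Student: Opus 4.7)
The plan is to exploit the $\epsilon$-stationarity $\|\nabla f_0(x_\gamma)+\gamma\nabla p(x_\gamma)\|\le\epsilon$ (which is what $\epsilon$-stationarity of $\varphi_\gamma$ on $\R^q$ collapses to) to simultaneously produce near-feasibility $\|H(x_\gamma)\|\le\delta$ and construct a bounded multiplier $w$ with $\|\nabla f_0(x_\gamma)+\nabla H(x_\gamma)w\|\le\epsilon$. The necessary constraint qualification (calmness of~\eqref{eq:constrained_calm}) will come for free from the HEB/KL machinery: in both cases the relevant $H$ satisfies an error bound with exponent $\eta_h\le 1$, so Lemma~\ref{lemma:calm_pl} applies and the $(\epsilon,\delta)$-KKT condition is genuinely necessary.

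First I would use boundedness of $x_\gamma$ together with Lemma~\ref{lemma:global_suba_X_v} to fix a compact subanalytic ${\cal X}_C\subset\R^q$ that contains $x_\gamma$, meets ${\cal X}^*_{v_{l,\tau}}$, and on which $p$ inherits the desired HEB or KL exponent. In Case~1 ($v_{l,\tau}$ has $(\varrho,\eta)$-HEB with $\theta=1/\eta$, so $p$ has $\eta_p=1$), the hypothesis $p(x_\gamma)\le\epsilon$ directly yields $|H(x_\gamma)|=p(x_\gamma)\le\delta$ upon setting $\epsilon=\delta$; the scalar multiplier $w=\gamma=\Theta(1)$ then turns the stationarity of $\varphi_\gamma$ into $\|\nabla f_0(x_\gamma)+w\nabla H(x_\gamma)\|\le\epsilon$, finishing this case once calmness is confirmed via Lemma~\ref{lemma:calm_pl} at the exponent $\eta_h=\eta_p=1$.

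Case~2 is the substantive one. Proposition~\ref{prop:v_kl_imply_p_kl} with $\theta=1$ gives $p$ a $(c_p,\alpha_p)$-KL property with $\alpha_p=\alpha_v\ge 2$, which Lemmas~\ref{lemma:kl_imply_heb} and~\ref{lemma:kl_heb_imply_eb} upgrade to a proximal error bound $\|\nabla p(x)\|\ge\varrho_h^{-1}\mathrm{dist}(x,\mathcal{X}^*_p)^{\eta_h}$ with $\eta_h=1/(\alpha_p-1)\le 1$. Since $\|\nabla f_0\|\le\ell_f$ on ${\cal X}_C$, $\epsilon$-stationarity immediately gives $\|\nabla p(x_\gamma)\|\le(\epsilon+\ell_f)/\gamma\le\delta$ for $\gamma=\Omega(\delta^{-1})$, which is feasibility since $H=\nabla p$. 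To build the multiplier, I let $x^*$ be the projection of $x_\gamma$ onto $\mathcal{X}^*_p\cap{\cal X}_C$ and set $w=\gamma(x_\gamma-x^*)$. A Taylor expansion using $\nabla p(x^*)=0$ and the $\ell_{p,2}$-smoothness of $\nabla p$ yields
\begin{equation*}
\nabla^2 p(x_\gamma)\,w-\gamma\nabla p(x_\gamma)
=\gamma\bigl[\nabla^2 p(x_\gamma)(x_\gamma-x^*)-(\nabla p(x_\gamma)-\nabla p(x^*))\bigr]
=O\bigl(\gamma\|x_\gamma-x^*\|^2\bigr),
\end{equation*}
so combining with $\epsilon$-stationarity gives $\|\nabla f_0(x_\gamma)+\nabla^2 p(x_\gamma)w\|\le\epsilon+O(\gamma\|x_\gamma-x^*\|^2)$. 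Inserting the error bound $\|x_\gamma-x^*\|\le(\varrho_h\|\nabla p(x_\gamma)\|)^{1/\eta_h}$ with $\gamma=\Theta(\delta^{-1})$ and $\delta=\epsilon$ shows both that $w$ is bounded and that the remainder is $O(\epsilon)$.

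The main obstacle is the delicate balance in Case~2: the multiplier $w=\gamma(x_\gamma-x^*)$ must be simultaneously $O(1)$ and reproduce $\gamma\nabla p(x_\gamma)$ up to $O(\epsilon)$, and both requirements pull on the same quantity $\|x_\gamma-x^*\|$. The exponent condition $\alpha_v\ge 2$ (equivalently $\eta_h\le 1$) is exactly what reconciles them, since for smaller $\eta_h$ increasing $\gamma$ drives $\|x_\gamma-x^*\|$ down quickly enough to keep $\gamma\|x_\gamma-x^*\|$ bounded and $\gamma\|x_\gamma-x^*\|^2$ of size $O(\epsilon)$. A secondary but nontrivial step is verifying that ${\cal X}_C$ can be chosen so that the projection $x^*$ lies inside it and so that the assumed $\ell_{p,2}$-smoothness of $\nabla p$ is realized; this reduces to invoking Lemma~\ref{lemma:smooth_nabla_vltau} after shrinking ${\cal X}_C$ if necessary and appealing to the global subanalyticity of $\mathcal{X}^*_p\cap{\cal X}_C$ from Lemma~\ref{lemma:global_suba_X_v}.
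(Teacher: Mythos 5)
Your proposal is correct and follows essentially the same route as the paper's proof: Case~1 via exact penalization with $w=\gamma=\Theta(1)$ and calmness from Lemma~\ref{lemma:calm_pl}, and Case~2 via Lemma~\ref{lemma:penalty_v_stationary} to get $\|\nabla p(x_\gamma)\|\le(\epsilon+\ell_f)/\gamma$, the KL-to-error-bound chain to locate $x^*\in\mathcal{X}_p^*\cap{\cal X}_C$ with $\|x_\gamma-x^*\|=O(\epsilon_\gamma^{1/\eta_h})$, the multiplier $w=\gamma(x_\gamma-x^*)$, and the $\eta_h\le 1$ balance to keep $w$ bounded while the Taylor remainder is $O(\epsilon)$. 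The only cosmetic difference is that you Taylor-expand directly against $\nabla^2 p(x_\gamma)$ in one step, whereas the paper expands at $x^*$ and then shifts the Hessian to $x_\gamma$ using the same $\ell_{p,2}$-smoothness; the two are equivalent.
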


\begin{proof}[Proof of Theorem~\ref{thm:relation_stationary_solution_app}]
Since $x_\gamma$ is an $\epsilon$-stationary solution 
to~\eqref{eq:approximate_penalty}, thus
\begin{align}\label{eq:penalty_eps_stationary}
\| \nabla f_0(x_\gamma) + \gamma \nabla p(x_\gamma)\| 
\leq  \epsilon .
\end{align}
In the first condition, $H(x) = p(x)$,
thus $\eta_h = \eta_p = 1$,
and the calmness CQ holds based on Lemma~\ref{lemma:calm_pl}. 
This also corresponds to the exact penalization, and thus 
we can choose $w = \gamma = \Theta(1)$.
The above inequality with $ \gamma = \Theta(1)$ directly implies the KKT stationarity condition to the 
problem~\eqref{eq:constrained_calm}.

In the second condition, $H(x) = \nabla p(x)$,
by Lemma~\ref{lemma:penalty_v_stationary}, it is also an 
$\epsilon_\gamma$-stationary solution to 
$\min_{x\in \R^q} p(x) $, and thus 
\begin{align}
\|\nabla p(x_\gamma )\| 
\leq \epsilon_\gamma = \frac{\epsilon + \ell_f}{\gamma} .
\end{align}
By Lemma~\ref{lemma:kl_heb_imply_eb}, the KL condition implies that 
$\varrho_h \big( \mathrm{dist}(x_\gamma, {\cal X}_p^*\cap {\cal X}_C) \big)^{ \eta_h }
\leq \|\nabla p(x_\gamma)\| $. 
And since  ${\cal X}_p^*\cap {\cal X}_C$ is closed,
the above implies 
that there exists $x^* \in {\cal X}_p^* \cap {\cal X}_C$ such that
\begin{align}
\|x_\gamma - x^* \| = 
\mathrm{dist}(x_\gamma, {\cal X}_p^*\cap {\cal X}_C ) 
\leq \varrho_h^{-\frac{1}{\eta_h}} \|\nabla p(x_\gamma)\|^{\frac{1}{\eta_h}} 
= O(\epsilon_\gamma^{\frac{1}{\eta_h}} ) .
\end{align}
Taking Taylor expansion of $\nabla p(x)$ at $x^*$
and by the $\ell_{p,2}$-smoothness of $\nabla p(x)$ on ${\cal X}_C$, we have
\begin{align*}
\| \nabla p(x_\gamma) - \nabla^2 p(x^*) (x_\gamma - x^*) \| 
=&  \| \nabla p(x_\gamma) - \nabla p(x^*) - \nabla^2 p(x^*) (x_\gamma - x^*) \| \\ 
\leq & \ell_{p, 2} \|x_\gamma - x^*\|^2 
= O(\epsilon_\gamma^{\frac{2}{\eta_h}} ) .
\numberthis
\end{align*}
Plugging the above inequality into~\eqref{eq:penalty_eps_stationary},
we have
\begin{align}
\|\nabla f_0(x_\gamma) + \gamma \nabla^2 p(x^*) 
(x_\gamma - x^*) \|
= O( \epsilon + \gamma \epsilon_\gamma^{\frac{2}{\eta_h}} )
= O( \epsilon + \gamma^{1 - \frac{2}{\eta_h}} ).
\end{align}
Letting $w = \gamma (x_\gamma - x^*)$,
then $\|w\| = O(\gamma^{1 - \frac{1}{\eta_h}}) \leq O(1)$ is bounded
since $\eta_h \leq 1$.
We can further bound 
$\|\nabla f_0(x_\gamma) + \nabla^2 p(x_\gamma) w\|$ by
\begin{align*}
\|\nabla f_0(x_\gamma) + \nabla^2 p(x_\gamma) w\|
\leq & \|\nabla f_0(x_\gamma) + \nabla^2 p(x^*) w\|
+ \|\nabla^2 p(x_\gamma) - \nabla^2 p(x^*)\| \|w\| \\
\leq & \|\nabla f_0(x_\gamma) + \nabla^2 p(x^*) w\|
+ \ell_{p,2} \gamma \|x_\gamma - x^* \|^2  \\
=& O(\epsilon + \gamma^{1 - \frac{2}{\eta_h}} ) .
\numberthis
\end{align*}
Recall that $\eta_h \leq 1$,
thus choosing $\gamma  = \Omega (\delta^{- 1})$,
and $\epsilon = \delta$,
we have $\epsilon_\gamma \leq \delta$,
and $\|\nabla f_0(x_\gamma) + \nabla^2 p(x_\gamma) w\| = O (\epsilon)$,
which proves the result.

In the second condition, note that we require
\begin{align}
  1 \geq \eta_h = \frac{\eta_p}{\alpha_p} = \frac{1}{\alpha_p - 1}  
\end{align}
which implies $\alpha_p \geq 2$, and thus 
$\frac{\theta}{\theta - 1 + \frac{1}{\alpha_v}} \geq 2$ from Proposition~\ref{prop:v_kl_imply_p_kl}, implying $ \theta < 2$.
\end{proof}

\begin{remark}
Note that, the exponent of  the HEB
may not be unique (c.f. Appendix~\ref{sub_app:examples_global_suba}). 
In such cases, there may exist $0 < \eta_p \neq \frac{\alpha_p}{\alpha_p -1}$.
And the above theorem still requires 
$\eta_h = \frac{\eta_p}{\alpha_p} \leq 1$, thus  
$\alpha_p \geq \eta_p$.  
\end{remark}

\section{Proof of convergence of algorithms} 
\label{sec_app:proof_convergence}

In this section, we prove the convergence of the proposed algorithms
with a specific instantiation of $\theta = 1$,
thus $p(x) = v_{l,\tau}(x) + \ln M$.
For convenience, we define
$\varphi_{fh,\gamma}(x,y) 
\coloneqq f_0(x) - \gamma h_{l,\tau}(x,y)$.

The details of other oracles including 
Nesterov's acceleration
and Adam updates are given below.

\textbf{Nesterov's acceleration. } Define $\mathrm{U}(w,\Delta w;\alpha_t, t)=\operatorname{Proj}_{\mathcal{X}}(v_{t+1}+\tilde\alpha (v_{t+1}-v_t))$, where $v_{t+1}=w-\alpha_t\Delta w$ and $\tilde\alpha$ is the lookahead coefficient. 

\textbf{Adam updates. } Define $\mathrm{U}(w, \Delta w; \alpha, t)=\operatorname{Proj}_{\mathcal{X}}(w-\alpha \frac{\hat{m}_t}{\sqrt{\hat{v}_t}+\epsilon})$, where $m_t=\beta_1 m_{t-1}+(1-\beta_1) \Delta w$ denotes the moving average of gradients, $v_t=\beta_2 v_{t-1}+(1-\beta_2)(\Delta w^2)$ denotes moving average of squared gradients, $\hat{m}_t=\frac{m_t}{1-\beta_1^t}$ is bias-corrected first moment, $\hat{v}_t=\frac{v_t}{1-\beta_2^t}$ is bias-corrected second moment, with $\beta_1,\beta_2 \in (0, 1)$, $\epsilon$ is the small error.

In our convergence analysis, 
we use the PGD algorithm as an oracle. 

\subsection{Auxiliary lemmas}
\label{subs_app:auxiliary_lemma_convergence}

We first present the auxiliary lemmas to prove the convergence of the algorithms.

\begin{assumption}[Smoothness of functions]
\label{assmp:smooth_f}
For all $m\in \{0,\ldots, M\}$, 
$f_m$ is $\ell_{f,1}$-smooth on $\cal X$.  
\end{assumption}

\begin{assumption}
\label{assmp:bound_alg_x}
The sequence $\{x_t\}$ generated by Algorithm~\ref{alg:meta_v_double_loop}
is bounded on the trajectory.
\end{assumption}
The above assumption combined with Lemma~\ref{lemma:f_smooth_boundset_imply_lip}, implies that
$\{f_m(x_t)\}, m = 0,\ldots, M$ are $\ell_f$-Lipschitz 
on the trajectory.  

\begin{lemma}
\label{lemma:f_smooth_boundset_imply_lip}
Suppose Assumption~\ref{assmp:smooth_f} holds.
Given a bounded set ${\cal X}_C \subseteq {\cal X}$
such that $\|x\|\leq \ell_x$ for all $x \in {\cal X}_C$,
if there exists $x' \in {\cal X}_C$ that $\|\nabla f_m(x')\| \leq \bar{\ell}_f$,
and $|f_m(x')| < \bar{c}_f$ for all $m=0,\ldots, M$.
Then $f_m(x)$ is bounded and $\ell_f$-Lipschitz continuous for all $x \in {\cal X}_C$ with $\ell_f = \bar{\ell}_f +  2 \ell_{f,1}\ell_x$.
\end{lemma}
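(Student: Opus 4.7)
The plan is to leverage the $\ell_{f,1}$-smoothness (i.e., the Lipschitz continuity of $\nabla f_m$) together with the existence of a reference point $x' \in {\cal X}_C$ at which both $f_m(x')$ and $\nabla f_m(x')$ are controlled. Since ${\cal X}_C$ is contained in a ball of radius $\ell_x$ around the origin, any two points in ${\cal X}_C$ are at distance at most $2\ell_x$, so every quantity of interest can be propagated from $x'$ to an arbitrary $x \in {\cal X}_C$ via the smoothness inequality.

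First, I would bound the gradient uniformly on ${\cal X}_C$. By Assumption~\ref{assmp:smooth_f}, for any $x \in {\cal X}_C$,
\begin{equation*}
\|\nabla f_m(x)\| \leq \|\nabla f_m(x')\| + \|\nabla f_m(x) - \nabla f_m(x')\|
\leq \bar{\ell}_f + \ell_{f,1}\|x - x'\|
\leq \bar{\ell}_f + 2\ell_{f,1}\ell_x,
\end{equation*}
where the last step uses $\|x - x'\| \leq \|x\| + \|x'\| \leq 2\ell_x$. Setting $\ell_f := \bar{\ell}_f + 2\ell_{f,1}\ell_x$, the mean value theorem (or a one-line integral-of-the-gradient argument on the segment $[x,y] \subseteq {\cal X}$, which is available because $\cal X$ is convex and contains ${\cal X}_C$) yields $|f_m(x) - f_m(y)| \leq \ell_f\|x-y\|$ for all $x,y \in {\cal X}_C$, which is the claimed Lipschitz continuity.

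Finally, boundedness follows immediately: for any $x \in {\cal X}_C$,
\begin{equation*}
|f_m(x)| \leq |f_m(x')| + |f_m(x) - f_m(x')| \leq \bar{c}_f + \ell_f\|x-x'\| \leq \bar{c}_f + 2\ell_f \ell_x,
\end{equation*}
which is a finite constant depending only on $\bar{c}_f, \ell_x, \bar{\ell}_f, \ell_{f,1}$. There is no real obstacle here; the only subtlety is making sure the segment from $x'$ to $x$ lies in $\cal X$ so that smoothness applies along it, which is guaranteed by the standing convexity assumption on $\cal X$. Thus the entire argument is a direct application of the triangle inequality combined with $\ell_{f,1}$-smoothness.
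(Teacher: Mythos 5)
Your proof is correct and follows essentially the same route as the paper's: bound $\|\nabla f_m(x)\|$ on ${\cal X}_C$ by the triangle inequality and $\ell_{f,1}$-smoothness from the reference point $x'$, then deduce Lipschitz continuity and boundedness. The only difference is that you make explicit the mean-value/integral step (and the convexity of $\cal X$ needed for it), which the paper leaves implicit.
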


\begin{proof}[Proof of Lemma~\ref{lemma:f_smooth_boundset_imply_lip}]
We first prove that $f_m$  is $\ell_f$-Lipschitz continuous on ${\cal X}_C$.
For all $x \in {\cal X}_C$, it holds that
\begin{align}
\|\nabla f_m(x)\| \leq \|\nabla f_m(x')\| + \|\nabla f_m(x) - \nabla f_m(x')\|
\leq \bar{\ell}_f + \ell_{f,1} \|x - x'\|
\leq \underbrace{\bar{\ell}_f +  2 \ell_{f,1}\ell_x}_{\ell_f}.
\end{align}
Then we can further bound $|f_m(x)|$ by
\begin{align}
|f_m(x)| \leq |f_m(x')| + |f_m(x) - f_m(x')| 
\leq  \bar{c}_f + \ell_f \|x - x'\|
\leq  \bar{c}_f + 2\ell_f \ell_x .
\end{align}
The above holds for all $m=0,\ldots, M$, the proof is complete.
\end{proof}

\begin{lemma}[Smoothness of $h_{l,\tau}$ and $v_{l,\tau}$]
\label{lemma:smooth_h_v}
Under the same settings as Lemma~\ref{lemma:f_smooth_boundset_imply_lip},
then $h_{l,\tau}(x,y)$ is $\ell_{h_{l,\tau},1}$-smooth on ${\cal X}_C$ 
w.r.t. both 
$x$ and $y$ with  $\ell_{h_{l,\tau},1} = l+\ell_{f,1} + 2 e^{\frac{\ell_f \ell_x}{\tau}} \frac{\ell_f^2 }{\tau}  $.
And $v_{l,\tau}(x)$ is $\ell_{v_{l,\tau},1}$-smooth on ${\cal X}_C$
with $ \ell_{v_{l,\tau},1}= \ell_{h_{l,\tau},1}( 
  1 + \ell_{y^*_{l,\tau}} )$.
\end{lemma}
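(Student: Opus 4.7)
The plan is to prove smoothness of $h_{l,\tau}$ by computing its block Hessian and bounding each block, then lift to $v_{l,\tau}$ via a Danskin-type argument combined with the already-established Lipschitz continuity of the argmin map $y^*_{l,\tau}(x)$. Throughout, I would use Lemma~\ref{lemma:f_smooth_boundset_imply_lip} to guarantee that both $\|\nabla f_m\|\le \ell_f$ and $\|\nabla^2 f_m\|\le \ell_{f,1}$ uniformly on ${\cal X}_C$, which is what propagates through the softmax algebra.

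First, I would differentiate $h_{l,\tau}$. Writing $\pi_m(x,y) = e^{(f_m(y)-f_m(x))/\tau}/\sum_k e^{(f_k(y)-f_k(x))/\tau}$, one obtains $\nabla_x h_{l,\tau}(x,y) = -\sum_m \pi_m \nabla f_m(x) + l(x-y)$, and, using $\nabla_x \pi_m = -\tfrac{\pi_m}{\tau}\bigl(\nabla f_m(x) - \sum_k \pi_k \nabla f_k(x)\bigr)$, the block Hessian satisfies
\begin{align*}
\nabla_{xx}^2 h_{l,\tau} &= -\sum_m \pi_m \nabla^2 f_m(x) + \tfrac{1}{\tau}\Bigl(\sum_m \pi_m \nabla f_m(x)\nabla f_m(x)^\top - \bar g\,\bar g^\top\Bigr) + l I,
\end{align*}
with $\bar g = \sum_m \pi_m \nabla f_m(x)$. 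Since $\pi$ is a probability vector, the first term is bounded in operator norm by $\ell_{f,1}$; the covariance-like middle term is bounded via $\|\nabla f_m(x)\|\le \ell_f$, yielding a contribution of order $\ell_f^2/\tau$ (where a factor $e^{\ell_f\ell_x/\tau}$ enters when one bounds $\pi_m$ crudely through $|f_m(y)-f_m(x)|\le 2\ell_f\ell_x$ on ${\cal X}_C$, producing the stated $2e^{\ell_f\ell_x/\tau}\ell_f^2/\tau$); and the last term contributes $l$. Exactly analogous computations for $\nabla_{xy}^2 h_{l,\tau}$ and $\nabla_{yy}^2 h_{l,\tau}$ yield the same bound, so $\|\nabla^2 h_{l,\tau}\|\le \ell_{h_{l,\tau},1}$, which implies that $\nabla h_{l,\tau}$ is $\ell_{h_{l,\tau},1}$-Lipschitz in both $x$ and $y$.

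For the second claim, by Corollary~\ref{crlr:h_ltau_unique_y} the minimizer $y^*_{l,\tau}(x)$ is unique, and by the Danskin-type identity in Lemma~\ref{lemma:direction_derivative_v} we have $\nabla v_{l,\tau}(x) = -\nabla_x h_{l,\tau}(x,y^*_{l,\tau}(x))$. Hence
\begin{align*}
\|\nabla v_{l,\tau}(x) - \nabla v_{l,\tau}(x')\|
&\le \|\nabla_x h_{l,\tau}(x,y^*_{l,\tau}(x)) - \nabla_x h_{l,\tau}(x',y^*_{l,\tau}(x'))\| \\
&\le \ell_{h_{l,\tau},1}\bigl(\|x-x'\| + \|y^*_{l,\tau}(x)-y^*_{l,\tau}(x')\|\bigr) \\
&\le \ell_{h_{l,\tau},1}(1+\ell_{y^*_{l,\tau}})\|x-x'\|,
\end{align*}
where the first inequality uses joint smoothness of $h_{l,\tau}$ proved in step one, and the last uses Lemma~\ref{lemma:lip_cont_ystar}. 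This yields the advertised $\ell_{v_{l,\tau},1}$.

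The main obstacle will be the bookkeeping for the softmax-induced covariance blocks: one needs to track the dependence of $\pi_m$ on $(x,y)$ through the chain rule cleanly enough to expose the operator-norm bound of order $\ell_f^2/\tau$, and to justify the uniform exponential prefactor $e^{\ell_f\ell_x/\tau}$ that arises from bounding individual softmax probabilities on ${\cal X}_C$. Once the Hessian bound on $h_{l,\tau}$ is in hand, the passage to $v_{l,\tau}$ is essentially a two-line Danskin plus Lemma~\ref{lemma:lip_cont_ystar} argument.
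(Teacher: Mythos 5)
Your proposal is correct, but the first half takes a genuinely different route from the paper. For the smoothness of $h_{l,\tau}$, the paper never forms the Hessian: it bounds $\|\nabla_x h_{l,\tau}(x,y)-\nabla_x h_{l,\tau}(x',y)\|$ directly by splitting off the $l\|x-x'\|$ and $\ell_{f,1}\|x-x'\|$ terms and then controlling $\sum_m\|\pi_m(x,y)-\pi_m(x',y)\|$ via a mean-value expansion of the softmax at an intermediate point $\tilde x$; the factor $e^{\ell_f\ell_x/\tau}$ arises there from bounding the ratio $\sum_m e^{(f_m(y)-f_m(\tilde x))/\tau}/\sum_m e^{(f_m(y)-f_m(x))/\tau}$. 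You instead bound the operator norm of $\nabla^2_{xx}h_{l,\tau}$, and here your own algebra is actually \emph{stronger} than you give it credit for: the middle block $\tfrac{1}{\tau}\bigl(\sum_m\pi_m\nabla f_m\nabla f_m^\top-\bar g\,\bar g^\top\bigr)$ is a covariance matrix of vectors of norm at most $\ell_f$ under the probability vector $\pi$, so it is bounded by $\ell_f^2/\tau$ with no exponential prefactor at all --- the "crude bound on $\pi_m$" you invoke to manufacture the factor $e^{\ell_f\ell_x/\tau}$ is unnecessary, and since a smaller Lipschitz constant implies the stated one, your route yields the lemma (with a sharper $\ell_{h_{l,\tau},1}$) at the cost of requiring the twice-differentiability of $f_m$, which the paper does assume globally. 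The one block you should spell out is $\nabla^2_{xy}h_{l,\tau}$, which is \emph{not} a covariance (it mixes $\nabla f_m(x)$ and $\nabla f_m(y)$), so it needs a separate bound of the form $l+2\ell_f^2/\tau$; this is still dominated by $\ell_{h_{l,\tau},1}$ since $e^{\ell_f\ell_x/\tau}\ge 1$, so nothing breaks. The second half of your argument --- Danskin giving $\nabla v_{l,\tau}(x)=-\nabla_x h_{l,\tau}(x,y^*_{l,\tau}(x))$, a triangle inequality, and Lemma~\ref{lemma:lip_cont_ystar} --- is exactly the paper's proof.
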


\begin{proof}[Proof of Lemma~\ref{lemma:smooth_h_v}]
The gradient of $h_{l,\tau}(x,y)$ w.r.t. $x$ can be computed by 
\begin{align}
\nabla_x h_{l,\tau}(x,y)
=  - \sum_{m=1}^M \pi_m(x, y) \nabla f_m(x) + l(x - y) .
\end{align} 
Given $x, x', y \in {\cal X}$, we can bound 
$\|\nabla_x h_{l,\tau}(x,y) - \nabla_x h_{l,\tau}(x',y)\|$ by
\begin{align*}
& \|\nabla_x h_{l,\tau}(x,y) - \nabla_x h_{l,\tau}(x',y)\| \\
\leq &  l\|x - x' \|
+ \sum_{m=1}^M \pi_m(x,y) \|\nabla f_m(x) - \nabla f_m(x')\|
+ \sum_{m=1}^M \big \| \pi_m(x,y) - \pi_m(x',y) \big \| \|\nabla f_m(x')\| \\
\leq & l\|x - x' \|
+ \ell_{f,1} \|x - x' \| + 
\ell_f \sum_{m=1}^M \big \| \pi_m(x,y) - \pi_m(x',y) \big \|
\numberthis
\end{align*}
where we bound $\| \pi_m(x,y) - \pi_m(x',y) \big \|$ by 
\begin{align*}
  & \| \pi_m(x,y) - \pi_m(x',y) \|
= \Bigg\| \frac{e^{\frac{f_m(y) - f_m(x)}{\tau}}}
{\sum_{m=1}^M e^{\frac{f_m(y) - f_m(x)}{\tau}}}
- \frac{e^{\frac{f_m(y) - f_m(x')}{\tau}}}
{\sum_{m=1}^M e^{\frac{f_m(y) - f_m(x')}{\tau}}} \Bigg\| \\
\leq & \Bigg\| \frac{e^{\frac{f_m(y) - f_m(x)}{\tau}}}
{\sum_{m=1}^M e^{\frac{f_m(y) - f_m(x)}{\tau}}}
- \frac{e^{\frac{f_m(y) - f_m(x')}{\tau}}}
{\sum_{m=1}^M e^{\frac{f_m(y) - f_m(x)}{\tau}}} \Bigg\|
+ \Bigg\| \frac{e^{\frac{f_m(y) - f_m(x')}{\tau}}}
{\sum_{m=1}^M e^{\frac{f_m(y) - f_m(x)}{\tau}}}
- \frac{e^{\frac{f_m(y) - f_m(x')}{\tau}}}
{\sum_{m=1}^M e^{\frac{f_m(y) - f_m(x')}{\tau}}} \Bigg\| \\
\leq & 
\Bigg\| \frac{e^{\frac{f_m(y) - f_m(\tilde{x}) }{\tau}}}
{\sum_{m=1}^M e^{\frac{f_m(y) - f_m(x)}{\tau}}} \Bigg\| 
\frac{\ell_f}{\tau} \|x - x'\| +
\Bigg\| \frac{e^{\frac{f_m(y) - f_m(x')}{\tau}}
\sum_{m=1}^M e^{\frac{f_m(y) - f_m(\tilde{x}) }{\tau}} }
{\Big( \sum_{m=1}^M e^{\frac{f_m(y) - f_m(x)}{\tau}}\Big)
\Big( \sum_{m=1}^M e^{\frac{f_m(y) - f_m(x')}{\tau}}\Big)} \Bigg\|
\frac{\ell_f}{\tau} \|x - x'\| 
\numberthis
\end{align*}
where $\tilde{x}$ is on the line segment of $x$ and $x'$.
Taking $\sum_{m=1}^M$ of the above inequality yields
\begin{align*}
& \sum_{m=1}^M \| \pi_m(x,y) - \pi_m(x',y) \| \\
\leq & 
\frac{\sum_{m=1}^M e^{\frac{f_m(y) - f_m(\tilde{x}) }{\tau}}}
{\sum_{m=1}^M e^{\frac{f_m(y) - f_m(x)}{\tau}}} 
\cdot \frac{\ell_f}{\tau} \|x - x'\|+
\frac{ \sum_{m=1}^M e^{\frac{f_m(y) - f_m(\tilde{x}) }{\tau}} }
{ \sum_{m=1}^M e^{\frac{f_m(y) - f_m(x)}{\tau}} } 
\cdot \frac{\ell_f}{\tau} \|x - x'\| \\
\leq & 2 e^{\frac{\ell_f \ell_x}{\tau}}
\frac{\ell_f}{\tau} \|x - x'\|
\numberthis
\end{align*}
where the last inequality uses the fact that 
$\|f_m(\tilde{x}) - f_m(x)\| \leq \ell_f \ell_x$.
Combining the above arguments yields
\begin{align}
\|\nabla_x h_{l,\tau}(x,y) - \nabla_x h_{l,\tau}(x',y)\| 
\leq &  \ell_{h_{l,\tau},1} \|x - x' \|
\end{align}
with $\ell_{h_{l,\tau},1} = l+\ell_{f,1} + 2 e^{\frac{\ell_f \ell_x}{\tau}}
\frac{\ell_f^2 }{\tau}  $.

Similarly, given $x, y, y' \in {\cal X}$, we can bound 
$\|\nabla h_{l,\tau}(x,y) - \nabla h_{l,\tau}(x,y')\|$ by
\begin{align}
\|\nabla_x h_{l,\tau}(x,y) - \nabla_x h_{l,\tau}(x,y')\| 
\leq &  \ell_{h_{l,\tau},1} \|y - y' \| \\ 
\|\nabla_y h_{l,\tau}(x,y) - \nabla_y h_{l,\tau}(x,y')\| 
\leq &  \ell_{h_{l,\tau},1} \|y - y' \| .
\end{align}

The gradient of $v_{l,\tau}(x)$  can be computed by 
\begin{align}
\nabla v_{l,\tau}(x )
= \sum_{m=1}^M \pi_m(x, y^*_{l,\tau}(x)) \nabla f_m(x) 
+ l(x - y^*_{l,\tau}(x) ) .
\end{align} 
Given $x, x' \in {\cal X}$, we can bound 
$\|\nabla v_{l,\tau}(x) - \nabla v_{l,\tau}(x')\|$ by
\begin{align*}
& \|\nabla v_{l,\tau}(x) - \nabla v_{l,\tau}(x')\| 
\leq \ell_{h_{l,\tau},1}(\|x - x' \|
+ \|y^*_{l,\tau}(x) - y^*_{l,\tau}(x')\|)
\leq \ell_{h_{l,\tau},1}( 
  1 + \ell_{y^*_{l,\tau}} ) \|x - x' \| .
\numberthis
\end{align*}
The proof is complete.
\end{proof}

\begin{lemma}[Smoothness of the penalized function]
\label{lemma:smoothness_penalized_func}
Suppose Assumptions~\ref{assmp:smooth_f}, \ref{assmp:bound_alg_x} hold.
Then $\varphi_{fh, \gamma}$ is $\ell_{\varphi_{fh, \gamma},1}$-smooth w.r.t. $x$ and $y$ on the trajectory,
with $\ell_{\varphi_{fh, \gamma},1} = 
\ell_{f,1} + \gamma \ell_{h_{l,\tau},1}$. 
\end{lemma}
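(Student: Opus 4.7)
The plan is to assemble the smoothness constant of $\varphi_{fh,\gamma}(x,y) = f_0(x) - \gamma h_{l,\tau}(x,y)$ by splitting its gradient into the $f_0$ contribution and the $h_{l,\tau}$ contribution, bounding each by an already-available smoothness result, and then taking a sum via the triangle inequality.

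First, I would invoke Assumption~\ref{assmp:bound_alg_x} to fix a bounded subset ${\cal X}_C \subseteq {\cal X}$ containing the trajectory $\{x_t\}$ and a corresponding bounded set containing the inner iterates $\{y_{t,k}\}$ (the latter is standard since the inner loop is run on the strongly convex function $h_{l,\tau}(x_t,\cdot)$ and can be kept in a compact set by projection, or by the coercivity induced by the $\frac{l}{2}\|x-y\|^2$ regularization). Then, under Assumption~\ref{assmp:smooth_f}, Lemma~\ref{lemma:f_smooth_boundset_imply_lip} applies and provides the local Lipschitz constant $\ell_f$ used as an ingredient in Lemma~\ref{lemma:smooth_h_v}. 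Consequently, Lemma~\ref{lemma:smooth_h_v} yields that $h_{l,\tau}(x,y)$ is $\ell_{h_{l,\tau},1}$-smooth jointly in $(x,y)$ on ${\cal X}_C \times {\cal X}_C$, while Assumption~\ref{assmp:smooth_f} directly gives that $f_0$ is $\ell_{f,1}$-smooth.

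Next, I would compute the block gradients
\begin{align*}
\nabla_x \varphi_{fh,\gamma}(x,y) &= \nabla f_0(x) - \gamma \nabla_x h_{l,\tau}(x,y), \\
\nabla_y \varphi_{fh,\gamma}(x,y) &= -\gamma \nabla_y h_{l,\tau}(x,y),
\end{align*}
and for any two points $(x,y),(x',y') \in {\cal X}_C \times {\cal X}_C$, apply the triangle inequality to obtain
\begin{align*}
&\|\nabla \varphi_{fh,\gamma}(x,y) - \nabla \varphi_{fh,\gamma}(x',y')\| \\
&\leq \|\nabla f_0(x) - \nabla f_0(x')\| + \gamma \|\nabla h_{l,\tau}(x,y) - \nabla h_{l,\tau}(x',y')\| \\
&\leq \ell_{f,1}\|x-x'\| + \gamma \ell_{h_{l,\tau},1}\|(x,y)-(x',y')\| \\
&\leq \bigl(\ell_{f,1} + \gamma \ell_{h_{l,\tau},1}\bigr)\|(x,y)-(x',y')\|,
\end{align*}
which is exactly the claimed smoothness constant $\ell_{\varphi_{fh,\gamma},1}$.

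There is no real obstacle in this proof: it is essentially a one-line consequence of the preceding Lemma~\ref{lemma:smooth_h_v} together with Assumption~\ref{assmp:smooth_f}, additivity of Lipschitz-gradient constants, and the boundedness guarantee from Assumption~\ref{assmp:bound_alg_x}. The only mildly delicate point is to make explicit that the smoothness is stated ``on the trajectory,'' so I would be careful to state up-front that all bounds hold on a common compact set containing both the $x$- and $y$-iterates, so that Lemma~\ref{lemma:smooth_h_v} (which is a local-smoothness statement on a bounded set) can be applied with a single uniform constant $\ell_{h_{l,\tau},1}$.
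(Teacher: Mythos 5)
Your proposal is correct and follows essentially the same route as the paper: decompose $\nabla\varphi_{fh,\gamma}$ into the $f_0$ and $\gamma h_{l,\tau}$ contributions, apply the triangle inequality, and add the constants $\ell_{f,1}$ and $\gamma\ell_{h_{l,\tau},1}$ from Assumption~\ref{assmp:smooth_f} and Lemma~\ref{lemma:smooth_h_v}. The only cosmetic difference is that the paper bounds the blockwise partial gradients (w.r.t.\ $x$ with $y$ fixed, and w.r.t.\ $y$ with $x$ fixed) rather than the joint gradient, which matches the "smooth w.r.t.\ $x$ and $y$" phrasing of the statement; your joint-gradient version is, if anything, slightly stronger.
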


\begin{proof}[Proof of Lemma~\ref{lemma:smoothness_penalized_func}]
Given $x, x', y, y'\in {\cal X}$, we can bound the difference $\|\nabla_x \varphi_{fh, \gamma}(x,y) - \nabla_x \varphi_{fh, \gamma}(x',y) \|$ by
\begin{align}
\|\nabla_x \varphi_{fh, \gamma}(x,y) - \nabla_x \varphi_{fh, \gamma}(x',y) \| 
\leq & \|\nabla f_0(x) - \nabla f_0(x')\| 
+ \gamma \| \nabla h_{l,\tau}(x,y) - \nabla h_{l,\tau}(x',y) \|  
\nonumber \\
\leq & \ell_{f,1} \|x - x'\| + \gamma \ell_{h_{l,\tau},1}\|x - x'\| .
\end{align}
Similarly,  we can bound the difference $\|\nabla_y \varphi_{fh, \gamma}(x,y) - \nabla_y \varphi_{fh, \gamma}(x,y') \|$ by
\begin{align}
\|\nabla_y \varphi_{fh, \gamma}(x,y) - \nabla_y \varphi_{fh, \gamma}(x,y') \| 
\leq &  \gamma \| \nabla h_{l,\tau}(x,y) - \nabla h_{l,\tau}(x,y') \| 
\leq \gamma \ell_{h_{l,\tau},1} \|y - y'\| .  
\end{align}
The proof is complete.
\end{proof}

\begin{lemma}[Contraction of $y_{t,k}$]
\label{lemma:contraction_ytk}
Suppose Assumptions~\ref{assmp:smooth_f}, \ref{assmp:bound_alg_x} hold, and $l - \ell_{f,1} \geq \mu_{h_y} > 0$.
Recall that $y^*_{l,\tau}(x) \coloneqq {\arg\min}_y h_{l,\tau}(x,y)$.
The sequence $\{y_{t,k}\}_{k=1}^K$ produced by Algorithm~\ref{alg:meta_v_double_loop} satisfies
\begin{align}
\|y_{t,k+1} - y_{l,\tau}^*(x_t)\|^2
\leq (1 - \mu_{h_y} \beta_{t,k} ) \|y_{t,k} - y_{l,\tau}^*(x_t)\|^2 .
\end{align} 
\end{lemma}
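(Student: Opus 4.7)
The plan is to reduce the claim to a textbook strong-convexity contraction for projected gradient descent, with the only nontrivial ingredient being the verification that the inner objective $h_{l,\tau}(x_t,\cdot)$ inherits enough convexity and smoothness in $y$.

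First I would establish that $h_{l,\tau}(x_t,\cdot)$ is $\mu_{h_y}$-strongly convex on $\mathcal{X}$. Under Assumption~\ref{assmp:smooth_f}, each $f_m$ is $\ell_{f,1}$-smooth, hence $(-\ell_{f,1})$-weakly convex by Proposition~\ref{prop:smooth_imply_weak_convex}. Since the log-sum-exp operation preserves weak convexity (Lemma~\ref{lemma:log_sum_exp_preserve_weak_convexity}), the term $\tau \ln\bigl(\sum_{m}e^{(f_m(y)-f_m(x_t))/\tau}\bigr)$ is $(-\ell_{f,1})$-weakly convex in $y$. Adding the $l$-strongly convex quadratic $\tfrac{l}{2}\|x_t-y\|^2$ then yields $(l-\ell_{f,1})$-strong convexity, and the hypothesis $l-\ell_{f,1}\ge \mu_{h_y}$ gives the claim. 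Smoothness of $\nabla_y h_{l,\tau}(x_t,\cdot)$ with constant $\ell_{h_{l,\tau},1}$ follows from Lemma~\ref{lemma:smooth_h_v} and Assumption~\ref{assmp:bound_alg_x} (which, via Lemma~\ref{lemma:f_smooth_boundset_imply_lip}, supplies the local Lipschitz constant $\ell_f$ on the trajectory so the smoothness constant is finite).

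Next I would invoke the PGD oracle~\eqref{eq:update_oracle} with stepsize $\beta_{t,k}\le 1/\ell_{h_{l,\tau},1}$. Because $y^*_{l,\tau}(x_t)\in\mathcal{X}$ is the unique minimizer of $h_{l,\tau}(x_t,\cdot)$ (Corollary~\ref{crlr:h_ltau_unique_y}), it is a fixed point of the proximal map: $y^*_{l,\tau}(x_t)=\mathrm{Proj}_{\mathcal{X}}\bigl(y^*_{l,\tau}(x_t)-\beta_{t,k}\nabla_y h_{l,\tau}(x_t,y^*_{l,\tau}(x_t))\bigr)$. Using the non-expansiveness of the projection, I get
\begin{equation*}
\|y_{t,k+1}-y^*_{l,\tau}(x_t)\|^2 \le \bigl\|\,y_{t,k}-y^*_{l,\tau}(x_t) - \beta_{t,k}\bigl(\nabla_y h_{l,\tau}(x_t,y_{t,k})-\nabla_y h_{l,\tau}(x_t,y^*_{l,\tau}(x_t))\bigr)\bigr\|^2.
\end{equation*}
Expanding the square and applying the standard co-coercivity/strong-convexity inequality for a $\mu_{h_y}$-strongly convex, $\ell_{h_{l,\tau},1}$-smooth function in the form
\begin{equation*}
\langle \nabla_y h(x_t,y)-\nabla_y h(x_t,y^*),y-y^*\rangle \ge \mu_{h_y}\|y-y^*\|^2,
\end{equation*}
together with smoothness to control the squared-gradient term, yields the contraction factor $(1-\mu_{h_y}\beta_{t,k})$ provided $\beta_{t,k}\le 2/(\mu_{h_y}+\ell_{h_{l,\tau},1})$.

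I do not anticipate any real obstacle: the argument is entirely classical once the two moduli $\mu_{h_y}$ and $\ell_{h_{l,\tau},1}$ are identified. The only mildly delicate step is justifying the smoothness constant in $y$ globally along the trajectory; this is handled by combining Assumption~\ref{assmp:bound_alg_x} with Lemma~\ref{lemma:f_smooth_boundset_imply_lip} to ensure a uniform $\ell_f$ on the bounded set containing the iterates, so that $\ell_{h_{l,\tau},1}$ from Lemma~\ref{lemma:smooth_h_v} is finite and the stepsize condition $\beta_{t,k}\le 2/(\mu_{h_y}+\ell_{h_{l,\tau},1})$ is well-defined.
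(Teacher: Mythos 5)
Your proposal is correct and follows essentially the same route as the paper: establish $\mu_{h_y}$-strong convexity of $h_{l,\tau}(x_t,\cdot)$ via weak-convexity preservation under log-sum-exp plus the quadratic regularizer (the paper's Corollary~\ref{crlr:h_ltau_unique_y}), then apply the standard projected-gradient contraction for strongly convex smooth objectives, which the paper simply invokes as a known result. You additionally make explicit the stepsize restriction $\beta_{t,k}\le 2/(\mu_{h_y}+\ell_{h_{l,\tau},1})$ needed for the stated factor $(1-\mu_{h_y}\beta_{t,k})$, which the lemma statement itself leaves implicit.
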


\begin{proof}[Proof of Lemma~\ref{lemma:contraction_ytk}]
\label{proof:contraction_ytk}
Recall that the update of $y_{t,k}$ in \eqref{eq:update_ytk} takes the projected gradient descent (PGD) on $h_{l,\tau}(x,y) $.
By Corollary~\ref{crlr:h_ltau_unique_y}, 
for $l + \min_{m\in [M]} \mu_m \geq \mu_{h_y} > 0$, 
the function $h_{l,\tau}(x,y) $ is $\mu_{h_y}$-strongly convex w.r.t. $y$.

Leveraging the convergence result of PGD on strongly convex functions,
we have
\begin{align}
\|y_{t,k+1} - y_{l,\tau}^*(x_t)\|^2
\leq (1 - \mu_{h_y} \beta_{t,k} ) \|y_{t,k} - y_{l,\tau}^*(x_t)\|^2 .
\end{align} 
The proof is complete.
\end{proof}

\begin{corollary}
\label{crlr:convergence_ytk}
Suppose Assumptions~\ref{assmp:smooth_f},~\ref{assmp:bound_alg_x}  hold, and $l - \ell_{f,1} \geq \mu_{h_y} > 0$.
Recall that $y^*_{l,\tau}(x) \coloneqq {\arg\min}_y h_{l,\tau}(x,y)$.
The sequence $\{y_{t,k}\}_{k=1}^K$ produced by Algorithm~\ref{alg:meta_v_double_loop} satisfies
\begin{align}
\|y_{t,K} - y_{l,\tau}^*(x_t)\|^2 \leq 
\prod_{k=0}^{K-1}(1 - \mu_{h_y} \beta_{t,k} ) \|y_{t,0} - y_{l,\tau}^*(x_t)\|^2 .
\end{align}
\end{corollary}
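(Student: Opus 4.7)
The plan is to obtain Corollary~\ref{crlr:convergence_ytk} as an immediate consequence of the one-step contraction established in Lemma~\ref{lemma:contraction_ytk}, by a finite induction on the inner iteration index $k$. The hypotheses of the corollary (Assumptions~\ref{assmp:smooth_f} and~\ref{assmp:bound_alg_x}, together with $l-\ell_{f,1}\geq\mu_{h_y}>0$) coincide with those of the lemma, so no additional setup is required; in particular, under these conditions $h_{l,\tau}(x_t,\cdot)$ is $\mu_{h_y}$-strongly convex (via Corollary~\ref{crlr:h_ltau_unique_y}), which is precisely what powered the lemma.

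First, I would fix $t$ and abbreviate $e_k \coloneqq \|y_{t,k}-y^*_{l,\tau}(x_t)\|^2$. Applying Lemma~\ref{lemma:contraction_ytk} at the inner step $k$ yields the one-step bound
\begin{equation*}
e_{k+1} \;\leq\; \bigl(1-\mu_{h_y}\beta_{t,k}\bigr)\,e_k,\qquad k=0,1,\ldots,K-1.
\end{equation*}
I would then proceed by induction on $k$: the base case $k=0$ is trivial, and assuming $e_k \leq \prod_{j=0}^{k-1}(1-\mu_{h_y}\beta_{t,j})\,e_0$, multiplying the inductive hypothesis by the nonnegative factor $(1-\mu_{h_y}\beta_{t,k})$ and chaining with the one-step bound gives $e_{k+1}\leq \prod_{j=0}^{k}(1-\mu_{h_y}\beta_{t,j})\,e_0$. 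Setting $k=K-1$ recovers the stated inequality.

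There is no real obstacle here; the only care needed is to ensure each multiplicative factor $(1-\mu_{h_y}\beta_{t,k})$ is nonnegative, so that the telescoping preserves the inequality direction. This is guaranteed by choosing the inner stepsize $\beta_{t,k}\leq 1/\ell_{h_{l,\tau},1}$, which is the usual PGD stepsize condition implied by the smoothness of $h_{l,\tau}(x_t,\cdot)$ from Lemma~\ref{lemma:smooth_h_v} and is consistent with the stepsize regime under which Lemma~\ref{lemma:contraction_ytk} was derived. Under this condition the product is in $[0,1]$, so the right-hand side is a valid upper bound on the squared distance, completing the proof.
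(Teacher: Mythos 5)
Your proposal is correct and matches the paper's own proof, which likewise obtains the bound by applying the one-step contraction of Lemma~\ref{lemma:contraction_ytk} iteratively over $k=0,\ldots,K-1$. Your explicit induction and the remark on the stepsize ensuring each factor $(1-\mu_{h_y}\beta_{t,k})$ is nonnegative simply make precise what the paper states in one line.
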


\begin{proof}[Proof of Corollary~\ref{crlr:convergence_ytk}]
The result directly follows from the update of $y_{t,k}$ in \eqref{eq:update_ytk}, and by applying Lemma~\ref{lemma:contraction_ytk}
iteratively from $k=0,\ldots, K-1$.
\end{proof}

\subsection{Convergence of the meta algorithm}
\label{subs_app:meta_alg_convergence}

\begin{theorem}[Convergence of Algorithm~\ref{alg:meta_v_double_loop}
  with projected gradient descent]
\label{thm:convergence_meta_alg}
Suppose Assumptions~\ref{assmp:smooth_f} and~\ref{assmp:bound_alg_x} hold.
The sequence $\{x_{t}, y_{t}\}_{t=0}^T$ produced by Algorithm~\ref{alg:meta_v_double_loop} with $\alpha_t = \alpha = \Theta(1)$, 
$\beta_t = \beta = \Theta(1)$, 
$\gamma_t = O(1 + t)$, $K_t = O(1+t)$ satisfies
{\begin{align}\label{eq:convergence_PPv_total}
& \frac{1}{T}\sum_{t=0}^{T-1} \frac{1}{\alpha_t^2}
\Big\|x_t - \mathrm{Proj}_{\cal X} \big( x_t  - \alpha_t \nabla \varphi_{\gamma_t}(x_t) \big) \Big\|^2
= O \Big( \frac{1}{T} \Big),~\text{and} \nonumber \\
& \frac{1}{T} \sum_{t=0}^{T-1} \frac{1}{\gamma_t^2 \alpha_t^2}
\Big\|x_t - \mathrm{Proj}_{\cal X} \big( x_t  - \alpha_t \gamma_t 
\nabla v_{l,\tau}(x_t) \big) \Big\|^2
= O \Big( \frac{1}{T} \Big) .
\end{align}}
\end{theorem}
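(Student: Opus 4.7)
The plan is to treat the update of $x_t$ as an inexact projected gradient step on $\varphi_{\gamma_t}(x)$, where the inexactness comes from replacing $y^*_{l,\tau}(x_t)$ with the inner-loop iterate $y_{t+1} = y_{t,K_t}$. I would first invoke Lemma~\ref{lemma:smoothness_penalized_func} to conclude that $\varphi_{fh,\gamma_t}(\cdot,y)$ is $L_t$-smooth in $x$ with $L_t = \ell_{f,1} + \gamma_t \ell_{h_{l,\tau},1}$, and observe that by Danskin (Lemma~\ref{lemma:direction_derivative_v}), $\nabla \varphi_{\gamma_t}(x_t) = \nabla_x \varphi_{fh,\gamma_t}(x_t, y^*_{l,\tau}(x_t))$. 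Then I would bound the gradient approximation error
\[
\|\Delta x_t - \nabla \varphi_{\gamma_t}(x_t)\|
= \gamma_t \|\nabla_x h_{l,\tau}(x_t,y_{t+1}) - \nabla_x h_{l,\tau}(x_t,y^*_{l,\tau}(x_t))\|
\le \gamma_t \ell_{h_{l,\tau},1}\|y_{t+1} - y^*_{l,\tau}(x_t)\|
\]
via Lemma~\ref{lemma:smooth_h_v}, and then apply Corollary~\ref{crlr:convergence_ytk} to convert this into a $(1-\mu_{h_y}\beta)^{K_t}$-type contraction factor times $\|y_{t,0} - y^*_{l,\tau}(x_t)\|^2$; the last quantity is bounded under Assumption~\ref{assmp:bound_alg_x} combined with Lemma~\ref{lemma:lip_cont_ystar}.

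Next, I would write the standard descent inequality for inexact projected gradient on an $L_t$-smooth function: with $\alpha_t \le 1/(2L_t)$, a gradient-mapping argument yields
\begin{align*}
\varphi_{\gamma_t}(x_{t+1})
&\le \varphi_{\gamma_t}(x_t) - \tfrac{c_1}{\alpha_t}\|x_{t+1}-x_t\|^2 + \tfrac{\alpha_t}{c_2}\|\Delta x_t - \nabla\varphi_{\gamma_t}(x_t)\|^2
\end{align*}
for absolute constants $c_1,c_2 > 0$, where $\|x_{t+1}-x_t\|^2 = \|x_t - \mathrm{Proj}_{\cal X}(x_t - \alpha_t \Delta x_t)\|^2$. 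I then relate this residual to the ``clean'' gradient-mapping quantity
$ \|x_t - \mathrm{Proj}_{\cal X}(x_t - \alpha_t \nabla \varphi_{\gamma_t}(x_t))\|^2$
through non-expansiveness of the projection, picking up only another term in the gradient-approximation error.

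The next step is to reconcile the changing penalty coefficient $\gamma_t$ by passing from $\varphi_{\gamma_t}(x_{t+1})$ to $\varphi_{\gamma_{t+1}}(x_{t+1})$ using
\[
\varphi_{\gamma_{t+1}}(x_{t+1}) = \varphi_{\gamma_t}(x_{t+1}) + (\gamma_{t+1}-\gamma_t)\, p(x_{t+1}),
\]
and noting that $p(x_{t+1}) \ge 0$ is uniformly bounded on the bounded trajectory by Lemma~\ref{lemma:continuous_v}; the cumulative drift across $t=0,\dots,T-1$ is then $O(\gamma_T) = O(T)$. Telescoping the descent inequality, using $\varphi_{\gamma_t}$ is lower bounded (since $f_0$ is bounded below and $p \ge 0$), produces
$ \sum_{t=0}^{T-1} \tfrac{1}{\alpha_t^2}\|x_t - \mathrm{Proj}_{\cal X}(x_t - \alpha_t \nabla\varphi_{\gamma_t}(x_t))\|^2 = O(\gamma_T) + O(\sum_t \gamma_t^2 (1-\mu_{h_y}\beta)^{K_t})$. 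The choice $\gamma_t = O(1+t)$ and $K_t = O(1+t)$ is designed exactly so that the inner-loop error term is summable, yielding the claimed $O(1/T)$ rate after dividing by $T$. The second bound then follows by converting the gradient-mapping for $\varphi_{\gamma_t}$ into one for $v_{l,\tau}$: since $\nabla \varphi_{\gamma_t} = \nabla f_0 + \gamma_t \nabla v_{l,\tau}$ and $\|\nabla f_0\|$ is bounded on the trajectory by Assumption~\ref{assmp:bound_alg_x} and Lemma~\ref{lemma:f_smooth_boundset_imply_lip}, a triangle-inequality and projection non-expansiveness argument gives an extra additive $O(1/\gamma_t^2)$ slack that is absorbed into the $O(1/T)$ bound when averaged.

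The main obstacle I anticipate is handling the growth of the effective smoothness $L_t = O(\gamma_t)$ with a constant step size $\alpha = \Theta(1)$: strictly speaking the descent lemma requires $\alpha_t \lesssim 1/L_t$, so either the theorem implicitly permits $\alpha_t = \Theta(1/\gamma_t)$ (consistent with $\Theta(1)$ relative to a normalized gradient), or the gradient-mapping measure must be re-scaled accordingly as reflected by the $1/\alpha_t^2$ factor in \eqref{eq:convergence_PPv_total}. Resolving this rescaling carefully, and tracking the constants in the two-part gradient-mapping bound so that the inner-loop error truly decays fast enough to offset $\gamma_t^2$, is the main bookkeeping challenge; everything else is a standard inexact projected gradient argument built on Lemmas~\ref{lemma:smooth_h_v}--\ref{lemma:contraction_ytk}.
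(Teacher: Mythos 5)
Your proposal is correct in strategy and shares every essential ingredient with the paper's proof: linear convergence of the inner loop from the strong convexity of $h_{l,\tau}(x,\cdot)$ (Lemma~\ref{lemma:contraction_ytk} and Corollary~\ref{crlr:convergence_ytk}), a descent inequality for the projected step with step size tied to the smoothness constant $\ell_{f,1}+\gamma_t\ell_{h_{l,\tau},1}$, a decomposition of the gradient mapping of $\varphi_{\gamma_t}$ into the realized step plus an inner-loop error of order $\gamma_t^2\ell_{h_{l,\tau},1}^2(1-\mu_{h_y}\beta)^{K_t}$ via non-expansiveness of the projection, telescoping, and a triangle-inequality conversion to the $v_{l,\tau}$ gradient mapping using the boundedness of $\nabla f_0$ on the trajectory. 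The one structural difference is where the inner-loop bias is charged: you frame the $x$-update as \emph{inexact} PGD on the true $\varphi_{\gamma_t}$ and absorb the bias into a gradient-error term inside the descent lemma, whereas the paper defines the surrogate Lyapunov function $\mathbb{V}_t \coloneqq f_0(x_t)-\gamma_t h_{l,\tau}(x_t,y_{t+1})$ and applies the \emph{exact} descent lemma to that surrogate, so the bias appears only when converting $\|x_{t+1}-x_t\|^2$ back to the true gradient mapping; the surrogate route is marginally cleaner since no inexact-gradient term is needed in the descent step. The two obstacles you flag are genuine and are in fact passed over silently in the paper: its descent step likewise requires $\alpha_t\le 1/(\ell_{f,1}+\gamma_t\ell_{h_{l,\tau},1})$, which conflicts with $\alpha_t=\Theta(1)$ unless $\gamma_t$ stays bounded (the exact-penalty regime with $\eta_p=1$), and its telescoping of $\mathbb{V}_t$ ignores that $\mathbb{V}_{t+1}$ is evaluated at $(\gamma_{t+1},y_{t+2})$ rather than $(\gamma_t,y_{t+1})$ --- precisely the drift you budget explicitly. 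One caution on your own bookkeeping: a cumulative drift of $O(\gamma_T)=O(T)$ yields only $O(1)$ after dividing by $T$, not the claimed $O(1/T)$, so to land the stated rate you must either take $\gamma_t$ bounded or show the drift contributions $(\gamma_{t+1}-\gamma_t)\,p(x_{t+1})$ are summable.
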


\begin{proof}[Proof of Theorem~\ref{thm:convergence_meta_alg}]
We first prove~\eqref{eq:convergence_PPv_total}, the convergence of the penalty reformulation~\eqref{eq:approximate_penalty}.
Recall that at each outer-loop iteration, Algorithm~\ref{alg:meta_v_double_loop} does the following update
\begin{align}\label{eq:x_tplus_update_proof}
&x_{t+1} = \mathrm{Proj}_{\cal X} 
\big(x_{t} - \alpha_{t} (\nabla f_0(x_t ) -\gamma_t \nabla_x h_{l,\tau}(x_t , y_{t+1})) \big)  
\end{align}
where $y_{t+1} = y_{t,K}$ approximates $y_{l,\tau}^*(x_t)$ with sufficiently large $K$ based on Corollary~\ref{crlr:convergence_ytk}.
Choosing $\beta_{t,k} = \beta_t \leq 1/\mu_{h_y}$ for all $k = 0,\ldots, K-1$, 
 it then follows that 
\begin{align}\label{eq:y_t_converge_inner}
\|y_{t+1} - y_{l,\tau}^*(x_t)\|^2 =
\|y_{t,K} - y_{l,\tau}^*(x_t)\|^2 \leq 
(1 - \mu_{h_y} \beta_{t} )^K \|y_{t,0} - y_{l,\tau}^*(x_t)\|^2 .
\end{align}
Let $\ell_{fh,1,t}$ denote the smoothness constant for 
$ f_0(x ) - \gamma_t h_{l,\tau}(x , y)$.
Define the Lyapunov function $\mathbb{V}_t$ to be
\begin{align}
\mathbb{V}_t \coloneqq 
f_0(x_t ) - \gamma_t h_{l,\tau}(x_t , y_{t+1}) .
\end{align}
Applying the convergence of PGD for general nonconvex smooth objective yields
\begin{align}\label{eq:PGD_xt_bound}
\mathbb{V}_{t+1} - \mathbb{V}_t  \leq 
\langle \nabla f_0(x_t ) - \gamma_t \nabla h_{l,\tau}(x_t , y_{t+1}) , 
x_{t+1} - x_t \rangle
+ \frac{\ell_{fh,1, t}}{2 } \|x_{t+1} - x_t \|^2 .
\end{align} 
By the property of projection, and the update of $x_t$,
 we further have 
\begin{align}\label{eq:proj_property_descent}
\langle \nabla f_0(x_t ) - \gamma_t \nabla h_{l,\tau}(x_t , y_{t+1}) , 
x_{t+1} - x_t \rangle
\leq -\frac{1}{\alpha_t} \| x_{t+1} - x_t \|^2 
\end{align}
Since $\alpha_t \leq 1/\ell_{fh,1, t}$, plugging the above inequality back into~\eqref{eq:PGD_xt_bound} and rearranging yield
\begin{align}\label{eq:V_diff_x_diff}
  \mathbb{V}_{t+1} - \mathbb{V}_t 
\leq  - \frac{1}{2 \alpha_t} \|x_{t+1} - x_t \|^2  
\end{align}
Recall from \eqref{eq:x_tplus_update_proof} and \eqref{eq:y_t_converge_inner} that $x_{t+1} $ 
is an approximation to $\mathrm{Proj}_{\cal X} \big( x_t  - \alpha_t \nabla \varphi_{\gamma_t}(x_t) \big)  =  \mathrm{Proj}_{\cal X} \big( x_t  - \alpha_t (\nabla f_0(x_t ) -\gamma_t \nabla_x h_{l,\tau}(x_t , y_{l,\tau}^*(x_t))) \big)$, 
since $y_{t+1}$ is an approximation to $y_{l,\tau}^*(x_t)$.
Therefore, the term $\big\| x_t - \mathrm{Proj}_{\cal X} \big( x_t  - \alpha_t \nabla \varphi_{\gamma_t}(x_t) \big) \big\|^2 $   can be further decomposed as
{\begin{align*}
  & \big\| x_t - \mathrm{Proj}_{\cal X} \big( x_t  - \alpha_t \nabla \varphi_{\gamma_t}(x_t) \big) \big\|^2 =  
\Big\| x_t - \mathrm{Proj}_{\cal X} 
\big(x_{t} - \alpha_{t} (\nabla f_0(x_t ) - \gamma_t \nabla_x h_{l,\tau}(x_t , y_{l,\tau}^*(x_t ) )) \big)  \Big \|^2 \\
\leq & 2 \Big\| x_t - \mathrm{Proj}_{\cal X} 
\big(x_{t} - \alpha_{t} (\nabla f_0(x_t ) - \gamma_t \nabla_x h_{l,\tau}(x_t , y_{t+1} )) \big)  \Big \|^2
+ 2 \gamma_t^2 \| \nabla_x h_{l,\tau}(x_t , y_{t+1} )
- \nabla_x h_{l,\tau}(x_t , y_{l,\tau}^*(x_t ) ) \|^2 \\
\leq &  2 \| x_t -   x_{t+1} \|^2
+ 2 \gamma_t^2 \ell_{h_{l,\tau}, 1}^2 \|y_{t+1} - y_{l,\tau}^*(x_t )\|^2 \\
\leq &  2 \| x_t - x_{t+1}  \|^2
+ 2 \gamma_t^2 \ell_{h_{l,\tau}, 1}^2 \epsilon_{y,t}^2 
\numberthis \label{eq:bound_y_approx_err}
\end{align*}}
where $\epsilon_{y,t}^2 = (1 - \mu_{h_y} \beta_{t} )^{K_t} \|y_{t,0} - y_{l,\tau}^*(x_t)\|^2  $ .

Plugging~\eqref{eq:bound_y_approx_err} into~\eqref{eq:V_diff_x_diff}
and rearranging, we have
\begin{align*}
\big\| x_t - \mathrm{Proj}_{\cal X} \big( x_t  - \alpha_t \nabla \varphi_{\gamma_t}(x_t) \big) \big\|^2
\leq & 2 \| x_t - x_{t+1}  \|^2
+ 2 \gamma_t^2 \ell_{h_{l,\tau}, 1}^2 \epsilon_{y,t}^2 \\
\leq & 4 \alpha_t ( \mathbb{V}_t - \mathbb{V}_{t+1}  ) 
+ 2 \gamma_t^2 \ell_{h_{l,\tau}, 1}^2 \epsilon_{y,t}^2 .
\numberthis
\end{align*}
Taking telescoping sum of the above inequality, 
and choosing $\alpha_t = \alpha = \Theta(1)$, 
$\beta_t = \beta = \Theta(1)$, 
$\gamma_t = O(1 + t)$, $K_t = O(1+t)$
prove the first result.

For the second result, we can derive that
\begin{align*}
& \big\| x_t - \mathrm{Proj}_{\cal X} \big( x_t  - \alpha_t \gamma_t \nabla v_{l,\tau}(x_t) \big) \big\|^2 \\
\leq & 2 \big\| x_t - \mathrm{Proj}_{\cal X} \big( x_t  - \alpha_t \nabla \varphi_{\gamma_t}(x_t) \big) \big\|^2
+ 2 \big\| \mathrm{Proj}_{\cal X} \big( x_t  - \alpha_t \nabla \varphi_{\gamma_t}(x_t) \big) - \mathrm{Proj}_{\cal X} \big( x_t  - \alpha_t \gamma_t \nabla v_{l,\tau}(x_t) \big) \big\|^2 \\
\leq & 2 \big\| x_t - \mathrm{Proj}_{\cal X} \big( x_t  - \alpha_t \nabla \varphi_{\gamma_t}(x_t) \big) \big\|^2
+ 2 \big\| \alpha_t \nabla f_0(x_t)  \big\|^2 \\
\leq & 8 \alpha_t ( \mathbb{V}_t - \mathbb{V}_{t+1}  ) 
+ 4 \gamma_t^2 \ell_{h_{l,\tau}, 1}^2 \epsilon_{y,t}^2
+ 2 \alpha_t^2 \ell_f^2 .
\numberthis
\end{align*}
Dividing both sides of the above inequality by $\gamma_t^2 \alpha_t^2$,
and taking telescoping sum prove the second result.
\end{proof}

\section{Implementation details and additional experiments}
\label{sec_app:experiment}

In this section, we report the additional implementation details and additional experimental results and discussion omitted from the main text.

\paragraph{Computation.}
All experiments were conducted on a server with an Intel i9-7920X CPU, and one NVIDIA A5000 GPU. Some experiments require CPU only.

For all the experiments reported in the main text except for the multi-lingual speech recognition experiment, we exactly follow the settings from~\citep{mahapatra2020multi}.
For the multi-lingual speech recognition experiment, we follow 
the settings from~\cite{chen2024ferero}.
The implementations of the baselines including LS, PMTL, and EPO  are from the official code of the EPO~\citep{mahapatra2020multi} and that of FERERO~\cite{chen2024ferero} is from its official code with their default hyperparameters.
The results of XWC-MGDA are directly referenced from the paper.

\paragraph{Synthetic data.}
For the results in both Figure~\ref{fig:syn_init_close} and Figure~\ref{fig:synthetic_concave_exponential},  the model parameter $x$ has dimension $q = 20$, the number of objectives is $M = 2$.
The angles between the preference vectors and the horizontal axis are generated  between $[\frac{1}{20}\pi, \frac{9}{20}\pi]$ with equal angular distance.
The optimization methods are all deterministic in this experiment.
We use the default parameters for all baseline methods.
For the FOOPS method, we use hyperparameters 
$\theta=1, l = 1, \tau = 0.01$.
The penalty parameter $\gamma_t = \min\{0.05 + 0.01 t, 1.5 \}$.
The inner-loop parameters are $K = 100, \beta = 0.1$.

\begin{table}[ht]
\caption{Summary of hyper-parameters for the synthetic data experiments in Figure~\ref{fig:synthetic_concave_exponential}.}
\label{tab:hp-synthetic}
\centering
\begin{tabular}{c| cccc ccc }
\toprule 
Hyperparameters &LS & MGDA & PMTL & EPO 
& FERERO & FOOPS (ours)  \\
\midrule
step size $\alpha_t$ 
& 0.1 & 0.2 & 0.2 &0.1 &0.05 &0.2 \\ 
max iterations &150 &150 &150 &100 &100 &100  \\
\bottomrule
\end{tabular}
\end{table}

\begin{figure*}[ht]
\centering
\begin{subfigure}[b]{0.16\textwidth}
  \centering
  \includegraphics[width=.98\linewidth]{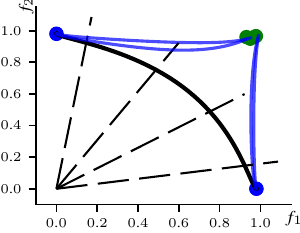}
  \caption{LS}
  \label{sfig:ls_toy}
\end{subfigure}
\begin{subfigure}[b]{0.16\textwidth}
  \centering
  \includegraphics[width=.98\linewidth]{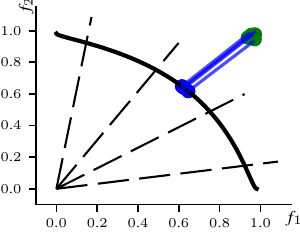}
  \caption{MGDA}
  \label{sfig:MGDA_toy}  
\end{subfigure}
\begin{subfigure}[b]{0.16\textwidth}
  \centering
  \includegraphics[width=.98\linewidth]{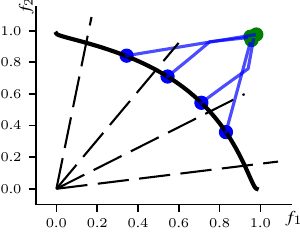}
  \caption{PMTL}
  \label{sfig:PMTL_toy}  
\end{subfigure}
\begin{subfigure}[b]{0.16\textwidth}
  \centering
  \includegraphics[width=.98\linewidth]{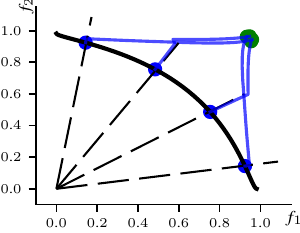}
  \caption{EPO}
  \label{sfig:EPO_toy}  
\end{subfigure}
\begin{subfigure}[b]{0.16\textwidth}
  \centering
  \includegraphics[width=.98\linewidth]{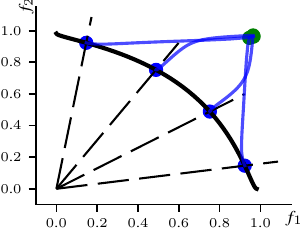}
  \caption{FERERO}
  \label{sfig:FERERO_toy}  
\end{subfigure}
\begin{subfigure}[b]{0.16\textwidth}
  \centering
  \includegraphics[width=.98\linewidth]{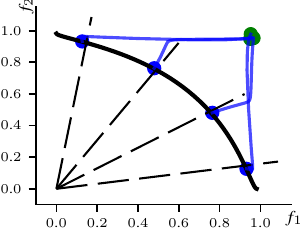}
  \caption{FOOPS}
  \label{sfig:FOOPS_toy}  
\end{subfigure}
\caption{Converging solutions (blue dots) and optimization trajectories (blue lines) on the objective space of different methods on synthetic objectives given in~\eqref{eq:obj_synthetic1}.
Dashed black arrows represent pre-specified preference vectors. The green dots represent initial objective values.
}
\label{fig:synthetic_concave_exponential}  
\end{figure*}

\begin{figure*}[ht]
\centering
\begin{subfigure}[b]{0.23\textwidth}
\centering
\includegraphics[width=.98\linewidth]{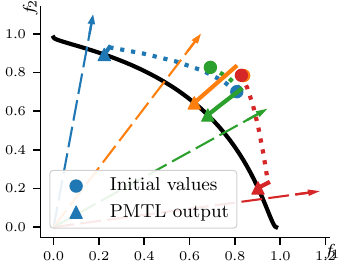}
\caption{PMTL}
\label{sfig:syn_init_close_pmtl2}  
\end{subfigure}
\begin{subfigure}[b]{0.23\textwidth}
\centering
\includegraphics[width=.98\linewidth]{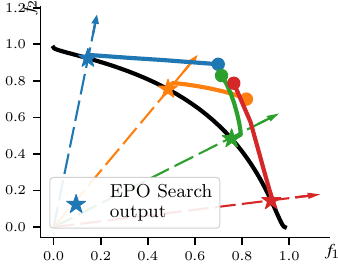}
\caption{EPO}
\label{sfig:syn_init_close_epo2}  
\end{subfigure}
\begin{subfigure}[b]{0.23\textwidth}
\centering
\includegraphics[width=.98\linewidth]{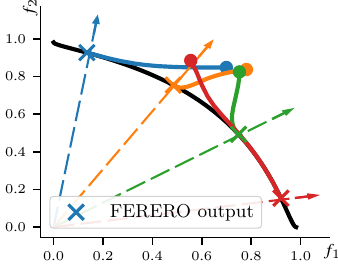}
\caption{FERERO ($A=I$)}
\label{sfig:syn_init_close_easy_ferero_app}  
\end{subfigure}
\begin{subfigure}[b]{0.23\textwidth}
\centering
\includegraphics[width=.98\linewidth]{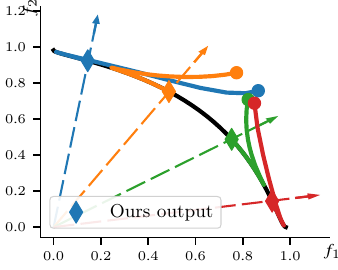}
\caption{FOOPS}
\label{sfig:syn_init_close_ours2}  
\end{subfigure}
\begin{subfigure}[b]{0.23\textwidth}
\centering
\includegraphics[width=.98\linewidth]{figures/synthetic/PMTL_hard_init.pdf}
\caption{PMTL}
\label{sfig:syn_init_close_pmtl_app}  
\end{subfigure}
\begin{subfigure}[b]{0.23\textwidth}
\centering
\includegraphics[width=.98\linewidth]{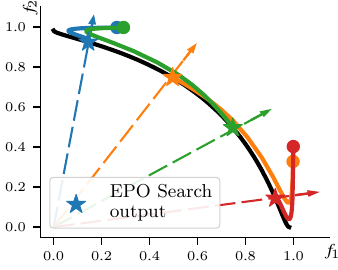}
\caption{EPO}
\label{sfig:syn_init_close_epo_app}  
\end{subfigure}
\begin{subfigure}[b]{0.23\textwidth}
\centering
\includegraphics[width=.98\linewidth]{figures/synthetic/FERERO_hard_init.pdf}
\caption{FERERO ($A=I$)}
\label{sfig:syn_init_close_ferero_app}  
\end{subfigure}
\begin{subfigure}[b]{0.23\textwidth}
\centering
\includegraphics[width=.98\linewidth]{figures/synthetic/FOOPS_hard_init.pdf}
\caption{FOOPS}
\label{sfig:syn_init_close_ours_app}  
\end{subfigure}
\caption{Extension of Figure~\ref{fig:syn_init_close}. Outputs (colored markers) and optimization trajectories (colored lines) of different methods when initial objectives are near the Pareto front. Different colors represent different preferences.
For FERERO, $A=I$ represents choosing the partial order cone $C_A = \R_+^M$ therein for vector optimization, which corresponds to Pareto dominance.}
\label{fig:syn_init_close_app}  
\end{figure*}

In Figure~\ref{fig:synthetic_concave_exponential},
we include additional results from LS and MGDA for comparison.
For all preferences and all methods, the initial model parameter $x_0$ is randomly generated from a Gaussian distribution $\mathcal{N}(0,1)$ for each dimension.
In Table~\ref{tab:hp-synthetic}, we provide a summary of the hyperparameters for the baselines and our methods for the experiments in Figure~\ref{fig:synthetic_concave_exponential}.

Figure~\ref{fig:syn_init_close_app} is an extension of Figure~\ref{fig:syn_init_close} under the same experiment objectives
but with additional results from EPO and by different initializations.
In Figures~\ref{sfig:syn_init_close_pmtl2}-\ref{sfig:syn_init_close_ours2}, the initial model parameters are randomly generated from a uniform distribution between $[-0.3, 0.3]$ for each dimension.
While in Figures~\ref{sfig:syn_init_close_pmtl}-\ref{sfig:syn_init_close_ours}
and Figures~\ref{sfig:syn_init_close_pmtl_app}-\ref{sfig:syn_init_close_ours_app}, the initial model parameters are randomly generated from a uniform distribution between $[-0.5, -0.15]$ or $[0.15, 0.5]$ for each dimension. 
Table~\ref{tab:hp-synthetic-init-close} summarizes the hyperparameters for the experiments in Figure~\ref{fig:syn_init_close}.
Compared to other baselines, our method is more robust to initializations
and requires the least number of iterations for the hard initialization
in Figures~\ref{sfig:syn_init_close_pmtl_app}-\ref{sfig:syn_init_close_ours_app}.
\begin{table}[ht]
\caption{Summary of hyper-parameters for the synthetic data experiments in Figure~\ref{fig:syn_init_close_app}.}
\label{tab:hp-synthetic-init-close}
\small
\centering
\begin{tabular}{c| cccc |cccc  }
\toprule 
Hyperparameters & \multicolumn{4}{c}{Figures~\ref{sfig:syn_init_close_pmtl2}-\ref{sfig:syn_init_close_ours2}}
& \multicolumn{4}{c}{Figures~\ref{sfig:syn_init_close_pmtl_app}-\ref{sfig:syn_init_close_ours_app}}\\
\cline{2-9}
 & PMTL & EPO & FERERO & FOOPS
 & PMTL & EPO & FERERO & FOOPS \\
\midrule
step size $\alpha_t$ & 0.25 & 0.10 & 0.60 & 0.20
& 0.50 & 0.20 & 0.60 & 0.20 \\ 
max iterations &100 & 60 & 10 & 100
&200 & 120 & 200 & 100 \\
\bottomrule
\end{tabular}
\end{table}

\paragraph{Multi-patch image classification.}
For a fair comparison, we follow the same data splitting and processing procedures as~\citep{mahapatra2020multi}.
In each of the three datasets, there are 120k samples for training and 20k samples for testing. There are two tasks on each dataset: 1) classifying the top-left image, and 2) classifying the bottom-right image.
For all methods, we use the SGD optimizer with batch size  256. 
The step sizes for updating the model parameters of all methods are $10^{-3}$. 
The number of epochs for all methods are $100$.
The parameters for other methods are chosen as default.
For the FOOPS method, we use hyperparameters 
$\theta=1, l = 0.6, \tau = 0.01$.
The penalty parameter $\gamma_t $
is set initially to $0.1$ and increased by $0.1$ after every 10 epochs
until it reaches $2$.
The inner-loop parameters are $K = 5, \beta = 10^{-3}$.

We use the Pymoo 0.6.1 library to compute the hypervolume.
The Nadir points for the hypervolume computation are given in Table~\ref{tab:nadir_points}.
For a fair comparison, the  Nadir points we use are the same with~\citep{momma2022multi,chen2024ferero}.

\begin{table}[ht]
\caption{Nadir points for the hypervolume computation }
\label{tab:nadir_points}
\small
\centering
\begin{tabular}{l| cccc   }
\toprule
Dataset and metrics & Nadir points, metrics on objective [$1,\ldots, M$] \\
\midrule
Multi-MNIST loss     & [0.500, 0.450] \\
Multi-Fashion loss   & [0.840, 0.800] \\
Multi-F+M loss       & [0.625, 0.575] \\
Multi-MNIST accuracy & [0.830, 0.848] \\
Multi-Fashion accuracy & [0.680, 0.710]\\
Multi-F+M accuracy    & [0.790, 0.785] \\
\bottomrule 
\end{tabular}
\end{table}

\paragraph{Multi-lingual speech recognition.}
We follow the same experiment settings in~\cite{chen2024ferero}.
We use two datasets, Librispeech and AISHELL v1.
Librispeech is an English speech dataset that consists of 960 hours of labeled audio data. For our experiments, we use the ``train-clean-100'' subset of the Librispeech dataset for supervised training, which contains 100 hours of clean training data. Additionally, we use the full 960 hours of data for self-supervised training. AISHELL v1 is a 178-hour  Mandarin speech corpus designed for various speech and speaker processing tasks. We use the full AISHELL v1 dataset for both self-supervised and supervised training. We combine these two datasets for our multi-lingual speech recognition experiments.

We use the conformer~\citep{gulati2020conformer} model with 8 conformer blocks as the encoder. Each block contains 512 hidden units and 8 attention heads. Each attention head has dimension 64. The convolutional kernel size is 31. Two classification heads are used. They contain two linear layers, one with 1000 output size for English, and another with 5000 output size for Chinese.
The total number of parameters is around 64.5M with 58.4M encoder layer parameters and the rest being the classification layer parameters.

The loss functions we use include the Contrastive Predictive Coding (CPC) loss, and the Connectionist Temporal Classification (CTC) loss. 
The \emph{CPC loss}~\citep{oord2018representation} is a self-supervised loss to learn robust representations from unlabeled speech data. 
The CPC loss is designed to maximize the probability of a future sample given a contextual representation generated from the current speech sequence. 
The \emph{CTC loss} is defined as the negative log-likelihood of the model parameter given the input sequence and the label sequence.

For all methods including the baselines, we use the step sizes $\alpha_{t,1}=5\times10^{-4}$ for training the backbone conformer parameters and $\alpha_{t,2} =5\times10^{-5}$ for training the classification head parameters.

\begin{table*}[ht]
\caption{Summary of average run time in seconds (s) or minutes (m) and number of iterations or epochs of different methods on different datasets.}
\label{tab:complexity}
\small
\fontsize{7}{8}\selectfont
\centering 
\begin{tabular}{c |c| ccccc }
\toprule 
  Datasets &Metrics &LS & PMTL & EPO & FERERO & FOOPS \\
\midrule
Synthetic, Figures 3(a-c)  &Iterations & 100 & 100 & 60 & 10 &100\\ 
&Per-iteration run time
&  3.50E-4s & 7.67E-4s &  4.93E-3s &  7.50E-4s & 7.61E-4s \\ 
&Total run time 
& 0.035s & 0.0767s & 0.296s & 0.0075s & 0.0761s\\ 
\hline
Synthetic, Figures 3(d-f)  &Iterations &100 & 200 & 80 & 200 &100 \\ 
&Per-iteration run time
& 3.10E-4s & 7.65E-4s &  4.93E-3s &  7.30E-4s 
& 7.43E-4s \\ 
&Total run time 
& 0.031s & 0.153s & 0.394s & 0.146s & 0.074s \\ 
\hline
Multi-MNIST/Fashion/F+M
&Epochs &100 & 100 & 100 & 100 & 100 \\   
&Per-epoch run time 
& 3.54s & 11.88s & 9.66s & 7.02s & 6.89s \\  
&Total run time 
& 5.9m & 19.8m & 16.1m & 11.7m &11.5m \\ 
\bottomrule
\end{tabular}
\end{table*}

\end{document}